\numberwithin{equation}{section}
\newtheorem{Theorem}{Theorem}[section]
\newtheorem{Corollary}[Theorem]{Corollary}
\newtheorem{Lemma}[Theorem]{Lemma}
\newtheorem{Proposition}[Theorem]{Proposition}
\newtheorem{Conjecture}[Theorem]{Conjecture}
\newtheorem{Question}[Theorem]{Question}
\newtheorem{Problem}[Theorem]{Problem}
\theoremstyle{definition}
\newtheorem{Definition}[Theorem]{Definition}
\newtheorem{Example}[Theorem]{Example}
\newtheorem{Remark}[Theorem]{Remark} }
\newcommand{\etype}[1]{\renewcommand{\labelenumi}{(#1{enumi})}}
\def\eroman{\etype{\roman}}
\newcommand{\C}{\mathbb{C}}
\newcommand{\Q}{\mathbb{Q}}
\newcommand{\R}{\mathbb{R}}
\newcommand{\HH}{\mathbb{H}}
\newcommand{\ssl}{\operatorname{sl}}
\newcommand{\so}{\operatorname{so}}
\newcommand{\ssp}{\operatorname{sp}}
\newcommand{\G}{\operatorname{G}}
\newcommand{\ff}{\operatorname{F}}
\newcommand{\su}{\operatorname{su}}
\newcommand{\Image}{\operatorname{Im}}
\newcommand{\Char}{\operatorname{Char}}
\newcommand{\Cent}{\operatorname{Cent}}
\newcommand{\tr}{\operatorname{tr}}
\newcommand{\diag}{\operatorname{Diag}}
\newcommand{\spann}{\operatorname{span}}
\newcommand{\GL}{\operatorname{GL}}
\newcommand{\N}{\mathbb{N}}
\newcommand{\Discr}{\operatorname{Discr}}
 \DeclareMathOperator{\End}{End}
\def\Cent{\operatorname{Cent}}
\def\UD{\operatorname{UD}}
\def\sl{\operatorname{sl}}
\def\chara{\operatorname{char}}
\def\a{\alpha}
\def\la{\lambda}
\def\bfi{\mathbf{i}}
\def\bfj{\mathbf{j}}
\newcommand{\discr}{\operatorname{Discr}}
\newcommand{\D}{\operatorname{Dist}}
\def\gl{\operatorname{gl}}
\newcommand{\ad}{\operatorname{ad}}
\newcommand{\SL}{\operatorname{SL}}
\newcommand{\real}{\operatorname{Re}}
\newcommand{\ve}{\operatorname{Ve}}
\begin{document}

\allowdisplaybreaks

\newcommand{\arXivNumber}{1909.07785}

\renewcommand{\thefootnote}{}

\renewcommand{\PaperNumber}{071}

\FirstPageHeading

\ShortArticleName{Evaluations of Noncommutative Polynomials on Algebras}

\ArticleName{Evaluations of Noncommutative Polynomials\\ on Algebras: Methods and Problems,\\ and the L'vov--Kaplansky Conjecture)\footnote{This paper is a~contribution to the Special Issue on Algebra, Topology, and Dynamics in Interaction in honor of Dmitry Fuchs. The full collection is available at \href{https://www.emis.de/journals/SIGMA/Fuchs.html}{https://www.emis.de/journals/SIGMA/Fuchs.html}}}
 
\Author{Alexei KANEL-BELOV~$^\dag$, Sergey MALEV~$^\ddag$, Louis ROWEN~$^\S$ and Roman YAVICH~$^\ddag$}

\AuthorNameForHeading{A.~Kanel-Belov, S.~Malev, L.~Rowen and R.~Yavich}

\Address{$^\dag$~Bar-Ilan University, MIPT, Israel}
\EmailD{\href{mailto:kanelster@gmail.com}{kanelster@gmail.com}}

\Address{$^\ddag$~Department of Mathematics, Ariel University of Samaria, Ariel, Israel}
\EmailD{\href{mailto:sergeyma@ariel.ac.il}{sergeyma@ariel.ac.il}, \href{mailto:romany@ariel.ac.il}{romany@ariel.ac.il}}

\Address{$^\S$~Department of Mathematics, Bar Ilan University, Ramat Gan, Israel}
\EmailD{\href{mailto:rowen@math.biu.ac.il}{rowen@math.biu.ac.il}}

\ArticleDates{Received September 18, 2019, in final form July 08, 2020; Published online July 27, 2020}

\Abstract{Let $p$ be a polynomial in several non-commuting variables with coefficients in a~field $K$ of arbitrary characteristic. It has been conjectured that for any $n$, for $p$ multilinear, the image of $p$ evaluated on the set $M_n(K)$ of $n$ by $n$ matrices is either zero, or the set of scalar matrices, or the set ${\rm sl}_n(K)$ of matrices of trace 0, or all of $M_n(K)$. This expository paper describes research on this problem and related areas. We discuss the solution of this conjecture for $n=2$ in Section~2, some decisive results for $n=3$ in Section~3, and partial information for $n\geq 3$ in Section~4, also for non-multilinear polynomials. In addition we consider the case of~$K$ not algebraically closed, and polynomials evaluated on other finite dimensional simple algebras (in particular the algebra of the quaternions). This review recollects results and technical material of our previous papers, as well as new results of other researches, and applies them in a~new context. This article also explains the role of the Deligne trick, which is related to some nonassociative cases in new situations, underlying our earlier, more straightforward approach. We pose some problems for future generalizations and point out possible generalizations in the present state of art, and in the other hand providing counterexamples showing the boundaries of generalizations.}

\Keywords{L'vov--Kaplansky conjecture; noncommutative polynomials; multilinear polynomial evaluations; power central polynomials; the Deligne trick; PI algebras}

\Classification{16H05; 16H99; 16K20; 16R30; 16R40; 17B99}

\begin{flushright}
\it On occasion of the 80-th birthday of D.B.~Fuchs
\end{flushright}

{\small \tableofcontents}

\renewcommand{\thefootnote}{\arabic{footnote}}
\setcounter{footnote}{0}

\section{Introduction}

In this review, we present a systematized exposition including
results obtained in previous papers. In addition, we have
systematized ideas and methods of proofs. Surprisingly, for quite
elementary looking results, we have used the Deligne trick, and a
technique for working with central simple algebras dating back to
Amitsur. At the end of each section, we present open problems and
our ideas, not all of which we were able to complete. We would be
happy if other scientists will succeed.

$K\langle x_1,\dots,x_m\rangle$ denotes the free $K$-algebra generated by noncommuting variables
$x_1,\dots,x_m$; we refer to the elements of $K\langle
x_1,\dots,x_m\rangle$ as {\it polynomials}. Consider any algebra $R$
over a~field~$K$. A polynomial $p\in K\langle x_1,\dots,x_m\rangle$
is called a {\it polynomial identity} (PI) of the algebra $R$ if
$p(a_1,\dots,\allowbreak a_m)=0$ for all $a_1,\dots,a_m\in R$; $p\in K\langle
x_1,\dots,x_m\rangle$ is a {\it central polynomial} of $R$, if for
any $a_1,\dots,a_m\in R$ one has $\mbox{$p(a_1,\dots,a_m)\in
\Cent(R)$}$ but $p$ is not a PI of $R$.

For any polynomial $p\in K\langle x_1,\dots,x_m\rangle$, the $image$
of $p$ (in $R$) is defined as
\begin{gather*}
\Image p: =\{r\in R:
\ \text{there exist}\ a_1,\dots,a_m\in R\ \text{such that}\
p(a_1,\dots,a_m)=r \}.
\end{gather*}

\begin{Remark}\label{cong-2}$\Image p$ is invariant under conjugation, since
\begin{gather*}
sp(x_1,\dots,x_m)s^{-1}=p\big(s x_1s ^{-1},s x_2s ^{-1},\dots,s x_ms^{-1}\big)\in\Image p,
\end{gather*}
 for any invertible element $s$.
\end{Remark}

Images of polynomials evaluated on algebras play an important role
in noncommutative algebra. In particular, various challenging
problems related to the theory of polynomial identities have been
settled after the construction of central polynomials by Formanek~\cite{F1} and Razmyslov~\cite{Ra1}.

This survey consists of the main results by the authors describing
the possible images of polynomials, especially in connection with
the partial solution of a conjecture attributed to L'vov and
Kaplansky concerning evaluations of polynomials on matrices that was formulated in~\cite{Dn}:

\begin{Conjecture}[L'vov--Kaplansky]\label{lvov-int}Let $p$ be a~multilinear polynomial. Then the set of values of $p$
on the matrix algebra $M_n(K)$ over an infinite field $K$ is a~vector space.
\end{Conjecture}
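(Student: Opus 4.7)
The plan is to combine two elementary invariances to cut the list of possible images down to four candidates, and then to grind at the only nontrivial property left: closure under addition. Multilinearity of $p$ gives that $\Image p$ is closed under scalar multiplication, since $p(\lambda a_1,a_2,\dots,a_m)=\lambda p(a_1,\dots,a_m)$, and Remark~\ref{cong-2} gives that $\Image p$ is invariant under simultaneous $\GL_n(K)$-conjugation. Any $\GL_n(K)$-invariant $K$-subspace of $M_n(K)$ is in particular a Lie ideal of $\gl_n(K)$, and for $K$ infinite (with mild care in bad characteristic) the Lie ideals of $\gl_n(K)$ are exactly $\{0\}$, $K\cdot I_n$, $\sl_n(K)$, and $M_n(K)$. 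So once $\Image p$ is known to be additively closed, the conjecture follows automatically, with the image being one of these four subspaces.

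To decide which candidate occurs, I would examine the trace map $T(a_1,\dots,a_m):=\tr\bigl(p(a_1,\dots,a_m)\bigr)$, itself a multilinear polynomial $M_n(K)^m\to K$, together with the centrality of $p$ on $M_n(K)$. If $p$ is a PI of $M_n(K)$, the image is $\{0\}$. If $p$ is a nonzero central polynomial of $M_n(K)$, scalar-closure forces $\Image p=K\cdot I_n$. If $T\equiv 0$ on $M_n(K)^m$ but $p$ is not central, the target is $\sl_n(K)$. Otherwise the expected target is all of $M_n(K)$. Each of these four targets is a $\GL_n$-invariant cone, consistent with the reductions above, so the task reduces to proving surjectivity of the evaluation onto the appropriate one.

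The hard step is this surjectivity, i.e.\ closure under addition. A natural attack is to note that $\Image p$ is a constructible subset of $M_n(K)$ (as the image of an algebraic morphism between affine varieties, by Chevalley), so its Zariski closure is a $\GL_n$-invariant cone; computing the differential of the evaluation map at a carefully chosen generic tuple and showing it surjects onto the candidate subspace would give that $\Image p$ is Zariski-dense in it, after which a density/similarity argument would be needed to upgrade to equality on each conjugacy class. The main obstacle lies in this upgrade: controlling the set of characteristic polynomials actually achieved by $p$, and ruling out accidentally missed orbits. For $n=2$ this is tractable because a $2\times 2$ matrix is determined up to conjugation by its trace and determinant, so the achievable characteristic polynomials live in $K^2$ and can be classified by hand (Section~2). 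Already for $n=3$ the space of characteristic polynomials is three-dimensional and the analysis becomes substantially more delicate (Section~3), which is why I expect any purely conceptual strategy along these lines to stall in general and why the conjecture remains open for $n\geq 3$.
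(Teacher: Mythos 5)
The statement you are addressing is a \emph{conjecture}, and the paper does not prove it; it only surveys partial results. Your opening reduction is sound and is exactly the paper's Remark~\ref{linear-2}: the linear span of $\Image p$ is a Lie ideal, so by Herstein it is $\{0\}$, $K$, $\sl_n(K)$ or $M_n(K)$, and the conjecture is equivalent to showing $\Image p$ equals its span. But everything after that is a plan, not a proof, and the plan breaks precisely at the point you flag. Knowing that $\Image p$ is a $\GL_n$-invariant cone whose span is one of four subspaces does not ``reduce the task to surjectivity onto the appropriate candidate'': the open content of the conjecture is exactly ruling out invariant cones that are \emph{not} subspaces. Concretely, for $n=3$ the paper's Theorem~\ref{main3-int} leaves open the possibility that $\Image p$ is the set of $3$-scalar matrices, or the set of sums of scalar and $3$-scalar matrices, or a dense proper subset of $M_3(K)$ (Problems~\ref{3s-3} and following); any of these would be a counterexample, and your trichotomy ``PI / central / trace-vanishing / otherwise all of $M_n(K)$'' silently assumes they cannot occur. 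Likewise, Zariski density of the image in the candidate subspace (which does follow from differential or dimension arguments, cf.\ Theorem~\ref{thmB1-pcp} and Theorem~\ref{no-mpc-pcp}) is not known to upgrade to equality even for $n=3$; the paper's complete answers for $n=2$ (Theorems~\ref{main2-int} and~\ref{main-2r}) require the graph-theoretic Lemma~\ref{graph-2r}, the cone machinery, Amitsur's generic division algebra $\widetilde{\UD}$ and the Deligne trick to exclude the one residual invariant cone $M_2(K)\setminus\tilde K$, and no analogue of that exclusion is available in general.

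So the verdict is that your proposal reproduces the standard, correct reduction already in the paper, but supplies no argument for the step that constitutes the conjecture itself --- closure of $\Image p$ under addition, equivalently the impossibility of proper invariant-cone images such as power-central (e.g., $3$-scalar) value sets --- and you acknowledge as much. As a description of the state of the art and of where the difficulty lies it is accurate; as a proof of the statement it has an essential gap, which is unavoidable since the statement is open for $n\geq 3$.
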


\begin{Remark}\label{linear-2} It is not difficult to ascertain the linear span of the values of any multilinear polynomial. Indeed, the linear span of its values comprises a Lie ideal since, as is well known,
 \begin{gather*}
 [a, p(a_1, \dots, a_n)]
 = p([a,a_1],a_2 \dots, a_n)+ p(a_1,[a,a_2] \dots, a_n)+ \cdots +p(a_1, \dots,[a,
 a_n]),
 \end{gather*}
 and Herstein \cite{Her} characterized Lie ideals of a
 simple ring $R$ as either being contained in the center or
 containing the commutator Lie ideal $[R,R]$. Another proof is
 given in \cite{BK}; also see \cite[Lemma 4]{BMR1}. It
is considerably more difficult to determine the actual image set
 $\Image p$, rather than its linear span.
\end{Remark}

Thus Conjecture~\ref{lvov-int} is equivalent to the following:

\begin{Conjecture}\label{Polynomial image-int}
If $p$ is a multilinear polynomial evaluated on the matrix ring
$M_n(K)$, then $\Image p$ is either $\{0\}$, $K$, $\ssl_n(K)$, or
$M_n(K)$. Here $K$ indicates the set of scalar matrices and
$\ssl_n(K)$ is the set of matrices of trace zero.
\end{Conjecture}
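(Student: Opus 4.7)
The strategy is to upgrade the linear-span classification of Remark~\ref{linear-2} to a set-theoretic equality. Two structural features of $\Image p$ are immediately available: it is invariant under $\GL_n(K)$-conjugation (Remark~\ref{cong-2}), hence a union of similarity classes, and by multilinearity $\lambda p(a_1,\dots,a_m)=p(\lambda a_1,a_2,\dots,a_m)$, so $\Image p$ is closed under scalar multiplication. Together with Remark~\ref{linear-2}, this forces the linear span of $\Image p$ to lie in $\{\{0\},K,\ssl_n(K),M_n(K)\}$.

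The first two cases settle quickly. If the span is $\{0\}$, then $p$ is a PI and $\Image p=\{0\}$. If the span is $K$, then $p$ is a nonzero central polynomial; closure under scalar multiplication together with the existence of a single nonzero central value yields $\Image p=K$. For the remaining two spans, since $\Image p$ is a union of similarity classes, it suffices to hit every similarity class of $\ssl_n(K)$ or $M_n(K)$, respectively. The plan is to first realize all diagonalizable matrices, by substituting combinations of diagonal and permutation-type matrices and running a Vandermonde-style genericity argument so that the unordered eigenvalue tuple of $p(a_1,\dots,a_m)$ sweeps out a Zariski-dense subset of the locus $\sum \lambda_i=0$ (or of $K^n$); the Deligne trick advertised in the abstract is then used at the level of an auxiliary central simple algebra over a generic extension to guarantee the nonvanishing needed for that genericity and to descend back to $M_n(K)$.

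The main obstacle is the non-semisimple locus, in particular the nilpotent cone inside $\ssl_n(K)$. Nilpotent matrices of different Jordan types form distinct similarity orbits of differing dimension, so Zariski-density in a fibre of the characteristic-polynomial map does not automatically produce every Jordan type. Closing this gap requires either explicit constructions of each Jordan type as an evaluation of $p$ (tractable for $n=2$ and in substantial part for $n=3$, as the paper will recount in Sections~2--3) or a uniform closure argument showing that $\Image p$ cannot omit an orbit lying in the closure of one it meets. No such uniform argument is yet known, which is precisely the reason Conjecture~\ref{Polynomial image-int} remains open for general~$n$, and why the subsequent sections of the survey concentrate on partial results, auxiliary techniques, and counterexamples delimiting the scope of possible generalizations.
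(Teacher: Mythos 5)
You have not proved the statement, and that is the right outcome: this is a \emph{Conjecture} (the L'vov--Kaplansky conjecture in the form of Conjecture~\ref{Polynomial image-int}), and the paper contains no proof of it, only the reduction you describe -- conjugation invariance (Remark~\ref{cong-2}), the cone property of multilinear images, and the Lie-ideal/Herstein span classification (Remark~\ref{linear-2}) -- together with partial results: a complete answer for $n=2$ (Theorems~\ref{main2-int} and~\ref{main-2r}), a near-classification for $n=3$ (Theorems~\ref{multi_tr=0_3} and~\ref{main3-int}), and dimension estimates for $n\ge 4$ (Theorems~\ref{no-mpc-pcp} and~\ref{harmonic-4-int}). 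Your treatment of the spans $\{0\}$ and $K$ is correct and agrees with the paper, and you are right that the remaining cases reduce to hitting every similarity class in $\ssl_n(K)$ or $M_n(K)$.

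Two parts of your sketch overstate what is actually available, and your diagnosis of why the problem is open is incomplete. First, the claim that a Vandermonde-style genericity argument plus the Deligne trick would make the unordered eigenvalue tuple sweep a dense subset of the trace-zero locus (hence realize all diagonalizable classes) is not established for general $n$: the strongest general statements in the paper are Theorem~\ref{thmB1-pcp}, which yields only matrices with eigenvalues $\big\{c,c\varepsilon,\dots,c\varepsilon^{n-1}\big\}$, and the dimension bounds $n^2-n+2$ and $n^2-n+3$, which fall well short of covering all spectra; only for $n=3$ and trace-vanishing $p$ does one get every admissible eigenvalue triple, and even there only as one of several possible cases. Second, the obstruction is not primarily the nilpotent cone or Jordan types. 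The paper's own open Problems~\ref{3s-3} and~\ref{s3s} show that the conjecture could already fail on the semisimple locus: a multilinear $3$-central polynomial, whose image is the set of $3$-scalar matrices, would be a genuine counterexample, and its nonexistence is unknown (by contrast, Theorem~\ref{no-mpc-pcp} and Theorem~\ref{thmC-pcp} rule out multilinear power-central polynomials only for $n\ge 4$). A further identified gap is that the image might be merely Zariski dense in $M_n(K)$ (or in $\ssl_n(K)$) without being all of it. Any honest account of why Conjecture~\ref{Polynomial image-int} is open should include these spectral, power-central, and density scenarios, not only the non-semisimple orbits.
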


Note that these options are mutually exclusive when $\chara(K)$ does
not divide~$n$. While lacking a verification of this conjecture, one
is led to the following more general question:

\begin{Question} Given a polynomial $p$ $($not necessarily
multilinear$)$, what is its possible image set? Which polynomial of
minimal degree produces one of these image sets?
\end{Question}

Recall the \textbf{standard polynomial} \begin{gather*}
 s_k: = \sum _{\pi \in S_k}
\operatorname{sgn}(\pi) x_{\pi(1)}\cdots x_{\pi(k)}.
\end{gather*}

A polynomial $p$ is {\it trace vanishing} if each of its evaluations
has trace 0. For example, it is easy to see that $s_{2k}$ is trace
vanishing.

\begin{Example}\label{ex_multilin-int}
$\Image p$ can indeed equal $\{0\}$, $K$, $\ssl_n(K)$, or
$M_n(K)$.

\begin{enumerate}\itemsep=0pt\eroman \item $\Image p =
\{0\}$ iff the polynomial $p$ is a PI, and $s_{2n}$ is an
example of such a polynomial by the Amitsur--Levitzki
theorem~\cite{AL}. To determine all such PI's leads us to Specht's
finite basis problem, which for multilinear polynomial identities is
not yet settled in nonzero characteristic. Also Kemer's solution is
non-computational, so we do not yet have explicit generators of the
T-ideal of PI's for $n>2$.

 \item If the polynomial $p$ is central,
then its image is $K$, and examples of such polynomials can be found
in
 \cite{Ra1} and in \cite{F1}. For $n=2$ the central polynomial of smallest
 degree is the multilinearization of $[x,y]^2$. The central polynomial of smallest
 degrees are known for $n = 3,4$ but not in general.

\item When $p = x_1x_2-x_2x_1$, $\Image p = \ssl_n(K)$ by a theorem of Albert and
Muckenhoupt~\cite{AM}. Obviously $p$ has the lowest possible degree.

In general, if $\Image p \subseteq \ssl_n(K)$, then $p$ is trace
vanishing, which raises the issue, when does this imply that $\Image
p$ consists of all commutators? We shall investigate this issue, and
obtain counterexamples for non-multilinear polynomials.

\item $\Image p=M_n(K)$ for $p=x$.
\end{enumerate}
\end{Example}

When $K$ is a finite field, Chuang \cite{Ch} proved that any
subset $S \subseteq M_n(K)$ containing $0$ is the image of a
polynomial with constant term zero, if and only if $S$ is invariant
under conjugation. Later Chuang's result was generalized by Kulyamin
\cite{Ku1, Ku2} for graded algebras.

The research detailed in this paper focuses on associative
algebras, mostly matrix algebras~$M_n(K)$ over an infinite field
$K$, for $n =2,3$. We also have density results for arbitrary $n$.
In~\cite{DyKl} Dykema and Klep obtain an affirmative answer for
the L'vov--Kaplansky conjecture, for multilinear polynomials of
degree $3$ when $n$ is either even, or odd and $\leq 15$.

In \cite{Mes} Mesyan conjectured that if $n\geq m-1$, then any
multilinear polynomial of degree $m$ evaluated on $M_n(K)$ takes all
values of trace zero, and proved it for $m=3$. In~\cite{BW} Buzinski
and Winstenley proved Mesyan's conjecture for $m=4$.

 In \cite{Fag}
Fagundes denotes by $UT_n^{(k)}$ for $k\geq 0$ the set of strictly
upper triangular matrices which, besides the main diagonal, also
have $k$ zero diagonals located above the main diagonal, and proves
that if $p$ is a multilinear polynomial evaluated on~$UT_n^{(0)}$ of
degree $m$ then its image is either $\{0\}$ or $UT_n^{(m-1)}$. In
particular, $\Image p$ is a vector space.

In \cite{FM} Fagundes and de Mello describe the images of
multilinear polynomials of degree $\le 4$ on the upper triangular
matrices.

Other works on upper triangular matrices include \cite{San,W1,W2,W3}.

Vitas \cite{V} proved for any nonzero multilinear polynomial~$p$, that if $A$ is an algebra with a~surjective inner derivation, such as the Weyl algebra, then $\Image p = A$.

A \textit{Lie polynomial} is an element of the free Lie algebra in
the alphabet $\{ x_i\colon i \in~I\}$, cf.~\cite[p.~8]{Ra4}. In other
words, a Lie polynomial is a sum of Lie monomials $\alpha_j h_j$,
where $h_j$ is a Lie word, built inductively: each letter $x_i$ is a
Lie word of degree~1, and if $h_j, h_k$ are distinct Lie words of
degree $d_j$ and~$d_k$,
then $[h_j, h_k]$ is a Lie word of degree $d_j+d_k$. A Lie
polynomial $p$ is \textit{multilinear} if each letter appearing in
$p$ appears exactly once in each of its Lie monomials.

 In \cite{S1}
\v{S}penko proved the L'vov--Kaplansky conjecture for Lie
polynomials $p$ of degree $\leq 4$ evaluated on matrix algebra $M_n$.

In \cite{AEV} Anzis, Emrich and Valiveti proved the L'vov--Kaplansky
conjecture for multilinear Lie polynomials of degree $3$ and $4$ evaluated on the Lie algebras $\su(n)$ of traceless skew-Hermitian matrices and $\so(n)$ of
skew-symmetric matrices.

In \cite{MO} Ma and Oliva proved that the image of any multilinear
Jordan polynomial of degree $3$ evaluated on the Jordan algebras of
real and complex symmetric matrices forms a vector space.

In \cite{LT} Li and Tsui proved that if $R$ is a central simple
algebra of degree $n$ over its center~$F$ and $\chara(F) = 0$, with
$\lambda\in F\setminus \{0, -1\}$, then there exist $a, c \in R$
such that for any element $r \in R$ of reduced trace $0$ there is an
element $b \in R$ such that $r = [a, [c, b]] + \lambda[c, [a, b]]$.

In \cite{Bre} Bre\v{s}ar proved that for any unital algebra $A$ over
a field $F$ of characteristic~$0$, if $1\in[A,A]$ then
$[A,A]\subseteq\spann f(A)$ for every nonconstant polynomial~$f$.
Also he investigated the set $f(A)-f(A)$ of differences of
evaluations
 and proved that for any algebraically closed field $F$
of characteristic $0$ and any noncommutative polynomial $f$, the set
$f(A)-f(A)$ on $A=M_n(F)$ with $n\geq 2$ contains all square-zero
matrices.

Papers on polynomial maps evaluated on matrix algebras include
\cite{GK, Wat}, who investigated maps that preserve zeros of
multilinear polynomials.

Research into polynomial image sets has strong connections with the
PI-theory. In particular, a polynomial identity is a polynomial whose
image set is $\{0\}$, and a central polynomial is a
polynomial whose images are central elements (in matrix algebras,
central elements are scalar matrices). The methods of working with PI
are set out in \cite{BrPS,DPP,Dr2,DF, DP, F2, GZ,Go, Ha, I, Ke1, Ke2, LeZh, P2, Ra2, Sa},
for more detailed exposition of PI theory and related
references see~\cite{BKR}. For combinatorial questions see~\cite{BBL}.

In the study of evaluations of polynomials on algebras, the approach associated with the investigation of normal bases of algebras seems significant. In this context, the research into Gr\"obner--Shirshov bases by the Bokut school (see \cite{Bokut, BokutChen, BK2}) is of interest, as is the study of evaluations of
polynomials on vertex algebras.

\subsection{Evaluations of words}

 The parallel topic in group theory (the images of words in
 groups) also has been studied extensively, particularly in recent
 years.
Investigation of the image sets of words in pro-$p$-groups is
related to the investigation of Lie polynomials and helped Zelmanov
\cite{Ze} to prove that the free pro-$p$-group cannot be embedded in
the algebra of $n\times n$ matrices when $p\gg n$. (For $p>2$, the
impossibility of embedding the free pro-$p$-group into the algebra
of $2\times 2$ matrices had been proved by Zubkov \cite{Zu}.) The
general problem of nonlinearity of the free pro-$p$-group is related
on the one hand with images of Lie polynomials and words in groups,
and on the other hand with problems of Specht type, which is of
significant current interest.

Let $w=w(x_1,\ldots, x_m)$ be an element of the free group $F_m(X)$, where
$X=\{x_1,x_2,\dots, x_m\}$. Given a group $G$, we consider
the corresponding evaluation map $f_{w, G}\colon G^m \rightarrow G$ corresponding to the word $w$.
This map is called a {\it word map}, which for convenience we also
notate as $w$ instead of $f_{w, G}$. Note that the identity
matrix~$I$ belongs to the image of any word map.

The major question under consideration is the size of the image
$w(G)\subseteq G$. Surjectivity of the map $w$ means that $w(G)=G$,
i.e., the equations $w(x_1,\ldots,x_m)=g$ can be solved for
each$g\in G$. This is, of course, a rare phenomenon even for
``good'' classes of groups. So usually one has to vary the word $w$
and group $G$ to obtain a reasonable estimate of $w(G)$. The typical
classes of groups which provide such estimates are simple algebraic
groups, simple and perfect finite groups, and some others.

The theorem of Borel~\cite{B} (also cf.~\cite{La}) states that for any
simple (semisimple) algebraic group~$G$ and any word $w$ of the free
group on $m$ variables, the word map $w\colon G^m\to G$ is dominant. If
the ground field~$K$ is algebraically closed this implies
immediately that \mbox{$w(G(K))^2=G$}, that is, every element $g\in G$ is
a product of two elements from~$w(G)$.

For an arbitrary infinite field $K$ and arbitrary word $w$,
Hui--Larsen--Shalev \cite{HLS} proved that $w(G(K))^4=G$, that is,
every element $g\in G$ is a product of four elements from~$w(G)$.
This estimate was improved by Egorchenova--Gordeev in~\cite{EG1} to
$w(G(K))^3=G$.

The most challenging open problem for word maps on semisimple
algebraic groups is the following, see~\cite{KKMP}:

\begin{Conjecture}Let $G={\rm PSL}_2(\mathbb C)$, and let $w=w(x,y)$ be an arbitrary
non-identity word in~$F(x,y)$. Then the word map $w\colon
{\rm PSL}_2(\mathbb C)\times {\rm PSL}_2(\mathbb C)\to {\rm PSL}_2(\mathbb C)$ is
surjective. In other words, the equation
\begin{gather*}
w(x_1,x_2)=a
\end{gather*}
has a solution for every $a\in {\rm PSL}_2(\mathbb C)$.
\end{Conjecture}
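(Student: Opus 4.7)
The plan is to reduce the conjecture to a polynomial-surjectivity statement on the character variety of $F_2$ in $\SL_2(\C)$, dispose of every semisimple target at once via Borel's theorem, and then confront the remaining unipotent case. I lift $w$ to a word in $\SL_2(\C)$ and use that, apart from the unique non-trivial unipotent conjugacy class, every element of $\PSL_2(\C)$ is determined up to conjugacy by its trace (up to the sign coming from the quotient by $\{\pm I\}$).

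First I would invoke the Fricke--Vogt theorem to express $\tr(w(X,Y))$ as an integer polynomial $P_w(s,t,r)$ in the three character-variety coordinates $s=\tr X$, $t=\tr Y$, $r=\tr(XY)$, together with the classical fact that every triple $(s,t,r)\in\C^3$ is realized by some $(X,Y)\in\SL_2(\C)^2$. Thus $\{\tr w(X,Y):(X,Y)\in\SL_2(\C)^2\}$ equals the image of the polynomial map $P_w\colon\C^3\to\C$. Borel's theorem makes the word map dominant whenever $w$ is non-identity, so $P_w$ is non-constant; and any non-constant polynomial $\C^n\to\C$ is surjective, since for each $c\in\C$ the polynomial $P_w-c$ is non-constant in three variables, generates a proper ideal in $\C[s,t,r]$, and vanishes somewhere by the Nullstellensatz. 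Hence every value in $\C$ is attained as a trace, so every semisimple conjugacy class of $\PSL_2(\C)$ lies in $\Image w$; the identity lies there trivially.

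The main obstacle is the single non-trivial unipotent class, which cannot be detected by trace alone. Set $\mathcal{Z}:=\{(X,Y)\in\SL_2(\C)^2:\tr w(X,Y)=\pm2\}$, a nonempty hypersurface in the $6$-dimensional space $\SL_2(\C)^2$; on $\mathcal{Z}$ the element $w(X,Y)$ lies in $\{\pm I\}\cup\{\text{non-trivial unipotents}\}$, and exhibiting a single $(X,Y)\in\mathcal{Z}$ with $w(X,Y)\ne\pm I$ would finish the proof. The natural approach is to compare $\mathcal{Z}$ with the proper subvariety $\{w=\pm I\}\subset\SL_2(\C)^2$ (proper since $w$ is dominant, hence not identically scalar) and argue $\mathcal{Z}\not\subseteq\{w=\pm I\}$; any point of the complement then witnesses a non-trivial unipotent in $\Image w$.

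The hard part, and the reason the conjecture has resisted proof, is that both $\mathcal{Z}$ and $\{w=\pm I\}$ can simultaneously be $5$-dimensional: for $w=y^n$ both varieties have dimension $5$, and although their complement is nonempty in that example (a unipotent $Y$ yields a non-trivial unipotent $Y^n$), no naive dimension inequality establishes this uniformly. One is forced to inspect the irreducible components of $\{w=\pm I\}$ directly, in particular those coming from pairs $(X,Y)$ inside a common maximal torus or dihedral subgroup, on which a subword of $w$ may trivialize. A plausible route is to combine the Egorchenova--Gordeev bound $w(G)^3=G$ with the centralizer structure of $\SL_2(\C)$ and a deformation of a point in a torus-type component of $\{w=\pm I\}$ toward a unipotent, choosing the perturbation so that $\tr w(X,Y)$ remains equal to $\pm2$ while $w(X,Y)$ itself escapes the scalars; making this deformation argument work uniformly over all non-identity $w$ is exactly the step where the problem has so far been intractable.
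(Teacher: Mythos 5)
You have not proved the statement, and you could not have by these means: the statement is an open conjecture, not a theorem of the paper. The paper records it as Conjecture stated in \cite{KKMP}, explicitly says it ``is still widely open, there being only several partial results,'' and later repeats that ``the question of whether one of the matrices $(I+e_{12})$ or $(-I-e_{12})$ (which are equal in ${\rm PSL}_2$) belongs to the image of $w$ remains open.'' What your first two paragraphs establish --- Borel's theorem makes $w$ dominant, hence $\tr w$ is a non-constant (Fricke) polynomial on the character variety, hence surjective onto $\C$, hence every non-unipotent class of ${\rm PSL}_2(\C)$ lies in $\Image w$ --- is exactly the known partial result that the paper states as the Lemma of Liebeck, Nikolov and Shalev, proved there by the same Borel-plus-trace argument. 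So up to that point you are correct but have only reproduced what is already in the survey.

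The genuine gap is the step you yourself flag: showing that the hypersurface $\mathcal{Z}=\{\tr w=\pm 2\}$ is not contained in $\{w=\pm I\}$, i.e., that $\Image w$ contains a non-trivial unipotent. Your final paragraph offers no argument for this --- the appeal to the Egorchenova--Gordeev bound $w(G(K))^3=G$ says nothing about whether any \emph{single} value of $w$ is unipotent, and the proposed ``deformation toward a unipotent keeping $\tr w=\pm 2$'' is not constructed, nor is there any reason given that the trace constraint can be preserved while escaping the scalars for an arbitrary non-identity $w$. Since this is precisely the open content of the conjecture, the proposal should be read as a (correct) reduction to the unipotent case plus a restatement of the difficulty, not as a proof; the paper itself goes no further.
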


This conjecture is still widely open,
 there being only several partial results, see
 \cite{BGG,BanZar,GKP2,GKP3,GKP1,KKMP}. It is a special case of the conjecture in \cite[Question~2]{KBKP}.
\begin{Problem}
Let $\mathbb G$ be the class of simple groups $G$ of the form $G=
G(K)$ where $K=\bar K$ is an algebraically closed field and $ G$ is
a semisimple adjoint linear algebraic group. Is it true that word
maps evaluated on groups from $\mathbb G$ are surjective for all
nontrivial non-power words?
\end{Problem}
The latter problem for groups of type A$_n$ can be reformulated as
follows:

\begin{Problem}\label{pr:slozhaja}
Is the word map $w\colon {\rm PSL}_n(\mathbb C)\times {\rm PSL}_n(\mathbb C)\to
{\rm PSL}_n(\mathbb C)$ surjective for any non-trivial $w(x,y)\in
F_2(x,y)$?
\end{Problem}

Now we turn from simple algebraic groups to finite simple groups.
Ongoing interest to this area was initially stimulated by the
positive solution of Ore's problem: Every element of a finite simple
group is a single commutator (see \cite{EG} and the final solution
in Liebeck--O'Brien--Shalev--Tiep \cite{LOST}; a survey is given in~\cite{Mall}).

Formidable progress in the description of images of word maps on
finite simple groups was obtained by M.~Larsen and A.~Shalev, who
stimulated the development of this area of research under the name
``Waring type problems''. The latest result of Larsen--Shalev--Tiep~\cite{LaST1} (see also \cite{LaS,LaST2, LaT}), is as
follows:
\begin{Theorem}[\cite{LaST1}]\label{th:fg}
Let $w$ be an arbitrary non-trivial word of $F(x_1,\ldots, x_n)$.
There exists a~constant $N = N(w)$ such that for all finite non-abelian
simple groups of order greater than $N$ one has
\begin{gather*}w(G)^2 = G.\end{gather*}
\end{Theorem}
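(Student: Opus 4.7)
The strategy is to prove $w(G)^2 = G$ by showing that for every $g \in G$ the count
\begin{gather*}
N_w^{(2)}(g) := \#\bigl\{(\mathbf{x}, \mathbf{y}) \in G^n \times G^n : w(\mathbf{x})\, w(\mathbf{y}) = g\bigr\}
\end{gather*}
is strictly positive as soon as $|G|$ exceeds some threshold $N(w)$. By a Frobenius-type identity, this count expands as a sum over irreducible characters of $G$, in which the trivial character $1_G$ contributes a main term of order $|G|^{2n-2}$, and each nontrivial $\chi \in \operatorname{Irr}(G)$ contributes a term dominated in absolute value by $|\chi(g)| \cdot |S_w(\chi)|^2 / \chi(1)$, where
\begin{gather*}
S_w(\chi) := \frac{1}{\chi(1)\,|G|^{n}} \sum_{\mathbf{x}\in G^n} \chi(w(\mathbf{x}))
\end{gather*}
is the normalized Fourier coefficient of the word map, satisfying $|S_w(\chi)|\le 1$ and $S_w(1_G)=1$. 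Positivity thus reduces to showing that the total contribution of the nontrivial characters is of strictly smaller order than the main term, uniformly in $g$.

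For this I would combine two uniform character-theoretic inputs: (i) a Larsen--Shalev style bound
\begin{gather*}
|S_w(\chi)| \le \chi(1)^{-\varepsilon(w)}, \qquad \chi \in \operatorname{Irr}(G)\setminus\{1_G\},
\end{gather*}
valid for every nonabelian finite simple $G$ with $|G|$ sufficiently large, where $\varepsilon(w)>0$ depends only on the word; and (ii) the Liebeck--Shalev pointwise bound $|\chi(g)| \le \chi(1)^{1-\delta}$ for $g \ne 1$, coupled with uniform convergence of the Witten zeta function $\sum_{\chi\ne 1_G}\chi(1)^{-s} \to 0$ as $|G| \to \infty$ through finite simple groups, for $s$ sufficiently large. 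Multiplying (i) and (ii) and summing shows that the nontrivial part of the character expansion is $o(|G|^{2n-2})$, which forces $N_w^{(2)}(g) > 0$ once $|G| > N(w)$. The value $g = 1$ is automatic since $1 \in w(G)$.

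The principal obstacle is establishing (i), whose proof rests crucially on the classification of finite simple groups. For groups of Lie type one invokes Deligne--Lusztig theory and controls the fibers of the word map on regular semisimple classes, obtaining an exponent $\varepsilon(w)$ from the structure of centralizers. For alternating groups, Murnaghan--Nakayama estimates combined with a combinatorial analysis of the cycle structure induced by $w$ play the analogous role. The non-triviality of $w$ enters through Borel's dominance theorem (or its finite-group avatar), preventing $|S_w(\chi)|$ from approaching $1$ for too many nontrivial $\chi$; this is precisely where the constant $N(w)$ is produced, and its uniformity across all infinite families of finite simple groups is the deepest aspect of the argument.
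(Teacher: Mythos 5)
The paper does not prove this statement at all: it is quoted as background and attributed to Larsen--Shalev--Tiep \cite{LaST1}, so there is no internal proof to compare yours against. What you have written is a sketch of the general character-theoretic philosophy of that literature (Frobenius count, Fourier coefficients $S_w(\chi)$ of the word measure, Witten zeta function), which is indeed the right family of ideas, but as an argument for $w(G)^2=G$ it has genuine gaps and would not close.

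The decisive problem is your input (ii). There is no uniform $\delta>0$ such that $|\chi(g)|\le \chi(1)^{1-\delta}$ for every $g\ne 1$, every nontrivial $\chi$, and every finite simple group: in $A_n$ take $\chi$ the standard $(n-1)$-dimensional character and $g$ a $3$-cycle, so that $|\chi(g)|=n-4$ while $\chi(1)=n-1$; the ratio tends to $1$, not to $\chi(1)^{-\delta}$. Character values are only well controlled in terms of the support or centralizer of $g$, and for elements with huge centralizers the trivial bound $|\chi(g)|\le\chi(1)$ is essentially sharp. With only the trivial bound, your reduction would require $|S_w(\chi)|\le\chi(1)^{-1-\epsilon}$-type decay, far beyond what your input (i) (the Larsen--Shalev bound, whose exponent $\varepsilon(w)$ is small) provides. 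This is exactly why the straightforward ``multiply two uniform bounds and sum the zeta function'' argument yields only $w(G)^3=G$ (Larsen--Shalev), whereas the improvement to squares in \cite{LaST1} required a genuinely different and much more elaborate analysis: separate treatment of alternating groups, unbounded-rank and bounded-rank groups of Lie type, refined character estimates at specially chosen elements (e.g., showing $w(G)$ contains regular semisimple elements and that products of suitable classes cover $G$), and Deligne--Lusztig-theoretic input. So your proposal is a reasonable outline of where the proof lives, but the specific two-step estimate you rely on is false as stated, and repairing it is precisely the content of the cited paper rather than a routine verification.
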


Different aspects of word maps are considered in a vast and
extended literature; we refer to the papers
\cite{BGK,BGKP,BaKu,BLS,BLS1,GKP2,GKP3, GKP1,KBKP,LaST1,Sha1,S,Sha3,Th}
 for details, surveys and further explanations.
Waring type questions for rings were considered by Matei Bre\v{s}ar
\cite{Bre}.

In \cite {BGEY} Kanel-Belov, Grigoriev, Elishev and Yu prove the
possibility of lifting a symplectomorphism to an automorphism of the
power series completion of the Weyl algebra of the corresponding
rank. They study the problem of lifting polynomial
symplectomorphisms in characteristic zero to automorphisms of the
Weyl algebra, by means of approximation by tame automorphisms.

\subsection{Non-multilinear polynomials}

As noted above, the analog to the L'vov--Kaplansky conjecture
formulated for any polynomial fails when $K$ is a finite field, so
we may assume that $K$ is infinite. The situation is considerably subtler for images of non-multilinear polynomials.

\begin{Definition}\label{basdef} A polynomial $p$ (written as a sum of monomials)
is called {\it semi-homogeneous of weighted degree $d$} with
(integer) {\it weights} $(w_1,\dots,w_m)$ if for each monomial $h$
of $p$, taking $d_j$ to be the degree of $x_{j}$ in $h$, we have{\samepage
\begin{gather*}d_1w_1+\dots+d_nw_n=d.\end{gather*} A
semi-homogeneous polynomial with weights $(1,1,\dots, 1)$ is called
$\it{homogeneous}$ of degree $d$.}

A polynomial $p$ is {\it completely homogeneous} of multidegree
$(d_1,\dots,d_m)$ if each variable $x_i$ appears the same number of
times $d_i$ in all monomials.
A polynomial $p\in K\langle x_1,\dots,x_m\rangle$ is multilinear
iff it is homogeneous of multidegree $(1,1,\dots,1)$. Thus, a
polynomial is multilinear if it is a polynomial of the form
\begin{gather*}p(x_1,\dots,x_m)=\sum_{\sigma\in S_m}c_\sigma
x_{\sigma(1)}\cdots x_{\sigma(m)},\end{gather*} where $S_m$ is the
symmetric group in $m$ letters and the coefficients $c_\sigma$ are
constants in $K$. \end{Definition}

\subsection{The main theorems}\label{2q70}

Here is a general combinatorial result.
\begin{Theorem}[{\cite[Theorem 1]{BMR3}}]\label{thmB1-pcp}
Let $p(x_1,\dots,x_m)$ be any multilinear polynomial evaluated on
$n\times n$ matrices over an infinite field. Assume that $p$ is
neither scalar nor PI. Then $\Image p$ contains a~matrix of the form
$c_n e_{n,1} + \sum\limits_{i=1}^{n-1} c_i e_{i,i+1}$ where $c_1,\dots,
c_n\neq 0$. When $\chara(K)$ is $0$ or prime to~$n$, $\Image p$
contains a~matrix with eigenvalues
$\big\{c,c\varepsilon,\dots,c\varepsilon^{n-1}\big\}$ for some $0 \ne c \in
K$, where $\varepsilon$ is a~primitive~$n$ root of~$1$.
\end{Theorem}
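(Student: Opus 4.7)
The plan is to reduce the statement to a construction using the natural $\Z/n\Z$-grading on $M_n(K)$, defined by $\deg(e_{i,j}) = j - i \pmod{n}$. The desired matrix form $c_n e_{n,1} + \sum_{i=1}^{n-1} c_i e_{i,i+1}$ comprises precisely the elements of the degree-$1$ component $M_n^{(1)} = J \cdot \mathrm{Diag}$, where $J = \sum_{i=1}^{n-1} e_{i,i+1} + e_{n,1}$ is the cyclic shift and $\mathrm{Diag}$ denotes the space of diagonal matrices, with all $c_i \neq 0$ corresponding to the diagonal factor being nonsingular. A direct induction shows $(JD)^n = (d_1 d_2 \cdots d_n)\,I$ for $D=\mathrm{Diag}(d_1,\ldots,d_n)$, so the eigenvalues of $JD$ are the $n$-th roots of $c_1 \cdots c_n$, yielding $\{c, c\varepsilon, \ldots, c\varepsilon^{n-1}\}$ for $c^n = c_1 \cdots c_n$ whenever $\chara(K)$ is $0$ or prime to $n$. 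Thus the eigenvalue statement follows once we construct such a matrix in $\Image p$.

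Since $p$ is multilinear, substituting homogeneous inputs $x_k \in M_n^{(d_k)}$ yields an output in $M_n^{(d)}$ with $d \equiv \sum d_k \pmod{n}$, and every homogeneous $x_k$ can be written uniquely as $J^{d_k} D_k$ with $D_k$ diagonal. The first main step is to show that for some $(d_1, \ldots, d_m)$ with $\sum d_k \equiv 1 \pmod{n}$, the map
\begin{equation*}
(D_1, \ldots, D_m) \longmapsto p\bigl(J^{d_1} D_1, \ldots, J^{d_m} D_m\bigr) = J \cdot D^*(D_1, \ldots, D_m)
\end{equation*}
is not identically zero. Suppose otherwise. Decomposing arbitrary inputs into homogeneous components via multilinearity would then force every evaluation $p(x_1, \ldots, x_m)$ to have trivial degree-$1$ component, whence $\Image p \subseteq \bigoplus_{d \not\equiv 1} M_n^{(d)}$. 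By Remark~\ref{linear-2} the linear span of $\Image p$ is a Lie ideal of $M_n(K)$, so Herstein's classification together with the hypothesis that $p$ is neither a PI nor central forces this span to contain $\ssl_n(K)$; but $\ssl_n(K) \cap M_n^{(1)} = M_n^{(1)} \neq 0$, contradicting the inclusion above.

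Having fixed such $(d_k)$, the map $D^*$ is a diagonal-matrix-valued multilinear polynomial in the entries of $D_1, \ldots, D_m$. A key symmetry: conjugation by $J^s$ transforms $J^{d_k} D_k$ into $J^{d_k}(J^s D_k J^{-s})$, cyclically shifting the diagonal entries of each $D_k$; correspondingly, it cyclically shifts the diagonal entries of $D^*$. Hence the coordinate polynomials $(D^*)_{11}, (D^*)_{22}, \ldots, (D^*)_{nn}$ are obtained from one another by cyclic relabeling of the variables, so either all or none of them are identically zero; since $D^* \not\equiv 0$, none is. As $K$ is infinite, the Zariski-open set $\bigcap_{i=1}^{n} \{(D_1,\ldots,D_m) : (D^*)_{ii} \neq 0\}$ has $K$-rational points, each yielding a matrix $J D^* \in \Image p$ of the desired form with all coefficients nonzero.

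The main obstacle is the reduction in the second paragraph, connecting the assumption that $p$ is neither a PI nor central to the existence of some nonzero degree-$1$ homogeneous evaluation. This is the place where Herstein's theorem on Lie ideals (invoked via Remark~\ref{linear-2}) does the essential work, preventing $\Image p$ from avoiding the super-diagonal component $M_n^{(1)}$; the subsequent genericity argument on the entries of $D^*$ is then a routine consequence of $K$ being infinite.
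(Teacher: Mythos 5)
Your proof is correct, and its engine is the same as the paper's: your $\Z/n\Z$-grading $\deg e_{i,j}=j-i$ is precisely the index $\iota$ in the paper's Euler-graph argument, and both proofs finish with a generic specialization over the infinite field together with the observation that a matrix $c_ne_{n,1}+\sum_{i=1}^{n-1}c_ie_{i,i+1}$ with all $c_i\neq 0$ has characteristic polynomial $\lambda^n-c_1\cdots c_n$. Where you genuinely diverge is in the two supporting steps. The paper first uses Lemma~\ref{graph-2r} (together, implicitly, with the span fact of Remark~\ref{linear-2}) to produce matrix units with $p(a_1,\dots,a_m)=\alpha e_{12}$, $\alpha\neq 0$, and then substitutes $\sum_k t_{k,\ell}\chi^k(a_\ell)$ with commuting indeterminates; the mod-$n$ index count shows the resulting map $f$ takes values in the degree-one component, and each superdiagonal coordinate is a nonzero polynomial in the $t$'s because the specializations picking out $\chi^{k-1}(a_\ell)$ yield $e_{k,k+1}$ and $e_{n,1}$ themselves as values of $f$. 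You never exhibit an explicit matrix-unit evaluation: you obtain nonvanishing of some multidegree-one graded component directly from Herstein's theorem via Remark~\ref{linear-2} (if all such components vanished, $\Image p$ would miss the component $M_n^{(1)}\subseteq \ssl_n$, contradicting that the span of $\Image p$ contains $\ssl_n$), and you obtain simultaneous nonvanishing of all $n$ coordinates of $D^*$ from the conjugation-by-$J^s$ symmetry, which permutes them by cyclic relabeling of the variables. Your packaging is somewhat cleaner and bypasses the Euler-graph bookkeeping entirely, while the paper's version is more constructive, exhibiting explicit witnesses among the shifted matrix units, and its Lemma~\ref{graph-2r} is a tool reused elsewhere in the paper; since the paper's step producing $\alpha e_{12}$ also rests on the same span/Lie-ideal fact, the two arguments have identical hypotheses and generality.
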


\subsubsection[The main theorems for $n=2$]{The main theorems for $\boldsymbol{n=2}$}\label{2q7}

Our most decisive results are for $n=2$, given in \cite{BMR1}, for
which we settle Conjecture \ref{Polynomial image-int}, proving the
following results (see \cite[Section~2]{BMR1} for terminology). We call
a field $K$ {\it quadratically closed} (with respect to the
polynomial $p$) if every nonconstant polynomial in~$K[x]$ in one
variable, of degree $\le 2\deg p$, has a root in~$K$.

\begin{Theorem}[{\cite[Theorem~1]{BMR1}}]\label{imhom2-int}
Let $p(x_1,\dots,x_m)$ be a semi-homogeneous polynomial evaluated
on the algebra $M_2(K)$ of $2\times 2$ matrices over a quadratically
closed field. Then $\Image p$ is either $\{0\}$, $K$, the set of all non-nilpotent matrices having
trace zero, $\ssl_2(K)$, or a~dense subset of $M_2(K)$ $($with respect to Zariski topology$)$.
\end{Theorem}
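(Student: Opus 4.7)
My plan is to leverage the two symmetries of $\Image p$ and classify the possible orbits. By Remark~\ref{cong-2}, $\Image p$ is $\GL_2(K)$-conjugation invariant; by semi-homogeneity of weighted degree $d$, substituting $x_i\mapsto \lambda^{w_i}x_i$ shows that $\Image p$ is closed under $A\mapsto \lambda^d A$ for every $\lambda\in K^\times$. Because $K$ is quadratically closed with respect to $p$, the degree-$2$ characteristic polynomial of any matrix in $M_2(K)$ splits over $K$, so every matrix has a~Jordan form and the combined conjugation-plus-scaling orbits are transparent: $\{0\}$, the nonzero scalars, the punctured nilpotent cone, a~one-parameter family of semisimple traceless orbits, and a~two-parameter family of semisimple non-traceless orbits. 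Thus $\Image p$ must be a union of members of this short list.

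Next I would case-split on whether the polynomial function $\tr\circ p$ vanishes identically. If so, $\Image p\subseteq \ssl_2(K)$, and Cayley--Hamilton yields $p^2=-(\det\circ p)\cdot I$; in particular $p^2$ is central. Either $p$ itself is a~PI, giving $\Image p=\{0\}$, or Amitsur's theorem that the algebra of generic $2\times 2$ matrices is a~domain forces $\det\circ p\not\equiv 0$. In the latter case the quadratic closure lets one realise any prescribed pair $\pm a$ of eigenvalues as those of a~value of~$p$, and combining this with scaling and conjugation produces every semisimple traceless matrix inside $\Image p$. Consequently $\Image p$ is either exactly the set of non-nilpotent trace-zero matrices, or, if some evaluation of $p$ additionally yields a~nonzero nilpotent, all of $\ssl_2(K)$.

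If $\tr\circ p$ is not identically zero, I would show $\Image p$ is Zariski dense in $M_2(K)$. The Zariski closure of $\Image p$ is constructible (Chevalley) and $\GL_2$-invariant, hence cut out by polynomial relations in $\tr$ and $\det$. Under the scaling, the pair $(\tr\circ p,\det\circ p)$ is equivariant with weights $(d,2d)$, forcing any such vanishing relation to be weighted-homogeneous in the coordinates $(t,D)$ on the invariant-theoretic quotient. One then rules out the existence of a~nontrivial such relation by exhibiting two evaluations whose $(\tr,\det)$-profiles lie on distinct $K^\times$-orbits under this action, concluding that the closure is all of $M_2(K)$.

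I expect the hardest step to be the middle one, separating the ``non-nilpotent trace-zero matrices'' sub-case from full $\ssl_2(K)$: this hinges on whether the zero locus of $\det\circ p$ meets the non-vanishing locus of $p$, a~question controlled by the fine combinatorics of $p$ and not merely by its scaling and conjugation symmetries. It is precisely here that the calibration ``quadratically closed with respect to $p$'' is exploited, since the auxiliary equations governing the realisation of prescribed eigenvalues as values of $p$ have degree at most $2\deg p$, and their solvability over $K$ is exactly what the hypothesis supplies.
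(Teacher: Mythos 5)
The decisive gap is in your second case, $\tr\circ p\not\equiv 0$, where you assert that $\Image p$ must be Zariski dense. That assertion is already false for central polynomials: for $p$ the multilinearization of $[x,y]^2$ (or $[x,y]^2$ itself) the trace is not identically zero in characteristic $\ne 2$, yet $\Image p=K$, not a dense set. The mechanism you propose -- a vanishing relation among $(\tr p,\det p)$ must be weighted-homogeneous, and is excluded by ``exhibiting two evaluations whose $(\tr,\det)$-profiles lie on distinct $K^\times$-orbits'' -- begs the question precisely in the problematic subcase: when the invariant $\det p/\tr^2 p$ (equivalently, the ratio of eigenvalues) is \emph{constant}, no such pair of evaluations exists, and your sketch offers nothing. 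This constant-ratio case is exactly where the paper's proof of Theorem~\ref{imhom-2} does its real work: if the fixed ratio $r$ satisfies $r\ne\pm1$, the two eigenvalues are linear functions of $\tr p$, hence lie in the algebra of generic matrices with traces, which by Proposition~\ref{Am1-2} embeds in the division algebra $\widetilde{\UD}$; Cayley--Hamilton then gives $(p-\lambda_1 I)(p-\lambda_2 I)=0$ with both factors nonzero in a domain, a contradiction (the Deligne trick). The case $r=1$ is disposed of by Lemma~\ref{nilp-2} ($p$ is PI or central), and $r=-1$ returns you to the trace-zero situation. Without this step, your argument cannot exclude, a priori, an image equal to the cone of all matrices with a fixed eigenvalue ratio $r\notin\{\pm1\}$, which is neither dense nor on the theorem's list.

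Two secondary points. First, your orbit inventory under conjugation and scaling omits the non-nilpotent non-diagonalizable matrices (the set $\tilde K$ of Example~\ref{coneex1-2}), so ``$\Image p$ is a union of members of this short list'' is not quite right; this class genuinely matters in this circle of ideas, since Example~\ref{coneex2-2}(iv) exhibits a completely homogeneous polynomial whose image is exactly $M_2(K)\setminus\tilde K$. Second, a $\GL_2$-invariant closed subset need not be cut out by polynomials in $\tr$ and $\det$ (a single scalar matrix is invariant and closed but is not a union of fibers of $(\tr,\det)$); the density deduction can be repaired by arguing directly that the pair map $(\tr\circ p,\det\circ p)$ is dominant and then sweeping out conjugacy classes with distinct eigenvalues, which is in the spirit of the paper's Lemma~\ref{2two-2}, but as written the claim is unjustified. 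Your trace-zero case, by contrast, is essentially the paper's argument (Cayley--Hamilton plus Amitsur to get $\det\circ p\not\equiv 0$, then the cone Lemma~\ref{cone_conj-2}), though you should note that characteristic $2$ needs separate treatment there, as the ``semisimple traceless'' picture you rely on degenerates.
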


(We also give examples to show how $p$ can have these images.)

\begin{Theorem}[{\cite[Theorem~2]{BMR1}}]\label{main2-int} If $p$ is a multilinear polynomial evaluated on the
matrix ring~$M_2(K)$ $($where $K$ is a quadratically closed field$)$,
then $\Image p$ is either $\{0\}$, $K$, $\ssl_2$, or~$M_2(K)$.
\end{Theorem}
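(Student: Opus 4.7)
Since a multilinear polynomial is in particular semi-homogeneous (with weights all equal to~$1$), Theorem~\ref{imhom2-int} applies directly: $\Image p$ must be one of $\{0\}$, $K$, the set of non-nilpotent trace-zero matrices, $\ssl_2(K)$, or a Zariski-dense subset of $M_2(K)$. To prove Theorem~\ref{main2-int} it suffices to eliminate two of these five possibilities.

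The ``non-nilpotent trace-zero'' case is ruled out by an immediate consequence of multilinearity: setting $x_1=0$ gives $p(0,a_2,\ldots,a_m)=0$, so $0\in\Image p$. But~$0$ is itself a nilpotent trace-zero matrix, so it cannot belong to the set of \emph{non-nilpotent} trace-zero matrices, and this option is excluded.

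It remains to promote the Zariski-dense case to actual equality with $M_2(K)$. My approach combines two invariance properties of $\Image p$: conjugation invariance (Remark~\ref{cong-2}) together with the scaling invariance $K\cdot\Image p\subseteq\Image p$ coming from $p(\lambda a_1,a_2,\ldots,a_m)=\lambda\, p(a_1,a_2,\ldots,a_m)$. Hence $\Image p$ is a union of orbits of $K^*\times\PGL_2(K)$ acting on $M_2(K)$. Since $K$ is quadratically closed, every $2\times 2$ matrix has eigenvalues in $K$, and the orbits of this action are: $\{0\}$, the nonzero scalars, the nonzero nilpotents, the single orbit of non-diagonalizable matrices with nonzero double eigenvalue (scaled Jordan blocks), and a family of $3$-dimensional orbits of diagonalizable matrices parameterized by the eigenvalue ratio modulo $r\leftrightarrow 1/r$. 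Since $M_2(K)$ is $4$-dimensional, density of $\Image p$ forces the set of eigenvalue ratios whose orbit is contained in $\Image p$ to be Zariski-dense in the parameter space.

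The main technical obstacle is upgrading this density to include every orbit, particularly the lower-dimensional ones (nonzero scalars, nonzero nilpotents, and Jordan blocks). My plan is to analyze the morphism $\phi\colon M_2(K)^m\to K^2$, $\phi(a_i)=(\tr p,\det p)$, whose image is a constructible subset of $K^2$ invariant under the weighted action $(t,d)\mapsto(\lambda t,\lambda^2 d)$; density of $\Image p$ forces $\Image\phi$ to be Zariski-dense in $K^2$, and combined with the weighted invariance (and quadratic closure of $K$) this should yield $\Image\phi=K^2$, hence every characteristic polynomial is attained. Conjugation invariance then recovers every conjugacy class with distinct eigenvalues. The remaining orbits on the discriminant locus $t^2=4d$ I plan to handle by fixing $(a_2,\ldots,a_m)$ and studying the linear map $L\colon a_1\mapsto p(a_1,a_2,\ldots,a_m)$: its image is a linear subspace of $M_2(K)$ contained in $\Image p$, and the goal is to exhibit a choice of $(a_2,\ldots,a_m)$ for which $L$ is surjective, forcing $\Image p=M_2(K)$. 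The principal difficulty I anticipate is proving the existence of such a surjective $L$: formal density of $\Image p$ does not directly guarantee a single four-dimensional slice, and establishing it requires exploiting the detailed multilinear structure of $p$, most likely by examining how the trace-nonvanishing case forces the rank of $L$ to be generically maximal.
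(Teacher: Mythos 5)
Your reduction via Theorem~\ref{imhom2-int} only relocates the difficulty: the entire content of Theorem~\ref{main2-int} beyond the semi-homogeneous classification is the passage from ``Zariski dense in $M_2(K)$'' to ``equal to $M_2(K)$'', and that step is not actually proved in your proposal. The concrete obstruction is the invariant cone $\tilde K$ of non-nilpotent, non-diagonalizable matrices: a dense, conjugation- and scaling-invariant image can a priori be exactly $M_2(K)\setminus\tilde K$, and Example~\ref{coneex2-2}(iv) exhibits a completely homogeneous (non-multilinear) polynomial whose image is precisely this set, so any argument closing the gap must use multilinearity in an essential way. Your first mechanism --- that density of $\Image(\tr p,\det p)$ plus invariance under $(t,d)\mapsto(\lambda t,\lambda^2 d)$ and quadratic closure ``should yield'' $\Image\phi=K^2$ --- is unjustified (a dense constructible invariant set may still miss finitely many invariant curves $\{(\lambda t_0,\lambda^2 d_0)\}$), and even if it were true it would not help with $\tilde K$, because matrices in $\tilde K$ share their characteristic polynomial with scalar matrices, so attaining every characteristic polynomial says nothing about this class. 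Your second mechanism --- finding $(a_2,\dots,a_m)$ making the slice $L\colon a_1\mapsto p(a_1,a_2,\dots,a_m)$ surjective --- is exactly the unresolved crux, as you yourself note; no argument is given, and it is not a routine consequence of density.

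For comparison, the paper first uses the Euler-graph Lemma~\ref{graph-2r} together with Lemmas~\ref{2two-2} and~\ref{2general_one-2} (linear substitutions $\tau_ix_i+t_iy_i$ into generic matrices, plus Amitsur's theorem that $\widetilde{\UD}$ is a domain) to narrow the dense case down to $\Image p=M_2(K)$ or $M_2(K)\setminus\tilde K$ (Lemma~\ref{thm1-2}), and then rules out the latter: in characteristic $2$ by the trace manipulation of Lemma~\ref{char2-2}, and in characteristic $\ne 2$ by the long argument of Lemma~\ref{sklem}, where the hypothesis that every line $A+tB$ of values avoids $\tilde K$ forces the eigenvalues of $p$ to be polynomial invariants (via Donkin's theorem), so that $(p-\lambda_1I)(p-\lambda_2I)=0$ contradicts Proposition~\ref{Am1-2} --- the Deligne trick. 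Your proposal contains no substitute for this mechanism (and also does not address characteristic $2$ separately, where your orbit description implicitly assumes $\Char K\ne 2$), so as it stands it has a genuine gap at the decisive step. The elimination of the ``non-nilpotent trace-zero'' case via $0\in\Image p$ is also too literal a reading of Theorem~\ref{imhom2-int}: that case should be understood as $\hat K$ possibly together with $0$ (e.g., $[x,y]^3$ has image $\hat K\cup\{0\}$), so ruling it out for multilinear $p$ really requires producing a nonzero nilpotent value, which the paper gets from Lemma~\ref{graph-2r}.
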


The L'vov--Kaplansky conjecture is proved in \cite{M} for $M_2(\mathbb{R})$, together with a~partial solution settling the major part of L'vov--Kaplansky's conjecture in this case, proving the following result:

\begin{Theorem}[{\cite[Theorem 1]{M}}] \label{main-2r} If $p$ is a multilinear polynomial evaluated on the matrix ring~$M_2(K)$ $($where $K$ is an arbitrary field$)$, then $\Image p$ is either $\{0\}$, or $K$ $($the set of scalar matrices$)$, or $\ssl_2\subseteq\Image p$. If $K=\mathbb{R}$ then $\Image p$ is either $\{0\}$, or $K$, or $\ssl_2$ or $M_2$.
\end{Theorem}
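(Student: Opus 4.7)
The plan is to combine the Lie-ideal dichotomy of Remark \ref{linear-2} with Theorem \ref{main2-int} applied over the algebraic closure, and to descend to $K$ using Theorem \ref{thmB1-pcp} together with, for the real refinement, topological tools.

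By Remark \ref{linear-2}, the $K$-linear span of $\Image p$ is a Lie ideal of $M_2(K)$, hence one of $\{0\}$, $K\cdot I$, $\ssl_2(K)$, $M_2(K)$. If the span is $\{0\}$ then $p$ is a PI and $\Image p = \{0\}$; if it is $K\cdot I$ then $p$ is central but non-PI, and the multilinearity identities $p(0,a_2,\ldots,a_m)=0$ and $p(\lambda a_1,a_2,\ldots,a_m)=\lambda p(a_1,\ldots,a_m)$ force $\Image p = K$. In the remaining case the span contains $\ssl_2(K)$. Since being a PI or central for a multilinear polynomial survives extension of an infinite ground field, Theorem \ref{main2-int} applied over $\bar K$ yields $\Image_{\bar K} p \supseteq \ssl_2(\bar K)$, while Theorem \ref{thmB1-pcp} produces an evaluation of the form $c_1 e_{12} + c_2 e_{21}$ with $c_1 c_2 \neq 0$ already in $\Image_K p$.

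Conjugation-invariance (Remark \ref{cong-2}) together with multilinear scaling $p(\lambda a_1, a_2, \ldots) = \lambda p(a_1, \ldots)$ implies that $\Image p$ contains the full $\GL_2(K) \times K^*$-orbit of each of its elements. The main obstacle is to hit \emph{every} such orbit in $\ssl_2(K)$; over an arbitrary field this is a Galois-descent problem for the fibers $\Phi^{-1}(X)$ of the evaluation morphism. For $K=\mathbb{R}$ there are only three non-zero orbits --- the nilpotent orbit $N$, the split orbit $S = \{\det < 0\}$, and the anisotropic orbit $A = \{\det > 0\}$ --- and the key technical step is to show that $\det(p)\colon M_2(\mathbb{R})^m \to \mathbb{R}$ takes both signs. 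This is the delicate point because $\mathbb{C}$-surjectivity of $\det(p)$ does not preclude semidefiniteness over $\mathbb{R}$ (witness $x^2$); I would overcome it by producing a second evaluation in the style of Theorem \ref{thmB1-pcp} whose parameters $c_1', c_2'$ carry the opposite sign of product, exploiting the flexibility in the inputs left by that construction. Once both signs of $\det(p)$ are attained, continuity of $\det(p)$ on the connected space $M_2(\mathbb{R})^m$ and the intermediate value theorem furnish a nilpotent evaluation, and $\ssl_2(\mathbb{R}) \subseteq \Image p$ follows by saturating under conjugation and scaling.

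Finally, when $\Image_{\bar K} p = M_2(\bar K)$, the trace $\tr(p)$ is a nonzero multilinear scalar function of the $a_i$'s and hence surjects onto $\mathbb{R}$ by rescaling a single argument, so $\Image p$ meets every trace value. A generic-rank argument --- the complex surjectivity of the evaluation morphism $\Phi$ forces its Jacobian to attain rank $4$ on a Zariski-open (hence Euclidean-open) subset of $M_2(\mathbb{R})^m$, so $\Image p$ contains a Euclidean-open piece of $M_2(\mathbb{R})$ --- combined with the already established $\ssl_2(\mathbb{R}) \subseteq \Image p$ and saturation under $\GL_2(\mathbb{R}) \times \mathbb{R}^*$, yields $\Image p = M_2(\mathbb{R})$.
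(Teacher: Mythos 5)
There are two genuine gaps. For the arbitrary-field assertion, all you actually put into $\Image p$ over $K$ is a single trace-zero value $c_1e_{12}+c_2e_{21}$ (from Theorem \ref{thmB1-pcp}, which is moreover stated for infinite $K$) together with its saturation under conjugation and scaling. Since scaling multiplies the determinant by squares and conjugation preserves it, that saturation consists only of the trace-zero non-scalar matrices whose determinant lies in the square class of $-c_1c_2$; over a field with more than one square class (already over $\Q$, and over $\R$ it gives just one sign of the determinant) this is far from all of $\ssl_2(K)\setminus K$. You name this obstruction (``a Galois-descent problem'') but never overcome it, and it is precisely the content of the first statement. The paper closes it by a different mechanism (Lemma \ref{genfield-2r}): starting from matrix units with $p(a_1,\dots,a_m)=e_{12}$, the index swap $\chi$ yields a map $f(T_1,\dots,T_m)$, multilinear in the pairs $T_i$, whose image lies in the plane $\langle e_{12},e_{21}\rangle$ and contains both $e_{12}$ and $e_{21}$; Lemma \ref{dim2-2r} then forces $\Image f$ to be the whole plane, so every determinant value $-c_1c_2$ occurs, and every non-scalar trace-zero matrix is $K$-similar to such a value. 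The same device supplies the scalar (trace-zero) matrices in characteristic $2$, a case you omit, and it is also the missing justification for your claim that a second evaluation with the opposite sign of $c_1'c_2'$ can be produced: ``exploiting the flexibility of the construction'' is not an argument, whereas Lemma \ref{dim2-2r} is.

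For $K=\R$ the more serious failure is the last step. Knowing that $\Image p$ contains $\ssl_2(\R)$, meets every trace value, and contains a Euclidean-open subset of $M_2(\R)$ does not give $\Image p=M_2(\R)$ after saturating under $\GL_2(\R)\times\R^*$: the invariant $\det/\tr^2$ is preserved by that action, an open set realizes only an interval of its values, and the saturation can miss whole strata of positive codimension, notably the non-diagonalizable matrices with equal nonzero eigenvalues (the cone $\tilde K$ of Example \ref{coneex1-2}(iii)) and the scalar matrices. Ruling out the image $M_2(K)\setminus\tilde K$ is exactly the delicate point of the theorem (cf.\ Lemma \ref{thm1-2}), and it is where the paper's real work lies: Lemmas \ref{ineq-2r} and \ref{anyq-2r} show, by a sign correction and the intermediate value theorem along paths of evaluations with positive trace, that $\det p/\tr^2 p$ attains every real value; Lemma \ref{discr-not-zero-2r} then yields every matrix of nonzero discriminant, and Lemmas \ref{unip-2r} and \ref{scalar-2r} capture the discriminant-zero non-scalars and the scalars, using generic matrices and the fact that the algebra of generic matrices with traces is a domain (Proposition \ref{Am1-2}) to exclude nilpotent or scalar degenerations along the path. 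Your Jacobian-rank argument gives only density-type information and cannot reach these measure-zero classes. A smaller point: a zero of $\det(p)$ obtained from the intermediate value theorem need not have trace zero, hence need not be nilpotent; nilpotent values come for free instead, since $e_{12}\in\Image p$ by Lemma \ref{graph-2r}.
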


\begin{Remark} Assume that $p$ is a multilinear polynomial evaluated on $2\times 2$ matrices. According to
 Theorem \ref{main-2r}, $\Image p$ is $\{0\}$, or $K$, or $\ssl_2(K)$ or $\ssl_2(K)\subsetneqq\Image p$. In the last case it is clear that $\Image p$ must be Zariski dense in~$M_2(K)$, because otherwise $\dim(\Image p)=3$ and $\Image p$ is reducible, a~contradiction.
\end{Remark}

The situation is considerably subtler for images of non-multilinear,
completely homogeneous polynomials than for multilinear polynomials,
but nevertheless a classification of the possible images of
homogeneous polynomials evaluated on~$2\times 2$ matrices is
provided:

\begin{Theorem}[{\cite[Theorem 2]{M}}]\label{homogen-int}
 Let $p(x_1,\dots,x_m)$ be a semi-homogeneous polynomial eva\-lua\-ted on $2\times2$ matrices with real entries.
 Then $\Image p$ is either:
\begin{itemize}\itemsep=0pt \item $\{0\}$, \item the set $\R_{\geq 0}$, i.e., the matrices $\lambda I$ for $\lambda\geq 0$,
\item the set $\R$ of scalar matrices,
\item the set $\R_{\leq 0}$, i.e., the matrices $\lambda I$ for $\lambda\leq 0$,
\item the set $\ssl_{2,\geq0}(\R)$ of trace zero matrices with non-negative discriminant,
\item the set $\ssl_{2,\leq 0}(\R)$ of trace zero matrices with non-positive discriminant,
\item the set $\ssl_2(\R)$,
 \item or is Zariski dense in $M_2(\R)$.\end{itemize}
\end{Theorem}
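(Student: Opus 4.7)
The plan is to adapt the classification strategy used in \cite{BMR1} for the quadratically closed case (Theorem~\ref{imhom2-int}), keeping track of the sign obstructions that arise because $\R$ is not quadratically closed. Two basic symmetries of $\Image p$ drive the analysis. By Remark~\ref{cong-2}, $\Image p$ is $\GL_2(\R)$-conjugation invariant. By semi-homogeneity of weighted degree $d$, the substitution $x_i\mapsto\lambda^{w_i}x_i$ sends $p$ to $\lambda^d p$, so $\Image p$ is closed under multiplication by every element of $\{\lambda^d:\lambda\in\R\}$, which equals $\R$ if $d$ is odd and $\R_{\geq 0}$ if $d$ is even. Within $\ssl_2(\R)$, the $\GL_2(\R)$-orbit of a non-scalar matrix is determined by $\det$, and the sign of $\det$ (equivalently, the sign of the discriminant of the characteristic polynomial) is the only remaining conjugation- and positive-scaling-invariant.

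I would then split into cases according to the position of $\Image p$ relative to the center and to $\ssl_2(\R)$. If $p$ is a PI then $\Image p=\{0\}$. If $p$ is central and non-PI, the set of scalar values is a non-empty subset of $\R$ closed under $\lambda^d$-scaling, which forces it to be one of $\R$, $\R_{\geq 0}$, or $\R_{\leq 0}$, depending on the parity of $d$ and on the signs actually attained. If $p$ is trace-vanishing but non-central, a single non-zero value of $p$, together with conjugation and non-negative scaling, already fills out every $\ssl_2$-orbit sharing its sign of $\det$; hence the possible images are characterized by which signs of $\det$ occur, yielding $\ssl_{2,\leq 0}(\R)$, $\ssl_{2,\geq 0}(\R)$, or $\ssl_2(\R)$.

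The main obstacle is the trace-vanishing case in which $\det\circ p$ attains both positive and negative values: one must prove $\Image p=\ssl_2(\R)$, which requires exhibiting a non-zero nilpotent in $\Image p$. I would use a path argument: given $\bar a$ with $\det(p(\bar a))>0$ and $\bar b$ with $\det(p(\bar b))<0$, the function $t\mapsto\det(p((1-t)\bar a+t\bar b))$ is a real polynomial taking both signs on $[0,1]$, hence vanishes at some $t_0$; the matrix $p((1-t_0)\bar a+t_0\bar b)$ is then trace-zero with $\det=0$, i.e., nilpotent. The delicate point is ensuring this nilpotent is non-zero: the locus $\{p=0\}\subseteq M_2(\R)^m$ has codimension strictly larger than that of the single hypersurface $\{\det\circ p=0\}$, so for a generic perturbation of the endpoints the crossing $t_0$ lies off $\{p=0\}$, producing a genuine non-zero nilpotent. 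Combined with the previous step this yields $\Image p=\ssl_2(\R)$.

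In the remaining case $\Image p\not\subseteq\ssl_2(\R)$ and $\Image p\not\subseteq\R\cdot I$, the irreducible variety $V:=\overline{\Image p}\subseteq M_2(\R)$ is $\GL_2(\R)$-conjugation invariant and $\lambda^d$-scaling invariant, and contains both a non-scalar matrix and a matrix of non-zero trace. Invoking Theorem~\ref{thmB1-pcp} (after reducing from semi-homogeneous to multilinear by the standard partial polarization/restitution trick) places in $\Image p$ a matrix with two distinct real eigenvalues whose $\GL_2(\R)$-orbit is $2$-dimensional; combining this with the $1$-parameter scaling action and with the existence of values of distinct traces forces $\dim V=4$, so $V=M_2(\R)$ and $\Image p$ is Zariski dense.
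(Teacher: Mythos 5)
Your case decomposition (PI / central / trace-vanishing / neither) matches the paper's, and your treatment of the first three cases is essentially the paper's: semi-cone structure from the weights plus conjugation invariance, the similarity $A\sim -A$ for trace-zero $A$, and orbit considerations by sign of $\det$. (Two small points there: you should also rule out the possibility that \emph{all} values are nilpotent --- the nilpotent cone is not on the list --- which the paper does via Proposition~\ref{Am1-2}, since a nonzero element of $\widetilde{\UD}$ cannot be nilpotent; and your path argument for producing a nonzero nilpotent is not needed for the classification as the paper states it, though it is a reasonable supplement.)

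The genuine gap is in the last case, $\Image p\not\subseteq\R I$ and $\Image p\not\subseteq\ssl_2(\R)$, which is where the real content lies. First, the reduction ``from semi-homogeneous to multilinear by polarization/restitution'' does not transfer information in the direction you need: the image of the multilinearization of $p$ is in general \emph{not} contained in $\Image p$ (restitution only shows certain values of $p$ are values of the multilinearization), so Theorem~\ref{thmB1-pcp} tells you nothing about $\Image p$ itself; moreover a semi-homogeneous $p$ need not even be completely homogeneous, so multilinearization is not available. Second, the concluding dimension count is false as stated: a Zariski-closed, conjugation-invariant cone in $M_2(\R)$ containing non-scalar matrices of nonzero trace can be $3$-dimensional --- take the closure of the set of matrices whose ratio of eigenvalues is a fixed constant $r\neq\pm1$ (and note Example~\ref{coneex2-2}(ii) shows such sets genuinely arise as images of completely homogeneous polynomials over closed fields). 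Excluding exactly this possibility is the crux, and your proposal contains no mechanism for it. The paper does it with the function $g=\det p/\tr^2 p$: if $g$ is non-constant the image is Zariski dense, while if $g$ is constant with eigenvalue ratio $\hat c\neq-1$ the eigenvalues $\lambda_{1,2}$ become rational expressions in $\tr p$, hence elements of $\widetilde{\UD}$, and the Cayley--Hamilton factorization $(p-\lambda_1)(p-\lambda_2)=0$ in the domain $\widetilde{\UD}$ (the Deligne trick, Proposition~\ref{Am1-2}) forces $p$ to be central or a PI, collapsing this case back into the ones already handled. Without an argument of this kind (or some substitute proving that a constant eigenvalue ratio $\neq-1$ is impossible for a non-central semi-homogeneous $p$), your proof does not establish the dichotomy ``trace-vanishing or Zariski dense,'' and the classification fails.
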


\subsubsection[The main theorems for $n=3$]{The main theorems for $\boldsymbol{n=3}$}

 We have not classified the possible images of all homogeneous or
semi-homogeneous polynomials, but for $n=3$ we do describe all
possible images of trace vanishing semi-homogeneous polynomials:

\begin{Theorem}[{\cite[Theorem 3]{BMR2}}]\label{semi_tr0_3-3}
Let $p(x_1, \dots, x_m)$ be a semi-homogeneous polynomial which is
trace vanishing on $3\times 3$ matrices. Then $\Image p$ is one of
the following:
\begin{itemize}\itemsep=0pt
\item $\{0\}$, \item the set of scalar matrices $($which can occur
only if $\Char K=3)$,
 \item a dense subset of $\sl_3(K)$, or \item
the set of $3$-scalar matrices, i.e., the set of matrices having
eigenvalues
$\big\{\gamma,\gamma\varepsilon,\gamma\varepsilon^2\big\}$, where~$\varepsilon$ is a primitive cube root of~$1$.
\end{itemize}
\end{Theorem}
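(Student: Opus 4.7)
The plan is to exploit two symmetries of $\Image p$: conjugation invariance (Remark~\ref{cong-2}) and the scaling symmetry arising from semi-homogeneity. Since $p$ is trace vanishing, $\Image p \subseteq \sl_3(K)$, and substituting $\lambda^{w_i}a_i$ for $a_i$ shows that $\Image p$ is closed under $A\mapsto\lambda^{d}A$ for every $\lambda\in K^{\times}$, where $d$ is the weighted degree of $p$.

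I would first parametrise traceless $3\times 3$ conjugacy classes by characteristic polynomials: writing $\det(xI-A)=x^{3}-\sigma_{2}(A)x-\sigma_{3}(A)$ gives a map
\begin{gather*}
\chi\colon \sl_{3}(K)\to \mathbb{A}^{2},\qquad A\mapsto(\sigma_{2}(A),\sigma_{3}(A)),
\end{gather*}
under which the scaling $A\mapsto\lambda^d A$ sends $(\sigma_{2},\sigma_{3})$ to $(\lambda^{2d}\sigma_{2},\lambda^{3d}\sigma_{3})$. Hence the $K^\times$-orbits on $\mathbb{A}^{2}$ are the affine curves $\sigma_{2}^{3}=c\,\sigma_{3}^{2}$ for $c\in\mathbb{P}^{1}$. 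The locus $\{\sigma_{2}=0\}$ is precisely the $3$-scalar locus (direct check: $\gamma+\gamma\varepsilon+\gamma\varepsilon^{2}=0$ and $\gamma^{2}(\varepsilon+\varepsilon^{2}+\varepsilon^{3})=0$ for a primitive cube root $\varepsilon$; this persists in characteristic $3$, where $\varepsilon=1$ and $3$-scalar means scalar), and the origin is the nilpotent cone. The image $\chi(\Image p)\subseteq\mathbb{A}^{2}$ is therefore a $K^{\times}$-invariant constructible set; either it is Zariski dense in $\mathbb{A}^{2}$ or it is contained in a finite union of orbit curves. In the dense case, conjugation invariance of $\Image p$ together with the fact that $\chi$ has irreducible fibres of maximal dimension $6$ over the regular semisimple locus promotes density of $\chi(\Image p)$ to density of $\Image p$ itself in $\sl_3(K)$, yielding the third bullet of the theorem.

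To handle the restricted case, let $\tilde p$ be the full multilinearisation of $p$; it is multilinear and trace vanishing, and it fails to be a PI whenever $p$ does (since $K$ is infinite). If $\tilde p$ is neither a PI nor a central polynomial, Theorem~\ref{thmB1-pcp} delivers a matrix in $\Image\tilde p$ with eigenvalues $\{c,c\varepsilon,c\varepsilon^{2}\}$ for some $c\ne 0$; a polarisation/Vandermonde expansion combined with the $K^{\times}$-scaling of $\Image p$ then forces a genuine $3$-scalar matrix into $\Image p$, so $\chi(\Image p)$ meets $\{\sigma_{2}=0\}\setminus\{(0,0)\}$. By rigidity of the $K^{\times}$-orbits on $\mathbb{A}^{2}$ the restricted image must lie entirely in $\{\sigma_{2}=0\}$, and saturating under conjugation and scaling identifies $\Image p$ with the full set of $3$-scalar matrices, giving the fourth bullet. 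The degenerate possibility $\chi(\Image p)=\{(0,0)\}$ means every evaluation is nilpotent; I would rule out a strictly positive-dimensional nilpotent image by combining irreducibility of $\overline{\Image p}$ with the observation that the only conjugation- and scaling-invariant subset of the nilpotent cone in $\sl_3(K)$ realisable as the image of a non-PI polynomial is forced by an Amitsur--Levitzki/Deligne-trick argument to collide with a regular semisimple orbit, a contradiction. When instead $\tilde p$ is central, every value of $p$ is a scalar, and nonzero scalars lying in $\sl_{3}(K)$ require $3=0$ in $K$; this produces the second (scalar-matrix) bullet exactly in characteristic $3$, and the first bullet $\{0\}$ otherwise.

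The main obstacle will be the transfer from the multilinearisation $\tilde p$ back to $p$: extracting a genuine $3$-scalar matrix in $\Image p$ from the one produced by Theorem~\ref{thmB1-pcp} for $\tilde p$ requires a careful Vandermonde inversion together with a density argument for the $d$th-power map on $K^{\times}$, available because $K$ is infinite. A second delicate point is excluding intermediate nilpotent orbit closures as images, which requires the Deligne-trick machinery mentioned in the abstract. Finally, the characteristic-$3$ case needs separate bookkeeping since the eigenvalue clause of Theorem~\ref{thmB1-pcp} is unavailable, and one must work directly with the alternative normal form $c_{n}e_{n,1}+\sum_{i=1}^{n-1}c_{i}e_{i,i+1}$ supplied by the first part of that theorem.
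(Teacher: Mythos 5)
There is a genuine gap at the heart of your restricted (non-dense) case: the passage from the multilinearisation $\tilde p$ back to $p$. Theorem~\ref{thmB1-pcp} applies to multilinear polynomials, so it produces a matrix with eigenvalues $\{c,c\varepsilon,c\varepsilon^2\}$ in $\Image\tilde p$ — but $\Image\tilde p$ is only contained in the \emph{linear span} of $\Image p$, not in $\Image p$ itself. The Vandermonde/polarisation identities express values of $\tilde p$ as linear combinations of values of $p$ at modified arguments (and conversely recover multihomogeneous components of $p$ as combinations of values of $p$), so no "Vandermonde inversion" can transport the $3$-scalar matrix you obtain for $\tilde p$ into $\Image p$; you yourself flag this as the main obstacle, but no mechanism you describe closes it. The same objection applies to your characteristic-$3$ bookkeeping (using the normal form $c_ne_{n,1}+\sum c_ie_{i,i+1}$ of Theorem~\ref{thmB1-pcp} for $\tilde p$) and to the claim that centrality of $\tilde p$ forces every value of $p$ to be scalar. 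The paper avoids this entirely: it never linearises, but regards $p(Y_1,\dots,Y_m)$ for generic $Y_i$ as an element of Amitsur's generic division algebra $\widetilde{\UD}$, shows its eigenvalues are pairwise distinct (Lemmas~\ref{divAm-3} and~\ref{div2-3}, since eigenvalue multiplicities divide $3$), observes that a constant value of $H=\omega_2(p)^3/\omega_3(p)^2$ pins the eigenvalue ratio down to at most six values while the six permutations of a generic triple give six distinct ratios unless the triple is proportional to $(1,1,-2)$, $(1,-1,0)$ or $\big(1,\varepsilon,\varepsilon^2\big)$, and then kills the first two options by the Deligne trick (a factorisation of the characteristic polynomial in the domain $\widetilde{\UD}$, resp.\ a value of determinant $0$), leaving only the $3$-scalar alternative; the residual cases are finished with $\tr\big(p^i\big)$, Hamilton--Cayley and Lemma~\ref{div-3}. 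Your proposal has no substitute for this chain of arguments.

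Two further soft spots: your exclusion of a purely nilpotent image is argued by a vague "collision with a regular semisimple orbit," whereas the correct (and much shorter) argument is that $\omega_2(p)=\omega_3(p)=0$ would make $p$ a nonzero nilpotent element of the generic matrix algebra with traces, contradicting the fact that it embeds in the domain $\widetilde{\UD}$ (Proposition~\ref{Am1-2}); and your "rigidity of $K^\times$-orbits" step, even granting a single $3$-scalar value in $\Image p$, does not by itself show that \emph{all} values have $\sigma_2=0$ — in the paper this is exactly what the six-ratio/Deligne-trick analysis establishes. The dense case of your argument (density of $\chi(\Image p)$ in $\mathbb{A}^2$ promoting to density of $\Image p$ in $\sl_3$) is essentially the paper's Lemma~\ref{extr-3} and is fine.
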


All of the cases in Theorem~\ref{semi_tr0_3-3} occur, and we give
an example of a completely homogeneous $3$-scalar polynomial.
Unfortunately the question of whether there exists a $3$-scalar
multilinear polynomial remains open.

Although we do not settle the L'vov--Kaplansky conjecture completely, we describe the
possible images of multilinear polynomials.

\begin{Theorem}[{\cite[Theorem 4]{BMR2}}]\label{multi_tr=0_3}
Let $p$ be a multilinear polynomial which is trace vanishing on
$3\times 3$ matrices over a field $K$ of arbitrary characteristic.
Then $\Image p$ is one of the following:
\begin{itemize}\itemsep=0pt
\item $\{0\}$, \item the set of scalar matrices,
\item the set of $3$-scalar matrices, or
\item for each triple $\lambda_1+\lambda_2+\lambda_3=0$ there exist a matrix $M\in\Image
p$ with eigenvalues~$\lambda_1$,~$\lambda_2$ and~$\lambda_3$.
\end{itemize}
\end{Theorem}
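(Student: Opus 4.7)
The plan is to combine the Lie-theoretic structure of multilinear trace-vanishing images with the semi-homogeneous classification of Theorem~\ref{semi_tr0_3-3}, reducing to the case that $\Image p$ is Zariski dense in $\ssl_3(K)$, and then to upgrade this density to surjectivity of the characteristic-coefficient map using the $K^\times$-symmetries afforded by multilinearity.

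By Remark~\ref{linear-2}, the $K$-linear span of $\Image p$ is a Lie ideal of $M_3(K)$, which lies in $\ssl_3(K)$ since $p$ is trace vanishing. Herstein's theorem forces this span to be central---hence inside the scalar matrices, which meet $\ssl_3(K)$ only when $\chara K = 3$---or to contain $[M_3(K), M_3(K)] = \ssl_3(K)$. Multilinearity makes $\Image p$ closed under scalar multiplication via $p(\mu a_1, a_2, \dots, a_m) = \mu\, p(a_1, \dots, a_m)$, so in the first two subcases $\Image p = \{0\}$ or $\Image p = K$, giving the first two listed conclusions. Assume henceforth that the span equals $\ssl_3(K)$. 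Since a multilinear polynomial is in particular semi-homogeneous, Theorem~\ref{semi_tr0_3-3} applies and leaves the four options $\{0\}$, scalars, 3-scalar matrices, or Zariski dense in $\ssl_3(K)$; the first two are excluded by the span hypothesis, and the third is the third conclusion of the present theorem.

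It remains to show that when $\Image p$ is Zariski dense in $\ssl_3(K)$, the characteristic-coefficient map
\begin{equation*}
\chi\colon \Image p \longrightarrow \mathbb{A}^2, \qquad M \mapsto \bigl(e_2(M), e_3(M)\bigr),
\end{equation*}
is surjective (that is, every $(a,b) \in K^2$ is realized). Being the image of a polynomial map $M_3(K)^m \to M_3(K)$, the set $\Image p$ is constructible, so density in the irreducible variety $\ssl_3(K)$ forces it to contain a nonempty Zariski open $U \subseteq \ssl_3(K)$. The surjective characteristic-coefficient morphism $\ssl_3 \to \mathbb{A}^2$ sends $U$ to a dense constructible subset, which must then contain a nonempty open $V \subseteq \mathbb{A}^2$; thus $V \subseteq \chi(\Image p)$. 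Multilinearity further makes $\chi(\Image p)$ invariant under the $K^\times$-action $(a,b) \mapsto (\mu^2 a, \mu^3 b)$ induced by $M \mapsto \mu M$, and Theorem~\ref{thmB1-pcp} places in $\Image p$ a matrix with characteristic polynomial $x^3 - c^3$, so the orbit of $(0, c^3)$ lies in $\chi(\Image p)$.

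Every $K^\times$-orbit whose closure meets $V$ is then contained in $\chi(\Image p)$, and since $\mathbb{A}^2 \setminus V$ is a finite union of irreducible curves, at most finitely many orbit closures can fail to meet $V$. Handling these finitely many exceptional orbits---most crucially the $e_2$-axis, which requires matrices with a zero eigenvalue but nonzero $e_2$---is the main obstacle, as density in $\ssl_3(K)$ does not by itself select particular singular conjugacy classes. The plan for finishing is a direct construction: exploit multilinearity to substitute rank-deficient matrices into one or two arguments of $p$, forcing the evaluation to have a prescribed zero eigenvalue while keeping $e_2$ nonzero, and then sweep out the remaining orbit points via the $K^\times$-scaling action. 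This final step, refining generic density into coverage of the degenerate strata of $\ssl_3(K)$, is the delicate crux of the argument.
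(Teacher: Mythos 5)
Your reduction (Herstein/linear span, then Theorem~\ref{semi_tr0_3-3} for the semi-homogeneous classification, then trying to upgrade density in $\ssl_3(K)$ to realization of every trace-zero spectrum) is reasonable in outline, but the proof stops exactly where the theorem's real content begins. The fourth bullet asserts that \emph{every} triple with $\lambda_1+\lambda_2+\lambda_3=0$ occurs, i.e., that the map $M\mapsto(\omega_2(M),\omega_3(M))$ hits all of $K^2$. Your constructibility-plus-$K^\times$-scaling argument only yields all but finitely many weighted orbits in the $(\omega_2,\omega_3)$-plane, and you concede that the exceptional orbits (most notably $\omega_3=0$, $\omega_2\neq 0$, i.e., eigenvalues $\{\lambda,-\lambda,0\}$, together with any finite family of curves $\alpha\omega_2^3+\beta\omega_3^2=0$ missed by the open set $V$) are left to a ``plan.'' That plan does not work as stated: substituting a rank-deficient matrix into one argument of a multilinear $p$ makes each monomial singular, but a sum of singular matrices need not be singular, so this does not ``force the evaluation to have a prescribed zero eigenvalue.'' Since these degenerate strata are precisely the hard cases, the proposal has a genuine gap rather than a routine omission. (A secondary point: your use of Chevalley constructibility and of an open image in $\mathbb{A}^2$ requires $K$ algebraically closed, which should be made explicit.)

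For comparison, the paper closes exactly this gap by a different mechanism. After disposing of the case $\omega_2(p)\equiv 0$ via Hamilton--Cayley (giving scalar or $3$-scalar image), it fixes generic values $a=p(Y_1,\dots,Y_m)$ and $b=p(\tilde Y_1,Y_2,\dots,Y_m)$ (not proportional, by Lemma~\ref{dim2-2r}) and, for \emph{each} pair $(\alpha,\beta)$, studies $\varphi_{\alpha,\beta}(t)=\alpha\,\omega_2(a+tb)^3+\beta\,\omega_3(a+tb)^2$ along the line $a+tb$, which again lies in $\Image p$ by multilinearity. It then shows a root $t$ exists with $a+tb$ non-nilpotent, ruling out the nilpotent alternatives by the Deligne trick: a uniquely determined root is a conjugation-invariant rational function (Remark~\ref{patch1-int}), hence gives an element of $\widetilde{\UD}$, which cannot be nilpotent since the generic algebra with traces is a domain (Proposition~\ref{Am1-2}); the remaining subcase is excluded by applying Theorem~\ref{semi_tr0_3-3} to $q=a+t_3b$. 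This delivers every value of $\omega_2^3/\omega_3^2$ in $K\cup\{\infty\}$, and hence every trace-zero characteristic polynomial, which is the statement you still need. If you want to salvage your route, you must replace the rank-deficiency idea with an argument of this kind (or an equivalent one) that actually reaches the exceptional orbits.
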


\begin{Theorem}[{\cite[Theorem~2]{BMR2}}]\label{main3-int}
If $p$ is a multilinear polynomial evaluated on $3\times 3$
matrices over algebraically closed field~$K$,
then $\Image p$ is one of the
following:
\begin{itemize}\itemsep=0pt
\item $\{0\}$,
\item the set of scalar matrices, \item
 $\sl_3(K)$, $($perhaps lacking the diagonalizable matrices of discriminant
 $0)$,
 \item
a dense subset of $M_3(K)$, \item the set of $3$-scalar matrices, or
\item the set of sums of scalar and $3$-scalar matrices.
\end{itemize}
\end{Theorem}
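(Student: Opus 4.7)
The plan is to split by the algebraic nature of $p$, reducing in all but the most subtle case to results already established. If $p$ is a polynomial identity, $\Image p = \{0\}$. If $p$ is central but not a PI, then multilinearity implies $\Image p$ is invariant under nonzero scaling, and since $p$ takes at least one nonzero central value it equals the full set $K \cdot I$ of scalar matrices. These handle the first two bullets.

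Next, suppose $p$ is trace vanishing but not a PI or central. Then Theorem \ref{multi_tr=0_3} applies directly: $\Image p$ is either the set of scalar matrices (arising in characteristic $3$), the set of $3$-scalar matrices, or contains a matrix realizing every trace-zero eigenvalue triple. In the last sub-case, conjugation invariance (Remark \ref{cong-2}) forces $\Image p$ to contain every trace-zero matrix with pairwise distinct eigenvalues, i.e., the Zariski-open complement of the discriminant hypersurface in $\sl_3(K)$; hence $\Image p$ agrees with $\sl_3(K)$ up to possibly the diagonalizable matrices of discriminant zero, giving the third bullet.

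The substantial remaining case is $p$ not trace vanishing. Here I would work with the characteristic polynomial map $\chi\colon M_3(K)\to\mathbb{A}^3$, $A\mapsto(c_1,c_2,c_3)$ where $\det(tI-A)=t^3-c_1 t^2+c_2 t-c_3$, and study $W:=\overline{\chi(\Image p)}$. Multilinearity of $p$ forces $W$ to be invariant under the weighted $\mathbb{G}_m$-action with weights $(1,2,3)$; Chevalley's theorem together with irreducibility of $M_3(K)^m$ makes $W$ irreducible; Theorem \ref{thmB1-pcp} forces $W$ to contain the $c_3$-axis $\{c_1=c_2=0\}$ (with the characteristic-$3$ case handled by the combinatorial conclusion of that theorem); and the non-trace-vanishing assumption gives $W\not\subseteq\{c_1=0\}$. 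If $\dim W=3$, then $W=\mathbb{A}^3$, and since $\Image p$ is conjugation invariant and each generic fiber of $\chi$ is a single conjugation orbit, $\Image p$ is dense in $M_3(K)$, the fourth bullet. If $\dim W=2$, the target is to show $W=\{c_1^2=3c_2\}$, cutting out exactly the set of sums of scalar and $3$-scalar matrices (as one checks by computing the characteristic polynomial of $\lambda I+B$ with $B$ having eigenvalues $\gamma,\gamma\varepsilon,\gamma\varepsilon^2$).

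The principal obstacle is this last classification: ruling out every other irreducible weighted-homogeneous surface in $\mathbb{A}^3$ containing the $c_3$-axis, for instance the degree-two options $\{c_2=\alpha c_1^2\}$ with $\alpha\ne 1/3$, and higher weighted-degree alternatives such as $\{c_2^2=c_1 c_3\}$. A natural strategy is a decomposition argument: write $p(a)=\tfrac{1}{3}\tr(p(a))I+q(a)$ \emph{as a function}, where $q$ is trace-free; apply Theorem \ref{multi_tr=0_3} to the image of $q$ to force it to be dense in $\sl_3(K)$, to consist of $3$-scalar matrices, or to vanish; and then show that recombining with the scalar-valued function $\tfrac{1}{3}\tr\circ p$ produces exactly the scalar plus $3$-scalar surface in the two-dimensional case. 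Making this rigorous is delicate because $q$ is not an element of the free algebra $K\langle x_1,\dots,x_m\rangle$, so one has to work with $q$ only as an evaluation map and verify directly that its image set satisfies the hypotheses of Theorem \ref{multi_tr=0_3}; alternatively one rules out each candidate surface by hand, constructing explicit matrices in $\Image p$ via targeted substitutions that exploit multilinearity, conjugation invariance (Remark \ref{cong-2}), and the non-trace-vanishing hypothesis.
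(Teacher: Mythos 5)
Your reduction to cases is sensible, and the easy cases (PI, central, and the dense case when $\dim W=3$) are fine, but the heart of the theorem is exactly the step you defer: showing that the only possible $2$-dimensional (i.e., $8$-dimensional image) alternative is the surface $c_1^2=3c_2$, i.e., the scalar-plus-$3$-scalar matrices. As you note, there are many irreducible weighted-homogeneous surfaces containing the $c_3$-axis and not contained in $\{c_1=0\}$ (e.g., $c_2=\alpha c_1^2$ with $\alpha\ne 1/3$, or $c_2^2=c_1c_3$), and neither of your proposed strategies eliminates them. The decomposition $p=\tfrac13\tr(p)I+q$ is problematic not only because $q$ is a trace polynomial rather than an element of $K\langle x_1,\dots,x_m\rangle$ (which might be repaired via Proposition~\ref{Am1-2}), but more fundamentally because knowing $\Image q$ and $\Image(\tr p)$ separately cannot recover $\Image p$: a $2$-dimensional $W$ encodes precisely a correlation between the scalar part and the trace-free part, and this correlation is invisible to the two images taken individually (for instance, $\Image q$ could realize every trace-zero eigenvalue triple while $W$ still lies on $c_2^2=c_1c_3$). ``Ruling out each candidate surface by hand'' is not an argument. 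The paper closes this gap with machinery absent from your proposal: the Euler-graph/harmonic analysis of evaluations on matrix units (Theorem~\ref{dense}), which forces any diagonal matrix-unit value, in the non-dense case, to be supported on at most two of the harmonics $I$, $\diag\big\{1,\varepsilon,\varepsilon^2\big\}$, $\diag\big\{1,\varepsilon^2,\varepsilon\big\}$, combined with Theorem~\ref{equation} (a forced identity $\omega_1(p)^2=c\,\omega_2(p)$ when the image is not dense) and Remark~\ref{1-codim}; together these pin down $c=3$ and hence the scalar-plus-$3$-scalar surface.

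There is also a secondary gap in your trace-vanishing case. Theorem~\ref{multi_tr=0_3} only guarantees, for each triple $\lambda_1+\lambda_2+\lambda_3=0$, \emph{some} matrix in $\Image p$ with those eigenvalues; conjugation invariance then yields all trace-zero matrices with pairwise distinct eigenvalues, but says nothing about the non-diagonalizable matrices with repeated eigenvalues (nilpotents of order $2$ and $3$, and matrices similar to $\diag\{\lambda,\lambda,-2\lambda\}+e_{12}$), all of which must belong to the image for the third bullet as stated (only \emph{diagonalizable} discriminant-zero matrices may be missing). The paper obtains these from Lemma~\ref{V3inimage} (nilpotents, via the $\chi$-construction on matrix units) and Remark~\ref{rem_sl3}/Theorem~\ref{equation}, which your argument does not supply. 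Likewise the characteristic-$3$ case needs the separate treatment of Remark~\ref{char3-classification}, since $3$-scalar matrices and the harmonic basis degenerate there; your one-clause remark does not cover this.
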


\subsubsection[Higher $n$]{Higher $\boldsymbol{n}$}

Let us improve the estimates of the dimension of $\Image p$, first for $n\ge 5$ and then for $n=4$.

\begin{Theorem}[{\cite[Theorem 4]{BMR3}}]\label{no-mpc-pcp} Assume that the characteristic of $K$
does not divide $n$, and $K = F[\varepsilon]$, where
$\varepsilon$ is a primitive $n$-th root of~$1$.
 Let $p$ be any multilinear polynomial evaluated on $n\times n$ matrices which is not PI or central.
 If $n\geq 5$, then the image of~$p$ is at least $\big(n^2-n+3\big)$-dimensional.
\end{Theorem}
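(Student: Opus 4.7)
The plan is to exhibit a large explicit subvariety of $\Image p$ by combining Theorem~\ref{thmB1-pcp}, the conjugation invariance from Remark~\ref{cong-2}, and the multilinearity of $p$, organized via the adjoint quotient $\chi\colon M_n(K)\to K^n$ sending a matrix to the nontrivial coefficients of its characteristic polynomial.

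First, since $p$ is neither a PI nor central, Theorem~\ref{thmB1-pcp} produces $A \in \Image p$ with eigenvalues $\{c,c\varepsilon,\dots,c\varepsilon^{n-1}\}$ for some $c \in K\setminus\{0\}$. Such an $A$ is regular semisimple: its centralizer $K[A]$ in $M_n(K)$ has dimension $n$, so its $\GL_n$-conjugacy orbit $\mathcal{O}_A$ is a smooth irreducible variety of dimension $n^2-n$, contained in $\Image p$ by Remark~\ref{cong-2}. Multilinearity makes $\Image p$ a cone: $\lambda A \in \Image p$ for every $\lambda \in K$, and as $\lambda$ ranges over $K\setminus\{0\}$ the characteristic polynomials $x^n-(\lambda c)^n$ are all distinct, so $V_1 := \bigcup_{\lambda\neq 0}\mathcal{O}_{\lambda A}$ is a subvariety of $\Image p$ of dimension $n^2-n+1$. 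Because $A$ lies in the regular semisimple locus of $M_n(K)$, on which $\chi$ restricts to a fibration with fibers of dimension $n^2-n$, one has $\dim\Image p = (n^2-n) + \dim\chi(\Image p)$, so it suffices to prove $\dim\chi(\Image p) \geq 3$.

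The scaling family above yields only the $1$-dimensional line $\chi(V_1) = \{(0,\dots,0,-t) : t\in K\}$, so two further independent directions in $\chi(\Image p)$ must come from perturbing inputs. Fix $A = p(a_1,\dots,a_m)$; by multilinearity, for any $D \in M_n(K)$ and $t \in K$ one has $p(a_1+tD,a_2,\dots,a_m) = A + t\,B_D$ with $B_D := p(D,a_2,\dots,a_m) \in \Image p$, so the curve $t\mapsto\chi(A+tB_D)$ lies in $\chi(\Image p)$, and its tangent at $t=0$ is a concrete expression in the spectrum of $A$ and $B_D$ (via the derivative of $\det(xI-M)$ at $M=A$). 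Varying $D$ (and, if necessary, the other $a_i$) and invoking the non-PI and non-central hypotheses on $p$, one exhibits two such perturbations whose tangent directions to $\chi(\Image p)$ at $\chi(A)$ are linearly independent modulo the scaling direction, giving $\dim\chi(\Image p)\geq 3$ and hence $\dim\Image p\geq n^2-n+3$.

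The hard part is establishing this transversality: showing that two of the linear maps $D \mapsto$ (first-order variation of $\chi$ along $B_D$), as the base inputs also vary, genuinely produce directions beyond the scaling one, rather than collapsing onto it. The hypothesis $n\geq 5$ enters precisely here, because the argument uses enough independent elementary symmetric functions of the spectrum of $A$ to detect two independent perturbations; for $n\leq 4$ the characteristic polynomial space is too low-dimensional for this generic argument to succeed, and those cases instead require the separate, more intricate analyses recorded in the preceding theorems.
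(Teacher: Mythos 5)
Your first two steps are sound and match the paper's starting point: Theorem~\ref{thmB1-pcp} gives a value with eigenvalues $\{c,c\varepsilon,\dots,c\varepsilon^{n-1}\}$, conjugation invariance gives an $(n^2-n)$-dimensional orbit, scaling adds one more dimension, and (as in Remark~\ref{dim-f1-pcp}) the problem correctly reduces to showing that the spectral data of $\Image p$ — equivalently the image of $\chi$ — is at least $3$-dimensional. But at exactly that point your argument stops being a proof: the sentence ``varying $D$ \dots\ one exhibits two such perturbations whose tangent directions \dots\ are linearly independent modulo the scaling direction'' is an assertion of the entire content of the theorem, not a derivation. Nothing in the non-PI/non-central hypotheses by itself prevents all first-order variations of the characteristic polynomial from collapsing onto (at most) a $2$-dimensional set; indeed for $n=2,3$ power-central polynomials realize exactly such a collapse, and for $n=4$ Theorem~\ref{harmonic-4-int} shows that $\dim\Image p=n^2-n+2$ can actually occur (eigenvalues of the form $(\lambda_1,\lambda_2,-\lambda_1,-\lambda_2)$), so the transversality you need is emphatically not a generic-position fact and cannot be waved through. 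Your stated reason why $n\geq 5$ matters (``the characteristic polynomial space is too low-dimensional for $n\leq 4$'') is also not the real mechanism: for $n=4$ that space is $4$-dimensional, yet the collapse happens anyway.

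The paper closes precisely this gap with a different and quite specific device. It takes matrix units $a_i$ with $p(a_1,\dots,a_m)=\diag\{c_0,\dots,c_{n-1}\}$ nonscalar, builds the multilinear map $f$ of \eqref{mapping_sets-pcp} using the cyclic shift $\chi$ of matrix units (so all values of $f$ are diagonal), and shows $\dim\Image f\geq 2$, giving $n^2-n+2$; then, assuming $\dim\Image p\leq n^2-n+2$, it expands the diagonal values in the harmonic basis $e_k=\diag\{1,\varepsilon^k,\dots,\varepsilon^{(n-1)k}\}$ (this is where the hypothesis $K=F[\varepsilon]$ enters), concludes that every such diagonal value is supported on at most two harmonics, and then compares with the value obtained after transposing two diagonal positions (another admissible matrix-unit substitution). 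Inner-product computations $\langle q_k,e_s\rangle$ show that for $n\geq 6$ the transposed vector necessarily has at least three nonzero harmonic coefficients, and a further computation eliminates $n=5$, yielding the contradiction. So the missing transversality in your proposal is exactly what this discrete-Fourier argument on the cyclic group of diagonal positions supplies; without it, or some substitute of comparable strength, your outline proves only the bound $n^2-n+1$ (or $n^2-n+2$ if you add the shift construction), not $n^2-n+3$.
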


\begin{Theorem}[{\cite[Theorem 5]{BMR3}}]\label{harmonic-4-int}
 Let $p$ be any multilinear polynomial evaluated on $4\times 4$ matrices, which is neither PI nor central.
 Assume that $\Char K \ne 2$.
 Then $\dim \Image p \ge 14$, equality holding only if the following conditions are satisfied:

 \begin{itemize}\itemsep=0pt
\item For any matrix units $a_i$, if $p(a_1,\dots,a_m)$ is diagonal
 then it has eigenvalues $(c,c,-c,-c)$ for some $c$.
\item
Any value of $p$ has eigenvalues
 $(\lambda_1,\lambda_2,-\lambda_1,-\lambda_2)$.
\end{itemize}
\end{Theorem}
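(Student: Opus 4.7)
Since $p$ is multilinear, $\Image p$ is invariant under $GL_4(K)$-conjugation (Remark~\ref{cong-2}) and under scaling by $K^*$ (substituting $\lambda a_1$ scales $p$ by $\lambda$). By Theorem~\ref{thmB1-pcp}, $\Image p$ contains a matrix with eigenvalues $\{c,ci,-c,-ci\}$ for some $c\neq 0$, where $i$ is a primitive fourth root of unity (passing to $K[i]$ if needed). This is a regular semisimple element with $GL_4$-orbit of dimension $n^2-n=12$. Let $\pi\colon M_4(K)\to K^4/S_4$ denote the characteristic polynomial map. Over the regular semisimple locus, which is dense in $\pi(\Image p)$, each fiber of $\pi$ is a single closed $GL_4$-orbit of dimension $12$, so
\begin{equation*}
\dim\Image p \;=\; 12 + \dim \pi(\Image p).
\end{equation*}
The theorem thus reduces to showing $\dim \pi(\Image p)\geq 2$ and identifying the equality locus.

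For the lower bound, the family of spectra $(c,ci,-c,-ci)$ as $c$ varies already contributes a $1$-dimensional subcone of $\pi(\Image p)$. To produce a second independent direction I would substitute matrix units $a_k=e_{j_k,l_k}$. For such substitutions each monomial $x_{\sigma(1)}\cdots x_{\sigma(m)}$ of $p$ evaluates to $0$ or to the single matrix unit $e_{j_{\sigma(1)},l_{\sigma(m)}}$, so $p(a_1,\dots,a_m)$ is a $\Z$-linear combination of matrix units controlled by the combinatorics of $p$. By varying the substitution and tracking the counts, and using that $p$ is neither a PI nor central, one extracts a value of $p$ whose spectrum does \emph{not} lie in the rigid cone $\{(c,ci,-c,-ci)\}$; this second independent direction forces $\dim \pi(\Image p)\geq 2$, hence $\dim\Image p\geq 14$.

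For the equality case, assume $\dim\pi(\Image p)=2$. The variety $\pi(\Image p)$ is $S_4$-invariant, a cone, and contains the $1$-dimensional cone $\{(c,ci,-c,-ci)\}$. The minimal $S_4$-invariant, scaling-invariant $2$-dimensional subvariety of $K^4/S_4$ containing this cone is the pair-negation locus $V=\{(\lambda_1,\lambda_2,-\lambda_1,-\lambda_2)\}/S_4$, so $\pi(\Image p)\subseteq V$; this is the second bullet. For the first bullet, a diagonal matrix-unit evaluation $p(a_1,\dots,a_m)$ has diagonal entries lying in the $\Z$-span of the coefficients of $p$, and its spectrum must lie in $V$. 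The hypothesis $\Char K\ne 2$ together with the $2$-dimensionality (rather than degeneration to a single line) of $\pi(\Image p)$ restricts such integer-combinatorial diagonals within $V$ to the degenerate symmetric pattern $(c,c,-c,-c)$, giving the first bullet.

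The principal obstacle is the lower-bound step: showing that a non-PI, non-central multilinear $p$ always admits a matrix-unit substitution whose spectrum escapes the $1$-parameter cone $\{(c,ci,-c,-ci)\}$ furnished by Theorem~\ref{thmB1-pcp}. This is the delicate combinatorial heart of the argument; it is also the place where both the assumption $\Char K\ne 2$ and the specific rigidity forcing the multiplicity pattern $(c,c,-c,-c)$ in the equality case naturally enter, through a careful audit of how monomial contributions assemble on matrix-unit substitutions.
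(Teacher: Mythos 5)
There is a genuine gap, and it sits exactly where you flag it yourself: the lower bound. Your reduction $\dim \Image p = (n^2-n) + \dim \pi(\Image p)$ is fine, and Theorem~\ref{thmB1-pcp} does give the one-parameter cone of spectra $(c,ci,-c,-ci)$, but the entire quantitative content of the theorem (getting $14$ rather than the $13$ a power-central polynomial would give) is the claim that some value has spectrum outside that cone, and you offer only ``varying the substitution and tracking the counts.'' The paper does not argue this way at all. It starts from a non-scalar \emph{diagonal} matrix-unit evaluation $\diag\{c_0,c_1,c_2,c_3\}$ (Lemma~\ref{graph-2r} plus Remark~\ref{linear-2}), feeds it into the cyclic-shift map $f$ of \eqref{mapping_sets-pcp}, whose values are all diagonal, and shows $\Image f$ cannot be $1$-dimensional: comparing $f$ with the map built from the evaluation with two indices transposed forces the ratio identities $\tau^2=\tilde\tau=\tau$, hence $\tau\in\{0,1\}$, contradicting non-scalarity. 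Lemma~\ref{dim2-2r} then gives a $2$-plane of diagonal values, and Remark~\ref{dim-f1-pcp} converts this into $\dim\Image p\ge n^2-n+2=14$. Nothing in your sketch substitutes for this step.

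The equality case also does not go through as written. Your key assertion that the pair-negation locus $\{(\lambda_1,\lambda_2,-\lambda_1,-\lambda_2)\}$ is \emph{the} minimal scaling- and permutation-invariant $2$-dimensional cone of spectra containing the line $(c,ci,-c,-ci)$ is false: for instance the $2$-dimensional cone of unordered spectra $\{a,ai,-a,b\}$ also contains that line, so no minimality argument pins down the second bullet. Likewise the first bullet is asserted (``restricts such integer-combinatorial diagonals \dots to $(c,c,-c,-c)$'') rather than derived. The paper's route is genuinely computational: expand any diagonal matrix-unit value in the harmonic basis $e_k=\diag\big\{1,i^k,i^{2k},i^{3k}\big\}$; the bound $\dim\Image f\le 2$ (Remark~\ref{remharmonic-pcp}) forces at most two nonzero harmonic coefficients; the inner-product computations with the index-swapped vectors $q_k$ rule out $e_0$, leaving three cases $\alpha e_1+\beta e_2$, $\alpha e_1+\beta e_3$, $\alpha e_3+\beta e_2$, and in each case either the eigenvalues are $(c,c,-c,-c)$ or a third nonzero coefficient appears and $\dim\Image p\ge 15$. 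Finally, from a $\diag\{c,c,-c,-c\}$ value the shift construction shows that in the $14$-dimensional case $\Image p$ is exactly the variety of matrices with eigenvalues $(\lambda_1,\lambda_2,-\lambda_1,-\lambda_2)$, which is where the second bullet actually comes from. To repair your proposal you would need to import both the shift/harmonic machinery for the lower bound and the explicit case analysis for equality; the orbit-dimension bookkeeping alone does not generate either.
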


In \cite{BMR4} we investigated possible images of homogeneous Lie
polynomials evaluated on $2\times 2$ matrices $n=2$, and obtained
the following result:

\begin{Theorem}[{\cite[Theorem 3]{BMR4}}]\label{mainlie} For any algebraically closed field $K$ of characteristic \mbox{$\ne 2$},
 the image of any Lie polynomial $f$ $($not necessarily homogeneous$)$ evaluated on $\ssl_2(K)$
 is either~$\ssl_2(K)$, or~$\{0\}$, or the set of trace zero non-nilpotent matrices.
\end{Theorem}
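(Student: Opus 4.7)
I would argue by exploiting the $\SL_{2}(K)$-orbit structure of $\ssl_2(K)$ and reducing the inhomogeneous case to the homogeneous classification already settled in Theorem~\ref{imhom2-int}. Because every Lie word has trace zero and vanishes at $(0,\dots,0)$, we have $\Image f \subseteq \ssl_2(K)$ and $0 \in \Image f$; and by Remark~\ref{cong-2}, $\Image f$ is $\SL_{2}(K)$-conjugation invariant. Over an algebraically closed field of characteristic $\neq 2$, the conjugacy classes in $\ssl_2(K)$ are $\{0\}$, the single nonzero nilpotent orbit $\mathcal{N}^{*}$, and for each $\mu \in K^{\times}$ a semisimple orbit $\mathcal{O}_{\mu} = \{A \in \ssl_2(K) : \det A = -\mu\}$, so $\Image f$ is a union of some of these orbits, and the task is to determine which appear.

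The key device is the polynomial map $D \colon \ssl_2(K)^{m} \to K$, $D(a_{1},\dots,a_{m}) := -\det f(a_{1},\dots,a_{m})$. By Cayley--Hamilton applied to a trace-zero $2 \times 2$ matrix, $f^{2} = D \cdot I$; hence $D$ records the orbit of the value, with $D=0$ marking the nilpotent values in $\mathcal{N}^{*}\cup\{0\}$. The argument splits on whether $D \equiv 0$.

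Suppose $D \not\equiv 0$. Since $D(0,\dots,0) = 0$, $D$ is nonconstant, and over an algebraically closed field any nonconstant polynomial on affine space is surjective onto $K$: restrict to a line along which $D$ is univariate and nonconstant (one exists because some partial degree is positive and $K$ is infinite) and apply algebraic closure. Thus $\Image f$ meets every $\mathcal{O}_{\mu}$ with $\mu \neq 0$, and by conjugation invariance contains each such orbit. Together with $0 \in \Image f$, this forces $\{0\} \cup \bigcup_{\mu \neq 0} \mathcal{O}_{\mu} \subseteq \Image f$, which is precisely the set of trace-zero non-nilpotent matrices in the statement. The only remaining dichotomy is whether $\Image f$ also meets $\mathcal{N}^{*}$; by conjugation invariance this is all-or-nothing, yielding either the above set or all of $\ssl_2(K)$.

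The main obstacle is the case $D \equiv 0$, where every value of $f$ is nilpotent and one must conclude $f \equiv 0$. Here I would decompose $f = \sum_{d \ge 1} f^{(d)}$ into its homogeneous Lie components by total degree and rescale $x_{i} \mapsto \lambda x_{i}$; since each Lie word scales by $\lambda^{\deg}$, expanding yields $f(\lambda \vec x)^{2} = \sum_{n}\lambda^{n}\bigl(\sum_{d+d'=n} f^{(d)}(\vec x)\, f^{(d')}(\vec x)\bigr) \equiv 0$, and as $K$ is infinite each $\lambda$-coefficient vanishes. Taking $n = 2 d_{\max}$ gives $(f^{(d_{\max})})^{2} \equiv 0$, so $f^{(d_{\max})}$ takes values in the nilpotent cone. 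Because a Lie polynomial is unchanged when each input is shifted by a scalar matrix, its image on $M_{2}(K)$ agrees with its image on $\ssl_2(K)$, so Theorem~\ref{imhom2-int} applies to the semi-homogeneous $f^{(d_{\max})}$; among the images listed there, only $\{0\}$ lies inside the nilpotent cone, so $f^{(d_{\max})} \equiv 0$ on $\ssl_2$. Descending induction on $d_{\max}$ (peeling off the top-degree component and recursing) then produces $f \equiv 0$. The delicate ingredients are this degree-separation step and the invocation of Theorem~\ref{imhom2-int}; a more self-contained alternative would rewrite ``$f$ is always nilpotent'' via Engel's theorem as the Lie identity $[f,[f,[f,g]]] \equiv 0$ in a fresh variable $g$ and exploit the explicit T-ideal of Lie identities of $\ssl_2(K)$ in characteristic $\neq 2$.
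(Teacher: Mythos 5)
Your proof is correct and follows essentially the same route as the paper's: both rest on the surjectivity of the nonconstant polynomial $\det f$ over an algebraically closed field (obtained there via the scaling $x_i\mapsto cx_i$ and the top non-PI homogeneous component), the conjugacy of trace-zero matrices with equal nonzero determinant in characteristic $\ne 2$, and the homogeneous decomposition of a Lie polynomial. Your explicit handling of the degenerate case $\det f\equiv 0$ (extracting the top homogeneous component and invoking Theorem~\ref{imhom2-int} to rule out a nonzero always-nilpotent component) merely spells out what the paper leaves implicit when it asserts that the top non-PI component admits a specialization with nonzero determinant.
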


We also provide examples, showing the existence of completely
homogeneous Lie polynomials with exactly these image sets. In
\cite{BMR4} adjoint maps are used to construct an important example
of the completely homogeneous Lie polynomial which image is the set
of all trace vanishing matrices except for the nilpotents.

\subsubsection{Polynomials over quaternion algebras}

One can generalize the L'vov--Kaplansky conjecture for an arbitrary finite dimensional simple algebra.

\begin{Definition}
By the quaternion algebra $\HH$ we mean the four-dimensional
algebra $\langle 1,i,j,k\rangle_\R$ such that
\begin{gather*}i^2=j^2=k^2=-1,\qquad ij=-ji=k,\qquad jk=-kj=i,\qquad ki=-ik=j.\end{gather*} In this algebra, $\R= \R 1$ are called {\it scalars}, and $V=\R i + \R j + \R k$ are called {\it pure quaternions};
 $\{1,i,j,k\}$ are called {\it basic quaternions}. We also will use the standard
quaternion functions: the {\it norm} $\left\vert\left\vert
a+bi+cj+dk\right\vert\right\vert=\sqrt{a^2+b^2+c^2+d^2}$, the {\it
real part} $\real (a+bi+cj+dk)=a$, and the {\it pure quaternion}
part $\ve (a+bi+cj+dk)=bi+cj+dk$.
\end{Definition}

Any quaternion can be uniquely written as a sum of a scalar and a
pure quaternion, hence the functions of real and pure quaternion parts are well
defined. The norm function is multiplicative.

In \cite{A} Almutairi proved that if $p$ is a non-central
multilinear polynomial then its image contains all pure quaternions.
In Section~\ref{quat} we provide a complete classification of~$\Image p$,
settling the L'vov--Kaplansky conjecture for the quaternion algebra

\begin{Theorem}[{\cite[Theorem 1]{M2}}]\label{main-q}
 If $p$ is a multilinear polynomial evaluated on
the quaternion algebra~$\HH(\R)$, then $\Image p$ is either
$\{0\}$, $\R $ $($the space of scalar quaternions$)$, or $V $ $($the
space of pure quaternions$)$, or $\HH(\R)$.
\end{Theorem}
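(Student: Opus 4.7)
The plan is to classify $\Image p$ by reducing to Theorem~\ref{main2-int} via the identification $\HH \otimes_\R \C \cong M_2(\C)$, and to use Almutairi's result (cited above) to control the pure-quaternion part. First I would handle the trivial cases: if $p$ is a polynomial identity of $\HH$, then $\Image p = \{0\}$; if $p$ is central on $\HH$ but not a PI, then multilinearity (rescaling a single input by a real number) forces $\Image p = \R$. So I may assume $p$ is non-central on $\HH$, in which case Almutairi's theorem gives $V \subseteq \Image p$, and it remains to show $\Image p \in \{V, \HH\}$.

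Next, I would complexify: the polynomial $p$ extends to a multilinear polynomial $p_\C$ on $M_2(\C) = \HH \otimes_\R \C$, and because $\HH$ spans $M_2(\C)$ as a $\C$-vector space it is Zariski dense there. Consequently, the properties ``PI'', ``central'', and ``trace-vanishing'' transfer between $\HH$ and $M_2(\C)$ without change, so Theorem~\ref{main2-int} forces $\Image p_\C \in \{\{0\}, \C, \ssl_2(\C), M_2(\C)\}$. Using $\HH \cap \C = \R$ and $\HH \cap \ssl_2(\C) = V$, the first three of these options combine with the preliminary step to give $\Image p = \{0\}$, $\R$, and $V$ respectively. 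The only case still to settle is $\Image p_\C = M_2(\C)$, in which $p$ is non-trace-vanishing on $\HH$ and I need $\Image p = \HH$.

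For this remaining case I would argue as follows. The image $\Image p$ is invariant under the group $G = \R^\ast \times SO(3)$, acting on $\HH$ by real scaling together with conjugation by unit quaternions (so that $SO(3)$ rotates the pure part and fixes the real part). The $G$-orbits in $\HH$ are $\{0\}$, $\R^\ast$, $V \setminus \{0\}$, and the three-dimensional slope cones $C_s = \{q \in \HH : |\real q|/\|\ve q\| = s\}$ for $s \in (0,\infty)$, so $\Image p$ is a union of such orbits. Almutairi supplies the orbit $V \setminus \{0\}$, and the non-trace-vanishing hypothesis supplies at least one further orbit. Moreover, since $p_\C$ is dominant on $M_2(\C)$, the differential of $p$ achieves full real rank $4$ somewhere on $\HH^m$ (the dense full-rank locus of $p_\C$ meets the Zariski-dense subset $\HH^m$), so by the real inverse function theorem $\Image p$ contains a Euclidean open set, and its $G$-saturation is an open $G$-invariant subset of $\HH$.

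The main obstacle is the last step---ruling out that $\Image p$ could still be an open, $G$-invariant, Zariski-dense, semi-algebraic proper subset of $\HH$ missing some slope orbits $C_s$. Here one uses that the Zariski closure of $\Image p$ is irreducible (being the closure of the image of the irreducible variety $\HH^m$), contains the three-dimensional $V$, and is strictly larger than $V$; the only possibility is therefore that this closure is all of $\HH$. A Deligne-style continuity argument (as emphasised in the abstract of this paper), combined with the fact that the slope map degenerates $C_s$ to $V$ as $s \to 0$ and to $\R$ as $s \to \infty$, then forces the realised slope set to exhaust all of $[0,\infty]$, yielding $\Image p = \HH$ and completing the proof.
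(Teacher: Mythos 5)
Your reduction steps are fine as far as they go (the trivial cases, Almutairi giving $V\subseteq\Image p$, and the complexification $\HH\otimes_\R\C\cong M_2(\C)$ with Zariski density of $\HH^m$ transferring the PI/central/trace-vanishing dichotomies so that Theorem~\ref{main2-int} constrains $\Image p_\C$), and this is genuinely different scaffolding from the paper, which works entirely inside $\HH$ via substitutions of basic quaternions. But the decisive case --- $p$ takes both pure values and a value with nonzero real part, and you must show $\Image p=\HH$ --- is exactly where your argument stops being an argument. Knowing that the Zariski closure of $\Image p$ is irreducible and strictly contains $V$ only re-proves Zariski density, and an open $G$-invariant, Zariski dense, semi-algebraic set can perfectly well miss infinitely many of your slope cones $C_s$; nothing you wrote pins down the set of realised slopes. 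The phrase ``Deligne-style continuity argument'' does not fill this: the Deligne trick in this paper is the non-decomposability of characteristic polynomials of non-central elements of $\widetilde{\UD}$, used to derive contradictions, and it has no bearing on whether the real-valued invariant $|{\real\,p}|/\|\ve\,p\|$ attains every value in $[0,\infty]$. The degeneration of the orbits $C_s$ to $V$ and $\R$ as $s\to 0,\infty$ is a statement about the orbit space, not about the image.

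What is missing is precisely the one ingredient the paper's proof is built on: multilinearity used slotwise. Starting from basic-quaternion tuples giving a nonzero scalar value and a non-real value, one replaces entries one at a time to find a single tuple $(r_1,\dots,r_m)$ and an index $i$ such that $p(r_1,\dots,r_m)=r\in\R\setminus\{0\}$ while changing only the $i$-th entry to $r_i^*$ produces a non-real value $a+v$; subtracting the suitable multiple of $r_i$ from $r_i^*$ (using linearity in slot $i$) yields a tuple whose value is the pure quaternion $v\neq 0$ with the same remaining entries. Then $p(r_1,\dots,xr_i+y\tilde r_i,\dots,r_m)=xr+yv$ for all real $x,y$, so the image contains a real $2$-plane spanned by a nonzero scalar and a nonzero pure quaternion --- hence every slope --- and conjugation together with Lemma~\ref{cone-q} gives all of $\HH$. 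Alternatively you could run an honest intermediate-value argument on $\det p/\tr^2 p$ in the spirit of Lemmas~\ref{ineq-2r} and~\ref{anyq-2r}, but that too has to be carried out (including control of points where $p$ vanishes or is purely real along the interpolating path), not merely invoked. As written, your proposal establishes density in the last case but not surjectivity, which is the content of the theorem.
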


Also, the matrix representation of the quaternions ring is used in
order to provide a classification of possible images of
semi-homogeneous polynomials evaluated on the quaternion algebra.

\begin{Theorem}[{\cite[Theorem 2]{M2}}]\label{homogen-q}
If $p$ is a semi-homogeneous polynomial
evaluated on the quaternion algebra $\HH(\R)$, then $\Image p$
is either $\{0\}$, or $\R$, or $\R_{\geq 0}$, or $\R_{\leq 0}$, or
$V$, or some Zariski dense subset of $\HH$.
\end{Theorem}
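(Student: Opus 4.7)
The plan is to reduce to the matrix case via the standard embedding $\HH \hookrightarrow M_2(\C)$, which extends to the isomorphism $\HH \otimes_\R \C \cong M_2(\C)$, and then to invoke Theorem~\ref{imhom2-int} over the quadratically closed field $\C$. Under this representation the scalar quaternions $\R$ correspond exactly to the real scalar matrices and the pure quaternions $V$ correspond to the quaternionic matrices of trace zero. Since $\HH \cong \R^4$ sits as a Zariski-dense real form inside $M_2(\C) \cong \C^4$, every polynomial identity in the $4m$ real coordinates of a tuple $(q_1,\dots,q_m)\in \HH^m$ extends to the same identity in complex coordinates; in particular the properties ``$p$ is a PI'', ``$p$ is central'', and ``$p$ is trace-vanishing'' transfer in both directions between $\HH$ and $M_2(\C)$. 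Theorem~\ref{imhom2-int} then provides five mutually exclusive possibilities for $\Image_{M_2(\C)} p$.

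Assume $p$ has positive weighted degree $d$, so that $p(0,\dots,0)=0$. The ``small'' cases drop out quickly. If $p$ is a PI then $\Image_\HH p = \{0\}$. If $p$ is central then $\Image_\HH p \subseteq \R$; by semi-homogeneity it is closed under $c \mapsto \lambda^d c$ for all $\lambda \in \R$, so it is a union of rays through $0$ in $\R$, yielding $\R$ when $d$ is odd, and one of $\R_{\geq 0}$, $\R_{\leq 0}$, or $\R$ when $d$ is even (according as $p$ takes only non-negative, only non-positive, or both signs). If $p$ is trace-vanishing but non-central, then $\Image_\HH p \subseteq V$; by Remark~\ref{cong-2} the image is conjugation-invariant, and conjugation by unit quaternions acts as $\operatorname{SO}(3)$ on $V$, transitively on each sphere, while scaling by $|\lambda|^d$ realises all non-negative radii, so any nonzero value of $p$ forces $\Image_\HH p = V$.

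The decisive case is when $\Image_{M_2(\C)} p$ is Zariski dense in $M_2(\C)$, and I must transfer this density to $\HH$. Suppose for contradiction that some nonzero real polynomial $f$ in the four coordinates of a quaternion vanishes on $\Image_\HH p$. Expanding the quaternionic entries in real coordinates, $f \circ p$ is a polynomial function on $\R^{4m}$ that vanishes identically; by the Zariski density of $\R^{4m}$ inside $\C^{4m} \cong M_2(\C)^m$, the complexification $f_\C \circ p_\C$ vanishes identically on $M_2(\C)^m$. Thus $f_\C$ vanishes on the Zariski-dense set $\Image_{M_2(\C)} p$, forcing $f_\C \equiv 0$ and hence $f \equiv 0$, a contradiction; therefore $\Image_\HH p$ is Zariski dense in $\HH$. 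The principal obstacle is precisely this density-transfer step: it hinges on identifying real-polynomial functions on $\HH$ with restrictions of $\C$-polynomial functions on $M_2(\C)$, after which the conclusion is a clean application of the Zariski density of $\R^n$ inside $\C^n$.
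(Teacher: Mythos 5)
Your proposal is correct and follows essentially the same route as the paper's proof: reduce to $M_2(\C)$ via the representation $\HH\otimes_\R\C\cong M_2(\C)$ with $\HH$ Zariski dense in $M_2(\C)$, apply Theorem~\ref{imhom2-int} over the quadratically closed field $\C$, and treat the PI, central, trace-vanishing and dense cases using the semi-cone structure from semi-homogeneity and the conjugation-invariance of $V$. Your explicit complexification argument transferring Zariski density from $M_2(\C)$ back to $\HH$ spells out a step the paper only asserts, and your ${\rm SO}(3)$-transitivity argument is exactly the content of the paper's Lemma~\ref{cone-q}.
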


\subsection{Some basic tools}\label{def1-7}

We recall the following elementary graph-theoretic lemma.

\begin{Lemma} \label{graph-2r} Let $p$ be a multilinear polynomial.
If the $a_i$ are matrix units, then $p(a_1,\dots,a_m)$ is
either~$0$, or~$c\cdot e_{ij}$ for some $i\neq j$, or a diagonal
matrix.
\end{Lemma}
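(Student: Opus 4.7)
The plan is to combine the multilinear expansion of $p$ with an Eulerian-trail argument on the directed multigraph encoded by the row/column indices of the $a_i$.

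First I would expand $p = \sum_{\sigma \in S_m} c_\sigma\, x_{\sigma(1)} \cdots x_{\sigma(m)}$ and write each matrix unit as $a_i = e_{r_i, s_i}$. Using $e_{r,s} e_{r',s'} = \delta_{s, r'}\, e_{r, s'}$, the product $a_{\sigma(1)} \cdots a_{\sigma(m)}$ is either $0$ or the single matrix unit $e_{r_{\sigma(1)}, s_{\sigma(m)}}$. Hence $p(a_1, \dots, a_m)$ is automatically a $K$-linear combination of matrix units, and the task reduces to showing that the matrix units that do appear all share a common row/column index (in the off-diagonal case) or all lie on the diagonal.

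Next, I would encode the combinatorics. Define a directed multigraph $G$ on the vertex set $\{1, \dots, n\}$ whose edge set consists of the arrows $r_i \to s_i$, one for each $i$. A permutation $\sigma$ yields a nonzero monomial exactly when the reordered edge sequence $r_{\sigma(1)} \to s_{\sigma(1)},\ r_{\sigma(2)} \to s_{\sigma(2)}, \dots, r_{\sigma(m)} \to s_{\sigma(m)}$ is an Eulerian trail in $G$, and in that case the monomial equals $e_{a,b}$ where $a$ is the starting vertex and $b$ the ending vertex of the trail. The main step is then to invoke Euler's theorem for directed multigraphs: if $G$ (restricted to non-isolated vertices) is disconnected, no Eulerian trail exists, so every monomial vanishes and $p(a_1, \dots, a_m) = 0$. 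Otherwise one of the following holds:
\begin{enumerate}\itemsep=0pt\eroman
\item Every vertex of $G$ has equal in-degree and out-degree. Then every Eulerian trail is a closed circuit, so each surviving monomial equals $e_{v,v}$ for some vertex $v$, and $p(a_1, \dots, a_m)$ is diagonal.
\item Exactly one vertex $a$ has out-degree exceeding in-degree by $1$ and exactly one vertex $b$ has in-degree exceeding out-degree by $1$, with all other vertices balanced. Then every Eulerian trail starts at the same $a$ and ends at the same $b$ with $a \ne b$, so every surviving monomial equals $e_{a,b}$ and $p(a_1, \dots, a_m) = c \cdot e_{a,b}$.
\end{enumerate}

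The only real obstacle is the appeal to Euler's theorem: one must confirm that the start/end vertices of an Eulerian trail in a directed multigraph are uniquely determined (except in the circuit case, where the trail can begin at any vertex but ends where it began, giving only diagonal contributions). This is classical, and once invoked the three cases of the lemma correspond precisely to the disconnected, circuit, and trail possibilities above.
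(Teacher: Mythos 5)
Your proof is correct and follows essentially the same route as the paper: the paper also encodes the matrix units as directed edges and observes that nonzero monomials correspond to Eulerian paths or cycles, concluding that the value is $0$, a single off-diagonal unit up to scalar, or diagonal. Your version is in fact slightly more precise about the directed in-degree/out-degree balance conditions (the paper speaks loosely of even degrees), but the argument is the same.
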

The proof of this lemma is presented in Section~\ref{Euler-ch}.

\begin{Lemma}\label{scalar-2}
If $\Image p$ consists only of diagonal matrices, then $\Image p$
is either~$\{0\}$ or the set $K$ of scalar matrices.
\end{Lemma}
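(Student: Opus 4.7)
The plan is to exploit the conjugation-invariance of $\Image p$ from Remark~\ref{cong-2}. A single invertible transvection will force every value of $p$ to be a scalar matrix; multilinearity (the implicit running hypothesis here, as in Lemma~\ref{graph-2r}) will then upgrade this to all of~$K$ whenever $\Image p\neq\{0\}$.

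Concretely, I first pick any value $d=p(a_1,\dots,a_m)\in\Image p$, which by hypothesis is diagonal, $d=\sum_k \lambda_k e_{kk}$. For each pair $i\neq j$ the matrix $s=I+e_{ij}$ is invertible, with inverse $I-e_{ij}$, so Remark~\ref{cong-2} gives $sds^{-1}\in\Image p$. Using $de_{ij}=\lambda_i e_{ij}$, $e_{ij}d=\lambda_j e_{ij}$ and $e_{ij}de_{ij}=0$, a direct expansion yields
\[ sds^{-1} \;=\; d+(\lambda_j-\lambda_i)\,e_{ij}. \]
Diagonality of this conjugate forces $\lambda_i=\lambda_j$; letting $i$, $j$ range over all pairs shows every $d\in\Image p$ is scalar, i.e., $\Image p\subseteq K$.

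If moreover $\Image p\neq\{0\}$, choose $a_1,\dots,a_m$ with $p(a_1,\dots,a_m)=cI$ for some nonzero $c\in K$. Multilinearity gives $p(\alpha a_1,a_2,\dots,a_m)=\alpha cI$ for every $\alpha\in K$, so every scalar matrix lies in $\Image p$, and equality $\Image p=K$ follows.

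The only mildly delicate point is this last promotion from ``values are scalar'' to ``image equals all scalars'': without some homogeneity the lemma would fail (consider $p(x)=x^2+x$ on $M_1(\mathbb{R})$). Multilinearity is exactly what is needed; the conjugation argument itself is characteristic-free and imposes no restriction on~$n$.
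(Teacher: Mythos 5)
Your proof is correct and is essentially the paper's own argument: both rest on the conjugation-invariance of $\Image p$ (Remark~\ref{cong-2}) together with the observation that conjugating a nonscalar diagonal matrix by the transvection $I+e_{ij}$ produces a non-diagonal element of $\Image p$, forcing all diagonal values to be scalar. Your explicit final step, using multilinearity to pass from ``every value is scalar'' to ``$\Image p=K$'', is left implicit in the paper but is exactly the right justification.
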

\begin{proof}
Suppose that some nonscalar diagonal matrix
$A=\diag\{\lambda_1,\dots,\lambda_n\}$ is in the image. Therefore
$\lambda_i\neq\lambda_j$ for some $i$ and $j$. The matrix
$A'=A+e_{ij}$ (here $e_{ij}$ is the matrix unit) is conjugate to $A$
so by Remark~\ref{cong-2} also belongs to $\Image p$. However $A'$
is not diagonal, a contradiction.
\end{proof}

The proofs of our theorems use some algebraic-geometric tools in
conjunction with these ideas from graph theory. The final part of
the proofs of Theorems~\ref{main2-int} and~\ref{main3-int} uses the
First fundamental theorem of invariant theory (that in case $\Char
K=0$, invariant functions evaluated on matrices are polynomials
involving traces), proved by Helling~\cite{Hel}, Procesi~\cite{P},
and Razmyslov \cite{Ra3}. The formulation in positive
characteristic, due to
 Donkin~\cite{D}, is somewhat more intricate.
The group $\GL_n(K)$ acts on $m$-tuples of $n\times n$-matrices by
simultaneous conjugation.
\begin{theorem*}[Donkin~\cite{D}]
For any $m, n\in \N$, the algebra of polynomial invariants
\[ K[M_n(K)^m]^{\GL_n(K)}\] under $\GL_n(K)$ is generated by the
trace functions
\begin{gather}\label{Donk-2} T_{i,j}(x_1,x_2,\dots,x_m) =
\operatorname{Trace}\bigg(x_{i_1}x_{i_2}\cdots
x_{i_r},\bigwedge\nolimits^jK^n\bigg),\end{gather} where
$i=(i_1,\dots,i_r)$, all $i_l\leq m$, $r\in\N$, $j>0$, and
$x_{i_1}x_{i_2}\cdots x_{i_r}$ acts as a linear transformation on
the exterior algebra $\bigwedge^jK^n$.
\end{theorem*}

\begin{Remark}For $n=2$, we have a polynomial function in expressions of the form
\[ \operatorname{Trace}\bigg(A,\bigwedge^2K^2\bigg)\] and $\tr A$ where $A$ is
monomial. ($\tr A$ denotes the trace.)
 Note that
 \[ \operatorname{Trace}\bigg(A,\bigwedge^2K^2\bigg)=\det A.\]
\end{Remark}

We also need the first fundamental theorem of invariant theory (see \cite[Theorem~1.3]{P}).
\begin{Proposition}\label{procesi-2r}
Any polynomial invariant of $n\times n$ matrices $A_1,\dots,A_m$ is
a polynomial in the invariants $\tr(A_{i_1}A_{i_2}\cdots A_{i_k})$,
taken over all possible $($noncommutative$)$ products of the $A_i$.
\end{Proposition}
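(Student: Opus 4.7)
The plan is to reduce to the multilinear case by polarization and then apply Schur--Weyl duality to the tensor power $(K^n)^{\otimes m}$, translating via the trace pairing on $M_n(K)$ to express the invariants explicitly as products of traces of noncommutative monomials.

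First, assume $\Char K = 0$ (or at least that $K$ is infinite); a polynomial invariant $f$ of multidegree $(d_1,\dots,d_m)$ can be recovered from its full polarization $\widetilde{f}(A_1^{(1)},\dots,A_1^{(d_1)};\dots;A_m^{(1)},\dots,A_m^{(d_m)})$, a multilinear invariant of $d_1+\cdots+d_m$ matrices under simultaneous conjugation, by setting all $A_i^{(j)}=A_i$. So it suffices to prove the assertion for multilinear invariants. Let $N=d_1+\cdots+d_m$ and consider a multilinear $\GL_n(K)$-invariant $f\colon M_n(K)^N \to K$. By the trace form $(X,Y)\mapsto \tr(XY)$, which is nondegenerate and $\GL_n$-invariant, multilinear functionals $M_n(K)^N \to K$ correspond bijectively to elements $\Phi \in M_n(K)^{\otimes N}$ via $f(A_1,\dots,A_N)=\tr(\Phi\cdot (A_1\otimes\cdots\otimes A_N))$, where the product is taken in $\End((K^n)^{\otimes N})$. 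Invariance of $f$ under conjugation translates to $\Phi$ commuting with the diagonal action of $\GL_n(K)$ on $(K^n)^{\otimes N}$.

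Second, apply Schur--Weyl duality: the subalgebra of $\End((K^n)^{\otimes N})$ commuting with the diagonal $\GL_n(K)$-action is the image of the group algebra $K[S_N]$, acting by permutation of the tensor factors. Hence $\Phi$ is a linear combination of permutation operators $P_\sigma$ for $\sigma\in S_N$. A direct computation, decomposing $\sigma$ into cycles $(i_1^{(c)}\,i_2^{(c)}\cdots i_{k_c}^{(c)})$, gives
\begin{gather*}
\tr\bigl(P_\sigma\cdot(A_1\otimes\cdots\otimes A_N)\bigr)=\prod_c \tr\bigl(A_{i_1^{(c)}}A_{i_2^{(c)}}\cdots A_{i_{k_c}^{(c)}}\bigr),
\end{gather*}
so $f$ is a $K$-linear combination of products of traces of noncommutative monomials in $A_1,\dots,A_N$. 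Substituting back the polarization variables and specializing expresses the original $f$ as a polynomial in traces of monomials in $A_1,\dots,A_m$.

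The hard part of the proof is the Schur--Weyl duality step, which is the substantive input; once that is in hand the trace-pairing calculation is routine. In positive characteristic the polarization step can fail, and one must instead invoke Donkin's theorem cited just above, which additionally requires the trace invariants on exterior powers $\bigwedge^j K^n$ as in~\eqref{Donk-2}; in characteristic zero, however, these reduce via Newton's identities to the ordinary traces of monomials, so the simpler formulation in the proposition suffices.
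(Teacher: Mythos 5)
Your proposal is correct, but note that the paper gives no proof of Proposition~\ref{procesi-2r} at all: it is quoted as the first fundamental theorem with a pointer to \cite[Theorem~1.3]{P}, and your argument --- reduction to multihomogeneous components, polarization/restitution in characteristic zero, the trace-pairing identification of a multilinear invariant with an element of the commutant of the diagonal $\GL_n$-action, Schur--Weyl duality, and the cycle formula $\tr\bigl(P_\sigma\cdot(A_1\otimes\cdots\otimes A_N)\bigr)=\prod_c\tr\bigl(A_{i_1^{(c)}}\cdots A_{i_{k_c}^{(c)}}\bigr)$ --- is precisely the classical argument of that cited source. Your closing caveat that in positive characteristic polarization fails and one must instead use Donkin's theorem with the exterior-power trace invariants agrees with the paper's own remark preceding the proposition.
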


(The second fundamental theorem, dealing with relations between
invariants, was proved by Procesi \cite{P} and Razmyslov \cite{Ra3}
in the case $\Char K=0$ and by Zubkov \cite{Zu} in the case $\Char
K>0$. We recommend reader to read the book \cite{P3}.)

\subsubsection{Generic matrices}

 Another major tool is Amitsur's theorem \cite[Theorem~3.2.6, p.~176]{Row}.
{\it Generic matrices} over $K$ are $n\times n$ matrices whose
entries are commuting indeterminates over $K$. By Proposition
\ref{Am1-2}) the algebra of generic matrices is a domain UD, whose
ring of fractions is a division algebra of degree $n$,
\begin{gather*}\widetilde{\UD} \subseteq M_n\big( F\big(\xi _{i,j}^{(k)}\big)\colon 1\le i,j
\le n,\ k\ge 1\big).\end{gather*} \ In fact, we need a slight modification of
this theorem, which is well known. We can define the reduced
characteristic coefficients of elements of $\widetilde{\UD}$, which
by \cite[Remark~24.67]{Row2} lie in $F_1$.

\begin{Proposition}\label{Am1-2} The algebra of generic matrices with traces
is a domain which can be embedded in the division algebra
$\widetilde{\UD}$ of central fractions of Amitsur's algebra of
generic matrices. Likewise, all of the functions in Donkin's theorem
can be embedded in $\widetilde{\UD}$.
\end{Proposition}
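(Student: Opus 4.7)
The plan is to bootstrap from Amitsur's classical result on the plain algebra of generic matrices. Let $X_k = \big(\xi_{ij}^{(k)}\big)$, $k \ge 1$, be the generic $n\times n$ matrices over $F$, and let $\UD$ be the $F$-subalgebra of $M_n(F(\xi_{ij}^{(k)}))$ they generate. By the version of Amitsur's theorem cited in \cite{Row}, $\UD$ is a prime PI-domain whose central localization $\widetilde{\UD}$ is a division algebra of degree $n$ over its center $F_1$, and there is a natural embedding
\begin{gather*}
\widetilde{\UD} \;\subseteq\; M_n\big(F\big(\xi_{ij}^{(k)}\big)\colon 1\le i,j\le n,\ k\ge 1\big).
\end{gather*}
Since $\widetilde{\UD}$ is a division algebra, every subring of it is automatically a domain, so the domain claim for the trace ring will be free once the embedding is established.

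Next I would invoke the existence of the \emph{reduced characteristic polynomial} on the central simple algebra $\widetilde{\UD}$: for any $a\in\widetilde{\UD}$ there is a monic polynomial $\chi_a(\lambda)\in F_1[\lambda]$ of degree $n$ whose coefficients are the reduced characteristic coefficients of $a$, and in particular the reduced trace $\tr a$ and reduced norm lie in $F_1$. As recorded in \cite[Remark~24.67]{Row2}, these coefficients live in the central subring $F_1 \subseteq \widetilde{\UD}$. Consequently, for every noncommutative monomial $m(X_1,\dots,X_s)$ in the generic matrices, the element $\tr\,m(X_1,\dots,X_s)$ is an element of $F_1$, and therefore of $\widetilde{\UD}$. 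This already places the algebra generated by the $X_k$ together with all such traces inside $\widetilde{\UD}$, proving the first assertion.

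For the stronger statement about Donkin's functions, I would argue that each $T_{i,j}$ appearing in \eqref{Donk-2} is likewise one of the reduced characteristic coefficients (up to sign) of the monomial $x_{i_1}\cdots x_{i_r}$, namely the trace of the induced action on $\bigwedge^{j}K^n$, which by elementary multilinear algebra is (up to sign) the $j$-th elementary symmetric function of the eigenvalues of $x_{i_1}\cdots x_{i_r}$. In characteristic zero this follows from the identities expressing elementary symmetric functions in terms of Newton's power sums $\tr(A^k)$, and in positive characteristic one uses the characteristic-free description of Donkin which by construction lands in $F_1$. In either case, each such $T_{i,j}$ is an element of $F_1 \subseteq \widetilde{\UD}$, and the subalgebra they generate together with the generic matrices themselves embeds in $\widetilde{\UD}$, as claimed.

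The step I expect to be the real obstacle is the last one: in positive characteristic, showing that Donkin's generators are precisely among the reduced characteristic coefficients (and hence central in $\widetilde{\UD}$) is more delicate than the characteristic-zero setting, where Newton's identities make the reduction to traces of products immediate. One has to rely on the characteristic-free description of the invariants $T_{i,j}$ and the fact, guaranteed by \cite[Remark~24.67]{Row2}, that the coefficients of the reduced characteristic polynomial of any element of $\widetilde{\UD}$ are already in $F_1$. Modulo this input, the proof of the proposition reduces to the formal manipulations above.
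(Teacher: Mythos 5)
Your argument is correct, but it takes a somewhat different route from the paper's. You offload the key point onto the structure theory of central simple algebras: since $\widetilde{\UD}$ is a division algebra of degree $n$ over its center $F_1$ and the field $F\big(\xi_{ij}^{(k)}\big)$ splits it via the given matrix embedding, the reduced characteristic coefficients of any element --- in particular the matrix traces of monomials, and more generally the functions $\operatorname{Trace}\big(A,\bigwedge^j K^n\big)$ --- lie in $F_1\subseteq\widetilde{\UD}$ (the fact recorded in \cite[Remark~24.67]{Row2}, which the paper quotes just before the proposition), and the domain claim is then free because a subring of a division ring is a domain. The paper instead proves this centrality constructively: using \cite[Theorem~1.4.12]{Row} and \cite[Theorem~J, p.~27]{BR} it invokes the $n^2$-alternating identity \eqref{trace2pol0-2} to exhibit $\tr(T)$, and more generally every characteristic coefficient $\omega_k(T)$, explicitly as a ratio of two evaluations of a central polynomial, hence visibly an element of the central localization $\widetilde{\UD}$; Donkin's functions \eqref{Donk-2} are then treated exactly as you treat them, by reducing them to characteristic coefficients of monomials. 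Your route buys brevity; the paper's buys an explicit device (a trace rewritten as a fraction of multilinear central polynomials) that is reused later, e.g., in Example~\ref{coneex2-2}(ii). One correction: the step you single out as the real obstacle is not delicate. The identity $\det(I+tA)=\sum_j t^j\operatorname{Trace}\big(A,\bigwedge^j K^n\big)$ identifies each $T_{i,j}$ with a characteristic coefficient over any field, so neither Newton's identities nor any characteristic-zero hypothesis is needed there; the only point in your write-up that deserves an explicit sentence is that the reduced characteristic polynomial of an element of $\widetilde{\UD}$ coincides with the ordinary characteristic polynomial of its matrix realization, because $F\big(\xi_{ij}^{(k)}\big)$ is a splitting field.
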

\begin{proof}
 Any trace function can be expressed as the ratio of two
central polynomials, in view of \cite[Theorem 1.4.12]{Row}; also see
\cite[Theorem~J, p.~27]{BR} which says for any characteristic
coefficient~$\omega_k $ of the characteristic polynomial
\begin{gather*}\lambda^t + \sum_{k=1}^t (-1)^k \omega _k \lambda ^{t-k}\end{gather*} that
\begin{gather}\label{trace2pol0-2}
\omega_k f(a_1, \dots, a_t, r_1, \dots, r_m) = \sum _{k=1}^t
f\big(T^{k_1}a_1, \dots, T^{k_t} a_t, r_1, \dots, r_m\big) ,
\end{gather}
summed over all vectors $(k_1, \dots, k_t)$ where each $k_i \in \{
0, 1 \}$ and $\sum k_i = k$, where $f$ is any $t$-alternating
polynomial (and $t = n^2$). In particular, taking $k =1$, so that
$\omega_k = \tr(T)$, we have
\begin{gather*}
\tr(T)f(a_1, \dots, a_t, r_1, \dots, r_m) = \sum _{j=1}^t f(a_1,
\dots, a_{j-1}, Ta_j, a_{j+1} , \dots, a_t, r_1, \dots, r_m) ,
\end{gather*}
so the trace
\[ \tr(T) = \frac{ \sum\limits_{j=1}^t f(a_1, \dots, a_{j-1},
Ta_j, a_{j+1} , \dots, a_t, r_1, \dots, r_m) }{f(a_1, \dots, a_t,
r_1, \dots, r_m)}
\]
 belongs to $\widetilde{\UD}$. In general, the function
\eqref{Donk-2} of Donkin's theorem is a matrix invariant and thus~can be written in terms of characteristic coefficients of matrices,
so we can apply equation~\eqref{trace2pol0-2}.
\end{proof}

\begin{Lemma}\label{nilp-2} If $\Char K$ does not divide $n$, then any non-identity $p(x_1, \dots, x_m)$ of $M_n(K)$ must either be a central polynomial or take on a value which is a matrix whose eigenvalues are not all
the same.\end{Lemma}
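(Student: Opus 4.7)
My plan is to argue by contradiction, using the theory of generic matrices together with the division algebra $\widetilde{\UD}$ from Proposition~\ref{Am1-2}. Suppose that $p$ is not a PI of $M_n(K)$ but that every evaluation $p(a_1,\dots,a_m)$ has all eigenvalues equal; the goal is to deduce that $p$ must then be a central polynomial.

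Because $\Char K\nmid n$, the common eigenvalue of $p(a_1,\dots,a_m)$ is forced to be $\mu(a_1,\dots,a_m) := n^{-1}\tr\bigl(p(a_1,\dots,a_m)\bigr)$, and Cayley--Hamilton then yields the pointwise relation
\begin{gather*}
\bigl(p(a_1,\dots,a_m) - n^{-1}\tr(p(a_1,\dots,a_m))\,I\bigr)^n = 0
\end{gather*}
for all $(a_1,\dots,a_m)\in M_n(K)^m$. Next I evaluate on generic matrices $Y_1,\dots,Y_m$ with commuting indeterminate entries $\xi^{(i)}_{jk}$, setting $\tilde p := p(Y_1,\dots,Y_m)$ and $\tilde\mu := n^{-1}\tr(\tilde p)$. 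Since $K$ is infinite, the pointwise identity above forces $(\tilde p - \tilde\mu\,I)^n = 0$ as a matrix equation over $K[\xi^{(i)}_{jk}]$. Proposition~\ref{Am1-2} now lets me view this equation inside the division algebra $\widetilde{\UD}$, with $\tilde\mu$ landing in its center. A division algebra contains no nonzero nilpotents (every nonzero element is invertible and hence cannot have a vanishing power), so $\tilde p = \tilde\mu\,I$ in $\widetilde{\UD}$, and therefore also as a matrix over $K[\xi^{(i)}_{jk}]$. Specializing the $\xi^{(i)}_{jk}$ to arbitrary scalars in $K$ then shows that every value of $p$ on $M_n(K)$ is a scalar matrix, so $p$ is a central polynomial, contradicting the working assumption.

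The main obstacle is the middle step: the relation we must propagate into $\widetilde{\UD}$ is not a polynomial identity of $M_n(K)$ in the classical sense but a ``polynomial identity with trace,'' because of the explicit $\tr(p(\cdot))$ appearing in it. Making the transfer precise requires working inside the trace ring of generic matrices and invoking Proposition~\ref{Am1-2} to ensure that $\tilde p$ and $\tilde\mu$ live in a common central simple envelope where the division-algebra argument can be applied. Once this is in place, the final step is automatic, and the hypothesis $\Char K\nmid n$ enters only in the first paragraph, to recover the common eigenvalue from the trace.
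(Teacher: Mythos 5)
Your argument is correct and is essentially the paper's own proof: both pass to generic matrices, observe that $p-\tfrac1n\tr(p)$ becomes a nilpotent element of the trace ring of generic matrices, and invoke Proposition~\ref{Am1-2} (the embedding into the division algebra $\widetilde{\UD}$) to conclude it is zero, hence $p$ is central. You merely spell out the intermediate steps (Cayley--Hamilton, the transfer of the pointwise relation to the polynomial ring over an infinite field) that the paper leaves implicit.
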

\begin{proof} Otherwise $p(x_1, \dots, x_m)- \frac 1n \tr(p(x_1, \dots,
x_m))$
is a nilpotent element in the algebra of generic matrices with
traces, so by Proposition~\ref{Am1-2} is 0, implying $p$ is central.
\end{proof}

For $n>2$, we have an easy consequence of the theory of division
algebras.

\begin{Lemma}\label{div-3} Suppose for some polynomial $p$ and some number $q< n$,
 that $p^q$ takes on only scalar values in $M_n(K)$, over an infinite field $K$, for $n$ prime. Then $p$
 takes on only scalar values in $M_n(K)$.
\end{Lemma}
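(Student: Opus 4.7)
The plan is to pass from actual matrix evaluations to the algebra of generic matrices, and exploit that its ring of central fractions is a division algebra of prime degree $n$, where subfield extensions can only have degree $1$ or $n$.

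First, I would consider $p$ evaluated on generic $n\times n$ matrices, producing an element $P\in\widetilde{\UD}$ by Proposition~\ref{Am1-2}. The hypothesis that $p^q$ takes only scalar values on $M_n(K)$ for the infinite field $K$ translates, via a standard specialization/Zariski density argument (combined with the Amitsur embedding), into the statement that $P^q$ lies in the center $F_1$ of $\widetilde{\UD}$. Conversely, in order to conclude that $p$ takes only scalar values on $M_n(K)$, it suffices to show that the generic value $P$ itself lies in $F_1$, since then $p - \frac{1}{n}\tr(p)$ (which makes sense in the algebra of generic matrices with traces) vanishes in $\widetilde{\UD}$ and hence vanishes on all specializations.

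Next I would carry out the division-algebra step. Since $\widetilde{\UD}$ is a division algebra of degree $n$ over its center $F_1$, the subfield $F_1(P)$ has degree dividing $n$ over $F_1$. With $n$ prime, there are only two possibilities: $[F_1(P):F_1]=1$, meaning $P\in F_1$, or $[F_1(P):F_1]=n$. On the other hand, $P$ satisfies the polynomial $X^q - P^q\in F_1[X]$, whose degree is $q<n$. Hence the minimal polynomial of $P$ over $F_1$ has degree at most $q<n$, ruling out the case $[F_1(P):F_1]=n$. We are forced into $P\in F_1$, i.e., $P$ is central.

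Finally, transferring back to $M_n(K)$: since $P$ is central in $\widetilde{\UD}$, the noncentral generic matrix element $p - \frac{1}{n}\tr(p)$ is zero in $\widetilde{\UD}$, so its specialization to every tuple in $M_n(K)^m$ vanishes, meaning each value of $p$ is a scalar matrix. The main obstacle, in my view, is the passage between generic and specialized evaluations; one must confirm that ``$p^q$ takes scalar values on $M_n(K)$'' implies ``$P^q\in F_1$'' in the generic setting. This uses that $K$ is infinite (so Zariski-density of the specializations of the generic matrices forces the polynomial identity $p^q - \frac{1}{n}\tr(p^q)\cdot 1 \equiv 0$ to hold generically) together with the characteristic-coefficient formalism of Proposition~\ref{Am1-2}. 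Once that bridge is in place, the prime-degree argument in $\widetilde{\UD}$ is immediate.
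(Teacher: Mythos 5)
Your proposal is correct and follows essentially the same route as the paper: pass to Amitsur's generic division algebra $\widetilde{\UD}$ of degree $n$, note that $P=p(Y_1,\dots,Y_m)$ generates a subfield whose degree over the center must be $1$ or $n$ since $n$ is prime, and rule out $n$ because $P$ satisfies the degree-$q$ equation $X^q-P^q$ with $q<n$. You additionally spell out the specialization/Zariski-density bridge that the paper leaves implicit (and in the transfer steps you could avoid the factor $\tfrac1n\tr$, which presumes $\Char K\nmid n$, by simply observing that $P^q$ being a scalar matrix over $K[\xi]$ and $P\in F_1$ having polynomial entries already give the two directions), but the core argument is the paper's.
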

\begin{proof} We can view $p$ as an element of the generic
division algebra $\widetilde{\UD}$, and we adjoin a $q$-root of 1 to
$K$ if necessary. Then $p$ generates a subfield of dimension 1 or
$n$ of $\widetilde{\UD}$. The latter is impossible, so the dimension
is 1; i.e., $p$ is already central.
\end{proof}

We also require one basic fact from linear algebra:
\begin{Lemma}\label{dim2-2r}
Let $V_i$ $($for $1\leq i\leq m)$ and $V$ be linear spaces over
arbitrary field $K$. Let $f(T_1,\dots,T_m)\colon \prod\limits_{i=1}^m
V_i\rightarrow V$ be a multilinear mapping $($i.e., linear with respect
to each $T_i)$. Assume there exist two points in $\Image f$ which
are not proportional. Then $\Image f$ contains a $2$-dimensional
plane. In particular, if $V$ is $2$-dimensional, then $\Image f=V$.
\end{Lemma}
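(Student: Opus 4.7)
The plan is to induct on $m$. The base case $m=1$ is immediate: a linear map whose image contains two non-proportional vectors has image of dimension at least $2$, hence is itself a $2$-plane or larger subspace.

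For the inductive step $m\ge 2$, write $v_k=f(T^{(k)})$ for the given non-proportional points, with $T^{(k)}=(T_1^{(k)},\dots,T_m^{(k)})$, and set $S^{(k)}:=(T_1^{(k)},\dots,T_{m-1}^{(k)})$. For each pair $(c,d)\in K^2$ I would introduce the $(m-1)$-linear map
\begin{gather*}
g_{c,d}(T_1,\dots,T_{m-1}) := f\!\left(T_1,\dots,T_{m-1},\,cT_m^{(1)}+dT_m^{(2)}\right),
\end{gather*}
whose image lies inside $\Image f$. If some $\Image g_{c,d}$ contains two non-proportional vectors, the inductive hypothesis yields a $2$-plane inside $\Image g_{c,d}\subseteq\Image f$ and we are done.

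Otherwise every $\Image g_{c,d}$ lies in a line. Because $g_{1,0}(S^{(1)})=v_1$ and $g_{0,1}(S^{(2)})=v_2$, we conclude $\Image g_{1,0}\subseteq Kv_1$ and $\Image g_{0,1}\subseteq Kv_2$, so there are multilinear scalar-valued functions $\alpha,\beta$ on $V_1\times\cdots\times V_{m-1}$ with $g_{c,d}(T)=c\alpha(T)v_1+d\beta(T)v_2$. Requiring that $\Image g_{c,d}$ lie on a line for every $cd\ne 0$ forces $\alpha(T_a)\beta(T_b)=\alpha(T_b)\beta(T_a)$ by vanishing of the relevant $2\times 2$ determinant, so the vectors $(\alpha(T),\beta(T))\in K^2$ are pairwise proportional and take the common form $\sigma(T)(\mu,\nu)$ for fixed $\mu,\nu\in K$ and multilinear $\sigma$. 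Normalizing so that $\alpha(S^{(1)})=1=\beta(S^{(2)})$ forces $\mu,\nu\ne 0$, and the formula collapses to
\begin{gather*}
g_{c,d}(T) = \sigma(T)\bigl(c\mu v_1 + d\nu v_2\bigr).
\end{gather*}

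Since $\sigma$ is a nonzero multilinear functional it is surjective onto $K$; picking $T$ with $\sigma(T)=1$ and then $c=x/\mu$, $d=y/\nu$ realizes $xv_1+yv_2$ as a value of $f$ for arbitrary $x,y\in K$, so $Kv_1+Kv_2\subseteq\Image f$ provides the desired $2$-plane. The ``in particular'' clause then follows because a $2$-dimensional subspace of a $2$-dimensional $V$ must be all of $V$. The main obstacle will be the bookkeeping in this restrictive case: verifying carefully that the ``every slice is a line'' hypothesis genuinely pins the pairs $(\alpha(T),\beta(T))$ to one common line in $K^2$, and that the resulting multilinear $\sigma$ is surjective, so that every point of the target $2$-plane is actually attained.
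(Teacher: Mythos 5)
Your proposal is correct, and it checks out in detail (the ``every slice is a line'' hypothesis, applied with the single choice $(c,d)=(1,1)$ and paired against the base point $S^{(1)}$, really does pin all $(\alpha(T),\beta(T))$ to one line through the nonzero vector $(1,\beta(S^{(1)}))$, and the normalization $\alpha(S^{(1)})=\beta(S^{(2)})=1$ forces $\mu,\nu\neq0$, so the flagged bookkeeping goes through over any field). But it is a genuinely different route from the paper's. You induct on the number of variables: either some slice $g_{c,d}$ already has two non-proportional values, and induction applies, or every slice is collinear, in which case $g_{c,d}=c\alpha v_1+d\beta v_2$ together with the vanishing $2\times2$ determinants yields the factorization $g_{c,d}(T)=\sigma(T)(c\mu v_1+d\nu v_2)$ with $\sigma$ multilinear and nonzero, hence surjective onto $K$, so the full plane $Kv_1+Kv_2$ is attained. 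The paper instead takes two tuples with non-proportional values minimizing the ``Hamming distance'' $\D(\mu,\nu)=\#\{i\colon T_i\neq T_i'\}$: if the minimum $k$ equals $1$, linearity in that one slot gives the plane at once; if $k\geq2$, it evaluates $f(aT_1+bT_1',T_2+T_2',\dots,T_k+T_k',T_{k+1},\dots,T_m)$ and notes that every cross term in the expansion is at distance $<k$ from both tuples, hence proportional to both $v_1$ and $v_2$, hence zero, so the value is exactly $av_1+bv_2$. The paper's minimality trick is shorter and avoids your factorization and normalization steps; your induction is equally elementary, shows that in the degenerate case the specific plane $\langle v_1,v_2\rangle$ spanned by the two given values lies in $\Image f$, and gives a more explicit structural description of $f$ on the chosen slice.
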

\begin{proof}
 Let us denote for $\mu=(T_1\dots,T_m)$ and $\nu=(T_1',\dots,T_m')\in \prod\limits_{i=1}^m V_i$
 \begin{gather*}\D(\mu,\nu)=\#\{i\colon T_i\neq T_i'\}.\end{gather*}
 Consider $k=\min\big\{d\colon$ there exists $\mu,\nu\in\prod\limits_{i=1}^m V_i$ such that $f(\mu)$ is not proportional to
 $f(\nu)$ and $\D(\mu,\nu)=d\big\}$.
 We know $k\leq m$ by assumptions of lemma. Also $k\geq 1$ since any element of $V$ is proportional to itself.
 Assume $k=1$. In this case there exist $i$ and $T_1,\dots,T_m,T_i'$ such that $f(T_1,\dots,T_m)$ is not proportional to $f(T_1,\dots,T_{i-1},T_i',T_{i+1},\dots,T_m)$.
 Therefore \begin{gather*}\langle f(T_1,\dots,T_m),f(T_1,\dots,T_{i-1},T_i',T_{i+1},\dots,T_m)\rangle\subseteq\Image p\end{gather*} is $2$-dimensional.
 Hence we can assume $k\geq 2$.
 We can enumerate variables and consider $\mu=(T_1,\dots,T_m)$ and $\nu=(T_1',\dots,T_k',T_{k+1},\dots,T_m)$, $v_1=f(\mu)$ is not proportional to $v_2=f(\nu)$.
 Take any $a,b\in K$.
 Consider $v_{a,b}=f(aT_1+bT_1',T_2+T_2',\dots,T_k+T_k', T_{k+1},\dots,T_m)$. Let us open the brackets.
 We have
 \begin{gather*}v_{a,b}=av_1+bv_2+\sum_{\varnothing\subsetneqq S\subsetneqq\{1,\dots,k\}} c_S f(\theta_S),\end{gather*}
 where $c_S$ equals $a$ if $1\in S$ and $b$ otherwise,
 and $\theta_S=\big(\tilde T_1,\dots,\tilde T_k,T_{k+1},\dots,T_m\big)$ for $\tilde T_i=T_i$ if $i\in S$ or $T_i'$ otherwise.
 Note that any $\theta_S$ in the sum satisfies $\D(\theta_S,\mu)< k$ and $\D(\theta_S,\nu)<k$ therefore
 $f(\theta_S)$ must be proportional to both $v_1$ and $v_2$ and thus $f(\theta_S)=0$.
 Therefore $v_{a,b}=av_1+bv_2$ and hence $\Image f$ contains a $2$-dimensional plane.
\end{proof}

\begin{Lemma}\label{div2-3} The multiplicity of any eigenvalue of an element $a$ of
$\widetilde{\UD}$ must divide~$n$. In particular, when $n$ is odd,
$a$ cannot have an eigenvalue of multiplicity~$2$.
\end{Lemma}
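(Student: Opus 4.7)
The plan is to read the statement through the structure theory of central simple algebras applied to $\widetilde{\UD}$, which by Proposition~\ref{Am1-2} is a division algebra of degree $n$ over its center $F_1$.

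First, given $a\in\widetilde{\UD}$, I would look at the commutative subring $F_1[a]$. Being a finite-dimensional commutative subring of a division algebra, it has no zero divisors, hence is itself a field; put $d:=[F_1[a]:F_1]$. The standard fact that any subfield of a central division algebra of degree $n$ has dimension dividing $n$ over the center then gives $d\mid n$, and shows that the minimal polynomial $\mu_a(x)\in F_1[x]$ of $a$ is irreducible of degree $d$.

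Next I would pass to a splitting field $F'$ of $\widetilde{\UD}$, so that under the embedding $\widetilde{\UD}\hookrightarrow\widetilde{\UD}\otimes_{F_1}F'\cong M_n(F')$ the element $a$ becomes a genuine $n\times n$ matrix whose eigenvalues (with multiplicities) are by definition those spoken of in the lemma. The characteristic polynomial of this matrix is precisely the reduced characteristic polynomial $P_a(x)\in F_1[x]$ of degree $n$ (which exists by Proposition~\ref{Am1-2}). The key algebraic fact to invoke is that, for an element of a central simple algebra with irreducible minimal polynomial of degree $d$,
\begin{gather*}
P_a(x)=\mu_a(x)^{n/d}.
\end{gather*}
Since $\mu_a$ is irreducible over $F_1$ and separable (automatic in characteristic $0$; in positive characteristic the generic center $F_1$ is large enough to ensure this for generic elements, cf.\ Proposition~\ref{Am1-2}), it splits over $F'$ into $d$ distinct linear factors. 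Thus $P_a$ is a product of $d$ distinct linear factors each raised to the power $n/d$, so every eigenvalue of $a$ has multiplicity exactly $n/d$, which divides $n$. For the second assertion, if $n$ is odd then no multiplicity $n/d$ can equal $2$, since that would force $d=n/2\notin\Z$.

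The main obstacle, and the only nonroutine point, is the appeal to $P_a=\mu_a^{n/d}$ together with separability; this is standard in characteristic $0$, and in characteristic $p$ one has to be mildly careful because the center of the generic division algebra is a transcendental extension of $K$ which need not be perfect. However, since $\mu_a$ is irreducible of degree dividing $n$ over $F_1$ and $F_1[a]$ is a field, the needed separability follows from the basic theory of maximal subfields of $\widetilde{\UD}$, so no further work is required beyond citing the relevant results from the PI and division-algebra references already in use.
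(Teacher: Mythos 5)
Your argument is correct and in substance coincides with the paper's: the paper's proof is just a citation of Rowen's remark that the eigenvalues of an element of a division algebra all occur with the same multiplicity, and that common multiplicity is exactly the $n/d$ coming from the relation $P_a=\mu_a^{n/d}$ that you derive via the subfield $F_1[a]$ and the double centralizer theorem. The one caveat is your separability claim in characteristic $p$: an individual element of $\widetilde{\UD}$ need not generate a separable subfield (K\"othe's theorem provides a separable \emph{maximal} subfield, not separability of every $F_1[a]$), but this does not affect the lemma, since if $\mu_a$ has inseparability degree $p^e$ then $p^e\mid d$ and the common eigenvalue multiplicity becomes $p^e\,n/d$, which still divides $n$.
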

\begin{proof} Recall \cite[Remark~4.106]{Row1} that for any element~$a$ in a division
algebra, represented as a~matrix, the eigenvalues of~$a$ occur with
the same multiplicity, which thus must divide~$n$.
\end{proof}

\begin{Lemma}\label{divAm-int} Assume that an element $a$ of $\widetilde{\UD}$ has
a unique eigenvalue~$\a$ $($of multiplicity~$n)$.

If $\Char(K)=0$, then $a$ is scalar.

If $\Char(K)=k\neq 0$, then $a$ is $k^l$-scalar for some $l$.
\end{Lemma}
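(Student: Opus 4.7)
The plan is to exploit two facts: (i) the characteristic polynomial of $a$, viewed after embedding $\widetilde{\UD}\hookrightarrow M_n(F_1(\xi^{(k)}_{i,j}))$, has coefficients in the center $F_1$ of $\widetilde{\UD}$ (this is how reduced characteristic coefficients are defined, as in Proposition~\ref{Am1-2}); and (ii) any nilpotent element of $\widetilde{\UD}$ is zero, since $\widetilde{\UD}$ is a division algebra. Because $\alpha$ is the only eigenvalue of $a$ and has multiplicity $n$, the characteristic polynomial is $(\lambda-\alpha)^n$.

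For $\Char(K)=0$ (or more generally whenever $\Char(K)\nmid n$), the coefficient of $\lambda^{n-1}$ in $(\lambda-\alpha)^n$ equals $-n\alpha$, which lies in $F_1$; since $n$ is invertible, we conclude $\alpha\in F_1$. Then $a-\alpha\in\widetilde{\UD}$, and by Cayley--Hamilton $(a-\alpha)^n=0$, so $a-\alpha$ is a nilpotent element of a division algebra, forcing $a=\alpha$.

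For $\Char(K)=k\neq 0$, write $n=k^l m$ with $\gcd(m,k)=1$. Using the Frobenius identity in characteristic $k$,
\begin{gather*}
(\lambda-\alpha)^n=\bigl((\lambda-\alpha)^{k^l}\bigr)^m=\bigl(\lambda^{k^l}-\alpha^{k^l}\bigr)^m.
\end{gather*}
Expanding, the coefficient of $\lambda^{k^l(m-1)}$ equals $-m\alpha^{k^l}$; since $m$ is invertible in $K$ and this coefficient lies in $F_1$, we obtain $\alpha^{k^l}\in F_1$. Then $a^{k^l}-\alpha^{k^l}\in\widetilde{\UD}$ satisfies $(a^{k^l}-\alpha^{k^l})^m=0$ and therefore vanishes, so $a^{k^l}$ is the scalar $\alpha^{k^l}$; in other words, $a$ is $k^l$-scalar.

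The only delicate point is justifying that $\alpha$ really appears (to the appropriate power) as a coefficient of the reduced characteristic polynomial, which comes down to verifying that the expression $-m\alpha^{k^l}$ is nonzero so that $\alpha^{k^l}$ can be extracted; this is automatic since $\gcd(m,k)=1$. Everything else reduces to the principle that nilpotents in $\widetilde{\UD}$ vanish, which is what makes Amitsur's generic-matrix machinery so effective here.
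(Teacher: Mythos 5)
Your proof is correct and follows essentially the same route as the paper: show that $\alpha$ (in characteristic $0$), respectively $\alpha^{k^l}$ (in characteristic $k$, with $n=k^l m$, $\gcd(m,k)=1$), lies in the center, and then observe that $a-\alpha I$, respectively $a^{k^l}-\alpha^{k^l}I$, is a nilpotent element of the division algebra $\widetilde{\UD}$ and hence zero. The only difference is that the paper asserts the centrality of $\alpha$, resp.\ $\alpha^{k^l}$, without comment, while you justify it via the reduced characteristic coefficients of $(\lambda-\alpha)^n=\big(\lambda^{k^l}-\alpha^{k^l}\big)^m$ together with the invertibility of $n$, resp.\ $m$ (note the relevant point there is invertibility of $m$, not nonvanishing of $-m\alpha^{k^l}$).
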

\begin{proof} If $\Char(K)=0$, then $\a$ is an element of $\widetilde{\UD}$ and $a -\a I$ is nilpotent, and thus
$0$.

If $\Char(K)=k$ then $\a^{k^l}$ is an element of $\widetilde{\UD}$,
therefore $a^{k^l} -\a^{k^l} I$ is nilpotent, and thus $0$. Thus $a$
is $k^l$-scalar. This is impossible unless $k$ divides $n$.
\end{proof}

Over any field $K$, applying the structure theory of division rings to
Amitsur's theorem, it is not difficult to get an example of a~completely homogeneous polynomial $f$, noncentral on~$M_3(K)$, whose
values all have third powers central; clearly its image does not
comprise a~subspace of~$M_3(K)$. Furthermore, in the
(non-multilinear) completely homogeneous case, the set of values
could be dense without including all matrices. (Analogously,
although the finite basis problem for multilinear identities is not
yet settled in nonzero characteristic, there are counterexamples
for completely homogeneous polynomials, cf.~\cite{Belov01}.)

\subsubsection{Generic elements}

\begin{Definition}Assume that $K$ is an arbitrary field and $F\subseteq K$ is a
subfield. The set $\{\xi_1,\dots,\xi_k\}\subseteq K$ is called {\it
generic} (over $F$) if $f(\xi_1,\dots,\xi_k)\neq 0$ for any
commutative polynomial $f\in F[x_1,\dots,x_k]$ that takes nonzero
values.
\end{Definition}
\begin{Lemma}\label{many-gen-2r}
 Assume that $K$ has infinite transcendence degree over $F$.
 Then for any $k\in\N$ there exists a set of generic elements
 $\{\xi_1,\dots,\xi_k\}\subset K$.
\end{Lemma}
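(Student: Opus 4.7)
The plan is to observe that the definition of a generic set simply means that $\{\xi_1,\dots,\xi_k\}$ is algebraically independent over $F$: the condition ``$f(\xi_1,\dots,\xi_k)\neq 0$ for every nonzero $f\in F[x_1,\dots,x_k]$'' is exactly algebraic independence. Thus the lemma reduces to producing $k$ algebraically independent elements of $K$ over $F$, which is immediate from the hypothesis of infinite transcendence degree.

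More concretely, I would argue inductively on $k$. For $k=1$, infinite transcendence degree of $K/F$ in particular guarantees transcendence degree at least $1$, so there exists $\xi_1\in K$ transcendental over $F$. Having chosen algebraically independent $\xi_1,\dots,\xi_{j}\in K$ with $j<k$, the field $F(\xi_1,\dots,\xi_j)$ has finite transcendence degree $j$ over $F$, while $K$ has infinite transcendence degree over $F$; hence $K$ has infinite (in particular, positive) transcendence degree over $F(\xi_1,\dots,\xi_j)$, so there exists $\xi_{j+1}\in K$ transcendental over $F(\xi_1,\dots,\xi_j)$. By the standard exchange property of transcendence bases, $\{\xi_1,\dots,\xi_{j+1}\}$ remains algebraically independent over $F$.

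After $k$ steps we obtain the required generic set. Alternatively (and equivalently), one may simply pick any $k$ elements out of a transcendence basis of $K$ over $F$, which is infinite by hypothesis; any subset of a transcendence basis is algebraically independent over $F$, hence generic.

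There is essentially no obstacle here: the lemma is a direct translation of the hypothesis on transcendence degree into the language of the ad hoc definition of ``generic.'' The only point worth highlighting is the verification that the ``takes nonzero values'' clause in the definition really means ``is a nonzero polynomial,'' so that genericity coincides with algebraic independence.
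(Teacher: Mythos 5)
Your proposal is correct and follows essentially the same route as the paper: an inductive choice of $\xi_{j+1}\in K$ transcendental over the field generated by $F$ and the previously chosen elements, which works because $K$ has infinite transcendence degree over every such finitely generated extension. Your explicit identification of ``generic'' with ``algebraically independent over $F$'' (and the remark about the ``takes nonzero values'' clause) is a useful clarification but does not change the argument.
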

\begin{proof}
 $K$ has infinite transcendence degree over $F$. Therefore, there exists an element $\xi_1\in K\setminus \bar F$,
 where $\bar F$ is an algebraic closure of $F$.
 Now we consider $F_1=F[\xi_1]$.
 $K$ has infinite transcendence degree over $F$ and thus has infinite transcendence degree over $F_1$.
 Therefore there exists an element $\xi_2\in K\setminus \bar F_1$.
 And we consider the new base field $F_2=F_1[\xi_2]$.
 We can continue up to any natural number $k$.
\end{proof}

\begin{Remark} Note according to Lemma \ref{many-gen-2r} that if $K$ has infinite transcendence degree over~$F$
 we can take as many generic elements as we need. In particular we can take as many generic matrices as we need.
\end{Remark}
\begin{Lemma}\label{gen-real-2r}
 Assume $f\colon H\rightarrow\R$ $($where $H\subseteq\R^k$ is an open set in $k$-dimensional Euclidean space$)$
 is a function that is continuous in a neighborhood of the point $(y_1,\dots,y_k)\in
 H$, with
 $f(y_1,\dots,y_k)<q$.
 Let $c_i$ be real numbers $($in particular the coefficients of some
 polynomial~$p)$.
 Then there exists a set of elements $\{x_1,\dots,x_k\}\subseteq\R$ generic over $F=\Q[c_1,\dots,c_N]$ such that
 $(x_1,\dots,x_k)\in H$ and $f(x_1,\dots,x_k)<q$.
\end{Lemma}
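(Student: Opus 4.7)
The plan is to combine the continuity hypothesis with a standard countable-avoidance argument, exploiting the fact that $F$ is countable.

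First, since $f$ is continuous at $(y_1,\dots,y_k)\in H$ and $f(y_1,\dots,y_k)<q$, there is an open box $B=\prod_{i=1}^{k}(a_i,b_i)\subseteq H$ around $(y_1,\dots,y_k)$ on which $f<q$. Any point of $B$ automatically satisfies the conditions $(x_1,\dots,x_k)\in H$ and $f(x_1,\dots,x_k)<q$, so it suffices to find such a point whose coordinates are generic over $F=\Q[c_1,\dots,c_N]$.

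Next, I would observe that $F$ is finitely generated over $\Q$, hence countable, and therefore the polynomial ring $F[x_1,\dots,x_k]$ is countable as well. For each nonzero polynomial $g\in F[x_1,\dots,x_k]$, its real zero set $Z(g)=\{x\in\R^k\colon g(x)=0\}$ is a proper real algebraic subvariety of $\R^k$, and a standard induction on $k$ (using Fubini's theorem together with the fact that a nonzero univariate polynomial has only finitely many real roots) shows that $Z(g)$ has $k$-dimensional Lebesgue measure zero. Consequently the union $Z=\bigcup_{g\neq 0}Z(g)$, taken over the countable set of nonzero elements of $F[x_1,\dots,x_k]$, is still of measure zero.

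Finally, since the open box $B$ has positive Lebesgue measure, the set $B\setminus Z$ is nonempty. Any point $(x_1,\dots,x_k)\in B\setminus Z$ has the property that no nonzero polynomial $g\in F[x_1,\dots,x_k]$ vanishes on it, which is precisely the genericity condition in the definition preceding Lemma~\ref{many-gen-2r}, while $f(x_1,\dots,x_k)<q$ by the choice of $B$. The only point requiring any care is the measure-zero claim for real algebraic varieties, but this is classical; as an alternative one could replace the measure-theoretic step with a Baire category argument, or iterate the transcendental-selection construction from the proof of Lemma~\ref{many-gen-2r} inside a nested sequence of subintervals of $(a_i,b_i)$, choosing each $x_i$ transcendental over $F[x_1,\dots,x_{i-1}]$ and in the prescribed interval (possible because a countable field meets any nonempty open interval of $\R$ only in a proper, hence measure-zero, subset).
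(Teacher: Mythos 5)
Your proof is correct, but it takes a different main route from the paper. You fix, via continuity, an open box $B$ on which $f<q$, and then avoid the zero sets of all nonzero polynomials in $F[x_1,\dots,x_k]$ at once, using that $F$ is countable, that each real algebraic hypersurface is Lebesgue-null, and that $B$ has positive measure; the chosen point then satisfies the genericity definition directly. The paper instead works coordinate-by-coordinate: inside a $\delta$-neighborhood $N_\delta(y_i)$ it picks $x_i$ outside the (countable) real points of the algebraic closure of $F[x_1,\dots,x_{i-1}]$, so the $x_i$ are successively transcendental and hence the tuple is generic, and then it invokes continuity only at the end, shrinking $\delta$ so that $f(x_1,\dots,x_k)<q$. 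Both arguments hinge on the countability of $F$; yours produces the whole tuple in one stroke but leans on the (classical, slightly heavier) measure-zero fact for real algebraic varieties, while the paper's is purely a cardinality argument (uncountable interval versus countable algebraic closure) and is essentially the ``iterate the transcendental-selection inside subintervals'' alternative you sketch in your last sentence. Either way the genericity condition of the paper's definition is met, so the proposal stands as a valid, somewhat more machinery-laden, variant.
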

\begin{proof}
The $\delta$-{\it neighborhood} $N_\delta(x)$ of $x\in\R$ denotes
the interval $(x-\delta,x+\delta)\subseteq\R$.
 Fix some small $\delta>0$ such that the product of $\delta$-neighborhoods of $y_k$ lays in $H$.
 For this particular $\delta$ we consider the $\delta$-neighborhood $N_\delta(y_1)$ of $y_1$:
 the interval $(y_1-\delta,y_1+\delta)$ is an uncountable set, and therefore there exists
 $x_1\in N_\delta(y_1)\setminus \bar F$. We consider $F_1=F[x_1]$ and analogically chose $x_2\in N_\delta (y_2)\setminus \bar F_1$
 and
 take $F_2=F_1[x_2]$. In such a way we can take generic elements $x_k\in N_\delta(y_k)$.
 Note that if $\delta$ is not sufficiently small $f(x_1,\dots,x_k)$ can be larger than $q$,
 but
 \begin{gather*}\mathop{f(x_1,\dots,x_k)\rightarrow f(y_1,\dots,y_k)}_{\delta\rightarrow 0}.\end{gather*}
 Thus there exists sufficiently small $\delta$ and generic elements $x_i\in N_\delta(y_i)$ such that
 $f(x_1,\dots,x_k)\allowbreak <q$.
\end{proof}

\begin{Remark} Note that $f$ can be a function defined on a set of matrices. In this case we consider it as a function
 defined on the matrix entries.
\end{Remark}

\begin{Remark}\label{patch1-int} Assume that $\Char (K)=0$. Suppose $t$ is a commuting indeterminate, and $f(x_1,
\dots, x_m;t)$ is a polynomial taking values under matrix
substitutions for the~$x_i$ and scalars for $t$. If there exists
unique $t_0$ such that $f(x_1, \dots, x_m;t_0)=0$, then $t_0$ is a
rational function with respect to the entries of $x_i$. If this
$t_0$ is fixed under simultaneous conjugation of the matrices
$x_1,\dots , x_m$, then $t_0$ is in the center of Amitsur's generic
division algebra~$\widetilde{\UD}$, implying $f \in
\widetilde{\UD}$. If $\Char(K)=k\neq 0$, then $t_0^{k^l}$ is a
rational function for some $l\in\mathbb{N}_0$. For details see
\cite[Remark~$2$]{BMR1}.
\end{Remark}
\begin{Remark}
In Remark \ref{patch1-int} we could take a system of polynomial
equations and polynomial inequalities. If $t_0$ is unique, then it
is a rational function (or $t_0^{k^l}$ if $\Char(K)=k$).
\end{Remark}

\subsubsection{Basic facts about cones}\label{def1-2}

Here is one of the main tools for our investigation.

\begin{Definition}
A {\it cone} of $M_n(K)$ is a subset closed under multiplication by
nonzero constants. An {\it invariant cone} is a cone invariant
under conjugation. An invariant cone is {\it irreducible} if it
does not contain any nonempty invariant cone.
\end{Definition}

\begin{Example}\label{coneex-2} Examples of invariant cones of $M_n(K)$ include:
\begin{enumerate}\itemsep=0pt\eroman \item The set of diagonalizable matrices.
 \item The set of non-diagonalizable matrices.
 \item The set $K$ of scalar matrices.
 \item The set of nilpotent matrices.
 \item The set $\sl_n$ of matrices having trace zero.
\end{enumerate}
\end{Example}

Let us continue with the following easy but crucial lemma.

\begin{Lemma}\label{cone_conj-2} Suppose the field $K$ is closed
under $d$-roots. If the image of a semi-homogeneous polynomial $p$
of weighted degree $d$ intersects an irreducible invariant cone $C$
nontrivially, then $C \subseteq \Image p$.
\end{Lemma}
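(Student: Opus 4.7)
The plan is to show that $\Image p$ itself is an invariant cone, from which the conclusion follows almost immediately by the irreducibility hypothesis on $C$.

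First, I would verify that $\Image p$ is closed under multiplication by nonzero scalars of $K$. Let $(w_1,\dots,w_m)$ be the weights of $p$, so that by semi-homogeneity
\begin{gather*}
p\bigl(t^{w_1}a_1,\dots,t^{w_m}a_m\bigr) = t^d\,p(a_1,\dots,a_m)
\end{gather*}
for any commuting indeterminate $t$. Given $r = p(a_1,\dots,a_m) \in \Image p$ and any $\lambda \in K^\ast$, the hypothesis that $K$ is closed under $d$-roots produces some $t \in K$ with $t^d = \lambda$; substituting and using the displayed identity yields $\lambda r = p(t^{w_1}a_1,\dots,t^{w_m}a_m) \in \Image p$.

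Next, I would combine this with Remark~\ref{cong-2}, which guarantees that $\Image p$ is invariant under conjugation. Together these two closure properties say precisely that $\Image p$ is an invariant cone in the sense of the definition at the start of Section~\ref{def1-2}.

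Finally, I would observe that the intersection of two invariant cones is again an invariant cone (closure under $K^\ast$-scaling and conjugation is preserved under intersection). Thus $\Image p \cap C$ is an invariant cone, and it is nonempty by the hypothesis that the intersection is nontrivial. Since $\Image p \cap C \subseteq C$ and $C$ is irreducible, the only possibility is $\Image p \cap C = C$, which gives $C \subseteq \Image p$ as desired.

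There is essentially no serious obstacle here; the only delicate point is ensuring that the semi-homogeneous scaling identity genuinely delivers every nonzero scalar multiple of an element of $\Image p$, which is exactly why the $d$-roots hypothesis on $K$ is present. If one wanted to relax that hypothesis (e.g., to $d$-th powers only generating a proper subgroup of $K^\ast$), the argument would instead only show that $\Image p$ is closed under multiplication by $d$-th powers, and one would need an additional step—or a stronger notion of irreducibility—to recover $C \subseteq \Image p$.
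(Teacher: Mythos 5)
Your proof is correct and follows essentially the same route as the paper: the paper's own argument shows $\Image p$ is a cone via the substitution $x_i \mapsto c^{w_i/d}x_i$ (using the $d$-roots hypothesis), with conjugation-invariance from Remark~\ref{cong-2} and the irreducibility of $C$ supplying the (largely implicit) final step exactly as you spell out. Your version merely makes the concluding intersection argument explicit.
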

\begin{proof}

If $A\in\Image p$ then $A=p(x_1,\dots,x_m)$ for some $x_i\in
M_n(K)$. Thus for any $c\in K$,
$cA=p\big(c^{w_1/d}x_1,c^{w_2/d}x_2,\dots,c^{i_m/d}x_m\big) \in\Image p$,
where $(w_1,\dots,w_m)$ are the weights. This shows that $\Image p$
is a cone.
\end{proof}
\begin{Remark} When the polynomial $p$ is multilinear, the image of any multilinear
polynomial is an invariant cone, without any assumption on $K$.
\end{Remark}

\subsection{Some general observations}

\begin{Lemma}\label{p1-3}
Let $K$ be an algebraically closed field of characteristic $0 $, and
let $I$ be an ideal of $K[X_1,\ldots,X_n]$, and $V(I) = \{x\in K^n\colon
f(x)=0\ \forall\, f\in I\}$. Let $\pi\colon K^n \to K^{n-1}$ be the
projection onto the first $n-1$ coordinates. Let $I'$ denote the
ideal $I\cap K[X_1,\ldots,X_{n-1}]$ of $K[X_1,\ldots, X_{n-1}]$.
Then:
\begin{enumerate}\itemsep=0pt
\item[$(1)$] $\pi(V(I))$ is a Zariski dense subset of $V(I')$;
\item[$(2)$] if there exists a Zariski dense subset $W$ of $V(I')$ such that
 the pre-image $\pi^{-1}(p)\cap V(I)$ of each point $p\in W$ consists of
one point, then there exists a rational $K$-valued function~$\phi$
on~$V(I')$ such that all points of a Zariski-dense subset of $V(I)$
have the form $(p,\phi(p))$ where $p\in V(I')$.
\end{enumerate}

If $\Char (K)=k>0$ then there exists non-negative integer $\ell$
such that $(p,a)\in V(I)$ satisfy $ \phi(p) = a^{k^\ell}$ on a
Zariski-dense subset of $V(I)$.
\end{Lemma}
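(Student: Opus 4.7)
The plan is to derive Part (1) from the geometric elimination theorem and to reduce Part (2) to the classical fact that for a dominant morphism of irreducible varieties over an algebraically closed field, the generic fiber cardinality equals the separable degree of the function field extension.

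For Part (1), the inclusion $\pi(V(I)) \subseteq V(I')$ is immediate: any $f \in I' = I \cap K[X_1,\ldots,X_{n-1}]$ vanishes on $V(I)$ and hence on its projection. For density, let $J \subseteq K[X_1,\ldots,X_{n-1}]$ vanish on $\pi(V(I))$. Then $J\cdot K[X_1,\ldots,X_n]$ vanishes on $V(I)$, and Hilbert's Nullstellensatz gives $J \subseteq \sqrt{I}\cap K[X_1,\ldots,X_{n-1}] = \sqrt{I'}$. Thus $V(J) \supseteq V(I')$, and so $\overline{\pi(V(I))} = V(I')$.

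For Part (2), I would first decompose $V(I) = \bigcup_i Y_i$ and $V(I') = \bigcup_j X_j$ into irreducible components. Applying Part (1) to the prime ideal of $Y_i$ shows that $\overline{\pi(Y_i)}$ is an irreducible closed subset of $V(I')$, hence contained in some $X_j$. The single-point fiber hypothesis over the dense set $W$ then forces that for each $X_j$ meeting $W$ densely there is exactly one component $Y_i$ with $\overline{\pi(Y_i)} = X_j$, and that on a dense open subset the restriction $\pi|_{Y_i}\colon Y_i \to X_j$ is bijective on closed points. In characteristic $0$, generic bijectivity together with algebraic closure forces the field extension $K(X_j) \subseteq K(Y_i)$ to have separable degree $1$, which in this setting makes it trivial. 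Hence $X_n$ restricted to $Y_i$ equals some $\phi_j \in K(X_j)$, and on the open subset where $\phi_j$ is defined the unique point of $Y_i$ over $p$ is $(p,\phi_j(p))$. Assembling the $\phi_j$ across all components of $V(I')$ yields the asserted rational function $\phi$, and the union of the corresponding dense opens of the $Y_i$ gives the required Zariski-dense subset of $V(I)$. When $\Char(K) = k > 0$, the same reduction yields $\pi|_{Y_i}\colon Y_i \to X_j$ dominant with separable degree $1$, so $K(Y_i)/K(X_j)$ is purely inseparable of some degree $k^{\ell_j}$; thus $X_n^{k^{\ell_j}} \in K(X_j)$, giving rational $\phi_j$ with $a^{k^{\ell_j}} = \phi_j(p)$ for the unique fiber point $(p,a)$, and taking $\ell$ the maximum of the $\ell_j$ delivers the uniform exponent.

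The main obstacle will be the bookkeeping in the reducible case: one must verify that the one-point fiber condition genuinely rules out two distinct components of $V(I)$ mapping dominantly to the same component of $V(I')$, and that the component-wise rational functions fit together into a single rational function on $V(I')$ in the sense of a tuple over the irreducible components. Once that is settled, the core geometric input above does the rest.
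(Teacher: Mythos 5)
Your core argument is sound and is, at heart, the same as the paper's: everything reduces to showing that the extension $K(V(I'))\subseteq K(V(I))$, which is generated by $X_n$, has separable degree $1$, whence $X_n$ lies in $K(V(I'))$ in characteristic $0$ (and $X_n^{k^\ell}$ does in characteristic $k$). The difference is packaging: you invoke the general theorem that the cardinality of a general fiber of a dominant, generically finite morphism of irreducible varieties equals the separable degree of the function-field extension, after an explicit decomposition into irreducible components, while the paper works directly with the minimal polynomial $h$ of $X_n$ over $K(V(I'))$, noting $\Discr(h)\neq 0$, and shows that on the open set where the discriminant and the coefficients of $h$ are defined and nonzero every point has exactly $d=\deg h$ distinct preimages, forcing $d=1$; in characteristic $k>0$ it writes $h(x)=h_1\big(x^{k^\ell}\big)$ with $h_1'$ not identically zero. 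Your part (1) likewise proves the closure theorem by hand via the Nullstellensatz where the paper cites it; both are fine, and your route has the merit of making the reducibility question explicit, which the paper suppresses by speaking of ``the'' function fields $K(V(I))$ and $K(V(I'))$.

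However, the ``bookkeeping'' you defer is not all routine. The uniqueness of the dominating component $Y_i$ over each $X_j$ does go through as you expect: two such components would share, over a dense subset of $X_j$, their (unique) fiber points, so their intersection would dominate $X_j$ and have full dimension in each, contradicting their being distinct components. The genuine trouble is with components of $V(I)$ whose image closure is a \emph{proper} subvariety of a component of $V(I')$: your final dense set is a union of dense opens only of the dominating components, so it is not dense in $V(I)$ when such extra components exist --- and they can exist under the hypothesis. For example $I=(X_1X_2)\subset K[X_1,X_2]$ gives $I'=0$, $V(I')=K$, and $W=K\setminus\{0\}$ has one-point fibers, yet no single rational $\phi$ describes a dense subset of the vertical axis $\{X_1=0\}$, so the literal statement fails there. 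Hence your assembly step cannot be completed in full generality; the lemma must be read, as the paper's proof implicitly does, with $V(I)$ irreducible (or with the conclusion restricted to the components of $V(I)$ dominating components of $V(I')$), and you should state that restriction explicitly rather than leave it as bookkeeping.
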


\begin{proof}(1) is by \cite[Chapter~3, Section~2, Theorem~3]{CLO} and the subsequent
remarks.

To prove~(2), note that by (1) $\pi$ induces a field homomorphism
(hence an embedding) $K(V(I'))\to K(V(I))$ between the fields of
rational functions on the respective varieties. It is enough to show
that this is an isomorphism. Indeed, $K(V(I))$ is generated by~$K(V(I'))$ and~$X_n$. Moreover, $X_n$ is algebraic over $K(V(I'))$.
Let $h$ be the minimal polynomial of~$X_n$ over~$K(V(I'))$, of
degree $d$. The derivative~$h'$ has degree $d-1$, and the
discriminant $\Discr(h)$ is, up to a scalar, the resultant of~$h$
and~$h'$; it is non-zero since $h$ is irreducible implies that~$h$ and~$h'$ are relatively prime. Let now $U$ be the open subset of~$V(I')$ in which $\Discr(h)\ne 0$ and the coefficients of $h$ are
defined. Then each point of $U$ has precisely $d$ distinct
$\pi$-preimages in~$V(I)$. It follows that $d=1$, as required.

If $\Char(K)=k>0$, we take $\ell$ such that $h(x)=h_1\big(x^{k^\ell}\big)$
but $h_1'$ is not identically zero.
\end{proof}

\begin{Lemma}\label{divAm-3} Assume that $\Char(K)=0$. If an element $a$ of $\widetilde{\UD}$ has
a unique eigenvalue $\a$ $($i.e., of multiplicity $n)$, then $a$ is
scalar. If $\Char(K)=k\neq 0$ then $a$ is $k^l$-scalar for some~$l$.
\end{Lemma}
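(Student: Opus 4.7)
The plan is to exploit the fact that $\widetilde{\UD}$ is a division algebra (hence contains no nonzero nilpotents) together with Proposition~\ref{Am1-2}, which says that characteristic coefficients of elements of $\widetilde{\UD}$ lie in its center. This lemma is in fact a restatement of Lemma~\ref{divAm-int}, so the same argument applies; I just spell out the two cases.

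First, I would record the basic observation: if $a \in \widetilde{\UD}$ has unique eigenvalue $\a$ of multiplicity $n$, then its reduced characteristic polynomial is $(x-\a)^n$, whose coefficients are (up to sign) $\binom{n}{j}\a^j$ for $j = 1, \dots, n$. By Proposition~\ref{Am1-2} each of these coefficients lies in the center of $\widetilde{\UD}$. By Cayley--Hamilton, $(a-\a I)^n = 0$ holds in the $n\times n$ matrix realization of $\widetilde{\UD}$ sitting inside a matrix algebra.

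In characteristic $0$: the coefficient corresponding to $j=1$ is $n\a$, which is central, and since $n$ is invertible we conclude $\a \in \Cent(\widetilde{\UD})$. Hence $b:=a-\a I$ belongs to $\widetilde{\UD}$ and satisfies $b^n = 0$; but $\widetilde{\UD}$ is a division algebra, so $b=0$, i.e., $a=\a I$ is scalar.

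In characteristic $k \ne 0$: write $n = k^\ell m$ with $\gcd(m,k)=1$, taking $\ell$ maximal. The coefficient of $x^{n-k^\ell}$ in $(x-\a)^n$ is $(-1)^{k^\ell}\binom{n}{k^\ell}\a^{k^\ell}$, and by Lucas's theorem $\binom{n}{k^\ell} \equiv m \not\equiv 0 \pmod k$, so $\a^{k^\ell}$ is central. Now $a^{k^\ell}$ has unique eigenvalue $\a^{k^\ell}$ of multiplicity $n$, so $\bigl(a^{k^\ell} - \a^{k^\ell} I\bigr)^n = 0$; as $\a^{k^\ell} I$ is central, $a^{k^\ell} - \a^{k^\ell} I$ belongs to $\widetilde{\UD}$ and is nilpotent, hence zero. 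Thus $a^{k^\ell} = \a^{k^\ell} I$, so $a$ is $k^\ell$-scalar (and this forces $k \mid n$, otherwise $\ell = 0$ and $a$ would be scalar).

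The only mildly delicate point — hardly an obstacle, but worth being careful about — is to justify that $\a$ (in characteristic $0$) or $\a^{k^\ell}$ (in characteristic $k$) actually lies in the center of $\widetilde{\UD}$, so that $a - \a I$ (respectively $a^{k^\ell} - \a^{k^\ell} I$) is an element to which the no-nilpotents property can be applied; this is exactly what the characteristic-coefficient argument via Proposition~\ref{Am1-2} gives.
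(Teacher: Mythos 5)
Your proof is correct and follows essentially the same route as the paper's: the paper simply asserts that $\a$ (resp.\ $\a^{k^l}$) is an element of $\widetilde{\UD}$ and concludes that $a-\a I$ (resp.\ $a^{k^l}-\a^{k^l}I$) is a nilpotent element of a division algebra, hence zero. Your characteristic-coefficient argument, with Lucas's theorem handling the positive-characteristic case, merely makes explicit the centrality step that the paper leaves unstated.
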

\begin{proof} If $\Char(K)=0$, then $\a$ is an element of $\widetilde{\UD}$ and $a -\a I$ is nilpotent, and thus~$0$.

If $\Char(K)=k$ then $\a^{k^l}$ is an element of $\widetilde{\UD}$,
therefore $a^{k^l} -\a^{k^l} I$ is nilpotent, and thus $0$. Thus $a$
is $k^l$-scalar. This is impossible if $k$ is not the divisor of the
size of the matrices $n$.
\end{proof}

\begin{Remark}\label{dim-f0-pcp} The variety of $n\times n$ matrices with a given set of $n$ distinct
eigenvalues has dimension $n^2-n$.
 \end{Remark}

\begin{Remark}\label{dim-f1-pcp} Assume for some matrix units $a_i$ that $p(a_1,\dots,a_m)$ is a diagonal matrix.
 Then~$f$ as constructed in \eqref{mapping_sets-pcp} in the proof of Theorem \ref{thmB1-pcp}
 is diagonal. If the dimension of~$\Image f$ is~$\delta$, then each evaluation
$M$ of $f$ has some set
 of eigenvalues, and (if the point is generic and the eigenvalues are distinct), then
 any matrix with this set of eigenvalues is similar to $M$ and therefore
 belongs to $\Image p$ .
Therefore by Remark~\ref{dim-f0-pcp}, $\Image p$ has dimension at
least $n^2-n+\delta$.
\end{Remark}

Other works on polynomial maps evaluated on matrix algebras include
\cite{GK,Wat}, who investigated maps that preserve zeros of
multilinear polynomials.

\section{The low rank case}\label{2q0}

Let $p$ be a multilinear polynomial in several non-commuting
variables with coefficients in a~quad\-ra\-tically closed field $K$ of
arbitrary characteristic. In this section we prove the
L'vov--Kaplansky conjecture for $n=2$ in several general cases, and
show that although the analogous assertion fails for completely
homogeneous polynomials, one can salvage most of the conjecture by
including the set of all non-nilpotent matrices of trace zero and
also permitting dense subsets of~$M_n(K)$.

\begin{Remark}For $n=2$, Donkin's theorem provides a polynomial function in
expressions of the form $\operatorname{Trace}\big(A,\bigwedge^2K^2\big)$ and
$\tr A$ where $A$ is monomial. Note that
$\operatorname{Trace}\big(A,\bigwedge^2K^2\big)=\det A$.
\end{Remark}

Next we introduce the cones of main interest to us, drawing from
Example~\ref{coneex-2}.

\begin{Example}\label{coneex1-2}\quad

\begin{enumerate}\itemsep=0pt\eroman
\item The set of nonzero nilpotent matrices comprise an
 irreducible invariant cone, since these all have the same
minimal and characteristic polynomial~$x^2$.
 \item The set of nonzero
 scalar
 matrices is an irreducible invariant cone.
 \item $\tilde K$ denotes the set of non-nilpotent,
non-diagonalizable matrices in~$M_2(K)$. Note that $A\in \tilde K$
precisely when $A$ is non-scalar, but with equal nonzero
eigenvalues, which is the case if and only if $A$ is the sum of a
nonzero scalar matrix with a nonzero nilpotent matrix. These are all
conjugate when the scalar part is the identity, i.e., for matrices
of the form \begin{gather*}\left(\begin{matrix}1 & a \\0 &
1\end{matrix}\right),\qquad a \ne 0\end{gather*} since these all have the same
minimal and characteristic polynomials, namely $x^2 - 2x+ 1$. It
follows that $\tilde K$ is an irreducible invariant cone.
 \item $\hat K$ denotes the set of non-nilpotent
 matrices in~$M_2(K)$ that have trace zero.

 When $\Char K \ne 2$, $\hat K$ is an irreducible invariant
cone,
 since any such matrix has distinct eigenvalues and thus is conjugate to
$\left(\begin{smallmatrix}\lambda & 0\\0 &
-\lambda\end{smallmatrix}\right)$.

 When $\Char K = 2$, $\hat K$ is an irreducible invariant cone,
 since any such matrix is conjugate to
$\left(\begin{smallmatrix}\lambda & 1\\0 & \lambda
\end{smallmatrix}\right)$.
 \item $\sl_2(K)\setminus \{ 0 \}$ is the union of the two irreducible invariant cones of (i) and~(iv). (The cases $\Char K \ne 2$ and
 $\Char K = 2$ are treated separately.)

 \item Let $C$ denote the set of nonzero matrices which are the sum of a scalar and a nilpotent
 matrix. Then $C$ is the union of the following
 three irreducible invariant cones: The nonzero scalar matrices,
 the nilpotent matrices, and the nonzero scalar multiples of
 non-identity
 unipotent matrices. (All non-identity unipotent matrices are conjugate.)
\end{enumerate}
\end{Example}

\subsection[$2\times 2$ matrices]{$\boldsymbol{2\times 2}$ matrices}
\subsubsection{The case of a quadratically closed field}\label{2q}

In this subsection we assume that $K$ is a quadratically closed
field. In particular, all of the eigenvalues of a matrix $A\in
M_2(K)$ lie in $K$. We start with the semi-homogeneous case (which
includes the completely homogeneous case), and finally give the
complete picture for the multilinear case.

\begin{Theorem}\label{imhom-2} Let $p(x_1,\dots,x_m)$ be a semi-homogeneous polynomial evaluated
on the algebra~$M_2(K)$ of $2\times 2$ matrices over a quadratically closed field. Then $\Image p$ is either $\{0\}$, $K$, the set of all non-nilpotent matrices having trace zero, $\sl_2(K)$, or a~dense subset of $M_2(K)$ $($with respect to Zariski topology$)$.
\end{Theorem}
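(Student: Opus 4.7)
The plan is to combine the cone and conjugation-invariance structure of $\Image p$ with the Procesi--Donkin description of $\GL_2$-invariants, which for $n=2$ are generated by trace and determinant. By Lemma \ref{cone_conj-2} (using that $K$ is quadratically closed, hence closed under extraction of $d$-th roots for the weighted degree $d$ of $p$) and Remark \ref{cong-2}, $\Image p$ is an invariant cone, so it is a union of $K^*$-scaled $\GL_2(K)$-orbits. I will study the semi-homogeneous polynomial functions $T:=\tr\circ p$ and $D:=\det\circ p$, which are semi-homogeneous of weighted degrees $d$ and $2d$ respectively, and split into cases according to whether each vanishes identically.

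If $T\equiv 0\equiv D$, every value of $p$ is nilpotent, so via Proposition \ref{Am1-2} the element $p\in\widetilde{\UD}$ is nilpotent in a division algebra, forcing $p=0$ in $\widetilde{\UD}$; hence $p$ is a PI and $\Image p=\{0\}$. If $T\equiv 0$ but $D\not\equiv 0$, then some value of $p$ is a non-nilpotent trace-zero matrix, so by Example \ref{coneex1-2}(iv) and Lemma \ref{cone_conj-2} the whole irreducible cone $\hat K$ of such matrices lies in $\Image p$; depending on whether a nilpotent value is also attained, $\Image p$ equals either $\hat K$ or $\sl_2(K)$. The case $D\equiv 0$, $T\not\equiv 0$ is ruled out, since $p$ would then be a nonzero non-invertible element of the division algebra $\widetilde{\UD}$.

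In the remaining case $T\not\equiv 0\not\equiv D$, weighted-homogeneity forces any polynomial relation between $T$ and $D$ to have the form $T^2=\lambda D$ for some $\lambda\in K$. When no such relation holds, $(T,D)$ is Zariski-dense in $K^2$, a generic value $(t,d)$ satisfies $t^2-4d\ne 0$, the corresponding $2$-dimensional conjugacy class lies entirely in $\Image p$, and a dimension count shows $\Image p$ is Zariski-dense in $M_2(K)$. When $T^2=4D$, every value of $p$ has a repeated eigenvalue, and Lemma \ref{divAm-int} applied in $\widetilde{\UD}$ forces $p$ to be central (or $k^l$-scalar in characteristic $k\mid 2$), yielding $\Image p=K$. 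The sub-case $T^2=\lambda D$ with $\lambda\ne 0,4$ is excluded by considering $q:=p-\beta\,\tr(p)\cdot I$ for the constant $\beta=\tfrac12\bigl(1-\sqrt{1-4/\lambda}\bigr)\in K$: then $q$ has eigenvalues $(\alpha-\beta)T$ and $0$, so $q$ is singular but non-nilpotent, hence a nonzero non-unit in the division algebra $\widetilde{\UD}$, a contradiction.

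The hardest step, in my view, is pinning down $\Image p$ \emph{exactly} (not merely up to Zariski closure) in the trace-zero case: one must rule out $\Image p$ containing only some but not all nilpotent matrices, which refines into the question of whether the two irreducible invariant cones $\{0\}$ and the orbit of a nonzero nilpotent enter $\Image p$ simultaneously. A careful argument combining Lemma \ref{cone_conj-2} with the matrix-unit evaluations of Lemma \ref{graph-2r}, together with a limiting/scaling argument made rigorous through semi-homogeneity, should handle this. A secondary technical point is to verify in characteristic $2$ that the $k^l$-scalar possibility permitted by Lemma \ref{divAm-int} still lands inside the listed image $K$ of scalar matrices rather than producing an unlisted set.
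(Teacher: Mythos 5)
Your overall strategy is essentially the paper's: you classify according to whether the invariant $\tr^2/\det$ of the values is constant, kill the constant cases with ratio different from $\pm 1$ by producing a nonzero non-invertible element of Amitsur's division algebra $\widetilde{\UD}$ (this is exactly the Deligne trick the paper uses), and handle the trace-zero case with the irreducible invariant cones of Example~\ref{coneex1-2} via Lemma~\ref{cone_conj-2}. Where you genuinely differ is the dense case: the paper runs a pencil $f(t)=p(tx_1+(1-t)y_1,\dots)$ between two values with different eigenvalue ratios and shows the rational function $\tr^2 f/\det f$ has dense image, while you argue that if $T=\tr\circ p$ and $D=\det\circ p$ satisfy no relation then $(T,D)$ has dense image in $K^2$ and conclude by a fibre-dimension count over the conjugation quotient. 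That variant is fine (it is essentially what the paper itself does for $n=3$ in Lemma~\ref{extr-3}); your reduction of any weighted-homogeneous relation to $T^2=\lambda D$ also works, though it needs the small observation that $T^2/D$, being algebraic over $K$ inside a rational function field, must already lie in $K$.

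The genuine gap is characteristic $2$. Your sub-case ``$T^2=4D$'' is vacuous there (it reads $T^2\equiv 0$), so the repeated-eigenvalue situation in characteristic $2$ is exactly your case $T\equiv 0$, and there your inference breaks: in characteristic $2$ a nonzero scalar matrix is itself a non-nilpotent trace-zero matrix, so the non-nilpotent trace-zero matrices do not form a single irreducible invariant cone (they contain the scalar cone and the cone $\tilde K$ of Example~\ref{coneex1-2}(iii) as proper invariant subcones), and Lemma~\ref{cone_conj-2} applied to one non-nilpotent trace-zero value only yields the cone of its conjugates. Concretely, a central polynomial in characteristic $2$ falls into your case ``$T\equiv 0$, $D\not\equiv 0$'' and has image $K$, not $\hat K$ or $\sl_2(K)$, contradicting your stated dichotomy; likewise your flagged hope that the $k^l$-scalar possibility of Lemma~\ref{divAm-int} ``still lands inside $K$'' is false: a $2$-power-central non-central polynomial has all values with equal eigenvalues but non-scalar image, and the correct conclusion in that situation is one of the trace-zero options, which is how the paper disposes of characteristic $2$ (citing Example~\ref{coneex1-2}(v)), and even there extra care with the scalar cone is needed. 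Finally, the step you single out as hardest, ruling out images containing some but not all nilpotents, is actually immediate: the nonzero nilpotent $2\times 2$ matrices form a single conjugacy class up to scalars, hence one irreducible invariant cone, so one nilpotent value gives all of them; and you never need to decide between $\hat K$ and $\sl_2(K)$, since both are admissible outcomes of the theorem.
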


(We also give examples to show how $p$ can have these images.)

\begin{Theorem}\label{main-2} If $p$ is a multilinear polynomial evaluated on the
matrix ring $M_2(K)$ $($where $K$ is a quadratically closed field$)$,
then $\Image p$ is either $\{0\}$, $K$, $\sl_2$, or~$M_2(K)$.
\end{Theorem}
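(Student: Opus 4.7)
The plan is to apply Theorem~\ref{imhom-2} to $p$ (every multilinear polynomial is semi-homogeneous of weighted degree $1$ in each variable), which gives that $\Image p \in \{\{0\},\, K,\, \text{non-nilpotent trace-zero matrices},\, \sl_2(K),\, \text{Zariski-dense in } M_2(K)\}$. The task is to exclude the third possibility and promote the fifth to equality. The principal extra tool for multilinear $p$ is Lemma~\ref{dim2-2r}: from any two non-proportional values in $\Image p$, the entire $2$-dimensional $K$-span lies in $\Image p$. I shall use it repeatedly, coupled with conjugation-invariance (Remark~\ref{cong-2}), the cone property, and the fact that over a quadratically closed field every $2$-dimensional subspace of $\sl_2(K)$ meets the nilpotent cone non-trivially (since $-\det$ is a non-degenerate quadratic form on $\sl_2(K)$, and its restriction to any $2$-dimensional subspace is a non-zero quadratic form in two variables, hence has an isotropic vector).

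To exclude the non-nilpotent trace-zero case, assume $\Image p \subseteq \sl_2(K)$ with $p$ non-central. Pick a non-zero $v_1 \in \Image p$ and a generic $s \in \GL_2(K)$ so that $v_2 := s v_1 s^{-1} \in \Image p$ is non-proportional to $v_1$; by Lemma~\ref{dim2-2r} their span $W = \langle v_1, v_2 \rangle$ is a $2$-dimensional subspace of $\sl_2(K)$ contained in $\Image p$. By the quadratic-closure observation above, $W$ contains a non-zero nilpotent traceless matrix. Combined with any non-nilpotent trace-zero element of $\Image p$, Lemma~\ref{cone_conj-2} and the irreducible-cone analysis of Example~\ref{coneex1-2} force $\Image p = \sl_2(K)$, ruling out the intermediate case.

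The main obstacle is upgrading ``Zariski-dense'' to ``equal'', which I shall carry out via four applications of Lemma~\ref{dim2-2r}. First, since $\Image p$ is a dense constructible cone, it contains a Zariski-open subset $U \subseteq M_2(K)$, which meets the scalar line $K\cdot I$ non-trivially, and the cone property then yields $K\cdot I \subseteq \Image p$. Second, Theorem~\ref{thmB1-pcp} guarantees a non-zero trace-zero matrix in $\Image p$, and the argument of the previous paragraph produces a non-zero nilpotent $N \in \Image p$. Third, applying Lemma~\ref{dim2-2r} to the non-proportional pair $(\lambda I, N)$ for any non-zero scalar $\lambda I \in \Image p$ places the entire plane $\{a I + b N : a, b \in K\}$ in $\Image p$; this plane contains $\mu I + N$ for every $\mu \in K$, which is a Jordan block for $\mu \ne 0$, and conjugation-invariance gives every Jordan block in $M_2(K)$. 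Fourth, Theorem~\ref{thmB1-pcp} also supplies a non-nilpotent trace-zero matrix $A \in \Image p$ (with $\det A \ne 0$), and Lemma~\ref{dim2-2r} applied to $(\lambda I, A)$ puts the plane $\{a I + b A : a, b \in K\}$ into $\Image p$; this plane contains a matrix with any prescribed pair of distinct eigenvalues (solve the two linear conditions for $a, b$), so by conjugation-invariance $\Image p$ covers every diagonalizable non-scalar matrix. Together with the scalars and the Jordan blocks, these account for all of $M_2(K)$, so $\Image p = M_2(K)$, completing the classification.
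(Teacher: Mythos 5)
There is a genuine gap, and it sits exactly at the hard part of the theorem. Your argument rests on reading Lemma~\ref{dim2-2r} as saying that whenever two non-proportional matrices lie in $\Image p$, their span lies in $\Image p$; the lemma does not say this. It only asserts that $\Image p$ contains \emph{some} $2$-dimensional plane (in its proof, the plane spanned by two non-proportional values attained at minimal Hamming distance, where the cross terms can be forced to vanish); for an arbitrary pair of values the intermediate terms $f(\theta_S)$ need not vanish and no conclusion about their span follows. If the stronger reading were correct, the image of any multilinear polynomial would be closed under addition and, being a cone, would be a subspace — i.e., the L'vov--Kaplansky conjecture itself would follow in one line for all $n$. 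Your third and fourth steps depend entirely on this misreading: you apply the lemma to the specific pairs $(\lambda I,N)$ and $(\lambda I,A)$ to put the specific planes $\langle I,N\rangle$ and $\langle I,A\rangle$ into $\Image p$ and thereby capture all Jordan blocks $\mu I+N$, $\mu\neq 0$. But those Jordan blocks are exactly the set $\tilde K$ of Example~\ref{coneex1-2}(iii), and showing $\tilde K\subseteq\Image p$ — equivalently, excluding the residual case $\Image p=M_2(K)\setminus\tilde K$ left open by Lemma~\ref{thm1-2} — is precisely what the paper must work for: Lemma~\ref{char2-2} handles characteristic $2$, and Lemma~\ref{sklem} handles characteristic $\neq 2$ via Donkin's theorem and the Deligne trick (one shows the discriminant of $p$ is the square of a polynomial, so the eigenvalues $\lambda_1,\lambda_2$ are invariant polynomials, hence trace polynomials, and then $(p-\lambda_1 I)(p-\lambda_2 I)=0$ contradicts the fact that the algebra of generic matrices with traces is a domain). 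Your proposal offers no substitute for this step.

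Two further points. First, the claim that a dense image is constructible and hence contains a Zariski-open set \emph{which meets the scalar line} is unjustified: constructibility of images needs an algebraically closed field (quadratic closure in the paper's sense does not give it), and even over $\bar K$ a nonempty open subset of $M_2$ can avoid any prescribed line, since its complement is a proper closed set that may well contain that line; in the paper, scalar values are reached instead through diagonal evaluations on matrix units (Lemma~\ref{graph-2r}) together with Lemmas~\ref{2two-2} and~\ref{2general_one-2}. Second, your exclusion of the ``non-nilpotent trace-zero'' case is salvageable even with the correct form of Lemma~\ref{dim2-2r}, because there you only need \emph{some} $2$-plane inside $\sl_2$, and any such plane contains a nonzero nilpotent over a quadratically closed field; the paper gets nilpotent values more directly, since $e_{12}$ is already attained on matrix units by Lemma~\ref{graph-2r}. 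As written, however, the proof of the main implication (dense $\Rightarrow$ all of $M_2(K)$) does not go through.
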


Whereas one has a decisive answer for multilinear polynomials, the
situation is ambiguous for homogeneous polynomials, since, as we
shall see, certain invariant sets cannot occur as their images. For
the general non-homogeneous case, the image of a polynomial need not
be dense, even if it is non-central and takes on values of nonzero
trace, as we see in Example~\ref{nondense-2}.

One of our main ideas is to consider some invariant of the matrices
in $\Image(p)$, and study the corresponding invariant cones. Here is
the first such invariant that we consider.

\begin{Remark}\label{Phi-2}
Any non-nilpotent $2\times 2$ matrix $A$ over a quadratically closed
field has two eigenvalues $\lambda_1$ and $\lambda_2$, such that at
least one of which is nonzero. Therefore one can define the ratio of
eigenvalues, which is well-defined up to taking reciprocals:
$\frac{\lambda_1}{\lambda_2}$ and $\frac{\lambda_2}{\lambda_1}$.
Thus, we will say that two non-nilpotent matrices have {\it
different ratios} of eigenvalues if and only if their ratios of
eigenvalues are not equal nor reciprocal.

We do have a well-defined mapping $\Pi\colon M_2(K)\rightarrow K$
given by $A\mapsto
\frac{\lambda_1}{\lambda_2}+\frac{\lambda_2}{\lambda_1}$. This
mapping is algebraic because
\begin{gather*}\frac{\lambda_1}{\lambda_2}+\frac{\lambda_2}{\lambda_1}=-2+\frac{(\tr A)^2}{\det A}.\end{gather*}
\end{Remark}

\begin{Remark}\label{Phi2-2} The set of non-scalar
 diagonalizable matrices with a fixed nonzero ratio $r$ of eigenvalues (up to taking reciprocals) is an irreducible invariant cone. Indeed, this
is true since any such diagonalizable matrix is conjugate to
\begin{gather*}\lambda\left(\begin{matrix} 1 & 0\\0 & r\end{matrix}\right).\end{gather*}
\end{Remark}

{\bf Images of semi-homogeneous polynomials.}

\begin{Lemma}\label{linear1-2}
Suppose $K$ is closed under $d$-roots, as well as being
quadratically closed. If the image $\Image p$ of a
semi-homogeneous polynomial $p$ of weighted degree $d$ contains an
element of $\tilde K$, then $\Image p$ contains all of $\tilde K$.
\end{Lemma}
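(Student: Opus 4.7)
The plan is to recognize that this lemma is essentially an immediate consequence of two previously established facts, so the proof will be very short.

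First I would recall from Example \ref{coneex1-2}(iii) that $\tilde K$, the set of non-nilpotent, non-diagonalizable matrices in $M_2(K)$, is an irreducible invariant cone. This is because every such matrix is non-scalar but has two equal nonzero eigenvalues, and so (after scaling the scalar part to $1$) is conjugate to $\left(\begin{smallmatrix}1 & a\\ 0 & 1\end{smallmatrix}\right)$ for some $a\neq 0$; all such matrices share the same minimal and characteristic polynomial $x^2-2x+1$ and are therefore mutually conjugate, which together with closure under nonzero scalar multiplication gives irreducibility and invariance.

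Next I would invoke Lemma \ref{cone_conj-2}: since $K$ is closed under $d$-roots and $p$ is semi-homogeneous of weighted degree $d$, the image $\Image p$ is itself a cone, and any nontrivial intersection with an irreducible invariant cone forces containment of the whole cone. By hypothesis $\Image p \cap \tilde K \neq \varnothing$, and by Remark \ref{cong-2} $\Image p$ is conjugation-invariant, so applying Lemma \ref{cone_conj-2} to the cone $C=\tilde K$ gives $\tilde K \subseteq \Image p$, as required.

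There is no real obstacle here; the statement is exactly the specialization of Lemma \ref{cone_conj-2} to the particular irreducible invariant cone identified in Example \ref{coneex1-2}(iii). The only (trivial) subtlety to double-check is that $\tilde K$ itself, not merely $\tilde K \cup \{0\}$, is closed under the semi-homogeneous scaling $A \mapsto cA = p(c^{w_1/d}x_1,\ldots,c^{w_m/d}x_m)$ for $c \in K^\times$; but this is clear since a nonzero scalar multiple of a non-nilpotent non-diagonalizable matrix is again non-nilpotent and non-diagonalizable.
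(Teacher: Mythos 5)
Your argument is exactly the paper's: the paper proves this lemma by citing Lemma \ref{cone_conj-2} together with Example \ref{coneex1-2}(iii), since $\tilde K$ is an irreducible invariant cone. Your proposal is correct and matches that approach, with the added (harmless) check that $\tilde K$ is stable under nonzero scaling.
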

\begin{proof} This is clear from
Lemma~\ref{cone_conj-2} together with Example~\ref{coneex1-2}(iii),
since $\tilde K$ is an irreducible invariant cone.
\end{proof}
For the proof of Theorem~\ref{imhom-2} see Section~\ref{Deligne}.

We illuminate this result with some examples to show that certain
cones are obtained from specific completely homogeneous polynomials.
\begin{Example}\label{coneex2-2}\quad
\begin{enumerate}\itemsep=0pt\eroman \item
The polynomial $g(x_1, x_2) = [x_1, x_2]^2$ has the property that
$g(A,B) = 0$ whenever $A$ is scalar, but $g$ can take on a
nonzero value whenever $A$ is non-scalar.
 Thus, $g(x_1, x_2)x_1$ takes on all values except
scalars. This polynomial is completely homogeneous, but not
multilinear. (One can linearize in $x_2$ to make $g$ linear in
each variable except $x_1$, and the same idea can be applied to
Formanek's construction \cite{F1} of a central polynomial for any~$n$.)

\item Let $S$ be any finite subset of $K$. There exists a
completely homogeneous polynomial $p$ such that $\Image p$ is the
set of all $2\times 2$ matrices except the matrices with ratio of
eigenvalues from $S$. The construction is as follows. Consider
\begin{gather*}f(x)=x\cdot\prod_{\delta\in
S}(\lambda_1-\lambda_2\delta)(\lambda_2-\lambda_1\delta),\end{gather*} where
$\lambda_{1,2}$ are eigenvalues of $x$. For each $\delta$ the
product $(\lambda_1-\lambda_2\delta)(\lambda_2-\lambda_1\delta)$
is
a polynomial of $\tr x$ and $\tr x^2$. Thus $f(x)$ is a
polynomial with traces, and, by \cite[Theorem~1.4.12]{Row}), one can rewrite each trace in~$f$ as a fraction of
multilinear central polynomials.
After that we multiply the expression by the product of all the
denominators, which we can take to have value~1. We obtain a
completely homogeneous polynomial~$p$ which image is the cone under
$\Image f$ and thus equals~$\Image f$. The image of~$p$ is the set
of all non-nilpotent matrices with ratios of eigenvalues not
belonging to~$S$.

\item The image of a completely homogeneous polynomial evaluated
on $2\times 2$ matrices can also be $\hat K$. Take
$f(x,y)=[x,y]^3$. This is the product of $[x,y]^2$ and $[x,y]$.
$[x,y]^2$ is a central polynomial, and therefore $\tr f=0$.
However, there are no nonzero nilpotent matrices in $\Image p$ because if
$[A,B]^3$ is nilpotent then $[A,B]$ (which is a scalar multiple of
$[A,B]^3$) is nilpotent and therefore $[A,B]^2=0$ and $[A,B]^3=0$.

\item Consider the polynomial \begin{gather*}\qquad p(x_1,x_2,y_1,y_2) =
\big[(x_1x_2)^2,(y_1y_2)^2\big]^2 +
\big[(x_1x_2)^2,(y_1y_2)^2\big][x_1y_1,x_2y_2]^2.\end{gather*} Then $p$ takes on all
scalar values (since it becomes central by specializing $x_1
\mapsto x_2$ and $y_1 \mapsto y_2$), but also takes on all
nilpotent values, since specializing $x_1 \mapsto I + e_{12}$,
$x_2 \mapsto e_{22}$, and $y_1 \mapsto
 e_{12}$, and $y_2 \mapsto
e_{21} $ sends $p$~to \begin{gather*} \big[(e_{12}+e_{22})^2,
e_{11}^2\big]^2 + \big[(e_{12}+e_{22})^2, e_{11}^2\big][ e_{12}, e_{21}] =
0 - e_{12}( e_{11}- e_{22}) = e_{12}.\end{gather*}

We claim that $\Image p$ does not contain any matrix~$a= p(\bar
x_1,\bar x_2,\bar y_1,\bar y_2)$ in~$ \tilde K$. Otherwise, the
matrix
 $\big[(\bar x_1\bar x_2)^2,(\bar y_1\bar y_2)^2\big][\bar x_1\bar y_1,\bar x_2\bar y_2]^2$ would be the difference
of a matrix having equal eigenvalues and a scalar matrix, but of
trace 0, and so would have both eigenvalues 0 and thus be
nilpotent. Thus $\big[(\bar x_1\bar x_2)^2,(\bar y_1\bar y_2)^2\big]$
would also be nilpotent, implying the scalar term $\big[(\bar x_1\bar
x_2)^2,(\bar y_1\bar y_2)^2\big]^2$ equals zero, implying $a$ is
nilpotent, a contradiction.

$\Image p$ also contains all matrices having two distinct
eigenvalues. We conclude that $\Image p = M_2(K) \setminus \tilde
K$.
\end{enumerate}
\end{Example}

\begin{Remark}\label{Phi} In Example~\ref{coneex2-2}(iv),
The intersection $S$ of $\Image p$ with the discriminant surface is
defined by the polynomial \begin{gather*}\tr(p(x_1, \dots, x_m))^2- 4 \det(p(x_1,
\dots, x_m))= (\lambda_1 -\lambda_2)^2.\end{gather*} $S$ is the union of two
irreducible varieties (its scalar matrices and the nonzero nilpotent
matrices), and thus $S$ is a reducible variety. Thus, we see that
the discriminant surface of a~polynomial~$p$ of the algebra of
generic matrices can be reducible, even if it is not divisible by
any trace polynomial. Such an example could not exist for~$p$
multilinear, since then, by the same sort of argument as given in
the proof of Theorem~\ref{imhom-2}, the discriminant surface would
give a generic zero divisor in Amitsur's generic division algebra
$\widetilde{\UD}$ of Proposition~\ref{Am1-2}, a contradiction. In
fact, we will also see that the image of a multilinear polynomial
cannot be as in Example~\ref{coneex2-2}(iv).
\end{Remark}

{\bf Results for arbitrary polynomials.}

\begin{Lemma}\label{2two-2}
If $A,B\in \Image p$ have different ratios of eigenvalues,
then $\Image p$ contains matrices having arbitrary ratios of
eigenvalues $\frac{\lambda_1}{\lambda_2}\in K$.
\end{Lemma}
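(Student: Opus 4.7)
The plan is to connect $A$ and $B$ by a one-parameter family inside $\Image p$ and analyze the rational map $\Pi$ of Remark~\ref{Phi-2} along this family. Writing $A=p(a_1,\dots,a_m)$, $B=p(b_1,\dots,b_m)$ and $c_i:=a_i-b_i$, consider
\[
M(t):=p(b_1+tc_1,\dots,b_m+tc_m)\in\Image p\qquad (t\in K).
\]
Then $M(t)$ is a matrix-valued polynomial in $t$, so $\tr M(t)$ and $\det M(t)$ are scalar polynomials in $t$. The leading coefficient of $M(t)$ in $t$ is itself an evaluation of $p$: in the multilinear case it is precisely $D:=p(c_1,\dots,c_m)\in\Image p$, and correspondingly $\tr D$ and $\det D$ appear as the top coefficients of $\tr M(t)$ and $\det M(t)$, of degrees $m$ and $2m$ respectively.

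Given a target ratio $r\in K$, set $v:=r+r^{-1}\in K$, which is the desired $\Pi$-value. Since $\Pi(X)=(\tr X)^2/\det X-2$, the equation $\Pi(M(t))=v$ rewrites as
\[
P_v(t):=(\tr M(t))^2-(v+2)\det M(t)=0,
\]
a polynomial in $t$ of degree at most $2m$ whose leading coefficient $(\tr D)^2-(v+2)\det D$ vanishes precisely when $v=\Pi(D)$. If $v\neq \Pi(D)$, then $P_v$ has degree exactly $2m\geq 1$; since $K$ is quadratically closed with respect to $p$, $P_v$ admits a root $t_0\in K$. Using that $M(1)=A$ is non-nilpotent, $\tr M(t)$ and $\det M(t)$ cannot both be identically zero, so the locus of $t$ at which $M(t)$ is nilpotent is finite; choosing $t_0$ outside this locus yields $M(t_0)\in\Image p$ with $\Pi$-value $v$, i.e., with eigenvalue ratio~$r$. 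If instead $v=\Pi(D)$, then $D\in\Image p$ itself already has the required ratio, closing the dichotomy.

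The main obstacle I foresee is the degenerate case where $D$ is nilpotent (so $\tr D=\det D=0$), in which the leading-coefficient dichotomy collapses and $\deg P_v$ drops uniformly in $v$. One resolves this by passing to the actual top-degree coefficients of $\tr M(t)$ and $\det M(t)$: these arise from subsums in the expansion of $p(b+tc)$ picking fewer than $m$ factors from the $c_i$-direction, and by multilinearity each such subsum is itself an explicit evaluation of $p$ lying in $\Image p$. The unique potentially missed value of $v$ is then still of the form $\Pi$ applied to an element of $\Image p$, hence realized. A parallel argument, replacing $(A,B)$ by a conjugated pair $(sAs^{-1},sBs^{-1})$ or by another pair in $\Image p$ with differing $\Pi$-values, absorbs the finitely many $v$ for which every root of $P_v$ happens to fall in the nilpotent locus of $t\mapsto M(t)$. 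Combining the cases, every ratio $r\in K$ is realized in $\Image p$.
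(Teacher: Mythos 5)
There is a genuine gap at the crux of the argument: the nilpotent degeneration. A root $t_0$ of $P_v(t)=(\tr M(t))^2-(v+2)\det M(t)$ only yields a matrix with eigenvalue ratio $r$ if $M(t_0)$ is not nilpotent, but every point of the nilpotent locus $\{t\colon \tr M(t)=\det M(t)=0\}$ is automatically a root of $P_v$ for \emph{every} $v$, so nothing prevents all roots of $P_v$ from lying in that locus; you cannot simply ``choose $t_0$ outside this locus,'' since $t_0$ is constrained to be a root of $P_v$. Your two patches do not close this. First, when $D=p(c_1,\dots,c_m)$ is nilpotent (or zero, which happens whenever some $a_i=b_i$), the top coefficients of $\tr M$ and $\det M$ are \emph{sums} of evaluations of $p$ over proper subsets of $\{1,\dots,m\}$, not single evaluations, so the ``missed value of $v$ is still $\Pi$ of an element of $\Image p$'' claim fails. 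Second, the final assertion that the finitely many bad $v$ are ``absorbed'' by passing to a conjugated or different pair is unsubstantiated: conjugating both tuples simultaneously leaves $\tr M(t)$ and $\det M(t)$ unchanged, and for a genuinely different pair you give no argument that its bad set avoids the given $v$. Note also that nilpotent values cannot be excluded a priori, since they may legitimately lie in $\Image p$ (e.g.\ when $\Image p\supseteq\sl_2$).

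The paper's proof is designed precisely to kill this degeneration, and it uses a tool your proposal never invokes: it first lifts $x_i,y_i$ to generic matrices and works with the multiparameter family $f(T_1,\dots,T_m)=p(\tau_1x_1+t_1y_1,\dots,\tau_mx_m+t_my_m)$, linear in each pair $T_i$. Because $\tr f$ is linear in the distinguished $T_i$, a simultaneous zero of $\tr f$ and $\det f$ occurs at a unique, rationally defined point, hence (Remark~\ref{patch1-int}) gives a nilpotent element of the algebra of generic matrices with traces; by Proposition~\ref{Am1-2} this algebra is a domain, so that element is zero, contradicting the choice of $i$ for which the ratio is nonconstant (this is the Deligne trick). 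Without some version of this step --- or another argument guaranteeing a non-nilpotent root of $P_v$ for every $v$ --- your construction only produces the ratio $r$ for $v$ outside an uncontrolled finite exceptional set, which is not the statement of the lemma.
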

\begin{proof}
If $A = p(x_1,\dots,x_m)$, $B = p(y_1,\dots,y_m)\in \Image p$
have different ratios of eigenvalues, then we can lift the
$x_1,\dots,x_m$, $y_1,\dots,y_m$ to generic matrices, and then
$p(x_1,\dots,x_m)=\tilde A$ and $p(y_1,\dots,y_m)=\tilde B$ also
have different ratios of eigenvalues. Then take
\begin{gather*}f(T_1,T_2,\dots,T_m)=p(\tau_1 x_1+t_1y_1,\dots,\tau_m x_m+t_my_m),\end{gather*}
where $T_i=(t_i,\tau_i)\in K^2$. The polynomial $f$ is linear with
respect to all $T_i$.

In view of Remark~\ref{Phi}, it is enough to show that the ratio
$\frac{(\tr f)^2}{\det f}$ takes on all values. Fix
$T_1,\dots,T_{i-1},T_{i+1},\dots,T_m$ to be generic pairs where
$i$ is such that $\frac{(\tr f)^2}{\det f}$ is not constant with
respect to $T_i$. Such $i$ exist because otherwise all matrices in
the image (in particular, $A$ and $B$) have the same ratio of
eigenvalues.
 But
$\frac{(\tr f)^2}{\det f}$ is the ratio of quadratic polynomials,
and $K$ is quadratically closed.

 If there is a point $T_i$ such
that $\tr f=\det f=0$, then $f$ evaluated at this $T_i$ is
nilpotent. Since $\tr f$ is a linear function, the equation $\tr
f = 0$ has only one root, which is a rational function on the
other parameters. Thus $f$ evaluated at this $T_i$ is~0, by
Amitsur's theorem. We conclude that the ratio of eigenvalues does
not depend on $T_i$, contrary to our assumption on~$i$. Hence, we
can solve $\frac{(\tr f)^2}{\det f} =c$ for any $c \in K$.
\end{proof}

\begin{Lemma}\label{2general_one-2}
If there exist $\lambda_1\neq\pm\lambda_2$ with a collection of
matrices $(A_1,A_2,\dots,A_m)$ such that $p(A_1,A_2,\dots,A_m)$
has eigenvalues $\lambda_1$ and $\lambda_2$, then all
diagonalizable matrices lie in $\Image p$.
\end{Lemma}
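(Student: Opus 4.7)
The plan is to reduce the problem to Lemma~\ref{2two-2}: once I know $\Image p$ contains two matrices with different ratios of eigenvalues, that lemma produces matrices in $\Image p$ with every ratio in $K$, and the multilinearity built into the auxiliary function from its proof will promote ``every ratio'' to ``every similarity class of diagonalizable matrices''.

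The key step is therefore to produce a second ratio. Write $M = p(A_1,\dots,A_m)$ and note that $\Pi(M) := \lambda_1/\lambda_2 + \lambda_2/\lambda_1 = -2 + (\tr M)^2/\det M$ lies in $K\setminus\{\pm 2\}$ by the hypothesis $\lambda_1\neq\pm\lambda_2$. I claim the rational function $\Pi\circ p$ on $M_2(K)^m$ is non-constant. Were it identically equal to $c := \Pi(M) \neq \pm 2$, the Cayley--Hamilton relation would give the polynomial identity $p^2 = (\tr p)\,p - \frac{(\tr p)^2}{c+2}\,I$ on $M_2(K)^m$; by Proposition~\ref{Am1-2} this identity would persist in Amitsur's generic division algebra $\widetilde{\UD}$. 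Since $\tr M = \lambda_1+\lambda_2 \neq 0$, the image of $\tr p$ in $\widetilde{\UD}$ is nonzero, so I could form $q := p/\tr p \in \widetilde{\UD}$, and it would satisfy $q^2 - q + 1/(c+2) = 0$. As $K$ is quadratically closed this quadratic splits over $K$, and as $\widetilde{\UD}$ is a domain $q$ must equal one of its scalar roots; taking traces of $p = q\cdot\tr p\cdot I$ would then force $q=1/2$ and hence $c=2$, a contradiction. Thus $\Image p$ contains some $B = p(B_1,\dots,B_m)$ whose eigenvalue ratio differs from that of $M$.

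I now invoke the construction from the proof of Lemma~\ref{2two-2}: the map $f(T_1,\dots,T_m) = p(\tau_1 A_1+t_1 B_1,\dots,\tau_m A_m+t_m B_m)$ with $T_i=(\tau_i,t_i)\in K^2$ is multilinear in the $T_i$, and $\Image f \subseteq \Image p$. That proof shows that for every ratio $c\in K$ there exists $T\in (K^2)^m$ with $\Pi(f(T))=c$. Since $f$ is linear in each $T_i$, one has $f(\gamma T_1,T_2,\dots,T_m) = \gamma f(T_1,\dots,T_m)$ for all $\gamma\in K$, so $\Image f$ is a cone. Consequently $\Image f$ contains a matrix with eigenvalues $(\mu_1,\mu_2)$ for every pair $(\mu_1,\mu_2)\in K^2$: first choose $T$ so that $\Pi(f(T))$ yields the correct ratio, and then rescale by the appropriate $\gamma$; the degenerate pair $(0,0)$ arises simply as $f(0,\dots,0)$. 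Thus $\Image f$ meets every similarity class of diagonalizable matrices, and by the conjugation invariance of $\Image p$ (Remark~\ref{cong-2}) every diagonalizable matrix lies in $\Image p$.

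The main obstacle is the non-constancy argument in the second paragraph. It relies on the combined use of Amitsur's theorem (to transfer the putative identity into $\widetilde{\UD}$), the quadratic closure of $K$ (to confine the root $q$ to $K$), and the hypothesis $\lambda_1\neq\pm\lambda_2$ (to exclude the degenerate values $c=\pm 2$, which correspond to central and trace-zero polynomials).
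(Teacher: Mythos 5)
Your argument only controls the invariant $\Pi$ of Remark~\ref{Phi-2}, i.e., the unordered ratio of eigenvalues, and that is not enough to reach the stated conclusion. Two families of diagonalizable matrices escape it: (i) the nonzero scalar matrices --- a value of $f$ with $\Pi=2$ merely has equal eigenvalues and may lie in $\tilde K$ (non-scalar, non-diagonalizable), so ``choose $T$ with the correct ratio and rescale'' does not produce $\mu I$; and (ii) matrices similar to $\diag\{\mu,0\}$, whose ratio is $0$, i.e., $\Pi=\infty$, whereas the proof of Lemma~\ref{2two-2} that you invoke only realizes finite values of $(\tr f)^2/\det f$. Both families are part of the assertion ``all diagonalizable matrices lie in $\Image p$'' and are exactly what is needed where the lemma is applied (Cases I and II of Lemma~\ref{thm1-2}, and Lemma~\ref{scalar-2r}, use precisely the fact that scalar matrices are obtained here). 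The paper's proof gets them by a different key idea which your reduction discards: it first uses Lemma~\ref{graph-2r} to realize $\diag\{\lambda_1,\lambda_2\}$ as an evaluation on matrix units, then symmetrizes with the index swap $\chi(e_{ij})=e_{3-i,3-j}$, so that every term of the auxiliary multilinear map $f$, hence every value of $f$, is \emph{diagonal}; for diagonal values, attaining all ratios really does sweep out (the cone over) all diagonal matrices, scalars and singular ones included, and Lemma~\ref{cone_conj-2} together with conjugation invariance finishes. Once diagonality of $\Image f$ is lost, the ratio invariant cannot distinguish a scalar matrix from an element of $\tilde K$, so the conclusion is out of reach by this route.

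There is also a smaller defect at the start: the claim $\Pi(M)=-2+(\tr M)^2/\det M\in K\setminus\{\pm 2\}$ presupposes $\det M\neq 0$, which the hypothesis does not give, since $\lambda_2=0\neq\lambda_1$ satisfies $\lambda_1\neq\pm\lambda_2$ (and this degenerate case does occur in the applications). It is patchable --- if $\det p\equiv 0$ then $p$ vanishes in $\widetilde{\UD}$ and is a PI, contradicting $M\neq 0$ --- and the characteristic-$2$ wrinkle in ``forces $q=1/2$'' is harmless (there one gets $\tr p=0$ directly). Your non-constancy argument for $\Pi\circ p$ via Cayley--Hamilton, quadratic closedness and Amitsur's theorem is in itself a pleasant alternative way to produce a second ratio, but the missing scalar and rank-one similarity classes described above constitute a genuine gap in the proof of the lemma as stated.
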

\begin{proof}Applying Lemma~\ref{graph-2r} to the hypothesis, there is a matrix
\begin{gather*}\left(\begin{matrix}\lambda_1 &
0 \\0 & \lambda_2\end{matrix}\right)\in\Image p,\qquad
\lambda_1\neq\pm\lambda_2\end{gather*} which is an evaluation of $p$ on
matrix units $e_{ij}$. Consider the following mapping $\chi$
acting on the indices of the matrix units:
$\chi(e_{ij})=e_{3-i,3-j}$. Now take the polynomial
\begin{gather*}f(T_1,T_2,\dots,T_m)=p(\tau_1 x_1+t_1\chi(x_1),\dots,\tau_m x_m+t_m\chi(x_m)),\end{gather*}
where $T_i=(t_i,\tau_i)\in K^2$, which is linear with respect to
each $T_i$. Let us open the brackets. We obtain $2^m$ terms and
for each of them the degrees of all vertices stay even. (The edge
$12$ becomes $21$ which does not change degrees, and the edge $11$
becomes $22$, which decreases the degree of the vertex $1$ by two
and increases the degree of the vertex $2$ by two.) Thus all terms
remain diagonal. Consider generic pairs $T_1,\dots,T_m\in K^2$.
For each $i$ consider the polynomial $\tilde
f_i(T_i^*)=f(T_1,\dots,T_{i-1},T_i+T_i^*,T_{i+1},\dots,T_m)$. For
at least one $i$ the ratio of eigenvalues of $\tilde f_i$ must be
different from $\pm 1$. (Otherwise the ratio of eigenvalues of
$\tilde f_i$ equal $\pm 1$ all $i$, implying $\lambda_1= \pm
\lambda_2\}$, a contradiction.)

 Fix $i$ such that the ratio of eigenvalues of $\tilde f_i$ is not $\pm 1$.
 By linearity, $\Image\big(\tilde f_i\big)$ takes on values with all possible ratios of
 eigenvalues; hence, the cone under $\Image(\tilde f_i)$ is the set of all diagonal
matrices. Therefore by Lemma~\ref{cone_conj-2} all diagonalizable
matrices lie in the image of~$p$.
\end{proof}

{\bf Images of multilinear polynomials.}

\begin{Lemma}\label{thm1-2} If $p$ is a multilinear polynomial evaluated on the
matrix ring $M_2(K)$ over a~quad\-ra\-tically closed field $K$, then
$\Image p$ is either $\{0\}$, $K$, $\sl_2$, $M_2(K)$, or
$M_2(K)\setminus\tilde K$.
\end{Lemma}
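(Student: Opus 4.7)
The plan is to refine the semi-homogeneous classification of Theorem~\ref{imhom-2} using the matrix-unit substitution technique of Lemma~\ref{graph-2r}, which is only available for multilinear polynomials. A multilinear $p$ is semi-homogeneous of weighted degree $m$ with weights $(1,\dots,1)$, so Theorem~\ref{imhom-2} already forces $\Image p$ to lie in $\{\,\{0\},\,K,\,\hat K,\,\sl_2(K),\,\text{Zariski-dense}\,\}$, where $\hat K$ denotes the set of non-nilpotent trace-zero matrices. The key device is the following: by multilinearity every value of $p$ is a $K$-linear combination of evaluations $p(e_{k_1 l_1},\dots,e_{k_m l_m})$ at matrix-unit tuples, and by Lemma~\ref{graph-2r} each such evaluation is $0$, a scalar multiple $c\,e_{ij}$ with $i\ne j$, or diagonal. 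If no matrix-unit substitution produces a $c\,e_{ij}$ term, then every value of $p$ is diagonal and Lemma~\ref{scalar-2} forces $\Image p\in\{\{0\},K\}$; equivalently, whenever $\Image p$ is not contained in the diagonal subspace, $\Image p$ contains a nonzero nilpotent of the form $c\,e_{ij}$.

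This device at once eliminates $\hat K$: that set contains non-diagonal elements (in characteristic $\ne 2$ any conjugate $\left(\begin{smallmatrix}\lambda & \mu\\ 0 & -\lambda\end{smallmatrix}\right)$ of $\diag(\lambda,-\lambda)$ with $\mu\ne 0$; in characteristic $2$ the Jordan block $\left(\begin{smallmatrix}\lambda & 1\\ 0 & \lambda\end{smallmatrix}\right)$ with $\lambda\ne 0$), so $\Image p=\hat K$ would force a nonzero nilpotent into $\Image p$, contradicting the defining property of $\hat K$. Hence the trace-vanishing options collapse to $\{0\}$, $K$, and $\sl_2(K)$.

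In the dense case, the Zariski-open locus of matrices with eigenvalues $\lambda_1\ne\pm\lambda_2$ meets $\Image p$, so Lemma~\ref{2general_one-2} places every diagonalizable matrix in $\Image p$. The complement of the diagonalizable locus in $M_2(K)$ is the disjoint union of the two irreducible invariant cones of nonzero nilpotents and of $\tilde K$ (Example~\ref{coneex1-2}(i),(iii)), so by Lemma~\ref{cone_conj-2} and Lemma~\ref{linear1-2} the set $M_2(K)\setminus\Image p$ is a union of some of these two cones. Since a dense image cannot lie in the diagonal subspace, the matrix-unit device produces a nonzero nilpotent in $\Image p$, and hence the missing cone must be contained in $\tilde K$; thus $\Image p$ is either $M_2(K)$ or $M_2(K)\setminus\tilde K$, completing the classification. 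The main obstacle is precisely this final step: the intermediate candidates $M_2(K)\setminus\{\text{nonzero nilpotents}\}$ and the pure diagonalizable locus can occur for merely semi-homogeneous polynomials (cf.\ Example~\ref{coneex2-2}(iv)), and it is the matrix-unit device---unavailable outside the multilinear setting---that excludes them here.
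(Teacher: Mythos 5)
Your proof is correct, but it takes a genuinely different route from the paper's. The paper proves this lemma directly and elementarily: it splits into the cases $\Char K=2$ and $\Char K\ne 2$, uses Lemma~\ref{graph-2r} together with the linear-span observation of Remark~\ref{linear-2} to get suitable matrix-unit evaluations ($e_{12}$ and diagonal values), and then feeds these into Lemmas~\ref{2two-2} and~\ref{2general_one-2} and the cone machinery (Lemma~\ref{cone_conj-2}, Example~\ref{coneex1-2}) to arrive at the five possibilities; Theorem~\ref{imhom-2} is never invoked there. You instead start from Theorem~\ref{imhom-2} (a multilinear polynomial is semi-homogeneous, so its image is $\{0\}$, $K$, $\hat K$, $\ssl_2$, or Zariski dense), eliminate $\hat K$ by your matrix-unit device (if the image is not contained in the diagonal subspace, some matrix-unit evaluation is a nonzero $ce_{ij}$, forcing a nonzero nilpotent into the image), and in the dense case combine Lemma~\ref{2general_one-2} with the decomposition of the non-diagonalizable locus into the two irreducible invariant cones to conclude the image is $M_2(K)$ or $M_2(K)\setminus\tilde K$. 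This is logically sound and not circular: the paper's proof of Theorem~\ref{imhom-2} (given in Section~\ref{Deligne}) relies on Amitsur's theorem and the Deligne trick but not on the present lemma. What your route buys is a shorter, characteristic-free argument and a clear isolation of exactly where multilinearity enters (excluding images such as $\hat K$ and $M_2(K)\setminus\{\text{nonzero nilpotents}\}$, which do occur for completely homogeneous polynomials as in Example~\ref{coneex2-2}); what it costs is that it imports the heavier generic-division-algebra input hidden in Theorem~\ref{imhom-2}, whereas the paper keeps this step elementary and defers the Amitsur-style argument to Lemma~\ref{sklem}, where $M_2(K)\setminus\tilde K$ is finally ruled out. (Two trivial points you may wish to make explicit: the scalar part $\lambda$ of your non-diagonal witness in $\hat K$ must be nonzero, and the dense case uses that a quadratically closed field is infinite, so the open locus of matrices with eigenvalues $\lambda_1\ne\pm\lambda_2$ is nonempty.)
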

\begin{proof} If $\Image p$ does not contain
a non-scalar matrix, then $p$ is either PI or central, and we are done. Hence, without loss of generality we can assume that $\Image p$ contains a~non-scalar matrix. By Remark~\ref{linear1-2} the linear span of
$\Image p$ is $\sl_2$ or $M_2(K)$. We treat the characteristic 2
and characteristic $\ne 2$ cases separately.

Case I: $\Char K=2$. Consider the set
\begin{gather*}\Theta= \{ p(e_1,\dots,e_m) \ \text{where the } e_j \text{ are matrix units}\}.\end{gather*}
 If
the linear span of the image is not $\sl_2$, then $\Theta$ contains
 at least one non-scalar diagonal matrix
$\diag\{\lambda_1,\lambda_2\}$, so $\lambda_1\neq -\lambda_2$
(since $+1 = -1$). Hence~by Lemma~\ref{2general_one-2}, all
diagonalizable matrices belong to $\Image p$. Thus, $\Image p$
contains $M_2(K)\setminus\tilde K$.

If the linear span of the image of $p$ is $\sl_2$, then by Lemma~\ref{graph-2r} the identity matrix (and thus all scalar matrices)
and $e_{12}$ (and thus all nilpotent matrices) belong to the
image. On the other hand, in characteristic 2, any matrix $\sl_2$
is conjugate to a matrix of the form $\lambda _1 I + \lambda_2
e_{1,2}$, and we consider the invariant $\frac
{\lambda_2}{\lambda_1}$. Take $x_1,\dots,x_m$ to be generic
matrices. If $p(x_1,\dots,x_m)$ were nilpotent then $\Image p$
would consist only of nilpotent matrices, which is impossible. By
Example~\ref{coneex1-2}(v), $p(x_1,\dots,x_m)$ is not scalar and not
nilpotent, and thus is a matrix from $\tilde K$. Hence, $\tilde K
\subset \Image p$, by Remark~\ref{linear1-2}. Thus, all trace zero
matrices belong to~$\Image p$.

Case II: $\Char K\neq 2$. Again assume that the image is not
$\{0\}$ or the set of scalar matrices. Then $e_{12}\in\Image p$ by Lemma~\ref{graph-2r}. Thus all nilpotent matrices
lie in $\Image p$. If the image consists only of matrices of trace
zero, then there is at least one matrix in
the image with a nonzero diagonal entry. By Lemma~\ref{graph-2r}
there is a set of matrix units that maps to a nonzero diagonal
matrix which, by assumption, is of trace zero and thus is
$\left(\begin{smallmatrix}c & 0
\\0 & -c\end{smallmatrix}\right)$. By Lemma \ref{cone_conj-2} and Example~\ref{coneex1-2}, $\Image p$ contains
 all trace zero $2\times 2$ matrices.

Assume that the image contains a matrix with nonzero trace. Then by Remark~\ref{linear1-2}
the linear span of the image is $M_2(K)$, and together with Lemma~\ref{graph-2r} we have at least two diagonal linearly independent
matrices in the image. Either these matrices have ratios of
eigenvalues $(\lambda_1:\lambda_2)$ and $(\lambda_2:\lambda_1)$
for $\lambda_1\neq\pm \lambda_2$ or these matrices have
non-equivalent ratios. In the first case we can use Lemma~\ref{2general_one-2} which says that all diagonalizable matrices lie
in the image. If at least one of these matrices have ratio not
equal to $\pm 1$, then in the second case we also use Lemma~\ref{2general_one-2} and obtain that all diagonalizable matrices lie
in the image. If these matrices are such that the ratios of their
eigenvalues are respectively $1$ and $-1$, then we use Lemma~\ref{2two-2} and obtain that all diagonalizable matrices with
distinct eigenvalues lie in the image. By assumption, in this
case, scalar matrices also belong to the image. Therefore we
obtain that for any ratio $(\lambda_1:\lambda_2)$ there is a
matrix $A\in\Image p$ having such a ratio of eigenvalues. Using
Lemmas~\ref{cone_conj-2} and~\ref{linear1-2}, we obtain that the image
of $p$ can be either $\{0\}$, $K$, $\sl_2$, $M_2(K)$, or $M_2(K)\setminus\tilde K$.
\end{proof}
\begin{Lemma}\label{char2-2} If $p$ is a multilinear polynomial evaluated on the
matrix ring $M_2(K)$, where $K$ is a quadratically closed field of
characteristic $2$, then $\Image p$ is either $\{0\}$, $K$,
$\sl_2$, or $M_2(K)$.
\end{Lemma}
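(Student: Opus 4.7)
The plan is to reduce Lemma~\ref{char2-2} to Lemma~\ref{thm1-2}, which lists $\Image p$ as one of $\{0\}$, $K$, $\sl_2$, $M_2(K)$, or $M_2(K)\setminus\tilde K$: one need only rule out the anomalous last possibility in characteristic~$2$. So assume, for contradiction, that $\Image p=M_2(K)\setminus\tilde K$. A key feature of characteristic~$2$ is that $\sl_2(K)\setminus\tilde K$ is exactly the union of the scalar matrices with the nilpotent matrices, since any trace-zero $2\times 2$ matrix has characteristic polynomial $X^2+\det X$ and a double eigenvalue $\sqrt{\det X}$, so is scalar, nilpotent, or in $\tilde K$.

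The core idea is to use multilinearity to produce an affine line in $\Image p$ and force it to intersect $\tilde K$. Since $\Image p$ contains matrices of nonzero trace, pick $\bar x=(x_1,\dots,x_m)$ with $\tau:=\tr p(\bar x)\ne 0$, and for any $y\in M_2(K)$ set $A:=p(\bar x)$ and $B:=p(y,x_2,\dots,x_m)$; then $A+tB=p(x_1+ty,x_2,\dots,x_m)\in\Image p$ for all $t\in K$. The linear functional $y\mapsto\tr p(y,x_2,\dots,x_m)$ is not identically zero (it equals $\tau$ at $y=x_1$), so $y$ may be chosen with $\sigma:=\tr B\ne 0$; then at $t_0:=\tau/\sigma$ (using $\Char K=2$) the matrix $\Phi:=A+t_0 B$ has trace zero and thus lies in $\sl_2(K)\cap\Image p$. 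Clearing denominators, the polynomial
\begin{equation*}
\Psi(\bar x,y):=\sigma\,p(\bar x)+\tau\,p(y,x_2,\dots,x_m)=\sigma\Phi
\end{equation*}
takes values in $\sl_2(K)$ and, wherever $\sigma\tau\ne 0$, is scalar (resp.\ nilpotent) precisely when $\Phi$ is. Both ``$\Psi$ is scalar'' and ``$\Psi$ is nilpotent'' cut out closed subvarieties of the irreducible variety $(M_2(K))^{m+1}$; their union contains the Zariski-dense locus $\{\sigma\tau\ne 0\}$, so it equals the whole space, and irreducibility forces one of the two loci to be everything. This yields the dichotomy: either $\Psi$ is identically scalar, or $\Psi^2\equiv 0$.

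Refuting both alternatives is the main work, and the step I anticipate will be the most delicate. For the nilpotent branch, if $\Psi^2\equiv 0$ then since Amitsur's generic division algebra $\widetilde{\UD}$ (Proposition~\ref{Am1-2}) has no zero divisors, $\Psi$ itself is identically zero: $\sigma A+\tau B=0$ for all $(\bar x,y)$. Hence $p(y,x_2,\dots,x_m)=(\sigma/\tau)\,p(\bar x)$ for every $y$, so the linear map $y\mapsto p(y,x_2,\dots,x_m)$ has image contained in the single line $K\cdot p(\bar x)$. Repeating with each other slot playing the role of $x_1$, the constraint ``rank at most $1$ in every slot'' is incompatible with $\Image p$ containing a non-scalar matrix, a contradiction. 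For the scalar branch, $\sigma A+\tau B\in K\cdot I$ always; modulo scalars this reads $p(y,x_2,\dots,x_m)\equiv(\sigma/\tau)\,p(\bar x)\bmod K\cdot I$, so $y\mapsto p(y,x_2,\dots,x_m)$ has image inside the $2$-dimensional subspace $K\cdot I+K\cdot p(\bar x)$. Symmetrising over all slots and transferring this rank bound into $\widetilde{\UD}$ via Proposition~\ref{Am1-2} yields an additional quadratic relation on $p(\bar\xi)$ over the centre that forces $p$ to be central, contradicting $\tau\ne 0$. Turning these rank bounds cleanly into the centrality conclusion is the principal obstacle I anticipate, and the one requiring the structure theory of $\widetilde{\UD}$ rather than linear algebra alone.
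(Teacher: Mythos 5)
Your reduction via Lemma~\ref{thm1-2} to excluding $\Image p=M_2(K)\setminus\tilde K$, your auxiliary polynomial $\Psi=\tr(B)\,A+\tr(A)\,B$ (which, by multilinearity, is exactly the $p_i$ used in the paper's own proof of Lemma~\ref{char2-2}), the use of $\Char K=2$ to make $\tr\Psi\equiv 0$, and the scalar-versus-nilpotent dichotomy via irreducibility of the parameter space all match the paper's route. But the proof is not complete: the decisive step, closing the ``identically scalar'' branch, is precisely what you defer as ``the principal obstacle I anticipate,'' and the sentence about ``an additional quadratic relation over the centre that forces $p$ to be central'' is a hope, not an argument. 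The paper closes this branch quickly: since $p$ takes values of nonzero trace, $\tr(p)$ is a nonzero central element of the algebra of generic matrices with traces, so from ``$\tr(b_i)\,p+\tr(p)\,b_i$ is scalar'' one gets $b_i\in\langle p,I\rangle$ over the central fractions, for every slot $i$; hence replacing the variables one at a time leaves the plane $\langle p,I\rangle$ invariant, so $\dim\Image p\le 2$, contradicting that $M_2(K)\setminus\tilde K$ is Zariski dense. If you insist on your centrality endpoint you must still prove it (for instance, conjugation-invariance of $\Image p$ is incompatible with a non-scalar value lying in a fixed $2$-plane containing $I$, since the conjugacy class of a non-scalar $2\times 2$ matrix affinely spans the $3$-dimensional trace hyperplane); as written, the heart of the lemma is missing.

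There are also two structural slips in your branch analysis. First, the dichotomy is established separately for each slot, so inside the nilpotent branch you are not entitled to ``repeat with each other slot'': slot $1$ may satisfy $\Psi\equiv 0$ while another slot falls in the scalar branch. The clean repair is the paper's: by the domain property of $\widetilde{\UD}$ (Proposition~\ref{Am1-2}) the nilpotent branch forces $\Psi\equiv 0$, and $0$ is scalar, so all slots can be handled uniformly in the scalar case. Second, even granting ``rank at most $1$ in every slot,'' this is not ``incompatible with $\Image p$ containing a non-scalar matrix'' --- a single line $K\cdot A$ already contains non-scalar matrices. What it actually contradicts is the existence of two non-proportional values of $p$ (via the mechanism of Lemma~\ref{dim2-2r}, which produces two non-proportional values differing in a single slot), equivalently the standing assumption that $\Image p=M_2(K)\setminus\tilde K$ is dense; the contradiction must be drawn against that assumption, and for a single slot in the nilpotent branch it does not follow at all without further work.
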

\begin{proof}In view of Lemma \ref{thm1-2}, it suffices to assume that the image
of $p$ is $M_2(K)\setminus\tilde K$. Let
$x_1,\dots,x_m$, $y_1,\dots,y_m$ be generic matrices. Consider the
polynomials
\begin{gather*}b_i=p(x_1,\dots,x_{i-1},y_i,x_{i+1},\dots,x_m).\end{gather*} Let
$p_i(x_1,\dots,x_m,y_i)=p\tr(b_i)+\tr(p)b_i$. Hence $p_i$ can be
written as
\begin{gather*}p_i=p(x_1,\dots,x_{i-1},x_i\tr(b_i)+y_i\tr(p),x_{i+1},\dots,x_m).\end{gather*}
Therefore $\Image p_i\subseteq\Image p$. Also if $a\in\Image p_i$,
then
\begin{gather*}\tr(a)=\tr(p\,\tr(b_i)+\tr(p)\,b_i)=2\,\tr(p)\,\tr(b_i)=0.\end{gather*}
Thus, $\Image p_i$ consists only of trace-zero matrices which belong
to the image of $p$. Excluding~$\tilde K$, the only trace zero
matrices are nilpotent or scalar. Thus, for each~$i$,
$p_i(x_1,\dots,x_m,y_i)$ is either scalar or nilpotent. However, the
$p_i$ are the elements of the algebra of generic matrices with
traces, which is a domain. Thus, $p_i(x_1,\dots,x_m,y_i)$ cannot be
nilpotent. Hence for all $i=1,\ldots, m$, $p_i(x_1,\dots,x_m,y_i)$
is scalar. In this case, changing variables leaves the plane
$\langle p,I\rangle$ invariant. Therefore, $\dim (\Image p) = 2$, a
contradiction.
\end{proof}
\begin{Lemma}\label{sklem}\label{char_n_2} If $p$ is a multilinear polynomial evaluated on
the matrix ring $M_2(K)$ $($where $K$ is a quadratically closed
field of characteristic not $2)$, then $\Image p$ is either
$\{0\}$, $K$, $\sl_2$, or $M_2(K)$.
\end{Lemma}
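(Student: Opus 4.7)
The plan is to argue by contradiction via a dimension count on the discriminant hypersurface. By Lemma~\ref{thm1-2}, it suffices to rule out the possibility $\Image p = M_2(K) \setminus \tilde K$; I assume this equality holds. Since $\Image p$ contains non-scalar matrices, $p$ is non-central, so its image in Amitsur's generic division algebra $\widetilde{\UD}$ (Proposition~\ref{Am1-2}) is non-central. The Cayley--Hamilton identity
\begin{gather*}
\bigl(p - \tfrac{1}{2}\tr(p)\cdot I\bigr)^2 = \tfrac{1}{4}\discr(p)\cdot I, \qquad \discr(p) := (\tr p)^2 - 4\det p,
\end{gather*}
together with the domain property of $\widetilde{\UD}$, forces $\discr(p) \neq 0$: otherwise $p - \tfrac{1}{2}\tr(p)\cdot I$ would be nilpotent, hence zero, in $\widetilde{\UD}$, making $p$ central. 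Dividing by $2$ uses $\Char K \neq 2$. Thus $\discr(p)$ is a nonzero polynomial in the generic matrix entries, and $D := \{x\in M_2(K)^m : \discr(p(x)) = 0\}$ is a proper codimension-$1$ hypersurface.

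On $D$, $p(x)$ has a repeated eigenvalue, so (being in $M_2(K) \setminus \tilde K$) it must be either scalar or nilpotent. Consequently every irreducible component $D_i$ of $D$---each of codimension~$1$---sits either in the scalar locus $\{x : p(x) \in KI\}$ or in the nilpotent locus $\{x : \tr p(x) = 0 = \det p(x)\}$. The key algebraic input is that $\tr p$ and $\det p$ are algebraically independent: every diagonal matrix $\diag(a,b)$ lies in $M_2(K) \setminus \tilde K = \Image p$, so the map $x \mapsto (\tr p(x), \det p(x))$ hits every pair $(a+b, ab)$; since $K$ is quadratically closed, $(a,b) \mapsto (a+b,ab)$ is surjective onto $K^2$, making $(\tr p, \det p)$ dominant. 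Therefore the nilpotent locus has codimension exactly~$2$, and no codimension-$1$ component $D_i$ can sit inside it.

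Hence every $D_i$ lies in the scalar locus, forcing $D \subseteq \{x : p(x) \in KI\}$. But $e_{12} \in \Image p$ supplies a point $x_0$ with $p(x_0) = e_{12}$, which satisfies $\discr(p(x_0)) = 0$ so $x_0 \in D$, while $p(x_0) = e_{12}$ is not scalar---a contradiction. The main technical hurdle will be establishing the algebraic independence of $\tr p$ and $\det p$; once this is in hand, the codimension mismatch between $D$ (codim~$1$) and the nilpotent locus (codim~$2$) forces $D$ into the scalar locus, and the explicit nilpotent $e_{12} \in \Image p$ completes the contradiction.
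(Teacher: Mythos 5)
Your reduction to excluding $\Image p = M_2(K)\setminus\tilde K$ (via Lemma~\ref{thm1-2}), the observation that $\tr^2p-4\det p$ is not identically zero, and the remark that every point of the discriminant hypersurface $D$ maps to a scalar or nilpotent value are all fine. The gap is the step ``therefore the nilpotent locus has codimension exactly $2$.'' Dominance (equivalently, algebraic independence) of $(\tr p,\det p)$ only controls the \emph{generic} fibre; the special fibre over $(0,0)$ can contain a hypersurface, which happens precisely when $\tr p$ and $\det p$ share a common irreducible factor, and algebraic independence does not exclude a common factor (compare $x$ and $xy$). This is not a repairable technicality: after the reduction, your argument uses nothing about multilinearity, so it applies verbatim to the completely homogeneous polynomial of Example~\ref{coneex2-2}(iv), which is non-central, takes the value $e_{12}$, has all diagonal matrices in its image (so $(\tr p,\det p)$ is dominant), and whose image really is $M_2(K)\setminus\tilde K$ --- so your proof would ``refute'' a true statement. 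Concretely, writing that polynomial as $p=-\det(c)I+\delta c$ with $c=\big[(x_1x_2)^2,(y_1y_2)^2\big]$ trace zero and $\delta$ the central value of $[x_1y_1,x_2y_2]^2$, one finds $\tr p=-2\det(c)$ and $\det p=\det(c)\big(\det(c)+\delta^2\big)$: the two share the factor $\det(c)$, and the nilpotent locus contains the codimension-one set $\{\det(c)=0\}$, exactly the behaviour your dimension count claims to exclude.

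So the genuinely hard content of the lemma is to show that, for \emph{multilinear} $p$, the discriminant cannot vanish on a hypersurface splitting between scalar and nilpotent values (equivalently, that the discriminant surface cannot be reducible in the sense of Remark~\ref{Phi}); the paper proves this by a much more hands-on route: varying one variable to get the line $A+tB$, a case analysis forcing each such substitution to yield either scalar values or values with a fixed ratio of eigenvalues, a factorization argument (using the splitting of the variables into the sets $S_1$, $S_2$, which is where multilinearity enters essentially) showing that $R=\sqrt{\tfrac14\tr^2p-\det p}$ is itself a polynomial, and then Donkin's theorem plus the Deligne trick, since $(p-\lambda_1I)(p-\lambda_2I)=0$ would give a zero divisor in the domain of generic matrices with traces. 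To rescue your approach you would have to prove directly that $\tr p$ and $\det p$ have no common irreducible factor when $p$ is multilinear (and also treat with more care the dimension-theoretic statements about $K$-points, since $K$ is only assumed quadratically closed, not algebraically closed); at present that is exactly the missing step.
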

The proof of Lemma \ref{sklem} is given in Section~\ref{Deligne}.

Finally, Theorem \ref{main-2} follows from Lemmas~\ref{char2-2} and~\ref{char_n_2}.

\subsubsection{Images of arbitrary non-homogeneous polynomials}

We consider briefly the general non-homogeneous case. One can write any
polynomial $p(x_1,\dots,\allowbreak x_m)$ as $p=h_k+\dots+h_n$, where the~$h_i$ are semi-homogeneous polynomials of weighted degree~$i$.

\begin{Proposition} Notation as above,
assume that there are weights $(w_1,\dots,w_m)$ that $\Image {h_n}$ is dense in $M_2(K)$. Then $\Image p$ is
dense in $M_2(K)$.
\end{Proposition}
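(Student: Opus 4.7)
The plan is to exploit a simple scaling argument based on the weighted semi\dash homogeneity of the components $h_i$. For any scalar $\lambda \in K$, substituting $x_i \mapsto \lambda^{w_i} x_i$ and using that $h_i$ is semi\dash homogeneous of weighted degree $i$ with weights $(w_1,\dots,w_m)$, one obtains
\begin{gather*}
p\bigl(\lambda^{w_1} x_1,\dots,\lambda^{w_m} x_m\bigr) = \sum_{i=k}^{n} \lambda^{i}\, h_i(x_1,\dots,x_m),
\end{gather*}
and this matrix lies in $\Image p$ for every choice of $\lambda\in K$ and $x_1,\dots,x_m\in M_2(K)$.

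To establish Zariski density of $\Image p$, I would argue by contradiction. Suppose a nonzero polynomial function $f$ on $M_2(K)$ (viewed as a polynomial in the four matrix entries) vanishes on $\Image p$, and decompose $f = f_0 + f_1 + \cdots + f_d$ into its homogeneous components, with $f_d \ne 0$. Substituting $y = \sum_{i} \lambda^{i} h_i(x)$ into $f(y)=0$ yields a polynomial identity in~$\lambda$ whose coefficients are polynomials in the entries of the $x_j$. Since $K$ is quadratically closed, in particular infinite, every such coefficient must vanish identically in the $x_j$.

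Now I would identify the leading coefficient in $\lambda$. Because each entry of $\sum_i \lambda^{i} h_i(x)$ is a polynomial in $\lambda$ of degree at most $n$, the expression $f_j(\sum_i \lambda^{i} h_i(x))$ has degree at most $jn$ in $\lambda$; hence $f(\sum_i \lambda^{i} h_i(x))$ has total $\lambda$\dash degree at most $dn$, attained only by $f_d$ applied to the top term $\lambda^{n} h_n(x)$, with coefficient $f_d(h_n(x))$ by homogeneity. Thus $f_d$ vanishes on $\Image h_n$. Because $\Image h_n$ is Zariski dense in $M_2(K)$ by hypothesis, this forces $f_d = 0$, contradicting $f_d \ne 0$. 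Hence $\Image p$ is Zariski dense in $M_2(K)$.

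This argument is essentially routine once the right substitution is in place; the only step requiring any care is confirming that the $\lambda^{dn}$ coefficient of $f(\sum_i \lambda^{i} h_i(x))$ is exactly $f_d(h_n(x))$, which is immediate from the homogeneity of $f_d$ and the fact that no cross\dash terms from lower $h_i$'s can contribute to the maximal $\lambda$\dash degree. No deep tool is needed; the semi\dash homogeneity of the $h_i$ and the density of $\Image h_n$ together with $K$ being infinite do all the work.
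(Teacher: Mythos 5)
Your proof is correct and follows essentially the same route as the paper: substitute $x_i \mapsto \lambda^{w_i}x_i$ so that $\sum_i \lambda^i h_i(x_1,\dots,x_m)\in\Image p$, then degenerate to the top term in $\lambda$ to transfer a hypothetical polynomial relation on the entries of $p$ to one on the entries of $h_n$, contradicting the density of $\Image h_n$. The paper carries out the degeneration by passing to $\tilde P=\lambda^{-n}p\big(\lambda^{w_1}x_1,\dots,\lambda^{w_m}x_m\big)$ and specializing $\varepsilon=1/\lambda$ to $0$, while you extract the coefficient of $\lambda^{dn}$, which is the top homogeneous component $f_d$ evaluated at $h_n(x)$ — the same idea, with your version making explicit the homogeneity bookkeeping the paper leaves implicit.
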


\begin{proof} Consider
\begin{gather*}p\big(\lambda^w_1x_1,\dots,\lambda^w_mx_m\big)=\sum\limits_{i=k}^n
h_i\lambda^i.\end{gather*} One can write $\tilde
P=\lambda^{-n}p\big(\lambda^w_1x_1,\dots,\lambda^w_mx_m\big)$ as a
polynomial in $x_1,\dots,x_m$ and $\varepsilon=\frac{1}{\lambda}$.
The matrix polynomial is the set of four polynomials
$p_{1,1}$, $p_{1,2}$, $p_{2,1}$, $p_{2,2}$, which we claim are independent.
If there is some polynomial $h$ in four variables such that
$h(p_{1,1},p_{1,2},p_{2,1},p_{2,2})=0$ then $h$ should vanish on
four polynomials of $\tilde P$ for each~$\varepsilon$, in
particular for $\varepsilon=0$, a contradiction.
\end{proof}

\begin{Remark} The case remains open where
 $p(x_1,\dots,x_m)$ is a polynomial for which there are no
weights $(w_1,\dots,w_m)$ such that one can write
$p=h_k+\dots+h_n$, where $h_i$ is semi-homogeneous of weighted
degree $i$ and $h_n$ has dense image in~$M_2$.\end{Remark}

\begin{Example}\label{nondense-2} For $\Char K\neq 2$ we give an example of such a polynomial
whose middle term has dense image in~$M_2(K)$. Take the polynomial
\begin{gather*}f(x,y)=[x,y]+[x,y]^2.\end{gather*} It is not hard to check that $\Image f$ is
the set of all matrices with eigenvalues $c^2+c$ and $c^2-c$.
Consider
$p(\alpha_1,\alpha_2,\beta_1,\beta_2)=f\big(\alpha_1+\beta_1^2,\alpha_2+\beta_2^2\big)$.
The polynomials $f$ and $p$ have the same images. Now let us open
the brackets. The term of degree 4 is
$h_4=[\alpha_1,\alpha_2]^2+\big[\beta_1^2,\beta_2^2\big]$. The image of
$h_4$ is all of $M_2(K)$, because $[\alpha_1,\alpha_2]^2$ can be
any scalar matrix and $\big[\beta_1^2,\beta_2^2\big]$ can be any trace
zero matrix. However the image of~$p$ is the set of all matrices
with eigenvalues $c^2+c$ and $c^2-c$.
\end{Example}

\subsubsection{The cases of an arbitrary real field}\label{2r}

Let $p$ be a multilinear polynomial in several non-commuting
variables with coefficients in an arbitrary field~$K$. Kaplansky
conjectured that for any $n$, the image of $p$ evaluated on the set~$M_n(K)$ of $n$ by $n$ matrices is either zero, or the set of scalar matrices, or the set~$\sl_n(K)$ of matrices of trace~$0$, or all of~$M_n(K)$. This conjecture was proved for $n=2$ when~$K$ is closed under quadratic extensions. In this section the conjecture is
verified for $K=\mathbb{R}$ and $n=2$, also for semi-homogeneous
polynomials~$p$, with a partial solution for an arbitrary field~$K$.

In Section~\ref{2q} the field $K$ was required to be
quadratically closed. Even for the field $\mathbb{R}$ of real
numbers L'vov--Kaplansky's question remained open, leading people
to ask what happens if the field is not quadratically closed? This
subsection provides a positive partial answer.

\subsubsection[Images of multilinear polynomials evaluated on $M_2(K)$]{Images of multilinear polynomials evaluated on $\boldsymbol{M_2(K)}$}\label{im-of-pol-2r}

Assume that $p$ is a multilinear polynomial evaluated on $2\times 2$
matrices over any field $K$. Assume also that $p$ is neither PI nor
central. Then, by Lemma~\ref{graph-2r} there exist matrix units $a_1,\dots,a_m$ such that
$p(a_1,\dots,a_m)=e_{12}$. Let us consider the mapping $\chi$
defined on matrix units that switches the indices~$1$ and $2$, i.e.,
$e_{11}\leftrightarrow e_{22}$ and $e_{12}\leftrightarrow e_{21}$.
Now let us consider the mapping $f$ defined on $m$ pairs
$T_i=(t_i,\tau_i):$
\begin{gather*}f(T_1,\dots,T_m)=p(t_1a_1+\tau_1\chi(a_1),t_2a_2+\tau_2\chi(a_2),\dots,t_ma_m+\tau_m\chi(a_m)).\end{gather*}
Now let us open the brackets. We show in the
proof of Theorem~\ref{thmB1-pcp} that any matrix of the image of $f$ can be
written as $c_1e_{12}+\dots+c_{n-1}e_{n-1,n}+c_ne_{n,n-1}$. In our
case $n=2$ and the image of $f$ contains only matrices of the type
$c_1e_{12}+c_2e_{21}$. Note that the matrices~$e_{12}$ and~$e_{21}$ both belong to the image of~$f$ since
$p(a_1,\dots,a_m)=e_{12}$ and
$p(\chi(a_1),\dots,\chi(a_m))=e_{21}$. According to Lemma~\ref{dim2-2r}
 the image of~$f$ is at least $2$-dimensional, and lies in the $2$-dimensional plane~$\langle e_{12}, e_{21}\rangle$. Therefore this plane is exactly the image of~$f$. Now we are ready to prove the following:
\begin{Lemma}\label{genfield-2r}
 If $p$ is a multilinear polynomial evaluated on the
 matrix ring $M_2(K)$ $($for an arbitrary field~$K)$,
 then $\Image p$ is either $\{0\}$ or~$K$, or $\ssl_2\setminus K\subseteq\Image p$.
\end{Lemma}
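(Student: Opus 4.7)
The plan is to leverage the construction described immediately before the statement. Assume $p$ is neither a PI nor central (otherwise $\Image p = \{0\}$ or $K$ and we are done). The paragraph preceding the lemma, invoking Lemma~\ref{graph-2r} together with Lemma~\ref{dim2-2r} applied to the symmetrized multilinear map $f(T_1,\dots,T_m)$, already establishes that the entire two-dimensional plane $\langle e_{12}, e_{21}\rangle$ is contained in $\Image p$. So all the work is done except passing from this plane to its conjugacy closure.

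Thus it suffices to show that every $M \in \ssl_2(K) \setminus K$ is similar to a matrix of the form $c_1 e_{12} + c_2 e_{21}$, and then to invoke Remark~\ref{cong-2} (conjugation invariance of $\Image p$). For the similarity claim I would use the rational canonical form: since $M$ is non-scalar, its minimal polynomial equals its characteristic polynomial $\chi_M(x) = x^2 - (\tr M)x + \det M = x^2 + \det M$, and $M$ is therefore conjugate to the companion matrix
\begin{gather*}
\begin{pmatrix} 0 & -\det M \\ 1 & 0 \end{pmatrix} = -(\det M)\, e_{12} + e_{21},
\end{gather*}
which manifestly lies in $\langle e_{12}, e_{21}\rangle$. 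This handles nilpotent $M$ (where $\det M = 0$), diagonalizable $M$ (where $-\det M$ is a square in $K$), and matrices whose eigenvalues do not lie in $K$ (where $x^2 + \det M$ is irreducible over $K$) in a single uniform step.

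The argument has essentially no hard point; the only non-cosmetic content beyond the construction already built up before the statement is the uniform conjugacy normal form above, which circumvents the usual case split by whether the eigenvalues lie in $K$. I would expect the mild subtlety to be bookkeeping when $\Char K = 2$ and scalar matrices happen to sit inside $\ssl_2(K)$, but since we are only asserting $\ssl_2(K)\setminus K \subseteq \Image p$, those scalar traceless matrices are explicitly excluded and cause no trouble.
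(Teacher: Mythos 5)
Your proposal is correct and essentially reproduces the paper's argument: the paper likewise takes the plane $\langle e_{12},e_{21}\rangle\subseteq\Image p$ established just before the lemma and shows any non-scalar trace-zero $A$ is similar to an element of it, by choosing a non-eigenvector $v_1$, setting $v_2=Av_1$, and noting $Av_2=-\det(A)v_1$ — i.e., exactly the companion matrix $-(\det A)e_{12}+e_{21}$ you obtain via the rational canonical form. The only difference is that you cite the canonical form abstractly while the paper writes out the cyclic-vector basis change explicitly; the content is the same.
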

\begin{proof}
 Let $A$ be any trace zero, non-scalar matrix. Take any vector $v_1$ that is not an eigenvector of $A$.
 Consider the vector $v_2=Av_1$. Note that \begin{gather*}Av_2=A^2v_1=-\det(A)v_1,\end{gather*} and therefore the matrix $A$ with respect to
 the base $\{v_1,v_2\}$ has the form $c_1e_{12}+c_2e_{21}$, for some~$c_i$. Hence $A$ is similar to $c_1e_{12}+c_2e_{21} \in\Image p$, implying $A\in\Image p$.
\end{proof}
\begin{Remark}\label{chr-n2-2r}
 Note that for $\Char(K)\neq 2$ (in particular for $K=\mathbb{R}$), \begin{gather*}(\ssl_2\setminus K) \cup\{0\}=
 \ssl_2\subseteq\Image p.\end{gather*}
\end{Remark}

{\bf The real case.}
Now we assume that $K=\mathbb{R}$. We already know that either $p$ is PI, or central, or
$\ssl_2\subseteq\Image p$. Assume that $\ssl_2\subsetneqq\Image p$.
We will use the following lemma:
\begin{Lemma}\label{ineq-2r}
 Let $p$ be any multilinear polynomial satisfying $\ssl_2\subsetneqq\Image p$.
 For any $q\in\mathbb{R}$ there exist generic matrices $x_1,\dots, x_m,y_1,\dots,y_m$ such that
 for $X=p(x_1,\dots,x_m)$ and $Y=p(y_1,\dots,y_m)$ we have the following:
 \begin{gather*}\frac{\det X}{\tr^2 X}\leq q\leq \frac{\det Y}{\tr^2 Y},\end{gather*}
 where $\tr^2 M$ denotes the square of the trace of $M$.
\end{Lemma}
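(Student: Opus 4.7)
By Lemma~\ref{genfield-2r} together with Remark~\ref{chr-n2-2r}, the hypothesis $\ssl_2\subsetneq\Image p$ forces $\ssl_2\subseteq\Image p$. In particular, both $A=\diag(1,-1)$ and $B=\left(\begin{smallmatrix}0&-1\\1&0\end{smallmatrix}\right)$ lie in $\Image p$; they have trace $0$ and determinants $-1$ and $+1$ respectively. Moreover, since $\Image p$ also contains elements of nonzero trace, the polynomial function $\tr\circ p$ is not identically zero on $M_2(\R)^m$.

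The plan is to show that the invariant $\phi(M):=\det M/\tr^2 M$, pulled back through $p$ to the open subset of $M_2(\R)^m$ on which $\tr\circ p$ does not vanish, is unbounded both above and below. Fix a tuple $(a_1,\ldots,a_m)$ with $p(a_1,\ldots,a_m)=A$. Because $\tr\circ p$ is a nonzero polynomial, its real zero locus has empty Euclidean interior, so arbitrarily close to $(a_1,\ldots,a_m)$ there exist tuples $(a_1',\ldots,a_m')$ with $\tr p(a')\ne 0$. By continuity of $\tr\circ p$ and $\det\circ p$, we can arrange $|\tr p(a')|$ to be as small as desired while $\det p(a')$ stays arbitrarily close to $\det A=-1$; hence $\phi(p(a'))\to -\infty$ along such a perturbation. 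The same argument starting from a tuple realizing $B$ (with $\det B=+1$) produces evaluations on which $\phi\circ p\to +\infty$.

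Given $q\in\R$, the previous step supplies nonempty open sets $U,U'\subseteq M_2(\R)^m$ on which $\phi\circ p<q$ and $\phi\circ p>q$ respectively. Applying Lemma~\ref{gen-real-2r} with $F=\Q[c_1,\ldots,c_N]$ (the field generated by the coefficients of $p$) to each of these sets, one selects tuples $(x_1,\ldots,x_m)\in U$ and $(y_1,\ldots,y_m)\in U'$ that are generic over $F$ and still satisfy the respective strict inequalities. Setting $X=p(x_1,\ldots,x_m)$ and $Y=p(y_1,\ldots,y_m)$ then yields $\det X/\tr^2 X<q<\det Y/\tr^2 Y$, which implies the weak inequality required by the lemma. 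The crux is the perturbation step: it succeeds precisely because $\ssl_2\subsetneq\Image p$ forces $\tr\circ p$ to be a nontrivial polynomial, hence with empty-interior zero locus, after which everything is standard real continuity together with Lemma~\ref{gen-real-2r}.
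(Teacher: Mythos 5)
Your proof is correct and follows essentially the same route as the paper's: both take preimages of the trace-zero matrices with determinant $-1$ and $+1$ (the paper uses $\Omega=e_{11}-e_{22}$ and $\Upsilon=e_{12}-e_{21}$), observe that near these points the ratio $\det/\tr^2$ becomes arbitrarily negative, respectively positive, and then invoke Lemma~\ref{gen-real-2r} to replace the perturbed tuples by generic ones. Your extra remark that $\tr\circ p$ is a nonzero polynomial with empty-interior zero locus just makes explicit a point the paper leaves implicit, so there is nothing substantively different.
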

\begin{proof} We know that $\ssl_2\subseteq\Image p$, in particular for the matrices
 $\Omega=e_{11}-e_{22}$ and $\Upsilon=e_{12}-e_{21}$
 there exist matrices $a_1,\dots,a_m,b_1,\dots,b_m$ such that
 $p(a_1,\dots,a_m)=\Omega$ and
 $p(b_1,\dots,b_m)=\Upsilon$.
 Note $\frac{\det M}{\tr^2 M}\leq q$ if $M$ is close to $\Omega$ and
 $\frac{\det M}{\tr^2 M}>q$ if $M$ is close to $\Upsilon$.
 Now we consider a very small $\delta>0$ such that for any matrices $x_i\in N_\delta(a_i)$ and $y_i\in N_\delta(b_i)$
 \begin{gather*}\frac{\det X}{\tr^2 X}\leq q\leq \frac{\det Y}{\tr^2 Y},\end{gather*} where
 $X=p(x_1,\dots,x_m)$ and $Y=p(y_1,\dots,y_m)$.
 Here by $N_\delta(x)$ we denote a $\delta$-neighborhood of $x$, under the
 max norm $\Arrowvert A \Arrowvert=\max\limits_{i,j} \arrowvert a_{ij} \arrowvert$.
 According to Lemma~\ref{gen-real-2r} one can choose generic matrices with such property.
\end{proof}

 Now we are ready to prove that the image of $g(x_1,\dots,x_m)=\frac{\det p}{\tr^2 p}$ is all of $\mathbb{R}$:
 \begin{Lemma}\label{anyq-2r}
 Let $p$ be any multilinear polynomial satisfying $\ssl_2\subsetneqq\Image p$.
 Then for any $q\in\mathbb{R}$ there exists a set of matrices $a_1,\dots, a_m$ such that
 \begin{gather}\label{eq-2r}
\frac{\det p(a_1,\dots,a_m)}{\tr^2 p(a_1,\dots,a_m)}= q.
 \end{gather}
 \end{Lemma}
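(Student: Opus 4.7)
The plan is to derive \eqref{eq-2r} from an intermediate value theorem argument applied to the polynomial function
\begin{gather*}
F_q(a_1,\dots,a_m) := \det p(a_1,\dots,a_m) - q\,\tr^2 p(a_1,\dots,a_m)
\end{gather*}
on $M_2(\R)^m \cong \R^{4m}$. An $m$-tuple $(a_1,\dots,a_m)$ satisfies \eqref{eq-2r} precisely when $F_q$ vanishes at it while $\tr p$ does not, so it suffices to exhibit such a tuple.

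First I would apply Lemma \ref{ineq-2r} to choose generic tuples $X_0$ and $Y_0$ with $\tr p(X_0), \tr p(Y_0)\neq 0$ and $F_q(X_0)\leq 0\leq F_q(Y_0)$; if either inequality is actually an equality we are already done, so we may assume both are strict. Connect $X_0$ and $Y_0$ by the affine path $\gamma(t) = (1-t)X_0 + tY_0$ inside the connected space $\R^{4m}$. Multilinearity of $p$ makes $p(\gamma(t))$ a matrix-valued polynomial in $t$ of degree at most $m$, so $\phi(t) := F_q(\gamma(t))$ is a real polynomial in $t$ with $\phi(0)<0<\phi(1)$. The intermediate value theorem provides $t^*\in(0,1)$ with $\phi(t^*)=0$, that is, $\det p(\gamma(t^*)) = q\,\tr^2 p(\gamma(t^*))$.

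The principal obstacle is to guarantee that $\tr p(\gamma(t^*))\neq 0$, because at a common zero of $F_q$ and $\tr p$ the matrix $p(\gamma(t^*))$ is nilpotent and the ratio in \eqref{eq-2r} is indeterminate. I would resolve this by a codimension/genericity argument. The hypothesis $\ssl_2\subsetneqq\Image p$ implies that $\Image p$ contains $e_{11}-e_{22}$ (trace zero, nonzero determinant) and also contains matrices of nonzero trace, so neither $\tr p$ nor $\det p$ is identically zero on $\R^{4m}$, and no factor of $\tr p$ divides $\det p$. Hence $\tr p$ and $F_q = \det p - q\,\tr^2 p$ are coprime polynomials on $\R^{4m}$, and their common zero locus has codimension at least $2$. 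Taking $(X_0,Y_0)$ generic in the sense of Lemma \ref{gen-real-2r} within the neighborhoods supplied by Lemma \ref{ineq-2r}, the affine line $\gamma$ is then a generic line in $\R^{4m}$ and therefore disjoint from this codimension-$2$ subvariety. Consequently every zero of $\phi$ on $[0,1]$ lies off $\{\tr p=0\}$, and the tuple $\gamma(t^*)$ satisfies both $\tr p(\gamma(t^*))\neq 0$ and $\det p(\gamma(t^*)) = q\,\tr^2 p(\gamma(t^*))$, which is equation \eqref{eq-2r}.
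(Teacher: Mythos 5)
Your overall strategy (use Lemma \ref{ineq-2r} to get two tuples on opposite sides of $q$, then apply the intermediate value theorem along a path and worry only about the trace not vanishing at the zero) is the same as the paper's, but the crucial step where you deviate is exactly where the gap lies. You connect the two tuples by a single straight line in \emph{all} variables, so $\tr p(\gamma(t))$ is a polynomial of degree up to $m$ in $t$ and may vanish inside $(0,1)$; to rule this out you assert that ``no factor of $\tr p$ divides $\det p$,'' hence that $\tr p$ and $F_q=\det p-q\,\tr^2 p$ are coprime and their common zero locus has codimension $\ge 2$. This coprimality is not justified by the facts you cite: knowing that $\Image p$ contains $e_{11}-e_{22}$ and some matrix of nonzero trace only shows that $\tr p$ and $\det p$ are not identically zero, and at best that $\tr p$ itself does not divide $\det p$ (every trace-zero value comes from a point of $\{\tr p=0\}$, and a non-nilpotent one exists). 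It does not exclude a proper irreducible factor $g$ of $\tr p$ dividing $\det p$, i.e.\ a hypersurface on which all values of $p$ are nilpotent; since the preimage of the nilpotent cone under a dominant polynomial map can in principle contain codimension-one components, this needs a genuine argument (presumably via the generic-matrix/Amitsur machinery), and none is given. The subsequent ``generic line avoids a codimension-two real variety'' step also glosses real-versus-complex codimension and the constraint that the endpoints must stay in the neighborhoods supplied by Lemmas \ref{ineq-2r} and \ref{gen-real-2r}, but that is secondary: the unproved coprimality is what your trace-nonvanishing claim actually rests on.

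The paper's proof is engineered precisely so that no such transversality fact is needed. It moves from the tuple $(x_1,\dots,x_m)$ to $(y_1,\dots,y_m)$ one variable at a time, replacing $y_i$ by $\pm y_i$ at each step so that every intermediate value $A_i$ has positive trace (the sign change is harmless because $\det M/\tr^2 M$ is invariant under $M\mapsto -M$). Between two consecutive tuples $A_i$ and $A_{i+1}$, which differ in a single entry, multilinearity makes $M(t)=(1-t)A_i+tA_{i+1}$ an affine path inside $\Image p$, so $\tr M(t)$ is an affine function of $t$ that is positive at both endpoints and hence positive on all of $[0,1]$; the ratio $\psi(t)$ is then continuous on the whole segment and the intermediate value theorem applies directly. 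If you restructure your argument this way (chain of one-variable changes with the sign normalization), your proposal goes through without any appeal to coprimality or genericity of lines; as written, it has a genuine gap.
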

\begin{proof}
 Let $q$ be any real number. According to Lemma \ref{ineq-2r}
 there exist generic matrices $x_1,\dots, x_m$, $y_1,\dots,y_m$ such that
 for $X=p(x_1,\dots,x_m)$ and $Y=p(y_1,\dots,y_m)$ we have the following:
 \begin{gather*}\frac{\det X}{\tr^2 X}\leq q\leq \frac{\det Y}{\tr^2 Y}.\end{gather*}
 Consider the following matrices:
 $A_0=p(\tilde x_1,x_2, \dots,x_m)$, where
 $\tilde x_1$ is either $x_1$ or $-x_1$, such that $\tr A_0>0$.
 $A_1=p(\tilde y_1,x_2,\dots,x_m)$, where
 $\tilde y_1$ is either $y_1$ or $-y_1$ such that $\tr A_1>0$.
 Assume that $A_i$, $\tilde x_1$, $\tilde y_1,\dots,\tilde y_i$ are defined.
 Let \begin{gather*}A_{i+1}=p(\tilde y_1,\dots,\tilde y_i,\tilde y_{i+1},x_{i+2},\dots,x_m),\end{gather*} where
 $\tilde y_{i+1}=\pm y_{i+1}$ is such that $\tr A_{i+1}>0$.
 In such a way we defined matrices $A_i$ for $0\leq i\leq m$.
 Note that for any $2\times 2$ matrix~$M$,
 \begin{gather*}\frac{\det M}{\tr^2 M}=\frac{\det (-M)}{\tr^2 (-M)}.\end{gather*}
 Note that $A_0=\pm p(x_1,\dots,x_m)$ and $A_m=\pm p(y_1,\dots, y_m);$ hence
 \begin{gather*}\frac{\det A_0}{\tr^2 A_0}\leq q\leq \frac{\det A_m}{\tr^2 A_m}.\end{gather*}
 Therefore there exists $i$ such that
 \begin{gather*}\frac{\det A_i}{\tr^2 A_i}\leq q\leq \frac{\det A_{i+1}}{\tr^2 A_{i+1}}.\end{gather*}
Since
 $A_{i}=p(\tilde y_1,\dots,\tilde y_i,x_{i+1},x_{i+2},\dots,x_m)$ and
 $A_{i+1}=p(\tilde y_1,\dots,\tilde y_{i+1},x_{i+2},\dots,x_m)$,
 we can consider the matrix function
 \begin{gather*}M(t)=(1-t)A_i+tA_{i+1}=p(\tilde y_1,\dots,\tilde y_i,(1-t)x_{i+1}+t\tilde y_{i+1},x_{i+2},\dots,x_m).\end{gather*}
 Then $\Image M\subseteq\Image p$, $M(0)=A_i$, $M(1)=A_{i+1}$ both $M(0)$ and $M(1)$ have positive trace, and
 $M$ is an affine function. Therefore for any $t\in [0,1]$ $M(t)$ has positive trace.
 Therefore the function $\psi(t)=\frac{\det M(t)}{\tr^2 M(t)}$ is well defined on $[0,1]$ and continuous.
 Also we have $\psi(0)\leq q\leq \psi(1)$. Thus there exists $\tau\in [0,1]$ such that $\psi(\tau)=q$ and thus
 $M(\tau)\in\Image p$ satisfies equation~\eqref{eq-2r}.
\end{proof}
\begin{Lemma}\label{discr-not-zero-2r}
 Let $p$ be a multilinear polynomial satisfying $\ssl_2\subsetneqq\Image p$.
 Then any matrix with distinct eigenvalues $($i.e., matrix of nonzero discriminant$)$ belongs to $\Image p$.
\end{Lemma}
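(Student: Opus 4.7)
The plan is to reduce the claim to the scalar-invariant $q(M) = \det(M)/\tr^2(M)$, which determines the conjugacy class of a $2\times2$ real matrix up to a real scaling whenever the matrix has distinct eigenvalues and nonzero trace; we then feed in Lemma~\ref{anyq-2r} to realize every value of this invariant inside $\Image p$.

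First I would dispose of the trace-zero case: if $M_0$ has distinct eigenvalues and $\tr M_0 = 0$, then $M_0 \in \ssl_2 \subseteq \Image p$ by hypothesis, and we are done (using Remark~\ref{chr-n2-2r}). So we may assume $\tr M_0 \ne 0$, and set $q := \det M_0 / \tr^2 M_0$; note $q \ne 1/4$ since the eigenvalues are distinct (the discriminant $\tr^2 M_0 - 4\det M_0$ is nonzero).

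Next I would invoke Lemma~\ref{anyq-2r} to obtain matrices $a_1,\dots,a_m$ with $A := p(a_1,\dots,a_m) \in \Image p$, $\tr A \ne 0$, and $\det A / \tr^2 A = q$. Since $p$ is multilinear, $\Image p$ is closed under scaling by any real number, so for $c := \tr M_0 / \tr A$ we have $cA \in \Image p$. By construction $\tr(cA) = \tr M_0$, and the $q$-invariant is scale-invariant, so $\det(cA) = q \cdot \tr^2(cA) = q \cdot \tr^2 M_0 = \det M_0$. Hence $cA$ and $M_0$ share the same characteristic polynomial.

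Finally I would conclude by the standard fact that two $2\times 2$ real matrices with the same characteristic polynomial and distinct roots are conjugate over $\mathbb{R}$: either the eigenvalues are both real (so both matrices are diagonalizable and similar to the same real diagonal matrix), or they form a complex conjugate pair $a \pm bi$ with $b \ne 0$ (so both matrices are similar to the real canonical form $\left(\begin{smallmatrix} a & -b \\ b & a \end{smallmatrix}\right)$). In either case $M_0$ is conjugate to $cA$, and by Remark~\ref{cong-2} we conclude $M_0 \in \Image p$.

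The only subtle step is the last one: one must verify that the eigenvalue type of $A$ matches that of $M_0$ (real vs.\ complex conjugate), but this is forced by $q = q(M_0) = q(A)$, since $q < 1/4$ characterizes real distinct eigenvalues and $q > 1/4$ characterizes non-real ones. Thus the main work is already done by Lemma~\ref{anyq-2r}; the present lemma is essentially a conjugacy-class bookkeeping argument.
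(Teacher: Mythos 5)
Your proof is correct and follows essentially the same route as the paper: use Lemma~\ref{anyq-2r} to realize the invariant $\det/\tr^2$, rescale one argument (multilinearity) to match the trace and hence the determinant, and conclude similarity off the discriminant surface together with conjugation-invariance of the image. The only difference is cosmetic: you make explicit the trace-zero case and the real-conjugacy argument, which the paper leaves implicit.
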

\begin{proof} Let $A$ be any matrix with nonzero discriminant. Let us show that $A\in\Image p$.
 Let $q=\frac{\det A}{\tr^2 A}$.
 According to Lemma~\ref{anyq-2r} there exists a set of matrices $a_1,\dots, a_m$ such that
 $\frac{\det \tilde A}{\tr^2 \tilde A}= q$, where $\tilde A=p(a_1,\dots,a_m)$.
 Take $c\in\R$ such that $\tr\big(c\tilde A\big)=\tr A$. Note $c\tilde A=p(ca_1,a_2,\dots,a_m)$ belongs to $\Image p$.
 Thus \begin{gather*}\frac{\det \big(c\tilde A\big)}{\tr^2 \big(c\tilde A\big)}=q=\frac{\det A}{\tr^2 A},\end{gather*} and $\tr A=\tr\big(c\tilde A)$.
 Hence, $\det\big(c\tilde A\big)=\det(A)$. Therefore the matrices $c\tilde A$ and $A$ are similar since
 they are not from the discriminant surface. Therefore $A\in\Image p$.
\end{proof}
\begin{Lemma}\label{unip-2r} Let $p$ be a multilinear polynomial satisfying $\ssl_2\subsetneqq\Image p$. Then any non-scalar matrix with zero discriminant belongs to $\Image p$.
\end{Lemma}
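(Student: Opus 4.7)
The plan is to split according to whether the repeated eigenvalue of $B$ is zero or not. Since $\discr(B)=0$ and $B$ is non-scalar, $B$ has a double real eigenvalue $\lambda=\tr(B)/2$ and is not diagonalizable, so $B$ is similar to the Jordan block $\lambda I+e_{12}$. If $\lambda=0$, then $B$ is a nonzero nilpotent matrix and hence lies in $\ssl_2\subseteq\Image p$ by hypothesis; conjugation invariance (Remark~\ref{cong-2}) then yields $B\in\Image p$.

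For $\lambda\neq 0$, I would first produce a matrix $\tilde A\in\Image p$ that is non-scalar and satisfies $\discr(\tilde A)=0$ with $\tr(\tilde A)\neq 0$, and then rescale to reach $B$. Applying Lemma~\ref{anyq-2r} with $q=1/4$ furnishes matrices $a_1,\dots,a_m$ such that $\tilde A:=p(a_1,\dots,a_m)$ satisfies $\det(\tilde A)/\tr^2(\tilde A)=1/4$, equivalently $\discr(\tilde A)=0$, with $\tr(\tilde A)\neq 0$. To ensure $\tilde A$ can be chosen non-scalar, I would argue by a dimension count: since $p$ is neither PI nor central (as $\ssl_2\subsetneqq\Image p$), the locus ``$p(\cdot)$ is scalar'' in $(M_2(\R))^m$ is cut out by three polynomial equations and has codimension at least $3$, whereas the hypersurface ``$\discr(p)=0$'' has codimension $1$. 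Hence the zero-discriminant locus is not contained in the scalar locus, and the generic construction underlying Lemma~\ref{anyq-2r}, whose inputs may be perturbed via Lemma~\ref{gen-real-2r}, can be steered so that $\tilde A$ lands off the scalar locus.

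Given $\tilde A\sim\mu I+e_{12}$ with $\mu=\tr(\tilde A)/2\neq 0$, multilinearity of $p$ gives $p(ca_1,a_2,\dots,a_m)=c\tilde A\in\Image p$ for every $c\in\R$; conjugation by $\diag(c,1)$ shows $c\tilde A\sim c\mu I+e_{12}$. Taking $c=\lambda/\mu$ then produces a matrix in $\Image p$ similar to $\lambda I+e_{12}$, which is in turn similar to $B$, so conjugation invariance of $\Image p$ delivers $B\in\Image p$.

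The main obstacle lies in guaranteeing that $\tilde A$ is non-scalar. The dimension sketch above should be turned into a rigorous perturbation argument in the semi-algebraic category, using that the scalar locus in $M_2(\R)$ has real dimension $1$ while the zero-discriminant locus has real dimension $3$, and that the open conditions ``$p$ non-scalar'' and ``$\tr(p)\neq 0$'' can be enforced on the real algebraic set $\{\discr(p)=0\}$ by small perturbations of the generic matrix inputs, as allowed by Lemma~\ref{gen-real-2r}.
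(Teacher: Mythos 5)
Your overall strategy is the same as the paper's: reduce to producing a single non-nilpotent, non-scalar matrix of zero discriminant in $\Image p$, then use multilinearity (scaling) and conjugation invariance to reach every Jordan block $\lambda I+e_{12}$. Your idea of invoking Lemma~\ref{anyq-2r} with $q=\tfrac14$ is attractive because it forces nonzero trace, so the nilpotent possibility disappears automatically (the paper has to exclude it separately via the uniqueness/$\widetilde{\UD}$-domain argument). But the step you yourself flag as the main obstacle is a genuine gap, and your sketch does not close it. Every scalar matrix $\mu I$ with $\mu\neq 0$ satisfies $\det/\tr^2=\tfrac14$, so Lemma~\ref{anyq-2r} applied as a black box may perfectly well return a scalar matrix, and nothing in Lemma~\ref{gen-real-2r} lets you "steer" the intermediate-value crossing point off the scalar locus: perturbing the generic inputs moves the whole segment, and you would still have to show that for generic inputs at least one solution of $\psi(t)=\tfrac14$ on the segment is non-scalar. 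Moreover, the dimension count is reasoned in the wrong direction: being cut out by three polynomial equations gives the scalar locus codimension \emph{at most} $3$, not at least $3$; a lower bound (equivalently, ruling out that a whole component of the zero-discriminant hypersurface of $p$ lies in the scalar locus, i.e., that $p_{12}$, $p_{21}$, $p_{11}-p_{22}$ share a factor there) is exactly the nontrivial content, and it is not supplied.

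The paper closes this gap with a concrete algebraic device that your proposal lacks: it arranges the segment $M(t)=(1-t)A_i+tA_{i+1}$ so that $\discr(A_i)>0$ and $\discr(A_{i+1})<0$ strictly (starting from $e_{11}-e_{22}$ and $e_{12}-e_{21}$ and perturbing to generic matrices), and then observes that if the zero-discriminant value $M(\tau)$ were scalar, then $A_{i+1}$ would equal a real multiple of $A_i$ plus a scalar matrix, whence $\discr(A_{i+1})=\big(\tfrac{1-\tau}{\tau}\big)^2\discr(A_i)\ge 0$, contradicting the opposite strict signs. To repair your proof you should either import this computation (which means running the segment construction with strict discriminant signs rather than citing Lemma~\ref{anyq-2r} as a black box), or actually prove the non-scalarity of a zero-discriminant value by a rigorous argument; as written, the semi-algebraic perturbation sketch does not constitute one.
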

\begin{proof} Let $A$ be any non-scalar matrix with zero discriminant. Let us show that $A\in\Image p$.
 The eigenvalues of $A$ are equal, and therefore they must be real.
 Thus $A$ is similar to the matrix
 $\tilde A = \left(
 \begin{smallmatrix}
 \lambda & 1
 \\ 0 & \lambda
 \end{smallmatrix}
 \right)
 $. If $A$ is nilpotent then $\lambda=0$ and $\tilde A=e_{12}$, and it belongs to $\Image p$ by Lemma~\ref{graph-2r}.
 If $A$ is not nilpotent then we need to show that at least one non-nilpotent matrix of such type belongs to $\Image p$, and
 all other are similar to it.
 We know that the matrices $e_{11}-e_{22}=p(a_1,\dots,a_m)$ and $e_{12}-e_{21}=p(b_1,\dots,b_m)$ for some $a_i$ and $b_i$.
 Note that $e_{11}-e_{22}$ has positive discriminant and $e_{12}-e_{21}$ has negative discriminant.
 Take generic matrices $x_1,x_2,\dots,x_m$, $y_1,\dots,y_m$ such that $x_i\in N_\delta(a_i)$ and $y_i\in N_\delta(b_i)$ where
 $\delta>0$ is so small that $p(x_1,\dots,x_m)$ has positive discriminant and $p(y_1,\dots,y_m)$ has negative discriminant.
 Consider the following matrices:
 \begin{gather*}A_0=p(x_1,x_2, \dots,x_m) ,\qquad A_i=p(y_1,\dots,y_i, x_{i+1},\dots,x_m),\qquad 1 \le i \le m.\end{gather*}
We know that $\discr A_0>0$ and $\discr A_m<0$, and therefore there exists $i$ such that $\discr A_i>0$ and $\discr A_{i+1}<0$.
 We can consider the continuous matrix function
 \begin{gather*}M(t)=(1-t)A_i+tA_{i+1}=p(y_1,\dots,y_i,(1-t)x_{i+1}+ty_{i+1},x_{i+2},\dots,x_m).\end{gather*}
 We know that $M(0)$ has positive discriminant and $M(1)$ has negative discriminant.
 Therefore for some $\tau$, $M(\tau)$ has discriminant zero.
 Assume there exists $t$ such that $M(t)$ is nilpotent.
 In this case either $t$ is unique or there exists $t'\neq t$ such that $M(t')$ is also nilpotent.
 If $t$ is unique then it equals to some rational function with respect to other variables (entries of matrices $x_i$ and $y_i$).
 In this case $t$ can be considered as a function on matrices $x_i$ and $y_i$ and as soon as it is invariant, according to
 the Proposition~\ref{procesi-2r} $t$ is an element of $\widetilde{\UD}$
 and thus
 $M(t)$ is the element of $\widetilde{\UD}$. Therefore $M(t)$ cannot be nilpotent since UD is a domain according to Remark~\ref{Am1-2}.
 If there exists $t'\neq t$ such that $M(t')$ is also nilpotent then for any $\tilde t\in \R$ $M(\tilde t)$ is the combination
 of two nilpotent (and thus trace vanishing) matrices $M(t)$ and $M(t')$. Hence $M(0)$ is trace vanishing and thus
 $\Image p\subseteq\ssl_2$, a contradiction.

 Recall that we proved $M(\tau)$ has discriminant zero that for some $\tau$. Note that $M(\tau)$ cannot be nilpotent.
 Assume that the matrix $M(\tau)$ is scalar.
 Hence $(1-\tau)A_i+\tau A_{i+1}=\lambda I$ where $\lambda\in\R$ and $I$ is the identity matrix.
 Thus,
 $A_{i+1}=\frac{1-\tau}{\tau}A_i+cI$.
 Note that for any matrix $M$ and any $c\in\R$ we have $\discr(M)=\discr(M+cI)$.
 Therefore the discriminant of $A_{i+1}$ can be written as
 \begin{gather*}\discr(A_{i+1})=\discr\left(\frac{1-\tau}{\tau}A_i\right)=\left(\frac{1-\tau}{\tau}\right)^2\discr(A_i),\end{gather*}
 a contradiction, since $\discr A_i>0$ and $\discr(A_{i+1})<0$.
 Therefore the matrix $M(\tau)$ is similar to $A$.
\end{proof}
\begin{Lemma}\label{scalar-2r}
 Let $p$ be a multilinear polynomial satisfying $\ssl_2\subsetneqq\Image p$.
 Then every scalar matrix belongs to $\Image p$.
\end{Lemma}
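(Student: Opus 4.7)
The plan is to produce a single nonzero scalar matrix in $\Image p$; once this is done, the cone property of $\Image p$ (coming from multilinearity) immediately yields all of $K\cdot I \subseteq \Image p$, and together with Lemmas \ref{discr-not-zero-2r} and \ref{unip-2r} this completes the classification $\Image p = M_2(\R)$ asserted by Theorem \ref{main-2r}.

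First I would invoke Lemma \ref{anyq-2r} with the particular value $q=1/4$. This produces a tuple $(a_1,\dots,a_m)$ such that $A := p(a_1,\dots,a_m)$ satisfies $\det A = \tr^2 A/4$ with $\tr A\neq 0$, i.e., $A$ has a repeated eigenvalue $\lambda = (\tr A)/2\neq 0$. If $A$ is itself scalar, we are done. Otherwise $A = \lambda I + N$ with $N\neq 0$ nilpotent, and the task is to show that by choosing the inputs more carefully one can actually force $A$ to be scalar.

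To accomplish this I would revisit the IVT construction inside Lemma \ref{anyq-2r}: the matrix $A$ is realized as $M(\tau)$ along the affine path $M(t) = (1-t)A_i + tA_{i+1}$, so the quadratic polynomial $D(t) = \tr^2 M(t) - 4\det M(t)$ (quadratic in $t$ because $\tr M$ is affine and $\det M$ is quadratic in $t$) vanishes at $t=\tau$. If $\tau$ happens to be a double root of $D$, then it is the unique real zero of $D$, and by Remark \ref{patch1-int} equals a rational function of the matrix entries that is invariant under simultaneous conjugation of the inputs. By Proposition \ref{Am1-2} the matrix $M(\tau)$ therefore represents an element of Amitsur's generic division algebra $\widetilde{\UD}$. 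Since $M(\tau) - \lambda I$ has all eigenvalues zero, it is nilpotent in the domain $\widetilde{\UD}$ (Lemma \ref{divAm-3}), hence must vanish: $M(\tau) = \lambda I$, a nonzero scalar in $\Image p$.

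The main obstacle, therefore, is to rule out (or avoid) the case in which $D$ has two distinct simple real roots. My plan is to vary the generic tuples $(x_i),(y_i)$ used in the IVT construction, exploiting the flexibility of Lemma \ref{ineq-2r}: as the inputs slide between the regions where $\det(p)/\tr^2(p)$ approaches $-\infty$ (near $\Omega = e_{11}-e_{22}$) and $+\infty$ (near $\Upsilon = e_{12}-e_{21}$), the discriminant of $D(t)$ as a quadratic in $t$ -- which is itself a polynomial in the entries of the inputs -- changes sign (positive when $D$ has two real roots, negative when $D$ has none). An intermediate-value argument, combined with Lemma \ref{gen-real-2r} to preserve genericity of the approximating inputs, then yields a configuration at which this quadratic-in-$t$ discriminant vanishes, so that $D$ has a double root. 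At such a configuration the previous paragraph applies and delivers a nonzero scalar matrix $\lambda I\in \Image p$; the cone property then gives $K\cdot I\subseteq \Image p$, finishing the proof.
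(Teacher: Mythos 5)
There is a genuine gap, and it sits at the junction between your second and third paragraphs. The conclusion ``$\tau$ is a double root $\Rightarrow$ $M(\tau)$ represents an element of $\widetilde{\UD}$, hence $M(\tau)-\lambda I$ is nilpotent in a domain and so vanishes'' is only available when the uniqueness of the root holds \emph{generically}, i.e., identically in the entries of generic matrices: Remark~\ref{patch1-int} and Proposition~\ref{Am1-2} are statements about the algebra of generic matrices, and the ``nilpotent implies zero'' step uses that $\widetilde{\UD}$ is a domain, which says nothing about one particular real specialization. Your third paragraph, however, produces only a single special configuration at which the discriminant $\Delta$ of $D(t)$ (a polynomial in the input entries) vanishes. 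Such a configuration cannot be generic -- by the very definition of generic elements, no nonzero polynomial over the base field vanishes at a generic point -- so Lemma~\ref{gen-real-2r} cannot ``preserve genericity'' while forcing $\Delta=0$, and the passage to $\widetilde{\UD}$ collapses. At a special tangency point, $M(\tau)$ is just a real matrix with equal nonzero eigenvalues, and nothing forces it to be scalar: generically it will be a non-scalar element $\lambda I+N$ of $\tilde K$, which is exactly what the paper's own Lemma~\ref{unip-2r} shows happens at discriminant-zero crossings of such lines. (The asserted sign change of $\Delta$ between the $\Omega$-region and the $\Upsilon$-region is also unsubstantiated: along the paths actually produced by Lemma~\ref{ineq-2r}/\ref{anyq-2r} the endpoint discriminants have opposite signs, so $D$ has two real roots and $\Delta>0$; you never exhibit a continuously connected family with $\Delta<0$.) If instead you tried to make the double root hold identically ($\Delta\equiv 0$), the generic argument would apply, but that identity is simply false for a general multilinear $p$ with $\ssl_2\subsetneqq\Image p$.

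The paper's proof avoids analysis altogether and is much shorter: since $\ssl_2\subsetneqq\Image p$, the linear span of the values is all of $M_2$, so by Lemma~\ref{graph-2r} (with Remark~\ref{linear1-2}) some evaluation on matrix units is diagonal with nonzero trace. If it is not already scalar, one applies the index-swapping map $\chi$ and the multilinear map $f$ from Section~\ref{im-of-pol-2r} (as in Lemma~\ref{2general_one-2}): all values of $f$ are diagonal, and $\Image f$ contains the two non-proportional matrices $\lambda_1e_{11}+\lambda_2e_{22}$ and $\lambda_1e_{22}+\lambda_2e_{11}$, so by Lemma~\ref{dim2-2r} it is the entire $2$-dimensional plane of diagonal matrices, hence contains every scalar matrix. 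You would do better to rebuild your argument on this combinatorial route than to try to extract scalars from the IVT construction.
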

\begin{proof}
 Note that it is enough to show that at least one scalar matrix belong to the image of $p$.
 According to Lemmas \ref{graph-2r} and Remark~\ref{linear1-2} there are matrix units $a_1,\dots,a_m$ such that $p(a_1,\dots,a_m)$ is
 diagonal with nonzero trace.
 Assume that it is not scalar, i.e., $p(a_1,\dots,a_m)=\lambda_1e_{11}+\lambda_2e_{22}$.
 We define again the mapping $\chi$ and $f(T_1,\dots,T_m)$ as in the beginning of Section~\ref{im-of-pol-2r} and return to the
 proof of Lemma~\ref{2general_one-2} where we proved that $\Image f$ consists only of diagonal matrices or only of matrices with zeros
 on the diagonal.
 In our case the image of $f$ consists only of diagonal matrices, which is a $2$-dimensional variety.
 We know that both $p(a_1,\dots,a_m)=\lambda_1e_{11}+\lambda_2e_{22}$ and
 $p(\chi(a_1),\dots,\chi(a_m))=\lambda_1e_{22}+\lambda_2e_{11}$ belong to the image of $f$, and
 therefore every diagonal matrix belong
 to the image of $f$, in particular every scalar matrix.
\end{proof}

Now we are ready to prove a major result.

\begin{proof}[Proof of Theorem~\ref{main-2r}]
 The second part follows from Lemmas \ref{genfield-2r}, \ref{discr-not-zero-2r}, \ref{unip-2r} and \ref{scalar-2r}.
 In the first part we need to prove that if $p$ is neither PI nor central then $\ssl_2(K)\subseteq\Image p$.
 According to Lemma \ref{genfield-2r}, $\ssl_2(K)\setminus K\subseteq\Image p$, and therefore according to Remark \ref{chr-n2-2r} we need consider only the case $\Char(K)=2$.
Then we need to prove that the scalar matrices belong to the image of $p$.
By Lemma~\ref{graph-2r} and Remark~\ref{linear1-2} there are matrix units $a_1,\dots,a_m$ such that $p(a_1,\dots,a_m)$ is diagonal.
 Assume that it is not scalar. Then we consider the mappings $\chi$ and $f$ as described in the beginning of Section~\ref{im-of-pol-2r}.
 According to Lemma~\ref{dim2-2r} the image of $f$ will be the set of all diagonal matrices, and in particular the scalar matrices belong to it.
\end{proof}
\begin{Remark}
 Assume that $p$ is a multilinear polynomial evaluated on $2\times 2$ matrices over an arbitrary infinite field $K$. Then, according to
 Theorem \ref{main-2r}, $\Image p$ is $\{0\}$, or $K$, or $\ssl_2(K)$ or $\ssl_2(K)\subsetneqq\Image p$. In the last case it is clear that
 $\Image p$ must be Zariski dense in~$M_2(K)$, because otherwise $\dim(\Image p)=3$ and $\Image p$ is reducible, a contradiction.
\end{Remark}

\begin{Remark} Note that the proof of Theorem \ref{main-2r} does not work when $n>2$ since for this case we will need to take more than one function (two functions for $n=3$ and more for $n>3$). In our proof we used that we have only one function: we proved that it takes values close to~$\pm\infty$ and after that used continuity. This does not work for $n\geq 3$. However one can use this idea for the question of possible images of trace vanishing multilinear polynomials evaluated on $3\times 3$ matrices. In this case one function will be enough, and one can take $g=\frac{\omega_3^2}{\omega_2^3}$. (One can find the definitions of~$\omega_i$ in the proof of Theorem \ref{semi_tr0_3-3}.) Moreover by Lemma \ref{graph-2r} there are matrix units~$a_i$ such that $p(a_1,\dots,a_m)$ is a~diagonal, trace vanishing, nonzero real matrix, which cannot be $3$-scalar since it will have three real eigenvalues. Therefore $p$ cannot be $3$-central polynomial. However the question of possible images of~$p$ remains open.
\end{Remark}

{\bf Images of semi-homogeneous polynomials evaluated on $\boldsymbol{2\times 2}$ matrices with real entries.}
Here we provide a classification of the possible images of semi-homogeneous
polynomials evaluated on $2\times 2$ matrices with real entries. Let
us start with the definitions.

\begin{Definition} A {\it semi-cone} of $M_n(\R)$ is a subset closed under multiplication by positive
constants. An {\it invariant semi-cone} is a semi-cone invariant
under conjugation. An invariant semi-cone is {\it irreducible} if
it does not contain any nonempty invariant semi-cone.
\end{Definition}

\begin{Remark}\label{semcone-2r}
Let $p$ be any semi-homogeneous polynomial of weighted degree $d\neq
0$ with weights $(w_1,\dots,w_m)$. Thus if $A=p(x_1,\dots,x_m)$ then
for any $c\in\R$ we have $p\big(c^{w_1}x_1,{\dots},c^{w_m}x_m\big)\allowbreak =c^dA$.
Hence $\Image p$ is a~semi-cone, for any~$d$.
\end{Remark}

\begin{proof}[Proof of Theorem~\ref{homogen-int}]
 Consider the function $g(x_1,\dots,x_m)=\frac{\det p}{\tr^2 p}$.
 If this function is not constant, then $\Image p$ is Zariski dense.
 Assume that it is constant; i.e., $\frac{\det p}{\tr^2 p}=c$.
 Then the ratio $\frac{\lambda_1}{\lambda_2}=\hat c$ of eigenvalues is also a constant.
 If $\hat c\neq -1$ then we can write $\lambda_1$ explicitly as
 \begin{gather*}\lambda_1=\frac{\lambda_1}{\lambda_1+\lambda_2}\tr p=\frac{1}{1+\frac{\lambda_2}{\lambda_1}}\tr p
 =\frac{1}{1+\frac{1}{\hat c}}\tr p,\end{gather*}
 Therefore $\lambda_1$ is an element of $\widetilde{\UD}$, and $\lambda_2=\tr p-\lambda_1$ also.
 According to the Hamilton--Cayley equation, $(p-\lambda_1)(p-\lambda_2)=0$ and therefore, since, by Remark~\ref{Am1-2},
 $\widetilde{\UD}$ is a domain,
 one of the terms $p-\lambda_i$ is a PI. Therefore $p$ is central or PI.
 Therefore we see that any semi-homogeneous polynomial is either PI, or central, or trace vanishing (if the ratio of eigenvalues is
 $-1$ then the trace is identically zero), or $\Image p$ is Zariski dense.
 If $p$ is PI then $\Image p=\{0\}$.
 If $p$ is central then, by Remark~\ref{semcone-2r}, $\Image p$ is a semi-cone, therefore
 $\Image p$ is either $\R_{\geq 0}$, or $\R_{\leq 0}$, or $\R$.
 If $p$ is trace vanishing, then any trace zero matrix $A\in\ssl_2(\R)$ is similar to $-A$.
 Therefore $\Image p=-\Image p$ is symmetric.
 Together with Remark \ref{semcone-2r} we have that $\Image p$ must be a cone.
 The determinant cannot be identically zero since otherwise the polynomial is nilpotent, contrary to Remark~\ref{Am1-2}.
 Hence there exists some value with nonzero determinant.
 All the trace zero matrices of positive determinant are pairwise similar, and
 all the trace zero matrices of negative determinant are pairwise similar.
 Therefore in this case all possible images of $p$ are $\ssl_{2,\geq0}(\R)$, $\ssl_{2,\leq0}(\R)$ and~$\ssl_{2}(\R)$.
\end{proof}

\begin{Example} $\Image p$ can be the set of non-negative scalars. Take any central polynomial, say $p(x,y)=[x,y]^2$ and consider
 $p^2=[x,y]^4$. If one takes $-p^2=-[x,y]^4$, then its image
 is the set~$\R_{\leq 0}$.

 The question remains open of whether or not there exists an example of a trace zero polynomial with non-negative (or non-positive) discriminant.

 There are many polynomials with Zariski dense image which are not dense with respect to the usual Euclidean topology.
 For example the image of the polynomial $p(x)=x^2$ is the set of matrices with two positive eigenvalues,
 or two complex conjugate eigenvalues; in particular any matrix $x^2$ has non-negative determinant.
The image of the polynomial $p(x,y)=[x,y]^4+\big[x^4,y^4\big]$ is the set
of matrices with non-negative trace.
 The question of classifying possible semi-homogeneous Zariski dense images
 remains open.
\end{Example}

\subsubsection[Multilinear polynomials evaluated on $\HH$]{Multilinear polynomials evaluated on $\boldsymbol{\HH}$}\label{quat}

The L'vov--Kaplansky conjecture can fail for a non-simple finite
dimensional algebra. In particular, it fails for the Grassmann
algebra over a linear space of finite dimension more than or equal
to $4$, and a field of characteristic $\neq 2$, which is a finite
dimensional (but not simple) associative algebra. In this case one
can consider the multilinear polynomial $p(x,y)=x\wedge y-y\wedge
x$. Then $e_1\wedge e_2=p(1/2e_1,e_2)$ and $e_3\wedge e_4$ both
belong to the image of $p$, but their sum does not. Thus the image
is not a vector space.

Recently, it has been conjectured that the evaluation of any
multilinear polynomial on a simple algebra is a vector space.
However, according to~\cite{ML2} this conjecture fails even for some
(infinite dimensional) division algebras and the polynomial
$p(x,y)=xy-yx$. Cohn~\cite{C} constructed a~division ring~$D$ in
which every element is a commutator, i.e.,~$p(D)=D$.

It is interesting to investigate the Kaplansky conjecture for finite
dimensional simple algebras. In this section we deal with
quaternions.
Note that the algebra of split quaternions, defined also by
$4$-dimensional vector space $\langle 1,i,j,k \rangle_\R$ with
multiplication defined by
\begin{gather*}ij=-ji=k,\qquad jk=-kj=-i,\qquad ki=-ik=j,\qquad i^2=-1,\qquad j^2=k^2=1,\end{gather*}
 is isomorphic to $M_2(\R)$.
Let us start with proving the following straightforward but
important lemmas:

\begin{Lemma}\label{elem-q} Let $p$ be a multilinear polynomial.
If $a_i$ are basic quaternions $(1,i,j,k)$, then $p(a_1,\dots,a_m)$
is $c\cdot q$ for some basic quaternion $q$ and some scalar $c\in\R$
$($which can equal~$0)$.
\end{Lemma}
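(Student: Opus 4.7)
The plan is to reduce the claim to a statement about the quaternion group $Q_8 = \{\pm 1, \pm i, \pm j, \pm k\}$ together with the multilinear structure of $p$. Since $p$ is multilinear, we may write $p(x_1,\dots,x_m) = \sum_{\sigma \in S_m} c_\sigma x_{\sigma(1)} \cdots x_{\sigma(m)}$ for some coefficients $c_\sigma \in \R$. Upon evaluation at basic quaternions $a_1,\dots,a_m$, each monomial $a_{\sigma(1)} \cdots a_{\sigma(m)}$ is a product inside $Q_8$, hence lies in $\{\pm 1, \pm i, \pm j, \pm k\}$; in particular it is always a scalar multiple of some basic quaternion.

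The core observation is that any two elements of $Q_8$ either commute or anticommute: if $x, y \in \{1,i,j,k\}$ then $xy = yx$ when one of them equals $1$ or $x=y$, and $xy = -yx$ when $x,y$ are distinct elements of $\{i,j,k\}$. Thus, swapping two adjacent factors in a product $a_1 a_2 \cdots a_m$ changes the product at worst by a global sign. Since every permutation is a product of adjacent transpositions, I would conclude that for every $\sigma \in S_m$ there is a sign $\varepsilon_\sigma \in \{\pm 1\}$ with
\begin{gather*}
a_{\sigma(1)} \cdots a_{\sigma(m)} = \varepsilon_\sigma \, a_1 a_2 \cdots a_m.
\end{gather*}

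Writing $a_1 a_2 \cdots a_m = \eta\, q$ for some $\eta \in \{\pm 1\}$ and some basic quaternion $q \in \{1,i,j,k\}$, each monomial of $p$ then evaluates to $\varepsilon_\sigma \eta\, q$. Summing,
\begin{gather*}
p(a_1,\dots,a_m) = \bigg(\sum_{\sigma \in S_m} c_\sigma \varepsilon_\sigma \eta \bigg) q = c\, q,
\end{gather*}
with $c := \eta \sum_\sigma c_\sigma \varepsilon_\sigma \in \R$, which is exactly the claim (and $c=0$ is of course permitted).

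There is no real obstacle here: the lemma is a direct structural consequence of multilinearity together with the fact that $Q_8$ is a finite group in which any two elements commute up to sign. The only point that deserves care is checking that the sign $\varepsilon_\sigma$ is well-defined, i.e.\ independent of which decomposition of $\sigma$ into adjacent transpositions one uses; but this is automatic since $\varepsilon_\sigma$ is determined by the product $a_{\sigma(1)} \cdots a_{\sigma(m)}$ itself, which does not depend on how we got there.
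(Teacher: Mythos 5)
Your proof is correct and follows essentially the same route as the paper: the key fact that any two basic quaternions commute up to sign, so that every reordered product $a_{\sigma(1)}\cdots a_{\sigma(m)}$ equals $\pm a_1\cdots a_m$, and hence the multilinear sum is a real multiple of a single basic quaternion. The paper states this in one sentence; you have merely filled in the adjacent-transposition bookkeeping, which is fine.
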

\begin{proof}
Note that for any two basic quaternions $q_1$ and $q_2$, $q_1q_2=\pm
q_2q_1$. Therefore, taking products of $m$ basic quaternions we
obtain the same result (up to $\pm$). Thus the sum of these results
multiplied by scalars must be a basic quaternion multiplied by some
scalar coefficient.
\end{proof}
\begin{Lemma}\label{cone-1-q}
For any multilinear polynomial $p$, $\Image p$ is a self-similar
cone, i.e.,~for any invertible $h\in\HH$, any scalar $c\in\R$ and
any element $\alpha\in\Image p$, $ch\alpha h^{-1}\in\Image p$.
\end{Lemma}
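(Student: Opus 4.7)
The plan is to verify the two properties packaged in ``self-similar cone'' separately and then combine them. First I would observe that $\Image p$ is a cone, i.e.\ closed under multiplication by scalars $c\in\R$. Since $p$ is multilinear and $\R$ lies in the center of $\HH$, if $\alpha=p(a_1,\dots,a_m)$ then
\begin{gather*}
c\alpha=cp(a_1,\dots,a_m)=p(ca_1,a_2,\dots,a_m)\in\Image p,
\end{gather*}
using only linearity in the first argument and centrality of $c$. (In the case $c=0$ we recover $0=p(0,a_2,\dots,a_m)\in\Image p$.)

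Next I would establish conjugation invariance, exactly in the spirit of Remark~\ref{cong-2}. For any invertible $h\in\HH$,
\begin{gather*}
h p(a_1,\dots,a_m) h^{-1}=p\bigl(ha_1h^{-1},ha_2h^{-1},\dots,ha_mh^{-1}\bigr),
\end{gather*}
because $p$ is a (non-commutative) polynomial and every factor $h^{-1}h$ telescopes between consecutive monomials. Thus $h\alpha h^{-1}\in\Image p$ whenever $\alpha\in\Image p$.

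Finally I would combine the two: given $\alpha\in\Image p$, $c\in\R$ and invertible $h\in\HH$, apply scalar closure to get $c\alpha\in\Image p$, and then apply conjugation invariance to obtain $h(c\alpha)h^{-1}=ch\alpha h^{-1}\in\Image p$. (Alternatively, simultaneously scaling the first entry by $c$ and conjugating all entries by $h$ gives the result in one step.) There is no real obstacle here; the only point that needs any care is the reliance on multilinearity (to pull scalars out) and on the fact that $\R=\Cent(\HH)$ (so that scalars truly commute with everything, including $h$), both of which are immediate.
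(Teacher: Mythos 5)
Your proposal is correct and follows essentially the same route as the paper: conjugation invariance by telescoping $h^{-1}h$ inside each monomial of the multilinear polynomial, and then absorbing the scalar $c$ into one argument via multilinearity (the paper does this in a single substitution $p\big(chx_1h^{-1},hx_2h^{-1},\dots,hx_mh^{-1}\big)=ch\alpha h^{-1}$). Your explicit treatment of $c=0$ and the remark that $\R=\Cent(\HH)$ are fine but add nothing beyond the paper's argument.
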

\begin{proof}
If $p(x_1,\dots,x_m)=\sum\limits_{\sigma\in S_m}c_\sigma
x_{\sigma(1)}\cdots x_{\sigma(m)}$, then
\begin{gather*}
p\big(hx_1h^{-1},hx_2h^{-1}\dots,hx_mh^{-1}\big)=
\sum_{\sigma\in S_m}c_\sigma
hx_{\sigma(1)}h^{-1}\cdots hx_{\sigma(m)}h^{-1} =hp(x_1,\dots,x_m)h^{-1},
\end{gather*}
 and
thus $p\big(chx_1h^{-1},hx_2h^{-1}\dots,hx_mh^{-1}\big)=ch\alpha h^{-1}\in\Image p$.
\end{proof}
\begin{Lemma}\label{cone-q} The set of pure quaternions $V$ is an irreducible self-similar cone,
i.e., equals all of its conjugates.
\end{Lemma}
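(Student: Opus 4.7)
The plan is to verify three things: (i) $V$ is closed under real scalar multiplication, (ii) $V$ is invariant under conjugation by invertible quaternions, and (iii) every nonzero element of $V$ generates all of $V\setminus\{0\}$ under the combined action of scaling and conjugation (the irreducibility condition).

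For (i), closure under real scalars is immediate since $V=\R i+\R j+\R k$ is a real subspace. For (ii), I would use the algebraic characterization of pure quaternions as those $q\in\HH$ with $q^{2}\in\R_{\le 0}$: indeed, if $q=a+w$ with $a\in\R$ and $w\in V$, then $q^{2}=(a^{2}-\lVert w\rVert^{2})+2aw$, and requiring this to be a nonpositive real forces either $a=0$ (so $q\in V$) or $w=0$ and $a^{2}\le 0$ (so $q=0\in V$). Given $v\in V$, the square $v^{2}=-(b^{2}+c^{2}+d^{2})$ is a central scalar, so for any invertible $h$,
\begin{equation*}
(hvh^{-1})^{2}=hv^{2}h^{-1}=v^{2}\in\R_{\le 0},
\end{equation*}
hence $hvh^{-1}\in V$ by the characterization above. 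This gives $hVh^{-1}\subseteq V$, and by symmetry $hVh^{-1}=V$.

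For (iii), I would show that any nonzero pure quaternion $u$ can be reached from $i$ by a combination of real scaling and conjugation. Writing $u=\lambda v$ with $\lambda=\lVert u\rVert>0$ and $v=u/\lVert u\rVert$ a unit pure quaternion (so $v^{2}=-1$), the problem reduces to conjugating $i$ to $v$. The key step is the clever choice $h:=v+i$. When $v\neq -i$ this is a nonzero, hence invertible, quaternion, and a direct calculation using $i^{2}=v^{2}=-1$ gives $hi=vi-1=v^{2}+vi=vh$, so $hih^{-1}=v$. The remaining case $v=-i$ is handled by $h=j$, since $jij^{-1}=-i$ by a direct check using $ji=-k$ and $kj=-i$. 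Combined with real scaling by $\lambda$ (absorbed into the cone structure), this shows that the orbit of $i$ under scaling plus conjugation is all of $V\setminus\{0\}$, proving irreducibility.

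The main obstacle—if there is one—is part (iii), since parts (i) and (ii) are essentially immediate from the algebraic definitions. One could alternatively invoke the Skolem--Noether theorem for the central simple $\R$-algebra $\HH$ (noting that $v$ and $i$ both satisfy $x^{2}+1=0$, so the $\R$-algebra isomorphism $\R[i]\to\R[v]$ extends to an inner automorphism of $\HH$), but the explicit $h=v+i$ construction is short, self-contained, and more in keeping with the elementary style of this section.
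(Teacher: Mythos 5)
Your proof is correct. Both you and the paper reduce the lemma to showing that the self-similar cone generated by $i$ is all of $V$, but you produce the conjugating element differently: the paper takes $h=1+yj+zk$, computes $(1+yj+zk)i(1-yj-zk)=\big(1-y^2-z^2\big)i+2zj-2yk$, and then solves explicitly for $y,z$ (with square roots in the coefficients) so as to hit a prescribed target $ai+bj+ck$, whereas you first normalize the target to a unit pure quaternion $v$ and use the identity $(v+i)i=v(v+i)$, valid since $v^2=i^2=-1$, to get $hih^{-1}=v$ for $h=v+i$, handling the single exceptional case $v=-i$ by conjugating with $j$. Your route avoids the coordinate computation and the square-root bookkeeping at the cost of a case split; the paper's is a single uniform formula in coordinates. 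You also verify explicitly that $V$ is conjugation-invariant via the characterization $q\in V\setminus\{0\}\iff q^2\in\R_{<0}$ (so $(hvh^{-1})^2=hv^2h^{-1}=v^2$), a point the paper treats as obvious; and your remark that Skolem--Noether would do the job abstractly (since $v$ and $i$ both satisfy $x^2+1=0$) is a legitimate, more general alternative, though, as you note, the explicit conjugator is more in the spirit of this section.
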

\begin{proof}It is enough to show that any self-similar cone including the element $i$ contains $V$.
Take $h(y,z)=1+yj+zk$, thus $h^{-1}=\frac{1-yj-zk}{1+y^2+z^2}$.
Thus a minimal self-similar cone $C$ containing~$i$ contains
all elements $c\cdot hih^{-1}$, in particular it contains all elements $(1+yj+zk)i(1-yj-zk)=\big(1-y^2-z^2\big)i+2zj-2yk$.
Consider an arbitrary pure quaternion $ai+bj+ck$. If $b=c=0$ then this vector belongs to $C$ because it is a multiple of $i$. Assume that at least one of $b$ and $c$ is nonzero. Then $b^2+c^2>0$, hence one can take $l=\frac{a+\sqrt{a^2+b^2+c^2}}{2}$,
$y=-\frac{c}{a+\sqrt{a^2+b^2+c^2}}$, $z=\frac{b}{a+\sqrt{a^2+b^2+c^2}}$, these numbers are well defined. Thus the element $(1+yj+zk)i(1-yj-zk)=\big(1-y^2-z^2\big)i+2zj-2yk=ai+bj+ck$ belongs to $C$.
\end{proof}

Now we are ready to prove the main theorem:

\begin{proof}[Proof of Theorem~\ref{main-q}]
Let us substitute basic quaternions in the polynomial~$p$. Lemma~\ref{elem-q} implies that we obtain multiples of basic quaternions.
Consider the four possible cases: all these results vanish, or among
these results there are scalars only,
 pure quaternions only, and both pure quaternions and scalars.
The first two cases quickly lead to answers about the image of the
polynomial $p$: in the first case $p$ is PI, its image is $\{0\}$,
and in the second case it is central polynomial, and its image is
$\R$. In the third case the image is $V$, by Lemma \ref{cone-q}.

Therefore the most interesting case is the fourth one. We assume
that there are basic quaternions $x_1,\dots,x_m$ and $y_1,\dots,y_m$
such that $p(x_1,\dots,x_m)=k\in\R\setminus\{0\}$ and
$p(y_1,\dots,y_m)=v\in V\setminus\{0\}$. We will show that in this
case the image of $p$ is the set of all the quaternions. For that
let us consider the following $m+1$ evaluations depending on
$y_1,\dots,y_m$: $A_1=p(x_1,x_2,\dots,x_m);
A_2=p(y_1,x_2,\dots,x_m); A_3=p(y_1,y_2,x_3,\dots,x_m);\dots;
A_{m+1}=p(y_1,\dots,z_m)$. Note that $A_1$ is a constant taking only
one possible value (which is a nonzero scalar), for any~$i$ $\Image
A_i\subseteq\Image A_{i+1}$ and $\Image A_{m+1}=\Image p$ includes
nonscalar values. Therefore, there exists~$i$ such that $\Image
A_i\subseteq\R$ and $\Image A_{i +1}\not\subseteq\R$. Thus there
exist some collection of quaternions $r_1,r_2,\dots,r_m,r_i^*$ such
that $p(r_1,r_2,\dots,r_m)=r\in\R\setminus\{0\}$. and
$p(r_1,r_2,\dots,r_{i-1}, r_i^*, r_{i+1},\dots,r_m) \allowbreak \notin \R$.
Assume that $p(r_1,r_2,\dots,r_{i-1}, r_i^*, r_{i+1},
\dots,r_m)=a+v$ for $a\in\R$ and $v\in V$. Then $v\neq 0$. If
$a=c\cdot p(r_1,r_2,\dots,r_m)$ we can take $\tilde r_i=r_i^*-cr_i$,
and
\begin{gather*}p(r_1,r_2,\dots,r_{i-1}, \tilde r_i, r_{i+1},\dots,r_m)=v\in V\setminus\{0\}.\end{gather*}
Note that for arbitrary real numbers $x$ and $y$ we have an element
\begin{gather*}p(r_1,r_2,\dots,r_{i-1}, xr_i+yr_i\tilde r_i, r_{i+1},\dots,r_m)=xr+yv.\end{gather*} Consider an arbitrary quaternion $a+bi+cj+dk$, where $a,b,c,d\in\R$. Let us take an $x$ such that $xr=a$. By Lemma \ref{cone-q}, $V$ is an irreducible self-similar cone, thus there exist $h\in\HH$ and $y\in\R$ such that $yhvh^{-1}=bi+cj+dk$.
Hence,
\begin{gather*} p\big(hr_1h^{-1},hr_2h^{-1} ,\dots,hr_{i-1}h^{-1},h(xr_i+y\tilde
r_i)h^{-1},hr_{i+1}h^{-1},\dots,hr_mh^{-1}\big) \\
\qquad{} =
h(xr+yv)h^{-1}=xr+yhvh^{-1}=a+bi+cj+dk. \end{gather*} Therefore,
$\Image p=\HH$.
\end{proof}

\subsubsection[Semi-homogeneous polynomials evaluated on $\HH$]{Semi-homogeneous polynomials evaluated on $\boldsymbol{\HH}$}

Now we can consider the semi-homogeneous case and present the proof of Theorem \ref{homogen-q}:

\begin{proof}[Proof of Theorem~\ref{homogen-q}]
There exists an isomorphism
$\Phi\colon \HH(\R)\otimes \C_{\R}\rightarrow M_2(\C)_{\R}$ such that for any $q=a+bi+cj+dk\in\HH$ and any $z\in\C$
\begin{gather*}\Phi(q\otimes z)= z\cdot
\begin{bmatrix}
 a+bi & c+di \\
 -c+di & a-bi
\end{bmatrix}.\end{gather*}

Note that $\Phi(\HH\otimes 1_\C)$, i.e., the set of matrices{\samepage
\begin{gather*}\begin{bmatrix}
 a+bi & c+di \\
 -c+di & a-bi
\end{bmatrix}\end{gather*}
is Zariski dense in $M_2(\C)$.}

Note that there exist numbers
$w_1,\dots,w_m$ and $d\neq 0$ such that for any $c\in\R$
$p\big(c^{w_1}x_1,c^{w_2}x_2,\allowbreak \dots,c^{w_m}x_m\big)=c^dp(x_1,\dots,x_m)$.
Therefore $\Image p$ must be a cone with respect to positive real
multipliers, i.e., $\lambda x\in\Image p$ for any $x\in\Image p$ and any $\lambda>0$.

 By Theorem \ref{imhom-2}, the Zariski closure of the image of
polynomial evaluated on $2\times 2$ matrices with complex entries
must be either $\{0\}$, or $\C$, or $\ssl_2 (\C)$ or $M_2 (\C)$.

 In the first case $p$ is PI, and its image is $\{0\}$. In the second
case $p$ is a central polynomial, therefore the image of $p$ being a~cone with respect to positive multipliers must be either $\R$, or~$\R_{\geq 0}$, or~$\R_{\leq 0}$. In the third case, $p$ takes only
pure quaternion values. According to Lemma~\ref{cone-q} we know that $V$ is
an irreducible self-similar cone (up to real multipliers). Hence
$\Image p$ is a~self-similar cone up to positive real multipliers. Therefore $\Image p\cup (-\Image p)=V$, i.e., for any pure quaternion $v\in
V$, either $v$, or $-v$ belongs to $\Image p$. Without loss of
generality, assume that $i\in\Image p$. Hence $jij^{-1}\in\Image p$,
and $jij^{-1}=-i$. Hence for any $c\in\R$, $ci\in V$, and thus any
element $v\in V$ is conjugate to some $ci$, as we showed in the
proof of Lemma~\ref{cone-q}. Therefore, $\Image p=V$. In the forth
case an image of $p$ is a Zariski dense subset of $\HH$.
\end{proof}

Let us show some interesting examples of multilinear and homogeneous polynomials:
\begin{Example}\quad
\begin{enumerate}\itemsep=0pt\eroman
\item According to Amitsur--Levitsky theorem $s_4$ evaluated on $2\times 2$ matrices is a PI, and thus is PI on quaternions, as is its multilinearization.
\item The Lie bracket $p(x,y)=xy-yx$ is a Lie polynomial and its image is $V$.
\item The polynomial $[x_1,x_2][x_3,x_4]+[x_3,x_4][x_1,x_2]$, the multilinearization of $[x_1,x_2]^2$, evaluated on $\HH$, is a multilinear central polynomial.
\item An example where $\Image p=\HH(\R)$ can be constructed trivially.
\item The central polynomial $p(x,y)=[x,y]^4$ provides an example of complete homogeneous polynomial evaluated of $2\times 2$ matrices with real entries and taking only positive central values I took a square of $p$. Note that evaluated on quaternions we do not need to take a square. This polynomial takes only non positive values: indeed, $(ai+bj+ck)^2=-a^2-b^2-c^2$, and $\Image p$ is $\R_{\leq 0}$. Hence, $-p(x,y)=-[x,y]^2$ is an example of the polynomial with image set $\R_{\geq 0}$.

\item The polynomial $p(x,y)=[x,y]^2+\big[x^2,y^2\big]$ is the sum of two polynomials, the image of the first term is $\R_{\leq 0}$, and the second one has image $V$,
thus $\Image p$ is the set of quaternions with non-positive real part. Of course $-p$ has the opposite image: the set of quaternions with non-negative real part.

\item In Example~\ref{coneex2-2}(i), the polynomial $g(x_1 , x_2 ) = [x_1 , x_2 ]^2$ has the property that $g(A, B) = 0$
whenever $A$ is scalar, but $g$ can take on a non-zero value whenever $A$ is non-scalar. Thus, $g(x_1 , x_2 )x_1$ takes all values except scalars.

\item Any quaternion $q=a+v$, $a\in\R$, $v\in V$ can be considered as a $2\times 2$ matrix with complex entries, whose eigenvalues are $\lambda_{1,2}=a\pm ni$, where $n=\left\vert\left\vert v\right\vert\right\vert$ is the norm of the vector part of the quaternion. In particular
$\tr q=2a=2\real q$ and $\det q=a^2+n^2=\left\vert\left\vert q\right\vert\right\vert$. In Example~\ref{coneex2-2}(ii) we provided an example of the polynomial taking all possible values except for those where the ratio of eigenvalues belongs to some set~$S$. However here we should have a polynomial with real coefficients. Nevertheless this is possible:
let $S$ be any finite subset of~$S^2$ -- a unit circle on the complex plane. There exists a
completely homogeneous polynomial~$p$ such that $\Image p$ is the
set of all quaternions except the quaternions with ratio of
eigenvalues from $S$. Any $c\in S$ can be written in the form $c=\frac{a+bi}{a-bi}$ for some $a,b\in\R$. The construction of the polynomial is as follows. Consider
\begin{gather*}f(x)=x\cdot\prod_{c\in
S}((a+bi)\lambda_1-(a-bi)\lambda_2)((a+bi)\lambda_2-(a-bi)\lambda_1),\end{gather*} where
$\lambda_{1,2}$ are eigenvalues of $x$ and $c=\frac{a+bi}{a-bi}$. For each $c$ the
product $(a+bi)\lambda_1-(a-bi)\lambda_2)((a+bi)\lambda_2-(a-bi)\lambda_1)=-a^2(\lambda_1-\lambda_2)^2-b^2(\lambda_1+\lambda_2)^2$
is
a polynomial with real coefficients in $\tr x$ and $\tr x^2$. Thus
$f(x)$ is a polynomial with traces, and by \cite[Theorem~1.4.12]{Row}, one can rewrite each trace in $f$ as a fraction of
multilinear central polynomials.

After that we multiply the expression by the product of all the
denominators, which we can take to have value 1. We obtain a
completely homogeneous polynomial $p$ which image is the cone
under $\Image f$ and thus equals $\Image f$. The image of $p$ is
the set of all quaternions with ratios of eigenvalues
not belonging to $S$.
\end{enumerate}
\end{Example}
\subsection{The Deligne trick}\label{Deligne}

One of the strongest tools we use in our investigation is a famous
 trick of Deligne, given below in Remark~\ref{Del}. We use it when we give a proof by contradiction; in
some cases we obtain an element of $\widetilde{\UD}$ whose
characteristic polynomial can be decomposed, and this contradicts to
 Amitsur's theorem. Here are some examples of when we use it.
\subsubsection{Evaluations of word maps on matrix groups}

It is important to investigate possible images of word maps on
matrix groups, and the famous Deligne trick technique can be used in
this area. A word map $w(x_1,\dots,x_m)$ is called dominant if its
image is Zariski dense. A famous theorem of Borel states:
\begin{Theorem}Any nontrivial word map $w$ evaluated on $\SL_n(K)$ is dominant.
\end{Theorem}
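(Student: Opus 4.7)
The plan is to prove Borel's theorem via an infinitesimal (Deligne-style) argument. First, I reduce to $K$ algebraically closed, since dominance is preserved under extending the base field. Because $\SL_n^m$ is irreducible, the Zariski closure $Z:=\overline{\Image w}$ is an irreducible subvariety of $\SL_n$, and the identity
\[
h\,w(g_1,\ldots,g_m)\,h^{-1}=w\bigl(hg_1h^{-1},\ldots,hg_mh^{-1}\bigr)
\]
shows that $Z$ is stable under conjugation by $\SL_n$. To conclude $Z=\SL_n$ it suffices, by irreducibility and dimension, to exhibit one point $(g_1,\ldots,g_m)$ at which the differential of $w$ has full rank $n^2-1$.

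Second, I would compute $dw$ by a formal perturbation: substituting $x_i\mapsto g_i(1+\varepsilon\xi_i)$ with $\xi_i\in\sl_n$ and keeping the linear term in $\varepsilon$, the product-rule expansion of a reduced word $w=x_{i_1}^{\epsilon_1}\cdots x_{i_k}^{\epsilon_k}$ yields
\[
dw\cdot w(g)^{-1}\;=\;\sum_{j=1}^{k}\epsilon_j\,\operatorname{Ad}\bigl(v_j(g)\bigr)\xi_{i_j},\qquad v_j(g):=g_{i_1}^{\epsilon_1}\cdots g_{i_{j-1}}^{\epsilon_{j-1}}.
\]
Thus $dw$ is surjective onto $\sl_n$ precisely when the $K$-span of the vectors $\operatorname{Ad}(v_j(g))(\sl_n)$, grouped by the index $i_j$ with appropriate sign sums, equals all of $\sl_n$.

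Third, the Deligne trick: take $(g_1,\ldots,g_m)$ to be a generic tuple freely generating a Zariski-dense subgroup of $\SL_n$, which is possible by the Tits alternative together with $K$ having sufficiently large transcendence degree over its prime field (cf.\ Lemma~\ref{many-gen-2r}). Then the set $\{v_j(g)\}$ is Zariski-dense in $\SL_n$, so the span of the $\operatorname{Ad}(v_j(g))(\sl_n)$ is an $\operatorname{Ad}(\SL_n)$-stable subspace of $\sl_n$. Since $\sl_n$ is an irreducible $\SL_n$-module when $\chara(K)\nmid n$ (a finer submodule analysis handles the modular case), this subspace either vanishes or equals $\sl_n$.

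The main obstacle, and the heart of the Deligne trick, is to rule out the vanishing alternative: one must verify that the operator $\xi_i\mapsto\sum_{j\colon i_j=i}\epsilon_j\operatorname{Ad}(v_j(g))\xi_i$ is not identically zero for at least one $i$. This amounts to the nonvanishing of the Fox derivatives $\partial_i w$ in the integral group ring $\Z[F_m]$, a classical consequence of $w\neq 1$, combined with the fact that a nonzero element of $\Z[F_m]$ remains nonzero after a sufficiently generic substitution into $M_n(K)$; the latter is where the free subgroups of $\SL_n$ (and, at the level of matrices, Amitsur's generic division algebra $\widetilde{\UD}$ of Proposition~\ref{Am1-2}) enter decisively. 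Once this non-degeneracy is established, the differential is surjective at the generic point, $\dim Z = n^2-1$, and $Z=\SL_n$ as required.
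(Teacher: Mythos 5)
Your strategy (find one point where $dw$ has full rank, then conclude dominance) is genuinely different from the paper's proof, but it has two concrete gaps. First, the pivotal claim in your third step is false as stated: the set $\{v_j(g)\}$ is the \emph{finite} set of prefixes of $w$ evaluated at $g$, so it is never Zariski-dense in $\SL_n$. The image of $dw$ at $g$ is $\sum_i \Image\bigl(T_i\bigr)$ with $T_i=\sum_{j\colon i_j=i}\pm\,\mathrm{Ad}(v_j(g))$, and for a fixed tuple $g$ there is no reason this subspace is $\mathrm{Ad}(\SL_n)$-stable; hence the dichotomy ``zero or all of $\sl_n$'' coming from irreducibility of the adjoint module does not apply, and the heart of the argument is missing. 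Relatedly, nonvanishing of the Fox derivatives $\partial_i w$ in $\Z[F_m]$ does not by itself give nonvanishing of $T_i$ in the adjoint representation at some point (that is an assertion about identities of a fixed finite-dimensional representation, which you do not prove), and even nonvanishing of $T_i$ is far from surjectivity of $\sum_i\Image(T_i)$.

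Second, the whole approach cannot work over fields of positive characteristic, which the theorem covers. Take $\Char K=p$ and $w=x^p$: the word map is dominant (every element of a maximal torus has a $p$-th root, so the image contains all semisimple elements), yet at every $g$ the translated differential is $\xi\mapsto\sum_{i=1}^{p}\mathrm{Ad}(g)^i\xi=\mathrm{Ad}(g)\bigl(\mathrm{Ad}(g)-1\bigr)^{p-1}\xi$, which is never surjective since $\mathrm{Ad}(g)-1$ is always singular. So no point with full-rank differential exists, and any proof that looks for one must fail; this is an inseparability phenomenon, not something a ``finer submodule analysis'' can repair. By contrast, the paper argues by induction on $n$: embedding $\SL_n\subset\SL_{n+1}$ and using conjugation-invariance, the closure of $\Image w$ contains all matrices with eigenvalue $1$; if that were the whole closure, then $w$, viewed as an element of Amitsur's generic division algebra $\widetilde{\UD}$, would have $\lambda-1$ dividing its characteristic polynomial, and $(w-1)f(w)=0$ in a domain forces either $w=1$ or a degree-$n$ identity, a contradiction (this is where the Deligne trick enters). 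If you want to salvage a differential-based proof, you would at least have to restrict to characteristic $0$ and replace the density claim by an actual argument for surjectivity of $\sum_i\Image(T_i)$ at a well-chosen point, which is the difficult part.
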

\begin{proof}Let us use induction with respect to $n$. The case $n=1$ is trivial,
since $\SL_1(K)$ is a~trivial group, and any word map (including the
trivial one) is dominant. Assume that Borel's theorem holds for some
$n$ and let us show that it holds for $n+1$. Note that
$\SL_n\leq\SL_{n+1}$, because one can take
$\varphi\colon \SL_n\rightarrow\SL_{n+1}$ defined in the following way:
for any matrix $M\in\SL_n$ we will take an $(n+1)\times (n+1)$
matrix from $\SL_{n+1}$, having entries of $M$ in the first $n$ rows
and columns, zeros on the last row and on the last column except an
entry $(n+1,n+1)$ having $1$ there. Therefore the Zariski closure of
the image of $w$ evaluated on $\SL_{n+1}$, according to the induction
assumption, is at least $\Image \varphi$. Note that $\Image w$ as
well as its Zariski closure is closed under conjugations, therefore
it will include all matrices from $\SL_{n+1}$ having at least one
eigenvalue equal to $1$. Let us show that this set cannot be Zariski
closure of $\Image w$. Assume that it is. {\it Note that $w$ can be
considered as an element of $\widetilde{\UD}$, and if $1$ is its
eigenvalue in any evaluation, then its characteristic polynomial has
a divisor $\lambda-1$, which is impossible, since
$\chi_w(w)=(w-1)f(w)=0$ gives us that either $w=1$ $($which is
impossible if $w$ is not trivial$)$, or $f(M)=0$ for all $M$ being
values of the map~$w$. Note that $\deg f=n$ and this is impossible
since it contradicts Jordan's theorem. Here we use the Deligne
trick.}
\end{proof}

If $K$ is an algebraically closed field and $\Char K=0$ then there
is a working hypothesis that any nontrivial word map evaluated on
${\rm PSL}_n(K)$ must be surjective; here we will consider it for the
case $n=2$:
\begin{Conjecture}\label{word-PSL} If the field $K$ is algebraically closed
of characteristic $0$, then the image of any nontrivial group word
$w(x_1,\dots,x_m)$ on the projective linear group ${\rm PSL}_2(K)$ is
${\rm PSL}_2(K)$.
\end{Conjecture}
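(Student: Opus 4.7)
The plan is to exploit the classification of $\SL_2(K)$-conjugacy classes by trace and reduce the statement to two sub-claims about the lift $\tilde w\colon \SL_2(K)^m\to \SL_2(K)$ of the word map. Two elements of $\SL_2(K)$ are conjugate iff they have the same trace, with the single exception that the fibers over $\pm 2$ each split into the scalar $\pm I$ and the non-trivial unipotent conjugacy class. Passing to $\PSL_2(K)$, surjectivity of $w$ is therefore equivalent to (i) the polynomial $\tau=\tr\circ\tilde w\colon \SL_2(K)^m\to K$ being surjective, together with (ii) some evaluation of $\tilde w$ landing on a non-central element of trace $\pm 2$, i.e., a non-trivial (possibly negated) unipotent.

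For (i), I would invoke Borel's theorem (just proved above) to get that $\Image\tilde w$ is Zariski dense in $\SL_2(K)$, which forces $\tau$ to be non-constant; since $K$ is algebraically closed, a non-constant polynomial $K^{4m}\to K$ is surjective, so every trace value is attained. For any $t\in K\setminus\{\pm 2\}$, the preimage $\tau^{-1}(t)\subset \SL_2(K)$ is a single conjugacy class, which is therefore contained in the conjugation-invariant set $\Image\tilde w$. Thus the image already covers every element of $\PSL_2(K)$ whose lifted trace is not $\pm 2$.

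The hard part is (ii), and here one is forced back on the Deligne trick. View $\tilde w$ as an element $W$ of Amitsur's generic division algebra $\widetilde{\UD}$ obtained by substituting generic $\SL_2$-matrices into $w$. Non-triviality of $w$ together with Borel's theorem gives that $W$ is non-central in $\widetilde{\UD}$, so generically its two eigenvalues are distinct. On the specialization locus $S_+=\{\tau=2\}$, each value of $\tilde w$ has both eigenvalues equal to $1$, hence is either $I$ or a non-trivial unipotent; the task is to rule out the degenerate scenario in which $\tilde w\equiv I$ on an entire irreducible component of $S_+$. If that scenario held, each matrix entry of $W-I$ would vanish on $S_+$, so $\tau-2$ would divide $W-I$ entrywise in the coordinate ring of $\SL_2(K)^m$; lifting this divisibility into $\widetilde{\UD}$ would express $W-I$ as a non-zero multiple of the central element $\tau-2$, and one would hope to derive a contradiction by applying Amitsur's theorem to the characteristic polynomial of $W$, much as in the proof of Borel's theorem above.

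Making this final step rigorous uniformly in $w$ is precisely the heart of the conjecture: it has been carried out for short or structurally simple words (see the partial results \cite{BGG,BanZar,GKP2,GKP3,GKP1,KKMP} cited in the text), but the general case remains open. The main obstacle is controlling the codimension of the locus $\{W=I\}$ inside $S_+$ for an arbitrary word; a natural line of attack is to induct on the Magnus-expansion complexity of $w$, combining dominance of $\tilde w$ with a Jacobian computation along $S_\pm$ to prevent $\tilde w$ from degenerating to a scalar on any whole component. This, however, is exactly why the statement is labelled a Conjecture rather than a Theorem.
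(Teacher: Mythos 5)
You were asked to prove a statement that the paper itself records as a widely open Conjecture, so there is no proof in the paper to compare against; what the paper does contain is the Lemma of Liebeck--Nikolov--Shalev: by Borel's theorem the word map is dominant on $\SL_2(K)$, hence $\tr\circ\tilde w$ has Zariski dense image, and since two elements of $\SL_2(K)$ with equal trace $\neq\pm2$ are conjugate while $\Image \tilde w$ is conjugation-invariant, every non-unipotent element of ${\rm PSL}_2(K)$ lies in the image. Your part (i) is essentially this argument, so on the non-unipotent classes you and the paper coincide. One small inaccuracy: $\SL_2(K)^m$ is not affine space, and a dominant regular function from an irreducible variety to the affine line need only have cofinite image, so ``non-constant polynomial, hence surjective'' is not quite a proof that every trace value is attained; some extra input (as in the cited lemma) is needed to rule out finitely many missed semisimple classes.

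Your part (ii) is where the conjecture actually lives, and the route you sketch cannot be completed as stated. Writing $W-I=(\tau-2)Z$ in the trace ring produces no conflict with Amitsur's theorem: this is not a factorization of the characteristic polynomial of $W$, and $\tau-2$ is a central non-unit in a domain, so no zero-divisor argument applies. More decisively, the degeneration you want to exclude genuinely occurs at the level of $\SL_2$: for $w=x^2$ the locus $\{\tr \tilde w=-2\}$ is $\{\tr x=0\}$, on which $x^2\equiv -I$ identically, which is exactly the paper's remark that $-I+e_{12}\notin\Image(x^2)$ on $\SL_2(K)$. Hence a nontrivial word can be identically central on an entire component of the trace-$(\pm2)$ locus, and any successful argument for ${\rm PSL}_2$ must exploit the identification of $\pm I$ (for $x^2$ one recovers the unipotent class only because $\big(I+\frac{1}{2}e_{12}\big)^2=I+e_{12}$), not merely conjugation-invariance, dominance and Deligne-trick divisibility. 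This is precisely the point the paper flags as open -- whether $I+e_{12}$ or $-I-e_{12}$ lies in $\Image w$ -- and it is why your final step, which you honestly label as the heart of the matter, is a genuine gap rather than a technical one.
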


\begin{Remark} Note that if one takes the group $\SL_2$ instead of ${\rm PSL}_2$, Conjecture~\ref{word-PSL}
fails, since the matrix $-I+e_{12}$ does not belong to the image of
the word map $w=x^2$.
\end{Remark}

\begin{Example}
When $\Char K=p>0$, the image of the word map $w(x)=x^p$ evaluated
on ${\rm PSL}_2(K)$ is not ${\rm PSL}_2(K)$. Indeed, otherwise the matrix
$I+e_{12}$ could be written as $x^p$ for $x\in{\rm PSL}_2(K)$.
 If the eigenvalues of $x$ are equal, then $x=I+n$ where $n$ is nilpotent. Therefore $x^p=(I+n)^p=I+pn=I$.
 If the eigenvalues of $x$ are not equal, then $x$ is diagonalizable and therefore $x^p$ is also diagonalizable, a contradiction.
\end{Example}

\begin{Lemma}[Liebeck, Nikolov, Shalev, cf.~also \cite{BanZar,G}] $\Image w$ contains all matrices from ${\rm PSL}_2(K)$ which
are not unipotent.
\end{Lemma}

\begin{proof}According to \cite{B} the image of the word map $w$ must be Zariski
dense in $\SL_2(K)$. Therefore the image of $\tr w$ must be Zariski
dense in $K$. Note that $\tr w$ is a homogeneous rational function
and $K$ is algebraically closed. Hence, $\Image (\tr w)=K$. For any
$\lambda\neq\pm 1$ any matrix with eigenvalues $\lambda$ and
$\lambda^{-1}$ belongs to the image of $w$ since there is a matrix
with trace $\lambda+\lambda^{-1}$ in $\Image w$ and any two matrices
from $\SL_2$ with equal trace (except trace $\pm 2$) are similar.
\end{proof}

However the question of whether one of the matrices $(I+e_{12})$ or
$(-I-e_{12})$ (which are equal in ${\rm PSL}_2$) belongs to the image of
$w$ remains open.

The following Lie-algebraic counterpart of Borel's theorem holds:

\begin{Theorem}[\cite{BGKP}] \label{borel-lie}
Let $L$ be a split semisimple Lie algebra, $k$ a field. Suppose that a Lie polynomial $w(x_1,\ldots,x_n)$ is not an identity of the Lie algebra $\sl_2(k)$. Then the image of $w\colon L^n\to L$ is Zariski dense.
\end{Theorem}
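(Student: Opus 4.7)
The plan is to deduce this Lie-algebraic Borel-type statement from the $\sl_2$ case (Theorem \ref{mainlie}) by exploiting the abundance of $\sl_2$-triples in a split semisimple Lie algebra. First reduce to $L$ simple: writing $L = L_1 \oplus \cdots \oplus L_r$ as a direct sum of simple ideals and using $[L_i, L_j] = 0$ for $i \ne j$, a Lie polynomial evaluates componentwise, $w(a_1, \ldots, a_n) = \sum_{i=1}^r w(\pi_i(a_1), \ldots, \pi_i(a_n))$, where $\pi_i$ is the projection onto $L_i$. Hence $w(L^n) = \sum_i w(L_i^n)$, and Zariski density in each $L_i$ implies Zariski density in $L$.

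Assume $L$ is split simple. Fix a split Cartan $H \subseteq L$ and for each root $\alpha$ take the $\sl_2$-triple $(e_\alpha, h_\alpha, f_\alpha) \subset L$ spanning a subalgebra $\mathfrak{s}_\alpha \cong \sl_2(k)$. Since $w$ is not an identity of $\sl_2(k)$, Theorem \ref{mainlie} (after base change to $\bar k$ if necessary) gives that $w(\mathfrak{s}_\alpha^n)$ is Zariski dense in $\mathfrak{s}_\alpha$. In particular, the Zariski closure $X := \overline{w(L^n)} \subseteq L$ contains the whole subalgebra $\mathfrak{s}_\alpha$ for every root $\alpha$.

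Now $X$ is an irreducible closed subvariety of $L$ (as the closure of the image of the irreducible variety $L^n$ under a morphism), is invariant under the adjoint group $G$ of $L$ (since $w$ commutes with Lie-algebra automorphisms), and each multi-homogeneous component of $w$ is scale-equivariant in each variable, so $X$ inherits a cone structure. To prove $X = L$, I would invoke generic smoothness: it suffices to exhibit a single point of $L^n$ at which the linear differential $dw \colon L^n \to L$ is surjective. Taking $(a_1, \ldots, a_n) \in \mathfrak{s}_\alpha^n$ with $y := w(a_1, \ldots, a_n)$ a regular semisimple element of $L$ (for a favorable choice of $\alpha$), the differential $dw$ is $\mathfrak{s}_\alpha$-equivariant, so a Schur-type argument reduces surjectivity to showing that the projection of $dw$ onto each irreducible $\mathfrak{s}_\alpha$-isotypic component of $L$ is nonzero, which should follow by directing the tangent vectors into the appropriate root spaces of $L$.

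The main obstacle is precisely this last surjectivity check: ensuring the differential really hits every $\mathfrak{s}_\alpha$-isotypic component, which couples combinatorics of root systems with the specific structure of $w$. In positive characteristic the generic smoothness step also needs extra care, analogous to the $k^\ell$-scalar phenomena appearing in Lemma \ref{divAm-int}. An alternative route avoids the differential by assembling orbit closures: the $G \times k^\times$-orbit closure of a regular semisimple element of a root $\sl_2$-triple has dimension $\dim L - \mathrm{rank}(L) + 1$ in $X$, and varying $\alpha$ so that the Cartan elements $h_\alpha$ span $H$ and amalgamating the resulting orbit closures should exhaust $L$.
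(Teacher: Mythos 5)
First, note that the paper does not actually prove Theorem \ref{borel-lie}: it quotes it from \cite{BGKP} and only remarks that the proof there uses the Deligne trick, via an induction on rank analogous to the sketch of Borel's theorem for $\SL_n$ given in Section~2.2 (closure of the image contains the image for a regular subalgebra of smaller rank, and the Deligne trick excludes the possibility that this proper invariant subvariety is the whole closure). So your proposal has to stand on its own, and it does not: the decisive step --- passing from ``the image meets/densely fills each root subalgebra $\mathfrak{s}_\alpha\cong\sl_2$'' to ``the image is dense in $L$'' --- is exactly what you leave open. You concede that the surjectivity of the differential $dw$ is unverified (and the $\mathfrak{s}_\alpha$-equivariance you want to use is not available at a point of $\mathfrak{s}_\alpha^n$, which is not fixed by the adjoint action, so the Schur-type reduction is not justified); containing all the $\mathfrak{s}_\alpha$ and their $G$-conjugates is very far from forcing $X=L$ (for $L=\sl_N$ the union of conjugates of a root $\sl_2$ consists of matrices of rank at most~$2$, a small-dimensional cone).

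The fallback via orbit closures fails for two concrete reasons. First, a semisimple element of $\mathfrak{s}_\alpha$ is conjugate in $\mathfrak{s}_\alpha$ to a multiple of the coroot $h_\alpha$, which is in general \emph{not} regular in $L$ (already $h_\alpha=\operatorname{diag}(1,-1,0,0)$ in $\sl_4$), so the claimed orbit dimension $\dim L-\operatorname{rank}(L)$ is not available. Second, even granting regularity, you only get finitely many closed subsets of dimension at most $\dim L-\operatorname{rank}(L)+1$, and since $w(L^n)$ is not closed under addition there is no way to ``amalgamate'' them inside $X$; for $\operatorname{rank}(L)\ge 2$ their union cannot be dense in $L$. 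Finally, your use of Theorem \ref{mainlie} imports the hypotheses ``$K$ algebraically closed, $\operatorname{char}K\neq 2$,'' whereas the statement is over an arbitrary field $k$ with non-identity assumed only over $k$; the base-change and characteristic-$2$ issues are waved at but not handled. In short, the reduction to simple $L$ and the observation that $\overline{w(L^n)}$ is an irreducible $G$-invariant cone containing each $\mathfrak{s}_\alpha$ are fine, but the heart of the theorem (the rank-induction plus Deligne-trick argument of \cite{BGKP}, or any substitute for it) is missing.
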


The proof of this Theorem uses Deligne trick.

\begin{Remark}
At the best of our knowledge, the Deligne trick first appeared in the paper \cite{DS}. The Borel paper \cite{B} was published in the same issue.
\end{Remark}

\subsubsection{Evaluations of semihomogeneous polynomials on matrix rings}

\begin{proof}[Proof of Theorem~\ref{imhom-2}] Assume that there are matrices
$p(x_1,\dots,x_m)$ and $p(y_1,\dots,y_m)$ with different ratios of
eigenvalues in the image of $p$. Consider the polynomial matrix
$f(t)=p(tx_1+(1-t)y_1,tx_2+(1-t)y_2,\dots,tx_m+(1-t)y_m)$, and
$\Pi\circ f$ where $\Pi$ is defined in Remark~\ref{Phi-2}. Write
this nonconstant rational function $\frac{\tr^2 f}{\det f}$ in
lowest terms as $\frac{A(t)}{B(t)}$, where $A(t)$, $B(t)$ are
polynomials of degree $\leq 2\deg p$ in the numerator and
denominator.

An element $c\in K$ is in $\Image (\Pi\circ f)$ iff there exists
$t$ such that $A-cB=0$ (If for some
$t^*$ $A(t^*)-cB(t^*)=0$, then $t^*$ would be a common root of $A$ and $B$). Let $d_c =
\deg(A-cB)$. Then $d_c \le \max(\deg A,\deg B)\leq 2\deg p$, and
$d_c=\max(\deg A,\deg B)$ for almost all $c$. Hence, the
polynomial $A-cB$ is not
 constant and thus there is a root. Thus the image of $\frac{A(t)}{B(t)}$ is Zariski
dense, implying the image of $\frac{\tr^2 f}{\det f}$ is Zariski
dense.

Hence, we may assume that $\Image p$ consists only of matrices
having a fixed ratio $r$ of eigenvalues. If $r \ne \pm 1$, the
eigenvalues $\lambda_1$ and $\lambda_2$ are linear functions of $\tr p(x_1,\dots,x_m)$. Hence $\lambda_1$ and $\lambda_2$ are the
elements of the algebra of generic matrices with traces, which is
a domain by Proposition~\ref{Am1-2}. But the two nonzero elements
$p-\lambda_1I$ and $p-\lambda_2I$ have product zero, a~contradiction. Here we use the Deligne trick.

We conclude that $r = \pm 1$. First assume $r = 1$. If $\Char K \ne
2$, then $p$ is a~PI, by~Lemma~\ref{nilp-2}. If $\Char K=2$ then the
image is either $\sl_2(K)$ or $\hat K$, by Example~\ref{coneex1-2}(v).

Thus, we may assume $r = -1$ and $\Char K\neq 2$. Hence, $\Image
p$ consists only of matrices with $\lambda_1=-\lambda_2$. By
Lemma~\ref{nilp-2}, there is a non-nilpotent matrix in the image of
$p$. Hence, by Example~\ref{coneex1-2}(v), $\Image p$ is either
$\hat K$ or strictly contains it and is all of $\sl_2(K)$.
\end{proof}

\subsubsection{Evaluations of multilinear polynomials on matrix rings}

Recall Lemma~\ref{sklem}: {\it
 If $p$ is a multilinear polynomial evaluated on
the matrix ring $M_2(K)$ $($where~$K$ is a quadratically closed
field of characteristic not~$2)$, then $\Image p$ is either
$\{0\}$, $K$, $\sl_2$, or~$M_2(K)$.}

\begin{Remark}
Since the details are rather technical, we start by sketching the
proof. We assume that $\Image p=M_2(K)\setminus\tilde K$. The
linear change of the variable in position $i$ gives us the line
$A+tB$ in the image, where $A=p(x_1,\dots,x_m)$ and
$B=p(x_1,\dots,x_{i-1},y_i,x_{i+1},\dots,x_m)$. Take the function
that maps $t$ to $f(t) = (\lambda_1-\lambda_2)^2$, where
$\lambda_i$ are the eigenvalues of $A+tB$. Evidently \begin{gather*}f(t) =
(\lambda_1-\lambda_2)^2=(\lambda_1+\lambda_2)^2-4\lambda_1\lambda_2=(\tr(A+tB))^2-4\det
(A+tB),\end{gather*} so our function $f$ is a polynomial of $\deg\leq 2$
evaluated on entries of $A+tB$, and thus is a~polynomial in~$t$.

There are three possibilities: Either $\deg_t f \leq 1$, or $f$ is
the square of another polynomial, or~$f$ vanishes at two different
values of $t$ (say, $t_1$ and~$t_2$). (Note that here we use that
the field is quadratically closed). This polynomial $f$ vanishes
if and only if the two eigenvalues of $A+tB$ are equal, and this
happens in two cases (according to Lemma~\ref{thm1-2}): $A+tB$ is
scalar or $A+tB$ is nilpotent. Thus either both $A+t_iB$ are
scalar, or $A+t_1B$ is scalar and $A+t_2B$ is nilpotent, or both
$A+t_iB$ are nilpotent. In the first instance $A$ and $B$
are scalars, which is impossible. The second case instance that the
matrix $A+\frac{t_1+t_2}{2} B\in\tilde K$, which is also
impossible. The third instance implies that $\tr A=\tr B=0$ which we
claim is also impossible. If $\deg _t f\leq 1$, then for large~$t$
the difference $\lambda_1-\lambda_2$ of the eigenvalues of $A+tB$,
is much less than~$ t$, so the difference between eigenvalues of~$B$ must be~$0$, a contradiction.

It follows that $f(t) = (\lambda_1-\lambda_2)^2$ is the square of
a polynomial (with respect to~$t$). Thus
$\lambda_1-\lambda_2=a+tb$, where $a$ and $b$ are some functions
of the entries of the matrices $x_1,\dots,x_m,y_i$. Note that $a$
is the difference of eigenvalues of $A$ and $b$ is the difference
of eigenvalues of $B$, Thus
\begin{gather}\label{symmetry}
a(x_1,\dots,x_m,y_i)=b(x_1,\dots,x_{i-1},y_i,x_{i+1},\dots,x_m,x_i).
\end{gather}
Note \looseness=-1 that $(\lambda_1-\lambda_2)^2=a^2+2abt+b^2t^2$ which means
that $a^2$, $b^2$ and $ab$ are polynomials (note that here we use
$\Char K\neq 2$). Thus, $\frac{a}{b}=\frac{a^2}{ab}$ is a rational
function. Therefore there are polynomials $p_1$,~$p_2$ and $q$ such
that $a=p_1\sqrt q$ and $b=p_2\sqrt q$. Without loss of
generality, $q$ does not have square divisors. By~\eqref{symmetry}
we have that $q$ does not depend on $x_i$ and $y_i$. Now consider
the change of other variables. The function $a$ is the difference
of eigenvalues of $A=p(x_1,\dots,x_m)$ so it remains unchanged.
Thus $q$ does not depend on other variables also. That is why
$\lambda_1\pm\lambda_2$ are two polynomials and hence $\lambda_i$
are polynomials. One concludes with the last paragraph of the
proof.
\end{Remark}

\begin{proof}[Proof of Lemma~\ref{sklem}]
In view of Lemma \ref{thm1-2} it suffices to prove that the image of
$p$ cannot be $M_2(K)\setminus\tilde K$. Assume that the image of
$p$ is $M_2(K)\setminus\tilde K$. Consider for each variable~$x_i$
the line $x_i+ty_i,\ t\in K$. Then
$p(x_1,\dots,x_{i-1},x_i+ty_i,x_{i+1},\dots,x_m)$ is the line
$A+tB$, where $p(x_1,\dots,x_m)=A$ and
$p(x_1,\dots,x_{i-1},y_i,x_{i+1},\dots,x_m)=B$. Thus
$A+tB\notin\tilde K$ for any $t$. Since~$B$ is diagonalizable, we
can choose our matrix units $e_{i,j}$ such that $B$ is diagonal.
Therefore
\begin{gather*}B=\lambda_BI+\left(\begin{matrix}c & 0
\\0 & -c\end{matrix}\right),\qquad A=\lambda_AI+\left(\begin{matrix}x
& y \\z & -x\end{matrix}\right).\end{gather*} Hence
\begin{gather*}A+tB=(\lambda_A+t\lambda_B)I+\left(\begin{matrix}x+tc & y \\z
& -x-tc\end{matrix}\right).\end{gather*}

The matrix $\left(\begin{smallmatrix}x+tc & y
\\z & -x-tc\end{smallmatrix}\right)$
is nilpotent if and only if $(x+tc)^2 + yz = 0$,, which has the
solution
$t_{1,2}=\frac{1}{c}\big({-}x\pm\sqrt{-yz}\big)$. Thus, when $yz\neq 0$,
$\pi(A+t_jB)$ will be nilpotent for $j=1,2$, where
$\pi(X)=X-\frac{1}{2}\tr X$. However $\tr(A+t_jB)$ is nonzero for
one of these values of $t_j$, implying $A+t_jB\in\tilde K$, a~contradiction.

Thus, we must have $yz=0$. Without loss of generality we can
assume that $z=0$. Any mat\-rix~$M$ of the type
$qI+\left(\begin{smallmatrix}w & h
\\0 & -w\end{smallmatrix}\right)$ satisfies $\det M=q^2-w^2$ and
$q=\frac{1}{2}\tr M$. Thus $x=\sqrt{\frac{1}{4}(\tr A)^2-\det A}$
and $c=\sqrt{\frac{1}{4}(\tr B)^2-\det B}$. Consider the matrix
\begin{gather*}P_i=cA-xB=p(x_1,\dots,x_{i-1},cx_i-xy_i,x_{i+1},\dots,x_m),\end{gather*}
which must be scalar or nilpotent. It can be written explicitly
algebraically in terms of the entries of $x_i$ and $y_i$. Also,
$P_i=(c\lambda_B-x\lambda_A)I+(cy)e_{12}$, where $e_{12}$ is the
matrix unit. There are two cases. If $y=0$ then the line $A+tB$
includes a scalar matrix, and if $y\neq 0$ then
$(c\lambda_B-x\lambda_A)=0$ and all matrices on the line $A+tB$
have the same ratio of eigenvalues.

Let $S_1 = \{ i \colon P_i\in K\}$ and $S_2 = \{i \colon P_i\in
\sl_2(K)\}$. Without loss of generality we can assume for some
$k\leq m$ that $S_1
 =\{1,2,\dots,k\}$ and $\{k+1,\dots,m\}$. The
four entries of $p(x_1,\dots,x_m)$ are
\begin{gather*}p_{ij}(x_{1,(1,1)},x_{1,(1,2)},x_{1,(2,1)},x_{1,(2,2)},\dots,x_{m,(2,2)}),\end{gather*}
polynomials in the entries of $x_i$. Consider the scalar function
\begin{gather*}f_1(x_1,\dots,x_m)=\frac{\frac{1}{2}\tr p(x_1,\dots,x_m)}{R(x_1,\dots,x_m)},\end{gather*} where
$R(x_1,\dots,x_m)=\sqrt{\frac{1}{4}\tr^2p(x_1,\dots,x_m)-\det
p(x_1,\dots,x_m)}$. This function is defined everywhere except for
those $(x_1,\dots,x_m)$ for which $p(x_1,\dots,x_m)$ is a matrix
with equal eigenvalues, because $R$ is the half-difference of
eigenvalues. The function $f_1(x_1,\dots,x_m)$ does not depend on
$x_{k+1},\dots,x_m$ because for any $i\geq k+1$, substituting
$y_i$ instead of $x_i$ does not change the ratio of eigenvalues of
$p(x_1,\dots,x_m)$. Consider the matrix function
\begin{gather*}f_2(x_1,\dots,x_m)=\frac{p(x_1,\dots,x_m)-\frac{1}{2}\tr p(x_1,\dots,x_m)}{R(x_1,\dots,x_m)}.\end{gather*} This function is also
defined everywhere except for those $(x_1,\dots,x_m)$ such that
the eigenvalues of $p(x_1,\dots,x_m)$ are equal. The function
$f_2(x_1,\dots,x_m)$ does not depend on $x_i$, $i\leq k$, because
for any $i\leq k$ substituting $y_i$ instead of $x_i$ does not
change the basis in which $p(x_1,\dots,x_m)$ is diagonal. $R^2$ is
a polynomial: \begin{gather*}R^2=\frac{1}{4}\tr^2p(x_1,\dots,x_m)-\det
p(x_1,\dots,x_m).\end{gather*}
Write $R^2=r_1r_2r_3$ where $r_1$ is the product of all the
irreducible factors in which only $x_1,\dots,x_k$ occur, $r_2$ is
the product of all the irreducible factors in which only
$x_{k+1},\dots,x_m$ occur, $r_3$ is the product of the other
irreducible factors. We have that
\begin{gather*}\frac{\tr^2
p(x_1,\dots,x_m)}{r_1(x_1,\dots,x_m)r_2(x_1,\dots,x_m)r_3(x_1,\dots,x_m)}=f_1^2(x_1,\dots,x_m)\end{gather*}
does not depend on $x_{k+1},\dots,x_m$. Therefore if $\tr^2
p=q_1q_2q_3$ (again in $q_1$ only $x_1,\dots,x_k$ occur, in $q_2$
only $x_{k+1},\dots,x_m$ occur and $q_3$ is all the rest) then
$\frac{r_1r_2r_3}{q_1q_2q_3}$ does not depend on
$x_{k+1},\dots,x_m$. Hence $r_2=q_2$ and $r_3=q_3$(up to scalar
factors). As soon as $q_1q_2q_3$ is a square of a polynomial all
$q_i$ are squares therefore $r_2$ and $r_3$ are squares. Now
consider the function $\frac{p_{12}^2}{R^2}$. This is the square
of the $(1,2)$-entry in the matrix function $f_2$, so it does not
depend on $x_1,\dots,x_k$. Writing $p_{12}^2=q_1q_2q_3$(where,
again, only $x_1,\dots,x_k$ occur in $q_1$, only
$x_{k+1},\dots,x_m$ occur in $q_2$ and $q_3$ is comprised of all
the rest), then all the $q_i$ are squares and $q_1=r_1$, implying
$r_1$ is square. Thus the polynomial $r_1r_2r_3=R^2$ is the square
of a polynomial. Therefore $R$ is a polynomial. We conclude that
$\lambda_1-\lambda_2=2R$ is a polynomial (where we recall that
$\lambda_1$ and $\lambda_2$ are the eigenvalues of
$p(x_1,\dots,x_m)$). $\lambda_1+\lambda_2=\tr (p)$ is also a
polynomial and hence $\lambda_i$ are polynomials, which obviously
 are invariant under conjugation since
 any conjugation is the square of some other conjugation).
Hence, $\lambda_i$ are the polynomials of traces, by Donkin's
theorem quoted above. Now consider the polynomials~$(p-\lambda_1I)$
and $(p-\lambda_2I)$, which are elements of the algebra of free
matrices with traces, which we noted above is a domain. Both are not
zero but their product is zero, a contradiction.
\end{proof}

\begin{Remark}\label{Del}
This trick (when we use that the characteristic polynomial of a
non-central element of $\widetilde{\UD}$ cannot be decomposed) is
called the \textit{Deligne trick}.\end{Remark}

\subsection[Some important questions and open problems regarding evaluations of polynomials on low rank algebras]{Some important questions and open problems regarding evaluations\\ of polynomials on low rank algebras}

In this section we formulate some problems that remain open.

The first and one of the most important problems in this area is the investigation of the possible image sets of multilinear polynomials on matrix algebras. One of the main and the most perspective conjectures answering this question is the L'vov--Kaplansky conjecture. However it remains being an open problem. For the case of $2\times 2$ matrices we have Theorem \ref{main-2r}, nevertheless the case when $\ssl_2\subsetneq\Image p$ should be investigated, not only for $K=\R$.

\begin{Problem}
Give a full classification of all possible image sets of multilinear polynomials evaluated on $M_2(K)$ for arbitrary field $K$ in the case when $\ssl_2\subsetneq\Image p$.
\end{Problem}

Another important problem is to investigate possible images of semihomogeneous polynomials evaluated on $M_2(K)$. We some a good result when $K$ is quadratically closed (see Theorem \ref{imhom2-int}), however our result for $K=\R$ does not settle the problem. For $\R$ the Zariski topology is not so natural, and it would be much more important to investigate possible images with respect to the standard topology. This question remains open. The question about classification of such image sets for the arbitrary field remains being open as well. The same question can be asked for the quaternion algebra. We have Theorem \ref{homogen-q}, however the problem of classification of all possible Zariski dense images of
semi-homogeneous polynomials evaluated on $\HH$ remains being open.
\begin{Problem}
Give a classification of all possible image sets of semihomogeneous polynomials evaluated on $M_2(K)$ for arbitrary $K$, or at least for $K=\R$ with respect to the standard topology. Give a classification of all possible images of semihomogeneous polynomials evaluated on $\HH$ with respect to the standard topology.
\end{Problem}

The same question can be asked for arbitrary (non-homogeneous) polynomial. Let us provide an important example:
\begin{Example}
Let $h(x)$ be arbitrary polynomial in one variable, and consider a noncommutative polynomial $h([x,y])$ evaluated on $M_2(K)$. Note that $[x,y]$ is trace zero matrix, so its eigenvalues are $\pm\lambda$ for some $\lambda$. Thus, if $[x,y]$ is not nilpotent then $[x,y]$ is similar to the matrix $\lambda e_{11}-\lambda e_{22}$. Hence, $h([x,y])$ is similar to the matrix $h(\lambda)e_{11}+h(-\lambda)e_{22}$, and its eigenvalues are~$h(\pm\lambda)$. Therefore,
$\Image h([x,y])$ is the set of all matrices having pairs of eigenvalues $h(\pm\lambda)$.
\end{Example}

 The question whether all possible images of noncommutative polynomials evaluated on~$M_2(K)\!$ can be only $\{0\}$, $K$, $\ssl_2(K)$, $M_2(K)$ or the set of all matrices having pairs of eigenvalues $h(\pm\lambda)$ for some polynomial $h$ is an open problem. The same question can be asked for the evaluations of non homogeneous polynomials on the quaternion algebra.
\begin{Problem}
Give a classification of all possible image sets of non homogeneous $($arbitrary$)$ polynomials on~$M_2(K)$ $($and~$\HH)$, where~$K$ is an arbitrary field $($or for some partial cases such as $K=\R$, $K=\C)$ $K$ being any quadratically closed field.
\end{Problem}

Another important algebra is the algebra of Cayley numbers. This
algebra is nonassociative, and therefore polynomials evaluated on it
are also nonassociative. However, it is close to associative, and
part of our tools can work. The problem is to give a classification
of possible image sets of nonassociative multilinear polynomials
evaluated on the algebra of Cayley numbers and to check whether all
possible images are vector spaces.
\begin{Problem}
Give a classification of all possible evaluations of multilinear
polynomials on the Cayley numbers algebra, the next step would be to
consider the same question for semihomogeneous and for arbitrary
polynomials.
\end{Problem}
\begin{Question}
Is it true for a simple $3$-dimensional Lie algebra that it is
possible not only to provide a classification of possible image sets
of polynomials $($see Theorem~{\rm \ref{mainlie})} but also for any given
set $S$ to describe the set of all polynomials whose image sets are~$S$?
\end{Question}
\begin{Question}[see Conjecture~\ref{word-PSL}] If the field $K$ is algebraically
closed of characteristic $0$, then must the image of any nontrivial
group word $w(x_1,\dots,x_m)$ on the projective linear group
${\rm PSL}_2(K)$ be ${\rm PSL}_2(K)$?
\end{Question}

Malev and Pines~\cite{MP} have established that the evaluations of
multilinear polynomials on the rock-paper-scissors algebra is a~vector space.

\section{The case of matrices of rank 3}\label{3a}

Now we turn specifically to the case $n=3$ and the proof of Theorem~\ref{main3-int}.

\begin{Lemma}\label{extr-3}
We define functions $\omega _k \colon M_3(K) \to K$ as follows: Given a
matrix $a$, let $\lambda_1$, $\lambda_2$, $\lambda _3$ be the
eigenvalues of $a$, and denote
\begin{gather*}\omega_k : = \omega_k (a) =\sum\limits_{1\leq i_1<i_2<\dots<i_k\leq
3}\lambda_{i_1}\dots \lambda_{i_k}.\end{gather*} Let $p(x_1,\dots,x_m)$ be a~semi-homogeneous, trace vanishing polynomial.

 Consider the rational function
$H(x_1,\dots,x_m)
=\frac{\omega_2(p(x_1,\dots,x_m))^3}{\omega_3(p(x_1,\dots,x_m))^2}$
$($taking values in $K \cup \{\infty\})$. If $\Image H$ is dense in~$K$, then $\Image p$ is dense in $\sl_3$.
\end{Lemma}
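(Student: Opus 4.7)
The plan is to upgrade the density of $H$ in $K$ to density of the pair $(\omega_2,\omega_3)$ in $K^2$, and then exploit the conjugation invariance of $\Image p$ (Remark~\ref{cong-2}) to sweep out a regular semisimple conjugacy class of dimension $n^2-n=6$ above each generic pair, obtaining a subset of $\sl_3$ of dimension $2+6=8$.

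First I would show that the morphism $\Omega=(\omega_2\circ p,\,\omega_3\circ p)\colon M_3(K)^m\to K^2$ has Zariski-dense image. Its closure $\overline{\Image\Omega}$ is irreducible (image of an irreducible variety) and at least one-dimensional, since otherwise $H$ would take only finitely many values. Because $p$ is semi-homogeneous of weighted degree $d$ with integer weights $(w_1,\dots,w_m)$, rescaling $x_i\mapsto c^{w_i}x_i$ multiplies $p$ by $c^d$ and hence sends $(\omega_2,\omega_3)$ to $(c^{2d}\omega_2,\,c^{3d}\omega_3)$. Thus $\overline{\Image\Omega}$ is stable under the weighted $K^*$-action $(\omega_2,\omega_3)\mapsto(c^{2d}\omega_2,\,c^{3d}\omega_3)$ on $K^2$. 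If this closure were a curve, it would be an irreducible $K^*$-invariant curve for this $(2,3)$-weighted action; the only such curves are the two coordinate axes and the cuspidal cubics $\omega_3^2=\gamma\omega_2^3$ for $\gamma\in K^*$. On each of these, $H=\omega_2^3/\omega_3^2$ is constant (taking value $\infty$, $0$, or $1/\gamma$ respectively), contradicting density of $\Image H$. Hence $\overline{\Image\Omega}=K^2$.

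Next, any trace-zero matrix with $\omega_2=a$, $\omega_3=b$ has characteristic polynomial $\lambda^3+a\lambda-b$, whose discriminant $\prod_{i<j}(\lambda_i-\lambda_j)^2$, expressed in the coefficients, is a nonzero polynomial in $K[a,b]$ in every characteristic. So on a Zariski-dense open $U\subset K^2$ the three eigenvalues are distinct. For each $(a,b)\in U\cap\Image\Omega$ one picks $A\in\Image p$ with $\omega_2(A)=a$ and $\omega_3(A)=b$; then $A$ is regular semisimple, so its $\GL_3(K)$-conjugacy class has dimension $n^2-n=6$. By Remark~\ref{cong-2} this entire class lies in $\Image p$. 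Letting $(a,b)$ range over $U\cap\Image\Omega$ (dense in $K^2$), the union of these $6$-dimensional classes has dimension $6+2=8=\dim\sl_3$, proving the lemma.

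The main technical step is the classification of irreducible $K^*$-invariant curves for the $(2,3)$-weighted action; once this is in hand the remainder is a dimension count together with the conjugation-invariance of $\Image p$. One should also verify in small characteristics that the discriminant of $\lambda^3+a\lambda-b$ in $K[a,b]$ is still a nonzero polynomial, which it is: in $\chara K=2$ it reduces to $b^2$, and in $\chara K=3$ to a nonzero scalar multiple of $a^3$.
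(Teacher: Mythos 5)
Your proof is correct, and its skeleton is the same as the paper's: use semi-homogeneity to upgrade density of $H$ to density of the pair of coefficient functions, then pass from a dense set of characteristic polynomials to density in $\ssl_3$ via conjugation invariance. The execution differs in how the first step is carried out. The paper works with the pair $\big(\omega_2(p)^3,\omega_3(p)^2\big)$, both semi-homogeneous of the same weighted degree $6d$, so the rescaling $x_i\mapsto c^{w_i}x_i$ multiplies both coordinates by the same scalar $c^{6d}$; invariant subsets of the image are then unions of lines through the origin, $H$ is constant on each such line, and density of $\Image H$ forces the pair to be dense in $K^2$ (the paper's further remark about algebraic independence is terse and not the load-bearing point). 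You instead keep the pair $(\omega_2,\omega_3)$, so the induced $K^*$-action has genuine weights $(2d,3d)$ and you must classify the irreducible invariant curves (the two axes and the cuspidal cubics $\omega_3^2=\gamma\omega_2^3$), on each of which $H$ is constant; this is correct but slightly heavier, and the paper's choice of coordinates sidesteps it. In the second step the paper argues that the eigenvalue triples are dense in the plane $\lambda_1+\lambda_2+\lambda_3=0$, while you make the equivalent point by a dimension count with the $6$-dimensional regular semisimple conjugacy classes lying over a dense set of separable characteristic polynomials, using Remark~\ref{cong-2}; your verification that the discriminant of $\lambda^3+a\lambda-b$ is a nonzero polynomial in every characteristic is a useful extra detail that the paper leaves implicit. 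Both routes prove the lemma; yours is somewhat more detailed where the paper is terse, at the cost of the invariant-curve classification.
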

\begin{proof} Note that $\omega_2(p)^3$ and $\omega_3(p)^2$ are
semi-homogeneous. Thus, $\Image H$ is dense in $K$ iff the image
of the pair $\big(\omega_2(p)^3,\omega_3(p)^2\big)$ is dense in~$K^2$. But
since $\omega_2$ and $\omega_3$ are algebraically independent, so
are $\omega_2(p)^3$ and $\omega_3(p)^2$, so we conclude that the
image of the pair $\big(\omega_2(p)^3,\omega_3(p)^2\big)$ is dense in
$K^2$. Thus, the set of characteristic polynomials of evaluations
of $p$ is dense in the space of all possible characteristic
polynomials of trace zero matrices. Therefore, the set of all
triples $(\lambda_1,\lambda_2,-\lambda_1-\lambda_2)$ of
eigenvalues of matrices from $\Image p$ is dense in the plane
$x+y+z=0$ defined in $K^3$, implying that $\Image p$ is dense in
$\sl_3$.
\end{proof}

\subsection[$3\times 3$ matrices over a field with a primitive cube root of 1]{$\boldsymbol{3\times 3}$ matrices over a field with a primitive cube root of 1}\label{3ab}

 Let $K$ be an algebraically closed field.
For $\chara(K) \ne 3$ we fix a primitive cube root $\varepsilon \ne
1$ of $1$; when $\chara(K) = 3$ we take $\varepsilon = 1$.

The proof of Theorem \ref{semi_tr0_3-3} will be presented in Section~\ref{Deligne-3}.

\begin{Example}
The element $ \big[x,[y,x]x[y,x]^{-1}\big]$ of $\widetilde{\UD}$ takes on
only $3$-scalar values (see \cite[Theorem~3.2.21, p.~180]{Row})
and thus gives rise to a homogeneous polynomial taking on only
$3$-scalar values.
\end{Example}

\subsection{Multilinear trace vanishing polynomials}\label{3ac}

We can characterize the possible image sets of multilinear trace vanishing polynomials.

\begin{Lemma}\label{diag-not-scal-3}
If $p$ is a multilinear polynomial, not PI nor central, then there
exists a collection of matrix units $(e_1,e_2,\dots,e_m)$ such that
$p(e_1,e_2,\dots,e_m)$ is a diagonal but not scalar matrix.
\end{Lemma}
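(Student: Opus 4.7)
The plan is to argue by contradiction, combining Lemma \ref{graph-2r}, multilinearity, and Herstein's description of Lie ideals of a simple ring as recalled in Remark \ref{linear-2}. Assume that every evaluation $p(e_1,\dots,e_m)$ on a tuple of matrix units fails to be a non-scalar diagonal matrix. Then by Lemma \ref{graph-2r}, every such evaluation is either zero, an off-diagonal multiple $c\cdot e_{ij}$ with $i\ne j$, or---by the contrary assumption---a scalar matrix. In all three cases the evaluation lies in the proper subspace
\[
W:=\spann_K\bigl(\{e_{ij}:1\le i\ne j\le 3\}\cup\{I\}\bigr)\subsetneq M_3(K).
\]

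Next, I would invoke multilinearity to pass from matrix-unit evaluations to arbitrary evaluations. Expanding each $b_i\in M_3(K)$ in the matrix-unit basis and using that $p$ is multilinear, every value $p(b_1,\dots,b_m)$ becomes a $K$-linear combination of values of $p$ on matrix-unit tuples. Consequently $\spann(\Image p)\subseteq W$. This is the key reduction that turns a ``set-theoretic'' assumption about matrix-unit evaluations into a ``linear'' assumption about the span of the image.

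The decisive step is then to derive a contradiction from the hypothesis that $p$ is neither a PI nor a central polynomial. By Remark \ref{linear-2}, $\spann(\Image p)$ is a Lie ideal of the simple ring $M_3(K)$, so by Herstein's theorem it is either contained in the center $K\cdot I$ or contains the commutator Lie ideal $[M_3(K),M_3(K)]=\ssl_3(K)$. However $e_{11}-e_{22}\in\ssl_3(K)$ does not lie in $W$ (the diagonal part of $W$ is $K\cdot I$), so $\ssl_3(K)\not\subseteq W$, ruling out the second alternative. Hence $\spann(\Image p)\subseteq K\cdot I$, which means $p$ is a~PI (when the span is zero) or central (when the span is $K\cdot I$), contradicting the hypothesis.

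There is no serious obstacle here: the only care needed is to verify that the linear-span step goes through (which is immediate from multilinearity and the expansion in the matrix-unit basis) and that Herstein's theorem applies as stated in Remark~\ref{linear-2}, both of which are standard. In particular the argument is characteristic-free, which is consistent with the lemma being stated for an arbitrary field.
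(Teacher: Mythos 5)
Your proposal is correct and uses essentially the same ingredients as the paper's proof: Lemma \ref{graph-2r} to restrict matrix-unit evaluations, multilinearity to identify $\spann(\Image p)$ with the span of those evaluations, and Remark \ref{linear-2} (Herstein) to force that span to be $\{0\}$, $K$, $\ssl_3(K)$ or $M_3(K)$. The only difference is presentational: you argue by contradiction via the subspace $W$, whereas the paper argues directly that the diagonal-valued evaluations must span all trace-zero diagonal matrices, so a non-scalar diagonal value exists.
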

\begin{proof}
According to Lemma \ref{graph-2r}, for any matrix units $e_i$ the value $p(e_1,e_2,\dots,e_m)$ must be either $ce_{i,j}$ or some diagonal matrix. In addition, let us note that according to the Remark~\ref{linear-2} the linear span of the image set $\langle p(M_3)\rangle$ (being a linear span of all evaluations of $p$ on sets of matrix units) is one of four possible sets: either is either $\{0\}$, $K$, $\ssl_3(K)$, or $M_3(K)$. We assumed that $p$ is not PI nor central, hence the linear span of the image set $\langle p(M_3)\rangle$ either is~$\ssl_n(K)$, or~$M_n(K)$. Thus,
the linear span of all
$p(e_1,e_2,\dots,e_m)$ for any matrix units $e_i$ such that
$p(e_1,e_2,\dots,e_m)$ is diagonal, includes all
$\diag\{x,y,-x-y\}$. In particular there exists a~collection of
matrix units $(e_1,e_2,\dots,e_m)$ such that $p(e_1,e_2,\dots,e_m)$
is a diagonal but not scalar matrix.
\end{proof}

The proof of Theorem \ref{multi_tr=0_3} will be presented in Section~\ref{Deligne-3}.

\begin{Remark}
Assume that $\chara (K) = 3$ and $p$ is a multilinear polynomial,
which is neither PI nor central. Then, by Lemma~\ref{diag-not-scal-3}
there exists a collection of matrix units $e_i$ such that
\begin{gather*}p(e_1,\dots,e_m)=\diag\{\alpha ,\beta ,\gamma\}\end{gather*} is diagonal but not
scalar. Without loss of generality, $\alpha \neq \beta $. Hence
$p^3(e_1,\dots,e_m)=\diag\big\{a^3,\allowbreak\beta ^3, \gamma^3\big\}$ and $\alpha
^3\neq \beta ^3$ because $\chara (K) = 3$. Therefore $p$ is not
$3$-scalar.
\end{Remark}

\begin{Theorem}\label{equation}
If $p$ is a multilinear polynomial such that $\Image p$ does not
satisfy the equation $\gamma\omega_1(p)^2=\omega_2(p)$ for
$\gamma=0$ or $\gamma=\frac{1}{4}$, then $\Image p$ contains a
matrix with two equal eigenvalues that is not diagonalizable and
of determinant not zero. If $\Image p$ does not satisfy any
equation of the form $\gamma\omega_1(p)^2=\omega_2(p)$ for any
$\gamma$, then the set of non-diagonalizable matrices of $\Image
p$ is Zariski dense in the set of all non-diagonalizable matrices,
and $\Image p$ is dense.
\end{Theorem}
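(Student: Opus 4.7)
The plan is to reduce to a discriminant analysis along linear paths in $\Image p$, in the spirit of Lemma \ref{sklem}. By Lemma \ref{diag-not-scal-3}, there exist matrix units giving a diagonal non-scalar evaluation of $p$, so $p$ is genuinely non-central. For the main construction I invoke multilinearity: for any evaluation $A=p(x_1,\dots,x_m)$ and any alternate value $B=p(x_1,\dots,x_{i-1},y_i,x_{i+1},\dots,x_m)$, the whole affine line $A+tB$ lies in $\Image p$. Write $\Delta(t)$ for the discriminant of the characteristic polynomial of $A+tB$; it is a polynomial in $t$ of degree at most $6$.

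The crucial geometric observation is that the locus of $3\times 3$ matrices having a repeated eigenvalue \emph{and} zero determinant decomposes into exactly two irreducible components: matrices with spectrum $(0,0,c)$, which satisfy $\omega_2=0$, and matrices with spectrum $(a,a,0)$, which satisfy $\omega_2=\frac{1}{4}\omega_1^2$. Thus the hypothesis that $\Image p$ does not satisfy $\gamma\omega_1^2=\omega_2$ for either $\gamma=0$ or $\gamma=\frac{1}{4}$ is precisely what is needed to ensure that a sufficiently generic matrix in $\Image p$ with a repeated eigenvalue has nonzero determinant.

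To actually produce such a matrix, I would first show that $\Delta(t)\not\equiv 0$ for suitable generic $A,B$, by the Deligne trick: if $\Delta\equiv 0$ along every such line, then the generic element $p\in\widetilde{\UD}$ would have a repeated eigenvalue, forcing via Lemma \ref{divAm-int} that $p$ be central or $k^l$-scalar, which upon inspection makes $\Image p$ fall on one of the two excluded varieties. Hence $\Delta$ has a root $t_0$; standard perturbation theory shows that at a \emph{simple} root the two coinciding eigenvalues approach each other like $\sqrt{t-t_0}$, so $M=A+t_0B$ is similar to a Jordan block and is non-diagonalizable, whereas at a double root of $\Delta$ the splitting is linear and $M$ stays diagonalizable. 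For generic $A,B$ all roots of $\Delta$ are simple, yielding non-diagonalizability; the previous paragraph then guarantees $\det M\ne 0$ for generic choices.

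For the second statement, when $\omega_2/\omega_1^2$ is non-constant on $\Image p$, the pair $(\omega_1,\omega_2)\colon\Image p\to K^2$ is dominant, and combined with $\omega_3$ the characteristic-polynomial map of $p$ is dominant onto the affine space of characteristic polynomials of $3\times 3$ matrices. Intersecting $\overline{\Image p}$ with the discriminant hypersurface produces a codimension-one subvariety whose non-diagonalizable stratum is dense, and conjugation invariance (Remark \ref{cong-2}) upgrades this to Zariski density of non-diagonalizable matrices of $\Image p$ in the variety of all non-diagonalizable matrices, so that $\Image p$ itself is dense in $M_3(K)$. The main obstacle throughout is the simultaneous control of non-diagonalizability and nonvanishing determinant at the chosen root $t_0$, and it is here that the numerical values $\gamma\in\{0,\frac{1}{4}\}$ enter decisively, as they are exactly the two values for which the discriminant locus meets the zero-determinant locus.
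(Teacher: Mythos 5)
Your skeleton is the same as the paper's (lines $A+tB$ inside $\Image p$, discriminant analysis along the line, Amitsur/Deligne-type arguments in $\widetilde{\UD}$, and the identification of the spectra $(0,0,c)$ and $(a,a,0)$ with the two excluded identities $\omega_2=0$ and $4\omega_2=\omega_1^2$), but the two decisive steps are asserted rather than proved. First, the claim that ``for generic $A,B$ all roots of $\Delta(t)$ are simple'' is exactly the kind of statement about $\Image p$ that needs an argument: since $A,B$ range only over values of $p$, nothing a priori prevents $\Delta(t)$ from having only multiple roots (or being a square) for every admissible line, and your appeal to square-root eigenvalue splitting is moreover a characteristic-zero argument in a setting where arbitrary characteristic is allowed. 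The paper never needs simple roots: it shows there are at least two distinct roots (a unique root would be conjugation-invariant, hence an element of $\widetilde{\UD}$, forcing $a,b$ scalar), and that no point of the line can be diagonalizable with a repeated eigenvalue, because two such points would have intersecting $2$-dimensional eigenspaces, producing a common eigenvector of $a$ and $b$ and hence again an element of $\widetilde{\UD}$, while a unique such point is handled the same way. That is how non-diagonalizability at the roots is obtained.

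Second, your inference from ``neither $\omega_2=0$ nor $4\omega_2=\omega_1^2$ holds identically on $\Image p$'' to ``a sufficiently generic repeated-eigenvalue matrix in $\Image p$ has nonzero determinant'' is a non sequitur: the repeated-eigenvalue locus inside $\Image p$ is a thin subset and could lie entirely in $\{\omega_3=0\}$ without either identity holding on the whole image. The paper closes precisely this gap by root counting on the line: if every discriminant-zero point of $a+tb$ had determinant zero, a uniqueness/$\widetilde{\UD}$ argument forces them all to be of one of the two types, and then the three (distinct) roots of the cubic $\det(a+tb)=0$ would all be roots of a quadratic in $t$ (namely $\omega_2(a+tb)$ or $\omega_1(a+tb)^2-4\omega_2(a+tb)$), so that quadratic vanishes identically in $t$ and the identity propagates to $p$ itself, contradicting the hypothesis. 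A similar issue occurs in your second part: non-constancy of $\omega_2/\omega_1^2$ does not by itself make the full map $(\omega_1,\omega_2,\omega_3)$ dominant, and density of a stratum of $\overline{\Image p}\cap\{\mathrm{discriminant}=0\}$ does not produce actual non-diagonalizable \emph{values} of $p$; the paper instead looks at the eigenvalue ratio $\mu\lambda^{-1}$ of the non-diagonalizable matrix already produced on the line and shows that constancy of this ratio would again force an identity $\omega_1(p)^2=c\,\omega_2(p)$, whence almost all non-diagonalizable conjugacy classes are hit by genuine evaluations.
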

\begin{proof}
If not, then by Lemma \ref{dim2-2r} there is at least one
variable (say, $x_1$) such that $a = p(x_1,x_2,\dots,x_m)$ does not
commute with $b = p(\tilde x_1,x_2,\dots,x_m)$. Consider the matrix
 $a+tb=p(x_1+t\tilde x_1,x_2,\dots,x_m)$, viewed as a polynomial in $t$.

Recall that the discriminant of a $3\times 3$ matrix with
eigenvalues $\la _1$, $\la_2$, $\la _3$ is defined as $\prod\limits_{1 \le i
< j \le 3}(\la_i - \la _j)^2$. Thus, the discriminant of $a+tb$ is
a polynomial~$f(t)$ of degree~$6$. If~$f(t)$ has only one root $t_0$, then this
root is defined in terms of the entries of $\tilde
x_1,x_1,x_2,\dots,x_m$, and invariant under the action of the
symmetric group, and thus is in Amitsur's division algebra~$\widetilde{\UD}$. By Lemma~\ref{divAm-3}, $a+t_0b$ is scalar, and
the uniqueness of~$t_0$ implies that $a$ and $b$ are scalar,
contrary to assumption.

Thus, $f(t)$ has at least two roots -- say, $t_1\neq t_2$, and
 the matrices $a+t_1b$ and $a+t_2b$ each must have multiple
eigenvalues. If both of these matrices are diagonalizable, then each
of $a+t_ib$ have a $2$-dimensional plane of eigenvectors. Therefore
we have two $2$-dimensional planes in $3$-dimensional linear space,
which must intersect. Hence there is a common eigenvector of both
$a+t_ib$ and this is a common eigenvector of $a$ and $b$. If $a$ and
$b$ have a common eigenspace of dimension~1 or~2, then there is at
least one eigenvector (and thus eigenvalue) of~$a$ that is uniquely
defined, implying $a \in \widetilde{\UD}$ by
Remark~\ref{patch1-int}, contradicting Lemma~\ref{divAm-3}. If $a$
and $b$ have a common eigenspace of dimension~$3$, then $a$ and $b$
commute, a contradiction.

We claim that there cannot be a diagonalizable matrix with equal
eigenvalues on the line $a+tb$. Indeed, if there were such a
matrix, then either it would be unique (and thus an element of
$\widetilde {UD}$, which cannot happen), or there would be at least two such
diagonalizable matrices, which also cannot happen, as shown above.

Assume that all matrices on the line $a+tb$ of discriminant
zero have determinant zero. Then either all of them are of the
type $\diag\{\lambda,\lambda,0\}+e_{12}$ or all of them are of the
type $\diag\{0,0,\mu\}+e_{12}$. (Indeed, there are three roots of
the determinant equation $\det(a+tb)=0$, which are pairwise
distinct, and all of them give a matrix with two equal
eigenvalues, all belonging to one of these types, since
otherwise one eigenvalue is uniquely defined and thus yields an
element of $\widetilde {UD}$, which cannot happen.

In the first case, all three roots of the determinant equation
$\det(a+tb)=0$ satisfy the equation
$(\omega_1(a+tb))^2=4\omega_2(a+tb)$. Hence, we have three
pairwise distinct roots of the polynomial of maximal degree $2$,
which can occur only if the polynomial is identically zero. It
follows that also $(\omega_1(a))^2-4\omega_2(a)=0$,
 so $(\omega_1(p))^2-4\omega_2(p)=0$ is
identically zero, which by hypothesis cannot happen.

In the second case we have the analogous situation, but $\omega_2(p)$ will be identically zero, a~contradiction.

\looseness=-1 Thus on the line $a+tb$ we have at least one matrix of the type
$\diag\{\lambda,\lambda,\mu\}+e_{12}$ and $\lambda\mu\neq 0$.
Consider the algebraic expression $\mu \lambda^{-1}$. If not
constant, then it takes on almost all values, so assume that it is a
constant $\delta$. Then $\delta\ne -2$, since otherwise this
matrix will be the unique matrix of trace 0 on the line $a+tb$ and
thus an element of $\widetilde {UD}$, contrary to
Lemmas~\ref{divAm-3} and \ref{div2-3}. Consider the polynomial
$q=p-\frac{\tr p}{\delta+2}$. At the same point $t$ it takes on the
value $\diag\{0,0,(\delta-1)\lambda\}+e_{12}$. Hence all three
pairwise distinct roots of the equation $\det q(x_1+t\tilde
x_1,x_2,\dots,x_m)=0$ will give us a matrix of the form
$\diag\{0,0,*\}+e_{12}$ (otherwise we have uniqueness and thus an
element of $\widetilde {UD}$), contradicting Lemma~\ref{div2-3}. Therefore~$q$
satisfies an equation $\omega_2(q)=0$. Hence, $p$ satisfies an
equation $\omega_1(p)^2-c\omega_2(p)=0$, for some constant~$c$,
a~contradiction. Hence almost all non-diagonalizable matrices
belong to the image of~$p$, and they are almost all matrices of
discriminant $0$ (a subvariety of~$M_3(K)$ of codimension~$1$). By
Amitsur's theorem, $\Image p$ cannot be a subset of the
discriminant surface. Thus, $\Image p$ is dense in~$M_3(K)$.
\end{proof}
\begin{Remark}\label{rem_sl3}Note that if $\omega_1(p)$ is identically zero, and $\omega_2 (p)$
is not identically zero, then $\Image p$ contains a matrix
similar to $\diag\{1,1,-2\}+e_{12}$. Hence $\Image p$ contains all
diagonalizable trace zero matrices (perhaps with the exception of
the diagonalizable matrices of discriminant $0$, i.e., matrices
similar to $\diag\{c,c,-2c\}$), all non-diagonalizable
non-nilpotent trace zero matrices, and all matrices $N$ for which
$N^2=0$. Nilpotent matrices of order $3$ also belong to the image of~$p$, as we shall see in Lemma \ref{V3inimage}.
\end{Remark}

\subsubsection{Multilinear trace vanishing polynomials over an algebraically closed field}\label{3ad}

\begin{Lemma}\label{V3} A matrix is $3$-scalar iff its eigenvalues are in $\big\{\gamma, \gamma\varepsilon, \gamma\varepsilon^2\colon \gamma \in K\big\}$,
where $\gamma^3\in K$ is its determinant. The variety $V_3$ of
$3$-scalar matrices has dimension~$7$.
\end{Lemma}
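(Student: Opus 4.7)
The plan is to split the lemma into two independent claims: the eigenvalue-determinant characterization in the first sentence, and the dimension count in the second. Both follow from the description of $V_3$ as the vanishing locus of two of the elementary symmetric functions in the eigenvalues.

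First I would handle the characterization via a direct symmetric-function computation. If $A$ has eigenvalues $\gamma,\gamma\varepsilon,\gamma\varepsilon^2$, then using $1+\varepsilon+\varepsilon^2=0$ (respectively the freshman's dream in characteristic $3$, where $\varepsilon=1$) one checks $\omega_1(A)=\gamma(1+\varepsilon+\varepsilon^2)=0$, $\omega_2(A)=\gamma^2(\varepsilon+\varepsilon^2+\varepsilon^3)=0$, and $\omega_3(A)=\gamma^3\varepsilon^3=\gamma^3$, identifying $\gamma^3$ with $\det A$ as asserted. Conversely, a $3$-scalar $A$ has characteristic polynomial $x^3-c$ with $c=\det A$; factoring over the algebraically closed field $K$ as $(x-\gamma)(x-\gamma\varepsilon)(x-\gamma\varepsilon^2)$ (collapsing to $(x-\gamma)^3$ in characteristic $3$) with $\gamma^3=c$, one recovers the prescribed eigenvalues.

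For the dimension of $V_3$, I would use the fact, just established, that $V_3\subset M_3(K)\cong K^9$ is cut out by the two polynomial equations $\omega_1=0$ and $\omega_2=0$, giving the general lower bound $\dim V_3\ge 7$. To prove equality I would exhibit an explicit $7$-dimensional dense subset of $V_3$ via the orbit picture under $\GL_3(K)$-conjugation. In characteristic different from $3$ and for $\gamma\neq 0$, the matrix $\diag(\gamma,\gamma\varepsilon,\gamma\varepsilon^2)$ has three distinct eigenvalues, so its centralizer in $\GL_3(K)$ is the $3$-dimensional diagonal torus and its conjugacy orbit has dimension $9-3=6$; letting $\gamma$ range over the one-parameter $K\setminus\{0\}$ produces a $7$-dimensional family whose closure is~$V_3$. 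The stratum $\gamma=0$ is the nilpotent cone of $M_3(K)$, of dimension at most $6$, and so contributes nothing larger.

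The main subtlety, which I expect to be the only real obstacle, is the characteristic-$3$ situation, where $\varepsilon=1$ and the diagonal matrix $\gamma I$ is genuinely scalar with zero-dimensional conjugation orbit. Here the correct $7$-dimensional dense subset of $V_3$ is provided by the \emph{regular} elements with characteristic polynomial $x^3-\gamma^3$, i.e., the companion matrix of $x^3-\gamma^3$ and its conjugates: a regular element has centralizer equal to the $3$-dimensional algebra $K[A]=\langle I,A,A^2\rangle$, so its orbit still has dimension $9-3=6$, and one additional parameter $\gamma$ recovers dimension $7$. In all characteristics the upper bound then matches the lower bound $\dim V_3\ge 7$ coming from the two defining equations, completing the computation.
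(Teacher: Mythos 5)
Your proposal is correct, and while the first assertion is handled exactly as in the paper (a $3$-scalar matrix is one whose characteristic polynomial is $x^3-\gamma^3$, which you verify by the symmetric-function computation $\omega_1=\omega_2=0$, $\omega_3=\gamma^3=\det$), your dimension count goes by a genuinely different route. The paper simply observes that the invertible part of $V_3$ is cut out inside the $9$-dimensional space $M_3(K)$ by the two equations $\tr(x)=0$ and $\tr\big(x^{-1}\big)=0$, and reads off codimension~$2$; this is very short, though it tacitly uses that the second equation is not redundant on the trace-zero hyperplane (and ignores the $6$-dimensional nilpotent part, which does not affect the count). You instead take the lower bound $\dim V_3\ge 7$ from the two defining equations $\omega_1=\omega_2=0$ and obtain the matching upper bound by an orbit/centralizer computation under $\GL_3(K)$-conjugation: generic fibers are single $6$-dimensional conjugacy classes (centralizer a $3$-dimensional torus, or $K[A]$ for a regular element in characteristic $3$), plus the one-parameter family in $\gamma$, with the nilpotent stratum of dimension at most $6$ handled separately. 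Your version is longer but more explicit: it exhibits the fibration of $V_3$ into $6$-dimensional orbits over the $\gamma$-line and treats characteristic $3$ and the degenerate strata head-on, whereas the paper's argument is a one-line codimension count. The only soft spot is your appeal to density of the regular elements in $V_3$ in characteristic $3$, which you assert rather than prove; it is standard (a non-regular element $\gamma I+e_{12}$ or $\gamma I$ is a limit of regular ones such as $\gamma I+e_{12}+te_{23}$), and alternatively you can avoid density altogether by noting that the non-regular strata have orbit dimension at most $4$, so they contribute at most dimension $5$ and cannot raise the total above $7$.
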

\begin{proof} The first assertion is immediate since the
characteristic polynomial is $x^3 -\gamma^3$. Hence~${V_3}$ is a
variety. The second assertion follows since the invertible
elements of $V_3$ are defined by two equations: $\tr (x)=0$ and
$\tr \big(x^{-1}\big)=0$ and thus a~$V_3$ is a variety of codimension~$2$.
\end{proof}
\begin{Lemma}\label{V3inimage}
Assume $\Char K\neq 3$. If $p$ is neither PI nor central, then the
variety $V_3$ is contained in~$\Image p$.
\end{Lemma}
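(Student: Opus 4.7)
The plan is to reduce the statement to a direct application of Theorem~\ref{thmB1-pcp}. That theorem asserts, under precisely the hypotheses we have here ($p$ multilinear, neither PI nor central, and $\Char K$ prime to $n=3$), the existence of a matrix $M\in\Image p$ with eigenvalues $\bigl\{c,c\varepsilon,c\varepsilon^2\bigr\}$ for some $0\ne c\in K$, where $\varepsilon$ is a primitive cube root of $1$. By Lemma~\ref{V3} this $M$ already lies in $V_3\setminus\{0\}$, so the job reduces to propagating one nontrivial $3$-scalar value through $\Image p$.

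The remaining task is carried out by exploiting the two symmetries that $\Image p$ enjoys for multilinear $p$: closure under conjugation (Remark~\ref{cong-2}) and closure under scalar multiplication, the latter obtained by rescaling a single input $x_i\mapsto\lambda x_i$ in any representation $M=p(x_1,\dots,x_m)$. Thus $\lambda M\in\Image p$ for every $\lambda\in K$, which in particular places $0$ in $\Image p$ by taking $\lambda=0$.

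For any nonzero $N\in V_3$, Lemma~\ref{V3} gives eigenvalues $\bigl\{\gamma,\gamma\varepsilon,\gamma\varepsilon^2\bigr\}$ with $\gamma\ne 0$. Since $\Char K\ne 3$, these three eigenvalues are pairwise distinct, so $N$ is diagonalizable and hence conjugate to $\diag\{\gamma,\gamma\varepsilon,\gamma\varepsilon^2\}$. By the same reasoning $(\gamma/c)M$ is diagonalizable with exactly the same set of eigenvalues, so it is conjugate to $N$. Applying Remark~\ref{cong-2} to the element $(\gamma/c)M\in\Image p$ then yields $N\in\Image p$, completing the argument.

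Since Theorem~\ref{thmB1-pcp} supplies the substantive input, I do not anticipate any real obstacle; the remainder is the routine observation that any two nonzero $3$-scalar matrices sharing the same determinant are conjugate, combined with the cone structure of $\Image p$ for multilinear $p$.
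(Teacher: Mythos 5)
Your argument covers only part of $V_3$. By Lemma~\ref{V3}, a matrix lies in $V_3$ exactly when its characteristic polynomial is $x^3-\gamma^3$, i.e., when $\omega_1=\omega_2=0$; this variety contains, besides the diagonalizable matrices with eigenvalues $\{\gamma,\gamma\varepsilon,\gamma\varepsilon^2\}$, $\gamma\ne 0$, also \emph{all nonzero nilpotent matrices} (the case $\gamma=0$). So the step ``for any nonzero $N\in V_3$ Lemma~\ref{V3} gives eigenvalues $\{\gamma,\gamma\varepsilon,\gamma\varepsilon^2\}$ with $\gamma\ne 0$'' is simply false, and conjugation together with rescaling of one variable cannot bridge the gap: those operations preserve the similarity class up to a scalar, and a nilpotent matrix is never a scalar multiple of a conjugate of a diagonalizable one. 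Since $\Image p$ is not known to be closed, you also cannot reach the nilpotents by any limiting argument; the lemma asserts actual containment of $V_3$, not containment up to closure.

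For the diagonalizable part your route (Theorem~\ref{thmB1-pcp} plus Remark~\ref{cong-2} and the cone property) is essentially the same as the paper's, which obtains a value $\alpha e_{12}+\beta e_{23}+\gamma e_{31}$ with $\alpha\beta\gamma\ne 0$ via the Euler-graph/$\chi$ construction. But the bulk of the paper's proof is devoted precisely to the nilpotent part you omit: nilpotents of rank one are handled because $e_{12}$ itself is an evaluation on matrix units (Lemma~\ref{graph-2r}), while the regular nilpotents (those similar to $e_{12}+e_{23}$) require a genuinely nontrivial argument, writing $f=q\,e_{12}+r\,e_{23}+s\,e_{31}$ with $q,r,s$ scalar multilinear polynomials and deriving a contradiction from divisibility (gcd) relations among them under the assumption that no order-$3$ nilpotent value occurs. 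Without an argument of this kind your proof establishes only that the invertible elements of $V_3$ and the zero matrix lie in $\Image p$, which is strictly weaker than the lemma.
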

\begin{proof}
According to Lemma~\ref{graph-2r} there exist matrix units
$e_1,e_2,\dots,e_m$ such that $p(e_1,e_2,\dots,\allowbreak e_m)=e_{1,2}$.
Consider the mapping $\chi$ described in the proof of Theorem~\ref{dense} (see Section~\ref{Euler-ch} for details). For any triples $T_i=(t_{1,i},t_{2,i},t_{3,i})$, let
\begin{gather*}f(T_1,T_2,\dots,T_m)=p\big(\dots,t_{1,i}e_i+t_{2,i}\chi(e_i)+t_{3,i}\chi^2(e_i),\dots\big).\end{gather*}
 $\Image f$ (a subset of $\Image p$) is a subset
of the $3$-dimensional linear space
\begin{gather*}L=\{\alpha e_{12}+\beta e_{23}+\gamma e_{31},\ \alpha ,\beta ,\gamma\in
K\}.\end{gather*} Since $e_{12}$, $e_{23}$ and $e_{31}$ belong to $\Image f$,
we see that $\Image f$ is dense in $L$, and hence at least one
matrix $a = \alpha e_{12}+\beta e_{23}+\gamma e_{31}$ for
$\alpha\beta \gamma\neq 0$ belongs to $\Image p$. Note that this
matrix is $3$-central. Thus the variety ${V_3}$, excluding the
nilpotent matrices, is contained in $\Image p$. The nilpotent
matrices of order $2$ also belong to the image of $p$ since they
are similar to $e_{12}$.

Let us show that all nilpotent matrices of order $3$ (i.e.,
matrices similar to $e_{12}+e_{23}$), also belong to $\Image p$.
We have the multilinear polynomial
\begin{gather*}f(T_1,T_2,\dots,T_m)=q(T_1,T_2,\dots,T_m)e_{12}+
r(T_1,T_2,\dots,T_m)e_{23}+s(T_1,T_2,\dots,T_m)e_{31},\end{gather*} therefore
$q$, $r$ and $s$ are three scalar multilinear polynomials. Assume
there is no nilpotent matrix of order $3$ in $\Image p$. Then we
have the following: if $q=0$ then either $rs=0$, if $r=0$ then
$sq=0$, and if $s=0$ then $qr=0$. Assume $q_1$ is the greatest
common divisor of $q$ and $r$ and $q_2=\frac{q}{q_1}$. Note that both
$q_i$ are multilinear polynomials defined on disjoint sets of
variables. If $q_1=0$ then $r=0$ and if $q_2=0$ then $s=0$.
Since there are no repeated factors, $r=q_1r'$ is a multiple
of $q_1$ and $s=q_2s'$ is a multiple of $q_2$. The polynomial $r'$
cannot have common devisors with $q_2$, therefore if we consider
any generic point $(T_1,\dots,T_m)$ on the surface $r'=0$ then
$r(T_1,\dots,T_m)=0$ and $q(T_1,\dots,T_m)\neq 0$. Hence
$s(T_1,\dots,T_m)=0$ for any generic $(T_1,\dots,T_m)$ from the
surface $r'=0$. Therefore $r'$ is the divisor of $s$. Remind both
$q_1$ and $q_2$ are multilinear polynomials defined on disjoint
subsets of $\{T_1,T_2,\dots,T_m\}$. Without loss of generality
$q_1=q_1(T_1,\dots,T_k)$, and $q_2=q_2(T_{k+1},\dots,T_m)$.
Therefore $r'=r'(T_{k+1},\dots,T_m)$ and it is divisor of $s$.
Also remind $s=s'q_2$ so $q_2(T_{k+1},\dots,T_m)$ is also divisor
of $s$. Hence $r'=cq_2$ where $c$ is constant. Thus
$r=q_1r'=cq_1q_2=cq$. However there exist $(T_{k+1},\dots,T_m)$
such that $q=0$ and $r=1$ (i.e., such that
$f(T_{k+1},\dots,T_m)=e_{23}$). A contradiction.
\end{proof}
\begin{Remark}
When $\Char K=3$, $V_3$ is the space of matrices with
equal eigenvalues (including also scalar matrices). The same proof
shows that all nilpotent matrices belong to the image of $p$, as
well as all matrices similar to $cI+e_{12}+e_{23}$. But we do not
know how to show that scalar matrices and matrices similar to
$cI+e_{12}$ belong to the image of $p$.
\end{Remark}

\begin{proof}[Proof of Theorem~\ref{main3-int}]
First assume that $\Char K\ne 3$. According to Lemma~\ref{V3inimage} the variety ${V_3}$ is contained in $\Image p$.
Therefore $\Image p$ is either the set of $3$-scalar matrices, or
some $8$-dimensional variety (with $3$-scalar subvariety), or is
$9$-dimensional (and thus dense).

It remains to classify the possible $8$-dimensional images. Let
us consider all matrices $p(e_1,\dots,\allowbreak e_m)$ where $e_i$ are matrix
units. If all such matrices have trace~0, then $\Image p$ is dense
in $\sl_3(K)$, by Theorem \ref{multi_tr=0_3}. Therefore we may
assume that at least one such matrix $a$ has eigenvalues~$\alpha$,~$\beta $ and~$\gamma$ such that $\alpha +\beta +\gamma\neq 0$.
By Theorem \ref{dense} we cannot have $\alpha +\beta +\gamma$, $\alpha +\beta \varepsilon+\gamma\varepsilon^2$ and $\alpha +\beta
\varepsilon^2+\gamma\varepsilon$ all nonzero. Hence $a$ either is
scalar, or a linear combination (with nonzero coefficients) of a~scalar matrix and $\diag\big\{1,\varepsilon,\epsilon^2\big\}$ (or with
$\diag\big\{1,\varepsilon^2,\epsilon\big\}$, without loss of generality --
with $\diag\big\{1,\varepsilon,\varepsilon^2\big\}$). By Theorem~\ref{equation}, if $\Image p$ is not dense, then $p$ satisfies an
equation of the type $(\tr(p))^2=\gamma\tr\big(p^2\big)$ for some
$\gamma\in K$. Therefore, if a scalar matrix belongs to $\Image
p$, then $\gamma=\frac{1}{3}$ and $\Image p$ is the set of
$3$-scalar plus scalar matrices. If the matrix~$a$ is not scalar,
then it is a~linear combination of a~scalar matrix and
$\diag\big\{1,\varepsilon,\varepsilon^2\big\}$. Hence, by Remark~\ref{1-codim}, $\Image p$ is also the set of $3$-scalar plus
scalar matrices. At any rate, we have shown that $\Image p$ is
either $\{0\}$, $K$, the set of $3$-scalar matrices, the set of
$3$-scalar plus scalar matrices (matrices with eigenvalues
$\big(\alpha +\beta ,\alpha +\beta \varepsilon,\alpha +\beta
\varepsilon^2\big)$), $\sl_3(K)$ (perhaps lacking nilpotent matrices
of order~$3$), or is dense in~$M_3(K)$.

If $\Char K=3$, then by Remark~\ref{char3-classification} the
multilinear polynomial $p$ is either trace vanishing or $\Image p$
is dense in $M_3(K)$. If $p$ is trace vanishing, then by
Theorem~\ref{multi_tr=0_3}, $\Image p$ is one of the following:
\{0\}, the set of scalar matrices, the set of $3$-scalar matrices,
or for each triple $\lambda_1+\lambda_2+\lambda_3=0$ there exists a
matrix $M\in\Image p$ with eigenvalues $\lambda_1,\ \lambda_2$ and
$\lambda_3$.
\end{proof}

\subsection{Deligne trick for algebras of rank 3}\label{Deligne-3}

Here we show proofs of two important theorems using the Deligne
trick (for details see Section~\ref{Deligne}).
\subsubsection[Evaluations of semihomogeneous polynomials on $3\times 3$ matrices]{Evaluations of semihomogeneous polynomials on $\boldsymbol{3\times 3}$ matrices}

\begin{proof}[Proof of Theorem~\ref{semi_tr0_3-3}]
 We define the functions $\omega _k \colon M_n(K) \to
K$ as in Lemma~\ref{extr-3}, and consider the rational function
$H=\frac{\omega_2(p(x_1,\dots,x_m))^3}{\omega_3(p(x_1,\dots,x_m))^2}$
(taking values in $K \cup \{\infty\}$).

If $\omega_2(p)=\omega_3(p)=0$, then each evaluation of $p$ is a
nilpotent matrix, contradicting Amitsur's theorem. Thus, either
$\Image H$ is dense in $K$, or $H$ must be constant.

If $\Image H$ is dense in $K$, then $\Image p$ is dense in $\sl_3$ by Lemma~\ref{extr-3}.

So we may assume that $H$ is a constant, i.e.,
 $\alpha \omega_2^3(p)+\beta \omega_3^2(p)=0$ for
some $\alpha ,\beta \in K$ not both $0$. Fix generic matrices
$Y_1,\dots,Y_m$. We claim that the eigenvalues
$\lambda_1$, $\lambda_2$, $-\lambda_1-\lambda_2$ of
$q:=p(Y_1,\dots,Y_m)$ are pairwise distinct. Otherwise either they are all
equal, or two of them are equal and the third is not, each of
which is impossible by Lemmas~\ref{divAm-3} and \ref{div2-3} since $q
\in \widetilde{\UD}$.

 Let
$\lambda_1'$, $\lambda_2'$, $-\lambda_1'-\lambda_2'$ be the eigenvalues of
another matrix $r\in\Image p$. Thus we have the following:
\begin{gather*}\alpha\omega_2^3(r)+\beta\omega_3^2(r)=\alpha\omega_2^3(q)+\beta\omega_3^2(q)=0.\end{gather*}
Therefore we have homogeneous equations on the eigenvalues.
Dividing by $\lambda_2^6$ and $\lambda_2'^6$ respectively, we have
the same two polynomial equations of degree 6 on
$\frac{\lambda_1}{\lambda_2}$ and $\frac{\lambda'_1}{\lambda'_2}$,
yielding six possibilities for $\frac{\lambda'_1}{\lambda'_2}$.
The six permutations of $\lambda_1$, $\lambda_2$, and
$\lambda_3=-\lambda_1-\lambda_2$ define six pairwise different
$\frac{\lambda'_1}{\lambda'_2}$ unless
$(\lambda_1,\lambda_2,\lambda_3)$ is a permutation (multiplied
by a scalar) of one of the following triples: $(1,1,-2)$, $(1,-1,0)$, $\big(1,\varepsilon,\varepsilon^2\big)$. The first instance is
impossible since the eigenvalues must be pairwise distinct. Here we use the Deligne trick.
The second instance give us an element of Amitsur's algebra
$\widetilde{\UD}$ with eigenvalue~$0$ and thus determinant~0,
contradicting Amitsur's theorem. Here we use the Deligne trick again. In the third instance the polynomial
$p$ is $3$-scalar. Thus, either~$p$ is $3$-scalar, or
each matrix from $\Image p$ will have the same eigenvalues up to
permutation and scalar multiple (which also holds when~$p$ is $3$-scalar).

Assume that for some $i\in\{2,3\}$ that $\tr p^i$ is not
identically zero. Then $\lambda_1^i$, $\lambda_2^i$, and
$\lambda_3^i$ are three linear functions on~$\tr p^i$. Hence we
have the PI (polynomial identity)
$\big(p^i-\lambda_1^i\big)\big(p^i-\lambda_2^i\big)\big(p^i-\lambda_3^i\big)$. Thus by
Amitsur's theorem, one of the factors is a PI. Hence $p^i$ is a
scalar matrix. However $i\neq 2$ by Lemma~\ref{div2-3}. Hence $i=3$,
implying $\Image p$ is the set of matrices with
eigenvalues $\big\{\big(\gamma,\gamma\varepsilon,\gamma\varepsilon^2\big)\colon
\gamma \in K\big\}$.

Thus, we may assume that $p$ satisfies $\tr(p^i)=0$ for $i=1, 2$
and $3$. Now $\omega_1(p)=\tr(p)=0$ and
$2\omega_2(p)=(\tr(p))^2-\tr\big(p^2\big)=0$.

Hence $\omega_1=\omega_2=0$ if $\chara(K) \ne 2$; in this case
$\omega_3$ is either $0$ (and hence $p$ is PI) or not~$0$ (and
hence $p$ is $3$-scalar).

So assume that $\chara (K)=2$. Recall that
\begin{gather*}0=\tr
\big(p^3\big)=\lambda_1^3+\lambda_2^3+\lambda_3^3=\lambda_1^3+\lambda_2^3+\lambda_3^3 -3\lambda_1\lambda_2\lambda_3+3\lambda_1\lambda_2\lambda_3.\end{gather*}
But
$\lambda_1^3+\lambda_2^3+\lambda_3^3-3\lambda_1\lambda_2\lambda_3$
is a multiple of $\lambda_1+\lambda_2+\lambda_3$ (seen by
substituting $-(\lambda_1+\lambda_2)$ for $\lambda_3)$ and thus
equals $0$. Thus, $0 = 3 \lambda_1\lambda_2\lambda_3 =
\lambda_1\lambda_2\lambda_3 =\omega_3(p)$, and the Hamilton--Cayley
equation yields $p^3+\omega_2p=0$. Therefore, $p\big(p^2+\omega_2\big)=0$
and by Amitsur's theorem either $p$ is PI, or $p^2 = -\omega_2$
(which is central), implying by Lemma~\ref{div-3} that $p$ is
central.
\end{proof}

\subsubsection[Evaluations of multilinear trace zero polynomials on $3\times 3$ matrices]{Evaluations of multilinear trace zero polynomials on $\boldsymbol{3\times 3}$ matrices}

\begin{proof}[Proof of Theorem~\ref{multi_tr=0_3}] If the polynomial $\omega_2 (p)$ (defined in the proof of
Theorem~\ref{semi_tr0_3-3}) is identically zero, then the
characteristic polynomial is $p^3-\omega_3(p)=0$, implying $p$ is
either scalar (which can happen only if $\Char (K)=3$) or
$3$-scalar. Therefore we may assume that the polynomial $\omega_2
(p)$ is not identically zero. Let
\begin{gather*}f_{\alpha ,\beta }(M)=\alpha \omega_2(M)^3+\beta \omega_3(M)^2.\end{gather*} It is enough to
show that for any $ \alpha , \beta \in K$ there exists a
non-nilpotent matrix $M= p(a_1,\dots,a_m)$ such that $f_{\alpha
,\beta }(p(a_1,\dots,a_m))=0$, since this will imply that the
image of $H$ (defined in Lem\-ma~\ref{extr-3}) contains all $-\frac {\beta}{\alpha}$ and thus $K
\cup \{\infty\}$. (For example, if $\alpha= 0$ and $\beta \ne 0$, then $ \omega _3(M) = 0$, implying $ \omega _2(M) \ne 0$
since $ \omega _1(M) = 0 $ and $M$~is non-nilpotent, and thus $H = \infty.)$ Therefore,
for any trace vanishing polynomial (i.e., a polynomial
$x^3+\gamma _1 x+\gamma _0$) there is a matrix in $\Image p$ for
which this is the characteristic polynomial. Hence whenever
$\lambda_1+\lambda_2+\lambda_3=0$ there is a matrix with
eigenvalues $\lambda_i$.

We may assume that $a=p(Y_1,\dots,Y_m)$ and $b=p\big(\tilde Y_1,Y_2\dots,Y_m\big)$ are not proportional, for generic matrices $\tilde Y_1,Y_1,\dots,Y_m$, cf.~Lemma~\ref{dim2-2r}. Consider the polynomial $\varphi_{\alpha,\beta }(t)=f_{\alpha ,\beta }(a+tb)$. There are three cases to
consider:

Case I. $\varphi_{\alpha ,\beta }= 0$ identically. Then $f_{\alpha,\beta }(a)=0$, and $a$ is not nilpotent by Proposition~\ref{Am1-2}. Here we use the Deligne trick.

Case II. $\varphi_{\alpha ,\beta }$ is a constant
 $\tilde \beta \ne 0$. Then
$f_{\alpha ,\beta }(b+ta)=t^6\varphi_{\alpha ,\beta }\big(t^{-1}\big)=\tilde
\beta t^6$; thus $f_{\alpha ,\beta }(b)=0$, and $b$ is not nilpotent
by Proposition~\ref{Am1-2}. Here we use the Deligne trick.

Case III. $\varphi_{\alpha ,\beta }$ is not constant. Then it
has finitely many roots. Assume that for each substitution $t$ the
matrix $a+tb$ is nilpotent; in particular, $\omega_2(a+tb)=0$.
Note that $\omega_2(a+tb)$ equals the sum of principal $2\times
2$ minors and thus is a quadratic polynomial (for otherwise
$\omega_2(b)=0$ which means that $\omega_2(p)$ is identically
zero, a contradiction). Hence $\omega_2(a+tb)$ has two roots,
which we denote as $t_1$ and $t_2$. If $t_1=t_2$, then $t_1$ is
 uniquely defined and thus, in view of Remark~\ref{patch1-int}, is a rational function in the entries of $a$ and $b$, and $a+t_1b$ is
a nilpotent rational function (because we assumed that one of
$a+t_1b$ and $a+t_2b$ is nilpotent, but here they are equal.) At
least one of $t_1$ and $t_2$ is a root of $\varphi_{\alpha ,\beta
}$.

If only $t_1$ is a root, then $t_1$ is uniquely defined and thus, by
Remark~\ref{patch1-int}, is a rational function; hence, $a+t_1b$ is
a nilpotent polynomial, contradicting Proposition~\ref{Am1-2}. Here we use the Deligne trick. Thus,
we may assume that both $t_1$ and $t_2$ are roots of
$\varphi_{\alpha ,\beta }$. But $\varphi_{\alpha ,\beta }(t_i)$ is
nilpotent, and in particular $\omega_3(a+t_ib)=0$. Thus there exists
exactly one more root $t_3$ of $\omega_3(a+tb)$, which is uniquely
defined and thus, by Remark~\ref{patch1-int}, is rational. Hence we
may consider the polynomial $q(x_1,\dots,x_m,\tilde x_1)=a+t_3b$,
which must satisfy the condition $\tr (q)=\det (q)=0$. This is
impossible for homogeneous $q$ by Theorem \ref{semi_tr0_3-3}, and
also impossible for nonhomogeneous $q$ since the leading homogenous
component $q_d$ would satisfy $\tr (q_d)=\det (q_d)=0$, a
contradiction.
 \end{proof}

\subsection{The Euler graph approach}\label{Euler-ch}

We recall the following elementary graph-theoretic Lemma~\ref{graph-2r}: {\it Let $p$ be a multilinear polynomial.
If the $a_i$ are matrix units, then $p(a_1,\dots,a_m)$ is
either~$0$, or~$c\cdot e_{ij}$ for some $i\neq j$, or a~diagonal matrix.}

\begin{proof} We connect a vertex $i$ with a
vertex $j$ by an oriented edge if there is a matrix $e_{ij}$ in our
set $\{a_1,a_2,\dots,a_m\}$. The evaluation $p(a_1,\dots,a_m)\neq
0$ only if there exists an Eulerian cycle or an Eulerian path in the
graph. There exists an Eulerian path only if the degrees of all
vertices but two are even, and the degrees of these two vertices are
odd. Also we know that there exists an Eulerian cycle only if the
degrees of all vertices are even. Thus when $p(a_1,\dots,a_m)\neq
0$, there exists either an Eulerian path or cycle in the graph. In
the first case we have exactly two vertices of odd degree such that
one of them ($i$) has more output edges and another ($j$) has more
input edges. Thus the only nonzero terms in the sum of our
polynomial can be of the type $ce_{ij}$ and therefore the result
will also be of this type. In the second case all degrees are even.
Thus there are only cycles and the result must be a diagonal matrix.
\end{proof}
\begin{Theorem}\label{dense}
If there exist $\alpha ,\ \beta $, and $\gamma$ in $K$ such that
$\alpha +\beta +\gamma,\ \alpha +\beta
\varepsilon+\gamma\varepsilon^2$ and $\alpha +\beta
\varepsilon^2+\gamma\varepsilon$ are nonzero, together with matrix
units $e_1,e_2,\dots,e_m$ such that $p(e_1,e_2,\dots,e_m)$ has
eigenvalues~$\alpha$,~$\beta $ and~$\gamma$, then $\Image p$ is
dense in $M_3$.
\end{Theorem}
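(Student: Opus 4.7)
The plan is to leverage the cyclic shift $\chi$ on matrix units, as in the proof of Lemma~\ref{V3inimage}, to produce three linearly independent diagonal matrices in $\Image p$, then to fill in the entire diagonal subspace via the multilinear argument of Lemma~\ref{dim2-2r}, and finally to invoke conjugation invariance and the density of the diagonalizable matrices in $M_3(K)$.

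By Lemma~\ref{graph-2r}, the value $A := p(e_1,\dots,e_m)$ must be diagonal (a scalar multiple of a single $e_{ij}$ has only the eigenvalue $0$), so after relabeling $A = \diag\{\alpha,\beta,\gamma\}$. Let $P$ be the cyclic permutation matrix on the standard basis of $K^3$; then $\chi(e_{ij}) = e_{i+1,j+1} = P e_{ij} P^{-1}$ (indices mod $3$), and hence $A_k := p(\chi^k(e_1),\dots,\chi^k(e_m)) = P^k A P^{-k}$ for $k = 0,1,2$ gives three diagonal matrices $A_0 = \diag\{\alpha,\beta,\gamma\}$, $A_1 = \diag\{\gamma,\alpha,\beta\}$, $A_2 = \diag\{\beta,\gamma,\alpha\}$ lying in $\Image p$. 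Expanding in the Fourier basis $\{I, D_1, D_2\}$ of diagonal matrices with $D_j = \diag\{1,\varepsilon^j,\varepsilon^{2j}\}$, a direct computation yields $A_k = a_0 I + \varepsilon^{-k} a_1 D_1 + \varepsilon^{k} a_2 D_2$, where $3a_0 = \alpha+\beta+\gamma$, $3a_1 = \alpha+\varepsilon^2\beta+\varepsilon\gamma$, and $3a_2 = \alpha+\varepsilon\beta+\varepsilon^2\gamma$. The determinant of the coordinate matrix of $(A_0, A_1, A_2)$ equals $a_0 a_1 a_2$ times the Vandermonde of the cube roots of unity, which is nonzero by the hypothesis (all three $a_i \ne 0$) together with $\varepsilon$ being a primitive cube root of unity. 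Hence $A_0, A_1, A_2$ are linearly independent and span the three-dimensional space of diagonal matrices.

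Now consider the multilinear map $f(T_1,\dots,T_m) = p\bigl(\sum_{j=0}^{2} t_{j,1}\chi^j(e_1),\dots,\sum_{j=0}^{2} t_{j,m}\chi^j(e_m)\bigr)$ with $T_i = (t_{0,i}, t_{1,i}, t_{2,i}) \in K^3$; by construction $A_0, A_1, A_2 \in \Image f \subseteq \Image p$. Lemma~\ref{dim2-2r} shows that whenever $\Image f$ contains two non-proportional values $v_1, v_2$, the entire plane $\spann\{v_1, v_2\}$ lies in $\Image f$. Iterating: if a further value $v_3 \in \Image f$ lies outside that plane, then for every nonzero $u \in \spann\{v_1, v_2\} \subseteq \Image f$ another application of Lemma~\ref{dim2-2r} places $\spann\{u, v_3\}$ in $\Image f$, and the union of these planes as $u$ varies is precisely $\spann\{v_1, v_2, v_3\}$. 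Applied to $A_0, A_1, A_2$, this realizes every diagonal matrix inside $\Image p$. Conjugation invariance (Remark~\ref{cong-2}) then upgrades this to every diagonalizable matrix of $M_3(K)$, and since the non-diagonalizable locus is contained in the discriminant hypersurface (codimension one), $\Image p$ is Zariski dense in $M_3(K)$.

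The main technical point is the iteration of Lemma~\ref{dim2-2r} from a $2$-plane to the full linear span of a non-coplanar triple. This relies on the fact --- apparent from its proof, which explicitly constructs $v_{a,b} = av_1 + bv_2$ --- that the $2$-plane produced is \emph{precisely} the span of the two chosen base values, so that linear spans of finite subsets of $\Image f$ are absorbed into $\Image f$ by induction on the number of generators. Everything else is essentially a Fourier-theoretic bookkeeping argument on the cyclic shift applied to the matrix units.
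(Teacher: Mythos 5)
Your opening and closing steps are fine and coincide with the paper's: the value $p(e_1,\dots,e_m)$ is diagonal by Lemma~\ref{graph-2r} (it cannot be $ce_{ij}$ since $\alpha+\beta+\gamma\neq 0$), its cyclic shifts $A_0,A_1,A_2$ lie in $\Image p$, and their linear independence is exactly the circulant determinant $3\alpha\beta\gamma-\big(\alpha^3+\beta^3+\gamma^3\big)\neq 0$, which is the same computation the paper performs at the end of its proof. The passage from a dense set of diagonal values to density in $M_3(K)$ via conjugation is also fine. The gap is in the middle, and it sits precisely where the paper does its real work. Lemma~\ref{dim2-2r} does \emph{not} assert that for any two non-proportional values $v_1,v_2\in\Image f$ the plane $\spann\{v_1,v_2\}$ lies in $\Image f$; its proof produces $av_1+bv_2$ only for a pair realizing the \emph{minimal} distance $k=\D(\mu,\nu)$ at which non-proportional values occur, because only then are the cross terms $f(\theta_S)$ forced to vanish (being proportional to both $v_1$ and $v_2$). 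For an arbitrary pair --- and your $A_0,A_1,A_2$ are obtained from tuples differing in \emph{all} $m$ slots --- the cross terms survive and the argument collapses. The strengthened statement you rely on is in fact false for multilinear maps: the bilinear map $(u,v)\mapsto uv^{T}$ on $K^2\times K^2$ has image the set of matrices of rank at most one, which contains $e_{11}$ and $e_{22}$ but not $e_{11}+e_{22}$. Consequently your iteration (``absorb the span of any finite subset'') is unjustified at every application, and so is the conclusion that every diagonal matrix lies in $\Image p$.

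The deeper point is that three linearly independent values do not force the image of $f$, which is merely a constructible conjugation-stable subset of the $3$-dimensional diagonal space, to be $3$-dimensional: a $2$-dimensional cone, or a union of a plane with a curve, contains plenty of linearly independent triples. This is exactly what the paper's proof rules out, and it needs a different idea: assuming $\dim\Image f=2$, it looks at the differential of $f$ at generic (algebraically independent) points, uses multilinearity to see that values such as $f(T_1,\dots,T_m)$, $f(\Theta_1,T_2,\dots,T_m)$, $f(\Theta_1,\Theta_2,T_3,\dots,T_m)$ all lie in the ($\le 2$-dimensional) image of $df$, and then chains these planes together to conclude that \emph{all} values of $f$ lie in a single plane $P$; only then does the linear independence of $\diag\{\alpha,\beta,\gamma\}$, $\diag\{\beta,\gamma,\alpha\}$, $\diag\{\gamma,\alpha,\beta\}$ yield the contradiction. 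To repair your argument you would need either this differential/genericity step or some other mechanism showing that a $2$-dimensional image of the multilinear map $f$ must be contained in a linear plane; the Fourier bookkeeping alone does not supply it.
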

\begin{proof}Define $\chi$ to be the permutation of the set of matrix units,
sending the indices $1\to 2$, $2\to 3$, and $3\to 1$. For example,
$\chi(e_{12})=e_{23}$.
For triples $T_1,\dots,T_m$ (each $T_i=(t_{i,1},t_{i,2},t_{i,3})$)
consider the function
\begin{gather}
f(T_1,\dots,T_m)=p\big(t_{1,1}x_1+t_{1,2}\chi(x_1)+t_{1,3}\chi^{-1}(x_1),t_{2,1}x_2+t_{2,2}\chi(x_2)
+t_{2,3}\chi^{-1}(x_2),\nonumber\\
\hphantom{f(T_1,\dots,T_m)=p\big(}{} \dots,t_{m,1}x_m +t_{m,2}\chi(x_m)+t_{m,3}\chi^{-1}(x_m)\big).\label{mapping_triples}
\end{gather}

Opening the brackets, we have $3^m$ terms, each of which we claim is a diagonal matrix. Each term is a monomial with
coefficient of the type
\begin{gather*}\chi^{k_{\pi(1)}} \chi^{k_{\pi(2)}}\cdots \chi^{k_{\pi(m)}}x_{\pi(1)}x_{\pi(2)}\cdots x_{\pi(m)},\end{gather*}
where $k_i$ is $-1$, $0$ or $1$, and $\pi$ is a permutation. Since we
substitute only matrix units in $p$, by Lemma~\ref{graph-2r} the
image is either diagonal or a matrix unit with some coefficient.
For each of the three vertices $v_1$, $v_2$, $v_3$ in our graph define
the index $\iota _\ell $, for $1 \le \ell \le 3$ to be the
number of incoming edges to $v_\ell$ minus the number of outgoing
edges from $v_\ell$. Thus, at the outset, when the image is
diagonal, we have $\iota _1 = \iota _2 = \iota _3 = 0$.

We claim that after applying $\chi$ to any matrix unit the new
$\iota'_\ell$ will all still be congruent modulo 3.
Indeed, if the edge $\vec{12}$ is changed to $\vec{23}$, then
$\iota'_1 = \iota +1$ and $\iota_3' = \iota_3+1$, whereas
$\iota'_2 = \iota_2 -2 \equiv \iota_2 +1$.
 The same with changing
$\vec{23}$ to $\vec{31}$ and $\vec{31}$ to $\vec{12}$. If we make
the opposite change $\vec{21}$ to $\vec{13}$, then (modulo 3) we
subtract~$1$ throughout. If we make a change of the type
$\vec{ii}\mapsto \vec{jj}$, then $\iota_\ell ' = \iota_\ell $ for
each~$\ell$.

If $p\big(\chi^{k_1}x_1,\chi^{k_2}x_2,\dots,\chi^{k_m}x_m\big)=e_{ij}$,
this means that the number of incoming edges minus the number of
outgoing edges of the vertex $i$ is $-1 \pmod 3$ and the number
of incoming edges minus the number of outgoing edges of $j$ is
$1\pmod 3$, which are not congruent modulo $3$. Thus the values of
the mapping $f$ defined in~\eqref{mapping_triples} are diagonal
matrices. Now fix $3m$ algebraically independent triples
$T_1,\dots,T_m,\Theta_1,\dots,\Theta_m,\Upsilon_1,\dots,\Upsilon_m$.
Assume that $\Image f$ is $2$-dimensional. Then $\Image df$ must
also be $2$-dimensional at any point. Consider the differential
$df$ at the point $(\Theta_1,T_2,\dots,T_m)$. Thus,
\begin{gather*}f(\Theta_1,T_2,\dots,T_m), \qquad f(T_1,T_2,\dots,T_m), \qquad
f(\Theta_1,\Theta_2,\dots,T_m)\end{gather*} belong to $\Image df$. Thus
these three matrices must span a linear space of dimension not
more than~$2$. Hence they lie in some plane~$P$. Now take
\begin{gather*}f(\Theta_1,\Theta_2,T_3,\dots,T_m),\qquad
f(\Theta_1,T_2,T_3,\dots,T_m), \qquad
f(\Theta_1,\Theta_2,\Theta_3,T_4,\dots,T_m).\end{gather*} For the same reason
they lie in a plane, which is
 the plane $P$ because it has
two pure quaternions from~$P$. By the same argument, we conclude that all
the matrices of the type
$f(\Theta_1,\dots,\Theta_k,T_{k+1},\allowbreak \dots,T_m)$ lie in P. Now we
see that \begin{gather*}f(\Theta_1,\dots,\Theta_{m-1},T_m),\qquad
f(\Theta_1,\dots,\Theta_m), \qquad f(\Upsilon_1,\Theta_2,\dots,\Theta_m)\end{gather*} also lie in $P$.
Analogously we obtain that also
\begin{gather*}f(\Upsilon_1,\dots,\Upsilon_k,\Theta_{k+1},\dots,\Theta_m)\in
P\end{gather*} for any~$k$.

 Hence for $3m$ algebraically independent triples
\begin{gather*}T_1,\dots,T_m;\qquad \Theta_1,\dots,\Theta_m;\qquad \Upsilon_1,\dots,\Upsilon_m,\end{gather*}
we have obtained that $f(T_1,\dots,T_M)$,
$f(\Theta_1,\dots,\Theta_m)$ and $f(\Upsilon_1,\dots,\Upsilon_m)$
lie in one plane. Thus any three values of~$f$, in particular
$\diag\{\alpha ,\beta ,\gamma\}$, $\diag\{\beta ,\gamma,\alpha \}$
and $\diag\{\gamma,\alpha ,\beta \}$, must lie in one plane. We
claim that this can happen only if \begin{gather*}\alpha +\beta +\gamma=0,\qquad
\alpha +\beta \varepsilon+\gamma\varepsilon^2=0,\qquad
\text{or}\qquad \alpha +\beta \varepsilon^2+\gamma\varepsilon=0.\end{gather*}

Indeed, $\diag\{\alpha ,\beta ,\gamma\}$, $\diag\{\beta
,\gamma,\alpha \}$ and $\diag\{\gamma,\alpha ,\beta \}$, are
dependent if and only if the matrix
\begin{gather*} \left( \begin{matrix} \alpha & \beta & \gamma \\ \beta & \gamma& \alpha \\ \gamma& \alpha & \beta \end{matrix}\right)\end{gather*}
is singular, i.e., its determinant $3\alpha \beta \gamma -\big(\alpha
^3+\beta ^3+\gamma^3\big) = 0$. But this has the desired three roots
when viewed as a cubic equation in~$\gamma$.

We have a contradiction to our hypothesis.
\end{proof}

\begin{Remark}\label{1-codim}
If there exist $\alpha$, $\beta $, and $\gamma$ such that $\alpha
+\beta +\gamma=0$ but $(\alpha ,\beta ,\gamma)$ is not
proportional to $\big(1,\varepsilon,\varepsilon^2\big)$ or
$\big(1,\varepsilon^2,\varepsilon\big)$,
 with
matrices $ A_1,A_2,\dots,A_m $ such that $p(A_1,A_2,\dots,A_m)$
has eigenvalues $\alpha$, $\beta $ and $\gamma$, then either all
diagonalizable trace zero matrices lie in $\Image p$, or $\Image
p$ is dense in~$M_3(K)$. If $\alpha +\beta
\varepsilon+\gamma\varepsilon^2=0$ but $(\alpha ,\beta ,\gamma)$
is not proportional to $\big(1,\varepsilon,\varepsilon^2\big)$ or
$(1,1,1)$, then all diagonalizable matrices with eigenvalues
$\alpha +\beta ,\ \alpha +\beta \varepsilon$ and $\alpha +\beta
\varepsilon^2$ lie in $\Image p$ or $\Image p$ is dense in~$M_3(K)$.
\end{Remark}
\begin{Remark}\label{char3-classification}
The proof of Theorem \ref{dense} works also for any field $K$ of
characteristic $3$. In this case $\varepsilon=1$. Hence, if there
are $\alpha$, $\beta $, and $\gamma$ in $K$ such that \begin{gather*}\alpha
+\beta +\gamma\ne 0,\end{gather*} together with matrix units
$e_1,e_2,\dots,e_m$ such that $p(e_1,e_2,\dots,e_m)$ has eigenvalues
$\alpha$, $\beta $ and $\gamma$, then $\Image p$ is dense in~$M_3$.
Therefore, for $\Char K=3$, any multilinear polynomial~$p$ is either
trace vanishing or $\Image p$ is dense in~$M_3(K)$.
\end{Remark}

\subsection{Open problems related to the rank 3 case}

\begin{Problem}\label{3s-3}Does there actually exist a multilinear polynomial whose image
evaluated on $3\times3$ matrices consists of $3$-scalar matrices?
\end{Problem}

\begin{Problem}\label{s3s}
Does there actually exist a multilinear polynomial whose image
evaluated on $3\times3$ matrices is the set of scalars plus
$3$-scalar matrices?
\end{Problem}

\begin{Remark}
Problems \ref{3s-3} and \ref{s3s} both have the same answer. If
they both have affirmative answers, such a polynomial would provide a
counter-example to Kaplansky's problem.
\end{Remark}

 \begin{Problem}
Is it possible that the image of a multilinear polynomial
evaluated on $3\times3$ matrices is dense but not all of $M_3(K)$?
\end{Problem}
 \begin{Problem}
Is it possible that the image of a multilinear polynomial
evaluated on $3\times3$ matrices is the set of all trace vanishing matrices excluding discriminant vanishing diagonalazable matrices?
\end{Problem}
\begin{Problem}
Give a classification of all possible evaluations of homogeneous polynomials on~$M_3(K)$ with respect to Zariski closure.
\end{Problem}
\begin{Remark}
The working hypothesis is that there are $6$ Zariski closures of image sets of homogeneous polynomials:
$\{0\}$, $K$, $V_3(K)$, $\ssl_3(K)$, $V_3+K,M_3(K)$. However the problem remains open.
\end{Remark}
\begin{Problem}
Investigate all possible image sets of non homogeneous polynomials in order to obtain a zoo of interesting examples.
\end{Problem}
\begin{Example}
Here are some interesting examples.
\begin{itemize}\itemsep=0pt
\item Let $p(x_1,\dots, x_m)$ be a homogeneous $3$-central polynomial and $h(x)$ be any polynomial in one variable. Then, $h(p(x_1,\dots, x_m))$ is the polynomial having evaluations with triples of eigenvalues belonging to the set $\big(h(c),h(c\varepsilon),h\big(c\varepsilon^2\big)\big)$ for all $c\in K$. This is a $7$-dimensional image.
\item Let $h(x)$ be again an arbitrary polynomial in one variable, and consider the polynomial $h([x,y])$.
This polynomial image is the set of all matrices having triples of eigenvalues belonging to the set $(h(a),h(b),h(-a-b))$ for all pairs $a,b\in K$. This is an $8$-dimensional image.
\end{itemize}
\end{Example}
\begin{Problem}
Give a classification of all possible evaluations of multilinear polynomials on $M_3(\R)$ with respect to the standard topology.
\end{Problem}
\begin{Remark}
Although we cannot answer a question about existence of multilinear $3$-central polynomials, but we know that multilinear $3$-central polynomials with real coefficients do not exist. Indeed, according to Lemma~\ref{diag-not-scal-3} any multilinear polynomial has a~diagonal not scalar matrix evaluation (with real entries), and these real entries are its eigenvalues. Note that a~nonzero $3$-central matrix cannot have three real eigenvalues.
\end{Remark}
\begin{Problem}
Investigate possible evaluations of multilinear polynomials on the simple non special Jordan algebra, which is denoted as $H(C_3)$ in~{\rm \cite{ZSSS}} and $C_{27}$ in~{\rm \cite{Sh}}.
\end{Problem}
\section{The case of high rank}\label{pcp}
The Euler graph approach is useful also for $n>3$:
\subsection{The Euler graph approach for algebras of high rank}

\subsubsection{The idea of the method}\label{euler-d}

Define $\chi$ to be the permutation of the set of matrix units,
sending the index $i\mapsto i+1$ for $1\leq i\leq n-1$, and
$n\mapsto 1$. For example, $\chi(e_{12})=e_{23},\
\chi(e_{57})=e_{68}$. Since we substitute only matrix units into
$p$, Lemma~\ref{graph-2r} shows that the image is either diagonal or
a matrix unit with some coefficient. Consider the corresponding
graph~$\Gamma$. Consider the sum of $(i-j)$ over edges
$(i\rightarrow j)$ of $\Gamma$. If the graph is an Eulerian cycle
then this sum is $0$, and if it is an Eulerian path from $k$ to
$\ell$ then this sum equals $k-\ell$. Take matrix units $a_1,\dots,a_m$ such that $p(a_1,\dots,a_m)$ is a diagonal matrix for some $\a \in K$. We may assume that $ a_1\cdots
a_m=D$. Writing $a_\ell = e_{i_\ell, j_\ell}$, we define
$\iota (a_\ell) = i_\ell - j_\ell$. Thus $i_1 = j_m$
and $\sum \iota (a_\ell) = 0$. Then $ \chi ^k (a_\ell) =
e_{i_\ell+k, j_\ell+k}$ (taken modulo~$n)$, implying
\begin{gather*}\iota \big(\chi ^k (a_\ell) \big) \equiv ( i_\ell+k) - (j_\ell+k) =
i_\ell - j_\ell = \iota (a_\ell) \pmod n. \end{gather*}

Consider
\begin{gather}\label{mapping_sets-pcp}
f(a_1,\dots,a_m)
=p\left( \sum _{k_1=1}^m t_{k_1,1}\chi ^{k_1} ( a_1),\dots,
 \sum _{k_m=1}^m t_{k_m,m}\chi ^{k_m} ( a_m)\right),
\end{gather}
where the $t_{k,\ell}$ are commuting indeterminates.
Opening the brackets, we have $n^m$ terms, each of the form
\begin{gather*}a' = \chi^{k_1}(a_{\pi(1)})\cdots \chi^{k_m}(a_{\pi(m)}),\end{gather*}
which, if nonzero, must have \begin{gather*}\iota(a') \equiv \sum _{\ell = 1}^m \iota \big(\chi^{k_\ell} a_{\pi(\ell)}\big)\equiv \sum _{\ell = 1}^m \iota (a_{\pi(\ell)}) \equiv 0\pmod n, \end{gather*}
implying $a'$ is a diagonal matrix.
 Hence $\Image f\subseteq\Image p$ contains only diagonal matrices.
This helps us to obtain a good subset in the set of diagonal properties, in particular matrices with needed set of eigenvalues, and each obtained matrix is an evaluation of the polynomial $p$.
\subsubsection{Applications of the method}

In this subsection we will show how this method is used in the proof
of several important results: Theorems~\ref{thmB1-pcp}, \ref{no-mpc-pcp} and~\ref{harmonic-4-int}.

\begin{proof}[Proof of Theorem~\ref{thmB1-pcp}]
Define $\chi$ as in Section \ref{euler-d} By Lemma \ref{graph-2r} there
exist matrix units $a_1,\dots,a_m$ with $p(a_1,\dots,a_m)=\a
e_{12}$ for some $\a \in K$. We may assume that $ a_1\cdots
a_m=e_{12}$. Writing $a_\ell = e_{i_\ell, j_\ell}$, we define
$\iota (a_\ell) = i_\ell - j_\ell$. Thus $i_1 = 1$ and $j_m = 2$,
and $\sum \iota (a_\ell) = 1$. Then $ \chi ^k (a_\ell) =
e_{i_\ell+k, j_\ell+k}$ (taken modulo $n)$, implying
\begin{gather*}\iota (\chi ^k (a_\ell) ) \equiv ( i_\ell+k) - (j_\ell+k) =
i_\ell - j_\ell = \iota (a_\ell) \pmod n. \end{gather*}

Consider $f(a_1,\dots,a_m)$ defined in \eqref{mapping_sets-pcp}.
Opening the brackets, we have $n^m$ terms, each of the form
\begin{gather*}
a' = \chi^{k_1}(a_{\pi(1)})\cdots \chi^{k_m}(a_{\pi(m)}),
\end{gather*}
which, if nonzero, must have
\begin{gather*}\iota(a') \equiv \sum _{\ell = 1}^m \iota \big(\chi^{k_\ell} a_{\pi(\ell)}\big)\equiv \sum _{\ell = 1}^m \iota (a_{\pi(\ell)}) \equiv 1\pmod n,
\end{gather*}
implying $a'$ is a matrix of the form $ce_{i,i+1}$ or $ce_{n,1}$.
 Hence $\Image f\subseteq\Image p$ has the form
\begin{gather*} a = \left(
\begin{matrix} 0 & * & 0 & \dots & 0
\\ 0 & 0 & * & \dots & 0 \\ \vdots & \vdots & \ddots & \ddots & \vdots \\
 0 & 0 & 0 & \ddots & * \\ * & 0 & \dots & 0 & 0 \end{matrix}\right)
 .\end{gather*}
Each of the starred entries of $a$ is a polynomial with respect to
$t_{k,i}$ and each of them takes nonzero values because
$e_{k,k+1}$ belongs to the image of $f$ for any $1\leq k\leq n-1$
and also $e_{n,1}\in\Image(f)$. Therefore for generic $t_{k,\ell}$
each of the starred entries is nonzero, so the minimal polynomial
of $a$ is $\lambda ^ n - \alpha$ for some $\a$, implying $a$ has
eigenvalues $\{c,c\varepsilon,\dots,c\varepsilon^{n-1}\}$ where
$c$ is the $n$-th root of the determinant $\alpha$.
\end{proof}

\begin{Remark}\label{remharmonic-pcp} Assume that $K $ has the form
$F[\varepsilon]$, where $\varepsilon$ is a primitive $n$-th root of~1. Let $\overline{\phantom w}$ denote the automorphism of $K$
sending $\varepsilon\mapsto \varepsilon^{-1}$.

Let us introduce the tool of ``harmonic bases'' of the space of
diagonal matrices.
 There is a~base of the matrices $e_k$ for $0\leq k\leq n-1$, where $e_k=\diag\big\{1,\varepsilon^k,\varepsilon^{2k},\dots,
 \varepsilon^{(n-1)k}\big\}$.
 Assume that there exist matrix units $a_1,\dots, a_m$ such that $p(a_1,\dots,a_m)=\diag\{c_0,\dots,c_{n-1}\}$.
This can be written as a linear combination of the~$e_k$. If, for
some $k$, $e_k$ appears in
 this sum with nonzero coefficient, then $e_k$ belongs to the linear span of $\big\langle M, \chi(M), \chi^2(M),\dots\big\rangle$,
 where
 $\chi(\diag\{\lambda_1,\dots,\lambda_n\})=\diag\{\lambda_n,\lambda_1,\dots,\lambda_{n-1}\}$.
 Therefore if we have a set of matrix units $a_i$ such that $p(a_1,\dots,a_m)=M$, then
 we construct a multilinear mapping $f$ whose image will either have at least dimension~3 or the image will be a linear set and
 therefore~$M$ will be a linear combination of no more than two base elements~$e_k$.

 Assume also that the image of $p$ is at most $\big(n^2-n+2\big)$-dimensional.
By Remark~\ref{dim-f1-pcp}, the image of $f$ constructed in the proof of Theorem~\ref{thmB1-pcp}
 is at most $2$-dimensional and thus is a~linear space.
 If $p(a_1,\dots,a_m)=h_0e_0+h_1e_1+\dots+h_{n-1}e_{n-1}$ with $h_k\neq 0$, then
 $e_k$ belongs to the linear span of $\Image f$.
 Hence there are at most two nonzero coefficients, say, $h_k$ and $h_l$ with all of the
 others
 zero.
 We can consider the scalar product
\begin{gather*} \langle \{ \alpha _1, \dots, \alpha _n \} \{ \beta _1, \dots, \beta _n \}\rangle
= \sum _{i=1}^n \alpha_i \overline{\beta_i}.\end{gather*}
 We compute $\langle \{c_0,\dots,c_{n-1}\},e_s \}\rangle $ in two ways, first as
 $nq_s$
 and then as
 \begin{gather*}c_0+c_1\varepsilon^{-s}+\dots+c_{n-1}\varepsilon^{-(n-1)s}\end{gather*}
 since $\overline{\varepsilon^{l}}=\varepsilon^{-l}$.
\end{Remark}

\begin{proof}[Proof Theorem~\ref{no-mpc-pcp}] There exist matrix units $a_i$ such that
 $p(a_1,\dots,a_m)=\diag\{c_0,\dots,\allowbreak c_{n-1}\}$
 is diagonal but not scalar.
 If at least one of the $c_i$ were zero,
 then each $c_i$ would be zero and therefore this matrix is zero,
 a contradiction. Thus,
 without loss of generality $c_0\neq c_1$ are nonzero.
 Therefore there also exist matrix units $\tilde a_i$ such that $p(\tilde a_1,\dots,\tilde a_m)=\diag\{c_1,c_0,c_2,\dots,c_{n-1}\}$.

 Assume that the image of $f$ as constructed in \eqref{mapping_sets-pcp} is
 $1$-dimensional. Then, for each~$i$,
 $\diag\{c_0,\dots,c_{n-1}\}$ is proportional
 to $\diag\{c_i,c_{i+1},\dots,c_{n-1},c_0,\dots,c_{i-1}\}$.
We can construct the mappings $f$ and $\tilde f$ as before, and
their images cannot be both $1$-dimensional since otherwise
 \begin{gather*}\tau=\frac{c_1}{c_0}=\frac{c_2}{c_1}=\frac{c_3}{c_2}=\dots=\frac{c_n}{c_{n-1}}=\frac{c_0}{c_n},\end{gather*}
 and also $\tilde\tau=\frac{c_0}{c_1}=\frac{c_2}{c_0}=\frac{c_3}{c_2}= \cdots$. Hence, since $n-1 \ge 3$. \begin{gather*}\tau^2=\frac{c_2}{c_1}\cdot\frac{c_1}{c_0}=\frac{c_2}{c_0}=\tilde\tau=\frac{c_3}{c_2}=\tau.\end{gather*} Thus $\tau \in \{0,1\}$.
 If $\tau=1$ then
 $p(a_1,\dots,a_m)$ is scalar, a contradiction. If $\tau=0$ then $c_1=0$, a~contradiction.

We conclude that $\Image f$ is least $2$-dimensional and $ \Image p$ is at least $\big(n^2-n+2\big)$-dimensional.

 Assume that the image of $p$ is at most $\big(n^2-n+2\big)$-dimensional.
 As we showed in Remark~\ref{remharmonic-pcp}, the matrix $\diag\{c_0,\dots,c_{n-1}\}$
 can be written as $\alpha e_k+\beta e_l$,
which is not scalar. Without loss of generality we may assume that
$c_0\neq c_1$
 (because there exists $r$ such that
 $c_r\neq c_{r+1})$, and we now consider
 the matrix \begin{gather*}\diag\{c_r,c_{r+1},\dots,c_{n-1}\,c_0,c_1,\dots,c_{r-1}\}\end{gather*}
with its different coefficients $\tilde
\alpha=\varepsilon^{rk}\alpha$ and $\tilde \beta
=\varepsilon^{rl}\beta $).

We define the matrices
$q_k:=\diag\big\{\varepsilon^k,1,\varepsilon^{2k},\varepsilon^{3k},\dots,
 \varepsilon^{(n-1)k}\big\}$.
 Switching the indices 1 and 2, we obtain
 matrix units $\tilde a_i$ such that
 \begin{gather*}p(\tilde a_1, \dots,\tilde a_m)=\diag\{c_1,c_0,c_2,c_3,\dots,c_{n-1}\} = \alpha q_k+\beta
 q_l.\end{gather*}
By Remark~\ref{remharmonic-pcp}, $\alpha q_k+\beta q_l$ also can be written as a linear combination of two elements
 of the base $e_s$
 (say, $\tilde \alpha e_{\tilde k}+\tilde \beta e_{\tilde l}$).
 Note that
 \begin{gather*}\langle q_k,e_s\rangle=\varepsilon^k+\varepsilon^{-s}-1-\varepsilon^{k-s}=
 \big(\varepsilon^k-1\big)\big(1-\varepsilon^{-s}\big)+ \langle e_k,e_s\rangle .\end{gather*}

Thus, if $k\neq s$, then \begin{gather*}\langle
q_k,e_s\rangle=\varepsilon^k+\varepsilon^{-s}-1-\varepsilon^{k-s}=
 \big(\varepsilon^k-1\big)\big(1-\varepsilon^{-s}\big)\end{gather*} since $\langle
 e_k,e_s\rangle= 0$,
 and if $k=s$ then \begin{gather*}\langle q_k,e_s\rangle=\big(\varepsilon^k-1\big)\big(1-\varepsilon^{-s}\big)+n.\end{gather*}
 Hence, if $s\notin\{k,l\}$ then \begin{gather*}\langle \alpha q_k+\beta q_l,e_s\rangle=\big(1-\varepsilon^{-s}\big)\big(\alpha\big(\varepsilon^k-1\big)+\beta \big(\varepsilon^l-1\big)\big).\end{gather*}
 We denote $\delta =\alpha\big(\varepsilon^k-1\big)+\beta \big(\varepsilon^l-1\big)$.
 Recall that $c_1\neq c_0$, and thus $\delta \neq 0$. Therefore either $s=\tilde k$, or else $s=\tilde l$, or
 $\langle \alpha q_k+\beta q_l,e_s\rangle=0$ (and thus
 $s$ is either $k$, or $l$, or $1-\varepsilon^{-s}=0$ (and thus $s=0$) -- only five possibilities.
 But for $n\geq 6$ there are at least three nonzero coefficients, a contradiction.

 Thus we may assume that $n=5$.
 We have exactly five possibilities for $s$, which therefore must be distinct.
 Therefore the $k$-th and $l$-th coefficients of $\alpha
 q_k+\beta q_l$ will be zero, i.e.,
 \begin{gather*}\big(1-\varepsilon^{-k}\big)\delta +5\alpha=\big(1-\varepsilon^{-l}\big)\delta +5\beta =0,\end{gather*}
 where $\delta =\alpha\big(\varepsilon^k-1\big)+\beta \big(\varepsilon^l-1\big)$.
 In particular \begin{gather*}\frac{\alpha}{\beta }=\frac{1-\varepsilon^{-k}}{1-\varepsilon^{-l}}.\end{gather*}
 Now let us take matrix units $a_i'$ such that $p(a_1',\dots,a_m')=\diag\{c_2,c_1,c_0,c_3,c_4\}$.
 Then $\alpha r_k+\beta r_l$ can also be written as a linear combination of two of the $e_s$, where
 $r_k=\diag\big\{\varepsilon^{2k},\varepsilon^k,1,\varepsilon^{3k},\varepsilon^{4k}\big\}$.
If $k\neq s$, then \begin{gather*}\langle
r_k,e_s\rangle=\varepsilon^{2k}+\varepsilon^{-2s}-1-\varepsilon^{2k-2s}=
 \big(\varepsilon^{2k}-1\big)\big(1-\varepsilon^{-2s}\big).\end{gather*}
 We perform the same calculations as before, and obtain
 \begin{gather*}\frac{\alpha}{\beta }=\frac{1-\varepsilon^{-2k}}{1-\varepsilon^{-2l}}.\end{gather*}
 Hence \begin{gather*}\frac{1-\varepsilon^{-k}}{1-\varepsilon^{-l}}=
 \frac{1-\varepsilon^{-2k}}{1-\varepsilon^{-2l}},\end{gather*} implying
 \begin{gather*}\frac{1+\varepsilon^{-k}}{1+\varepsilon^{-l}}=1,\end{gather*} and hence $k=l$, a contradiction.
 \end{proof}

\begin{proof}[Proof of Theorem~\ref{harmonic-4-int}] First note that $4^2-4+2 = 14$, so $\dim \Image p \ge 14$.
 Assume that $p$ is a multilinear polynomial evaluated on $4\times 4$ matrices with $14$-dimensional image.
 Let $a_1,\dots,a_m$ be any matrix units such that $p(a_1,\dots,a_m)$ is diagonal but not scalar.
 Let $p(a_1,\dots,a_m)=\diag\{c_0,c_1,c_2,c_3\}$ and $c_0\neq c_1$.
 We use the same notation as in the proof of Theorem \ref{harmonic-4-int}.
 Recall that $e_k=\diag\big\{1,i^k,i^{2k},i^{3k}\big\}$ and $q_k=\diag\big\{i^k,1, i^{2k},i^{3k}\big\}$.
 As in the proof of Theorem~\ref{harmonic-4-int},
 $\langle \alpha q_k+\beta q_l , e_s\rangle= \delta \big(1-i^{-s}\big)$ if $s\notin\{k,l\}$,
 or $\delta \big(1-i^{-s}\big)+4\alpha $ if $s=k$ and
 $\delta \big(1-i^{-s}\big)+4\beta $ if $s=l$.
 Therefore $k$ and $l$ are nonzero (for otherwise we have two nonzero possibilities for $s\notin \{k,l\}$
 and one other nonzero coefficient would be zero: $\langle \alpha q_k+\beta q_l , e_0\rangle=4\alpha$ (if we assume $k=0$ without loss of
 generality).
 Therefore $p(a_1,\dots,a_m)$ belongs to the linear span $\langle e_1,e_2,e_3\rangle$.
 Hence we have three options:
 \begin{itemize}\itemsep=0pt
\item $p(a_1,\dots,a_m)=\alpha e_1+\beta e_2$,
\item $p(a_1,\dots,a_m)=\alpha e_1+\beta e_3$,
\item $p(a_1,\dots,a_m)=\alpha e_3+\beta e_2$.
\end{itemize}
We will not treat the last case since its calculations are as in the first case.
Let us consider the first case $p(a_1,\dots,a_m)=\alpha e_1+\beta e_2$.
Therefore $p(\tilde a_1,\dots,\tilde a_m)=\alpha q_1+\beta q_2$ which can be written explicitly as
\begin{gather*}\left(\frac{1}{2}\alpha-\frac{1+i}{2}\beta\right)e_1+\frac{i-1}{2}\alpha e_2+\left(\frac{i}{2}\alpha+\frac{i-1}{2}\beta\right)e_3,\end{gather*}
thus $\alpha\in\{0,(1+i)\beta,-(1+i)\beta\}$.
If $\alpha=(1+i)\beta$ then $p(a_1,\dots,a_m)=\beta\diag\{i-2,i+2,-i,-i\}$.
If $\alpha=-(1+i)\beta$ then $p(a_1,\dots,a_m)=\beta\diag\{-i,-i,i+2,i-2\}$.
In both cases $\alpha=\pm (1+i)\beta$, so there are matrix units $\tilde a_i$ such that
$p(\tilde a_1,\dots,\tilde a_m)=\diag\{-i,i+2,-i,i-2\}$ which can be written explicitly as
$-ie_1-ie_2+ie_3$, and we have three nonzero coefficients. We conclude that the image is at least $15$-dimensional.
If $\alpha=0$, then the value $p(a_1,\dots,a_m)$ has eigenvalues $(c,c,-c,-c)$ as in the conditions of the theorem.

Assume now $p(a_1,\dots,a_m)=\alpha e_1+\beta e_3$.
Therefore $p(a_1,\dots,a_m)=\diag\{x,y,-x,-y\}$.
Then we consider $p(\tilde a_1,\dots,\tilde a_m)=\diag\{x,-x,y,-y\}$ which can be written explicitely as
\begin{gather*}\frac{(x-y)(1+i)}{4}e_1+\frac{x+y}{2}e_2+\frac{(x-y)(1-i)}{4}e_3,\end{gather*}
therefore $x=\pm y$ and once again we have a matrix from the conditions of the theorem.

Therefore there is a set of matrix units $a_i$ with $p(a_1,\dots,a_m)=\diag\{c,c,-c,-c\}$.
Now we construct the mapping $f$ and the image will be $2$-dimensional if and only if it is the set
$\diag\{\lambda_1,\lambda_2,-\lambda_1,-\lambda_2\}$.
Therefore $\Image p$ contains all the matrices with such eigenvalues, which is a $14$-dimensional variety.
Hence, if $\dim\Image p = 14$, then $\Image p$ is exactly this variety.
 \end{proof}

\subsubsection{Questions related to the Euler graph approach}

By Lemma \ref{dim2-2r}, the image of any multilinear polynomial
either is
 a vector space or is at least $3$-dimensional. This is why,
when in Section~\ref{Euler-ch} we considered a $3$-dimensional case
and we obtained the needed result up to Zariski closure. However the
question of using this approach for dimension larger than $3$
remains open:
\begin{Question}
Is it possible to use this method in order to enlarge the dimension
investigated?
\end{Question}

Assume we have a periodic sequence with period $n$ (i.e., sequence $\{a_k\}$ such that $a_{k+n}=a_k$ for all $k\in\N$).
 The dimension of the space spanned by its shifts (i.e., sequences $\{b_k=a_{k+i}\}$, $i=n$ equals the number of harmonics
 in its discrete Fourier transform. From the other side, using conjugation we can permute numbers in the period, obtaining different periodic sequences.
\begin{Question}
 What is the maximal possible number of harmonics one can obtain for a fixed sequence?
\end{Question}

\subsection{Non-trivial images: power-central polynomials on matrices}\label{pcps}

Images of noncommutative polynomials can be nontrivial and
investigation of them requires advanced theory. As a good example of
such polynomials one con consider power-central polynomials.

Note that a problem of existence of $3$-central polynomials occurred
in Section \ref{3a}.

Any multilinear non-central polynomial $p$ (in several
noncommuting variables) takes on values of degree $n$ in the matrix
algebra $M_n(F)$ over an infinite field $F$ is called power central.

These considerations motivate the following application:

\begin{Definition} A polynomial $p$ is $\nu$-\textit{central} if $p^\nu$ is central, for $\nu\ge 1$ minimal such. The
polynomial $p$ is \textit{power-central} if $p$ is $\nu$-central,
for some $\nu >1$.
\end{Definition}

\begin{Remark}\label{genericten0-pcp}
The existence of a $\nu$-central polynomial is equivalent to $\UD$
containing an element whose $\nu$-power is central, with $\nu$
minimal such. On the other hand, any such element can be
specialized to an arbitrary division algebra of dimension $n^2$
over its center. Thus, to prove the non-existence of a
$\nu$-central polynomial, it suffices for suitable $\ell $
dividing $\nu$ to construct a~division algebra with center~$\hat K
\supseteq K$ having the property that if $d^\nu \in \hat K$ then
$d^\ell \in \hat K$.
\end{Remark}

Examining dimensions of images, we conclude from
Theorem~\ref{no-mpc-pcp} that multilinear power-central
 polynomials do not exist whenever $n \ge 4$.
Note that it does not contradict to the Theorem~\ref{thmC-pcp}, where we will show that if a multilinear polynomial $p$ is $\nu$-central on~$M_n(K)$
then $\nu=n$. Both statements hold. What we can conclude that power central multilinear polynomials evaluated on matrix algebras $M_n(K)$ can occur only for $n=2$ or $n=3$. For $n=2$ there exists an example (in particular, the polynomial $p(x,y)=[x,y]$, see Example~\ref{ex_multilin-int}(ii) for details), for $n=3$ the question whether such polynomial exists remains being open (see Problem~\ref{3s-3}).

To put these results into perspective, we also consider the
existence of $\nu$-central polynomials which need not be
multilinear.

Using the structure theory of division algebras,
Saltman~\cite{Sa} proved that in characteristic $0$, $\nu$-central
polynomials do not exist for odd $\nu>1$ unless $n$ is prime.

For $n=\nu$ prime, as explained in
\cite[Theorem~3.2.20]{Row1}, this is equivalent to Amitsur's
generic division algebra being cyclic, one of the major open
questions in the theory of division algebras. Although we have
nothing more to say about this question, we have results for other cases.

\begin{Remark}\label{genericten00-pcp} A homogeneous $3$-central polynomial for $n=3$ was constructed in~\cite[Theorem~3.2.21]{Row1}, and a homogeneous $2$-central
polynomial for $n=4$ was constructed in
\cite[Proposition~3.2.24]{Row1}.
\end{Remark}

\begin{Theorem}\label{thmC-pcp}If a multilinear polynomial $p$ is $\nu$-central on $M_n(K)$
then $\nu=n$.
\end{Theorem}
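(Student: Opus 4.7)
The plan is to combine Amitsur's generic division algebra framework (Proposition~\ref{Am1-2}) with the image-dimension bounds and classifications established earlier in the paper. Throughout, $p$ is assumed non-central, so it corresponds to a non-central element of the generic division algebra $\widetilde{\UD}$ of degree $n$, with $p^\nu \in Z(\widetilde{\UD})$ and $\nu$ minimal with this property.

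First I would show $n \mid \nu$. When $\chara(K)$ is $0$ or prime to $n$, Theorem~\ref{thmB1-pcp} provides an evaluation $M \in \Image p$ whose eigenvalues are $\{c, c\varepsilon, \ldots, c\varepsilon^{n-1}\}$, i.e., $n$ pairwise distinct scalar multiples of a primitive $n$-th root of unity. The assumption that $p^\nu$ is central forces $M^\nu$ to be scalar, so all $\nu$-th powers of these eigenvalues must coincide; this requires $\varepsilon^\nu = 1$, whence $n \mid \nu$.

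Next I would bound $\nu$ from above. The locus of $n \times n$ matrices whose $\nu$-th power is scalar decomposes into conjugacy classes indexed by the number $d \mid n$ of distinct eigenvalues (Lemma~\ref{div2-3}), with ambient dimension at most $n^2 - n^2/d + 1 \le n^2 - n + 1$. For $n \ge 5$ this already violates $\dim \Image p \ge n^2 - n + 3$ from Theorem~\ref{no-mpc-pcp}; for $n = 4$ with $\chara(K) \ne 2$ it violates $\dim \Image p \ge 14$ from Theorem~\ref{harmonic-4-int}. Hence for $n \ge 4$ no multilinear $\nu$-central polynomial exists at all, and the theorem is vacuous in those cases. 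The remaining small cases rely on the image classifications: Theorem~\ref{main-2-int} shows the only non-trivial power-central option for $n = 2$ is $\Image p = \ssl_2$, whose elements square to scalars, giving $\nu = 2 = n$; Theorem~\ref{main3-int} similarly isolates the $3$-scalar matrices for $n = 3$, giving $\nu = 3 = n$.

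The main obstacle will be unifying the characteristic cases: the ``$n$ distinct eigenvalues'' device of Theorem~\ref{thmB1-pcp} becomes unavailable once $x^n - c$ fails to be separable, and Theorem~\ref{harmonic-4-int} omits $\chara(K) = 2$. To handle these gaps I would argue intrinsically inside $\widetilde{\UD}$: minimality of $\nu$ together with the subfield-dimension theorem applied to $Z[p] \subseteq \widetilde{\UD}$, combined with the $k^\ell$-scalar refinements of Lemma~\ref{divAm-3}, should still pin down the relation between $\nu$ and $n$ in these remaining characteristics.
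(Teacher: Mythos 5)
Your lower bound is the paper's own first step: Theorem~\ref{thmB1-pcp} supplies a value of $p$ with eigenvalues $c,c\varepsilon,\dots,c\varepsilon^{n-1}$, and centrality of $p^\nu$ forces $\varepsilon^\nu=1$, so $n\le\nu$. For the upper bound, however, you take a genuinely different route. The paper argues uniformly in $n$ and with almost no machinery: since $p$ is $\nu$-central, the projectivized eigenvalue tuple $(\lambda_1:\dots:\lambda_n)$ of $p(x_1,\dots,x_m)$ takes values in a finite set, hence is constant; evaluating at the matrix from Theorem~\ref{thmB1-pcp} shows this constant is $\big(1:\varepsilon:\dots:\varepsilon^{n-1}\big)$, so every value of $p$ has eigenvalues proportional to $\big(1,\varepsilon,\dots,\varepsilon^{n-1}\big)$, whence $p^n$ is already central and minimality gives $\nu\le n$. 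You instead get $\nu\le n$ by showing the hypothesis is vacuous for $n\ge 4$ (your dimension count against Theorems~\ref{no-mpc-pcp} and~\ref{harmonic-4-int} is sound, and is exactly the observation the paper records just before the theorem) and by appealing to the image classifications (Theorems~\ref{main2-int} and~\ref{main3-int}) for $n=2,3$. This works in principle but is much heavier, is not uniform in $n$, and imports the deepest results of the paper where the original proof needs only Theorem~\ref{thmB1-pcp} and Amitsur's theorem.

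There are concrete gaps you would still have to close. Every auxiliary theorem you quote carries field hypotheses absent from Theorem~\ref{thmC-pcp}: Theorem~\ref{main2-int} needs $K$ quadratically closed, Theorem~\ref{main3-int} needs $K$ algebraically closed, Theorem~\ref{no-mpc-pcp} needs $K=F[\varepsilon]$, and Theorem~\ref{harmonic-4-int} needs $\Char K\ne2$; so you must insert a scalar-extension step showing that for infinite $K$ the property ``$p^\nu$ central with $\nu$ minimal'' is inherited by $M_n\big(\overline K\big)$ (true, since centrality of $p^\nu$ is an identity of $M_n(K)$ and hence passes to extensions, while non-centrality of lower powers passes trivially, but it has to be said). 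In the small cases you also assert that the classifications ``isolate'' $\ssl_2$ and the $3$-scalar matrices; you still need to rule out the remaining listed images ($M_2(K)$ for $n=2$; $\ssl_3$, a dense image, or the scalars for $n=3$) as incompatible with power-centrality, e.g., because a value such as $\diag\{1,t,-1-t\}$ with $t$ generic has no scalar power --- easy, but not automatic. Finally, your closing paragraph is a sketch rather than an argument: knowing that $K_1(p)\subseteq\widetilde{\UD}$ is a subfield of degree dividing $n$ only bounds the degree of $p$ over the center by $\nu$, and Lemma~\ref{divAm-3} alone does not force $\nu=n$ in the leftover characteristics; note that both your proof and the paper's implicitly assume $\Char K$ prime to $n$ anyway, since step one uses the eigenvalue statement of Theorem~\ref{thmB1-pcp}.
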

\begin{proof}By Theorem~\ref{thmB1-pcp}, $\Image p$ contains a matrix $M$ of
type $c_n e_{n,1} + \sum\limits_{i=1}^{n-1} c_i e_{i,i+1}$ where
$c_1\cdots c_n\neq 0$. Hence $n \le \nu$. To show that $n = \nu$,
consider the continuous mapping \begin{gather*}\theta\colon \ (x_1,\dots,x_m)\mapsto
(\lambda_1:\lambda_2:\dots:\lambda_n)\end{gather*} to the projective variety,
where $\lambda_i$ are the eigenvalues of $p(x_1,\dots,x_m)$. Since
$p$ is $\nu$-central, $\theta$~is discrete and therefore
constant. By Theorem~\ref{thmB1-pcp} there are matrices
$a_1,\dots,a_m$ such that $p(a_1,\dots,a_m)=M$ and thus
$\theta(a_1,\dots,a_m)=\big(1:~\varepsilon:~\varepsilon^2:~\dots:~\varepsilon^{n-1}\big)$.
Therefore~$p$ is $n$-central, as desired.
\end{proof}

As a special case of Remark~\ref{dim-f0-pcp}, if $p$ is power-central then $\Image p$ has
dimension $n^2-n+1$.

\subsubsection{Considerations arising from division algebras}

A major tool is Amitsur's theorem (Proposition~\ref{Am1-2}). Here is an easy consequence of the theory of division
algebras.

\begin{Lemma}\label{div-pcp} If a polynomial $p$ is $\nu$-central for $M_n(K)$ for $\nu>1$, then $\nu$
cannot be relatively prime to $n$.
\end{Lemma}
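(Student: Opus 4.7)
The plan is to generalize the proof of Lemma \ref{div-3} (which handles the prime case via a dimension argument) to arbitrary $n$, using Kummer theory in place of the prime-degree constraint. Suppose for contradiction that $\nu > 1$ is minimal with $p^\nu$ central, and $\gcd(n,\nu) = 1$. By Proposition \ref{Am1-2}, regard $p$ as an element of the generic division algebra $\widetilde{\UD}$ of degree $n$ over its center $Z$. Since $\nu > 1$ is minimal, $p \notin Z$, so $L := Z(p)$ is a subfield of $\widetilde{\UD}$ with $d := [L:Z]$ satisfying $1 < d$ and $d \mid n$ (the latter by the structure theory of central simple algebras).

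The key technical step is to show $d \mid \nu$ as well. I would pass to the cyclotomic extension $Z' := Z(\zeta_\nu)$, noting that $\Char K \nmid \nu$ (otherwise $\Char K \mid n$, contradicting $\gcd(n,\nu)=1$). Setting $c := p^\nu \in Z$, the minimal polynomial $m(x)$ of $p$ over $Z$ divides $x^\nu - c$. Since $Z'/Z$ is Galois, the irreducible factorization of $m$ over $Z'$ consists of factors $m_1,\dots,m_k$ of a common degree $d'$ (the Galois group permutes them transitively), so $d = kd'$. By Kummer theory (valid since $\zeta_\nu \in Z'$), each irreducible factor of $x^\nu - c$ over $Z'$ has degree dividing $\nu$, whence $d' \mid \nu$. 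Combining $d' \mid d \mid n$ with $d' \mid \nu$ and $\gcd(n,\nu) = 1$ forces $d' = 1$, so $m(x)$ splits completely over $Z'$, and $L$ embeds as a $Z$-subalgebra of $Z' = Z(\zeta_\nu)$.

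Thus $p$ corresponds to some $\beta \in Z(\zeta_\nu)$ with $\beta^\nu = c$. The element $q := (p \otimes 1)(1 \otimes \beta^{-1}) \in \widetilde{\UD} \otimes_Z Z'$ satisfies $q^\nu = 1$. The final step is to deduce that $q$ is central in this central simple algebra, so that $p \otimes 1 = q \cdot (1 \otimes \beta)$ lies in the center $1 \otimes Z'$, forcing $p \in (\widetilde{\UD} \otimes 1) \cap (1 \otimes Z') = 1 \otimes Z$ and contradicting $p \notin Z$.

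The main obstacle is establishing the centrality of $q$: a $\nu$-th root of unity in the tensored algebra need not be central in a general central simple algebra. One approach is to invoke Lemmas \ref{div2-3} and \ref{divAm-3} for the eigenvalue multiplicities of $q$ in the underlying division algebra $\widetilde{\UD}$, coupled with the constraint $\gcd(n,\nu) = 1$, to force $q$ to have a unique eigenvalue and thus be scalar. An alternative is a Hilbert $90$-style argument on the Galois $1$-cocycle $\sigma \mapsto \sigma(p)/p$ from $\operatorname{Gal}(Z'/Z)$ taking values in $\mu_\nu \subseteq Z'$. Either route requires careful bookkeeping in passing between the division algebra $\widetilde{\UD}$ and the potentially non-division CSA $\widetilde{\UD} \otimes_Z Z'$.
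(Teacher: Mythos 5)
Your field-theoretic preparation is sound, but the proof is not complete: the centrality of $q=(p\otimes 1)(1\otimes\beta^{-1})$, which you explicitly leave open, is where all the content of the lemma sits. The obstruction is real. Once you extend only the center to $Z'=Z(\zeta_\nu)$, an element with $q^\nu=1$ in $\widetilde{\UD}\otimes_Z Z'$ need not be central unless that algebra is still a division ring, and neither of your proposed repairs addresses this: Lemmas~\ref{div2-3} and~\ref{divAm-3} are statements about elements of $\widetilde{\UD}$ itself, not of the possibly split algebra $\widetilde{\UD}\otimes_Z Z'$ (in $M_k(Z')$ a diagonal matrix of distinct roots of unity satisfies $q^\nu=1$ without being central), and the Hilbert~90 idea is not carried out. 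Moreover, pure field theory cannot finish the job: when $\mu_\nu\not\subseteq Z$ one can have $\beta\notin Z$ with $\beta^\nu\in Z$ and $[Z(\beta):Z]$ coprime to $\nu$ (take $\beta$ a primitive sixth root of unity, $\nu=3$, degree $2$), and such elements genuinely occur inside division algebras of degree coprime to $\nu$ -- for instance $u=\tfrac12(-1+i+j+k)$ in the rational quaternions satisfies $u^3=1$, $u\notin\Q$, with $n=2$, $\nu=3$. So some input beyond ``power-central element of a degree-$n$ division algebra'' is indispensable, and your sketch never supplies it. (Also, the parenthetical claim that $\Char K\mid\nu$ would force $\Char K\mid n$ is a non sequitur, though this is minor compared with the main gap.)

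The paper closes exactly this gap with the one move you avoided: adjoin $\zeta_\nu$ to the base field $K$ itself, not merely to the center. Since $K$ is infinite, $p^\nu$ being central and $p$ being non-central persist on $M_n(K(\zeta_\nu))$, and Amitsur's theorem (Proposition~\ref{Am1-2}) applies over $K(\zeta_\nu)$, so one works from the outset in a generic division algebra of degree $n$ whose center $Z$ contains all $\nu$-th roots of unity. Kummer theory then applies directly inside the division algebra: $[Z(p):Z]$ divides $\nu$, it also divides $n$, and coprimality forces $p\in Z$, contradicting $\nu>1$. If you prefer your tensor formulation, this same observation is the missing ingredient: $Z(\zeta_\nu)=Z\cdot K(\zeta_\nu)$ is a constants extension, so $\widetilde{\UD}\otimes_Z Z(\zeta_\nu)$ is identified with the central localization of the generic matrix algebra over $K(\zeta_\nu)$ and hence is a division ring, after which $q^\nu=1$ forces $q$ to equal one of the central roots of $x^\nu-1$; but this has to be stated and justified, and once you have it the detour through $d'$, the Galois orbit of factors of $m$, and the cocycle becomes unnecessary.
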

\begin{proof} We can view $p$ as an element of the generic
division algebra $\widetilde{\UD}$ of degree~$n$, and we adjoin an
$\nu$-root of 1 to $K$ if necessary. Then $p$ generates a subfield
of $\widetilde{\UD}$, of dimension divi\-ding~$\nu$. Hence the
dimension is~1; i.e., $\nu=1$.
\end{proof}

We shall need a general fact about polynomial evaluations.
\begin{Lemma}\label{noncom-pcp} For any polynomial $p(x_1, \dots, x_m)$ which has an evaluation of degree $n$ on $M_n(K)$,
there is an index $i,\ 1\le i\le m$, and matrices $a_1, a_2,
\dots, a_m,a_i'$ such that the evaluations $ p(a_1, \dots,
a_{i-1},a_i,a_{i+1},\dots, a_m)$ and $ p(a_1, \dots,
a_{i-1},a_i',a_{i+1},\dots, a_m)$ do not commute.
\end{Lemma}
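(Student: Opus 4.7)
The plan is to argue by contradiction. Suppose that for every index $i$ and every choice of matrices $a_1,\dots,a_m$ and $a_i'$, the values $p(a_1,\dots,a_{i-1},a_i,a_{i+1},\dots,a_m)$ and $p(a_1,\dots,a_{i-1},a_i',a_{i+1},\dots,a_m)$ commute. Fix a tuple $\bar a^0=(a_1^0,\dots,a_m^0)$ with $b:=p(\bar a^0)$ of degree $n$; since $b$ is non-derogatory, its centralizer in $M_n(K)$ equals $K[b]$, a commutative subalgebra of dimension $n$.

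I would then prove by induction on $k$ that for every $(a_1',\dots,a_k')$ the evaluation $p(a_1',\dots,a_k',a_{k+1}^0,\dots,a_m^0)$ lies in $K[b]$; setting $k=m$ yields $\Image p\subseteq K[b]$. The base $k=0$ is trivial. For the inductive step, note that the set of $(a_1',\dots,a_k')$ for which the intermediate value $c:=p(a_1',\dots,a_k',a_{k+1}^0,\dots,a_m^0)$ has degree $n$ is Zariski open and contains $(a_1^0,\dots,a_k^0)$, hence is dense. On this set, the induction hypothesis yields $c\in K[b]$, and since both $K[b]$ and $K[c]$ have dimension $n$, we conclude $K[c]=K[b]$. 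The standing contradiction hypothesis, applied in the $(k+1)$-st slot, forces $p(a_1',\dots,a_{k+1}',a_{k+2}^0,\dots,a_m^0)$ to commute with $c$, hence to lie in the centralizer $K[c]=K[b]$. Since $K[b]$ is a linear subspace of $M_n(K)$ and the map $(a_1',\dots,a_{k+1}')\mapsto p(a_1',\dots,a_{k+1}',a_{k+2}^0,\dots,a_m^0)$ is polynomial, the containment extends to all tuples by Zariski closure.

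Combining $\Image p\subseteq K[b]$ with conjugation invariance (Remark~\ref{cong-2}) yields $\Image p\subseteq\bigcap_{s\in GL_n(K)}sK[b]s^{-1}$. I would then check that this intersection is just the set $K\cdot I$ of scalars: any non-scalar $c$ in it would satisfy $s^{-1}cs\in K[b]$ for all $s$, so the entire $GL_n(K)$-orbit of $c$ would sit in the $n$-dimensional subspace $K[b]$; but for $K$ infinite and $b$ non-scalar, a direct computation (e.g., taking $s$ to be a transvection not commuting with $b$, so that $sbs^{-1}\notin K[b]$, and arranging that the same $s$ moves $c$ out of $K[b]$) produces an $s\in GL_n(K)$ with $scs^{-1}\notin K[b]$. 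Hence $\Image p\subseteq K\cdot I$, contradicting the fact that $b\in\Image p$ is non-scalar (the case $n=1$ being vacuous). The main obstacle is the induction step: one must secure the degree-$n$ condition on a Zariski dense subset so that the intermediate centralizer $K[c]$ is pinned to $K[b]$, after which the linearity of $K[b]$ lets Zariski closure propagate the conclusion everywhere.
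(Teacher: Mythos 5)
Your proposal is correct, but it takes a genuinely different route from the paper. The paper argues inside Amitsur's generic division algebra $\widetilde{\UD}$ (Proposition~\ref{Am1-2}): each mixed generic evaluation $p(Y_1,\dots,Y_i,Y'_{i+1},\dots,Y'_m)$ has degree $n$ over the center $K_1$ and so generates a maximal subfield, whence two such evaluations commute iff they generate the same subfield; since the subfields at the two ends of the chain differ, some consecutive pair fails to commute, and a specialization of the generic matrices produces the required $a_1,\dots,a_m,a_i'$. You instead negate the statement and work entirely at the level of $M_n(K)$: the centralizer of the non-derogatory value $b$ is the $n$-dimensional algebra $K[b]$, the locus where the intermediate value keeps degree $n$ is a nonempty Zariski-open (hence dense, $K$ infinite) set on which the next value is trapped in $K[b]$, and linearity of $K[b]$ plus polynomiality of $p$ propagates the containment, giving $\Image p\subseteq K[b]$; conjugation invariance (Remark~\ref{cong-2}) then yields the contradiction. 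Your induction plays the role of the paper's one-variable-at-a-time chain, with ``centralizer of a degree-$n$ matrix'' replacing ``maximal subfield of $\widetilde{\UD}$.'' What your approach buys is elementarity: no division algebras and no appeal to the (only informally justified) claim that two independent generic evaluations of $p$ do not commute; you need only that $K$ is infinite. The paper's argument is shorter given that the $\widetilde{\UD}$ machinery is already in place and yields the generic form of the statement directly.

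Two repairs are needed, neither fatal. First, the parenthetical justifying the last step is wrong as stated: an $s$ that does not commute with $b$ need not satisfy $sbs^{-1}\notin K[b]$ (an element can normalize $K[b]$ without centralizing it, e.g.\ a permutation matrix conjugating a diagonal $b$ to another polynomial in $b$), so ``take a transvection not commuting with $b$'' does not by itself move anything out of $K[b]$. The correct and standard substitute: the linear span of the conjugacy orbit of a non-scalar matrix is a conjugation-invariant subspace containing a non-central element, hence contains $\sl_n$ --- via Herstein's theorem already quoted in Remark~\ref{linear-2} when $\Char K\neq 2$, and by a direct check of the conjugation-invariant subspaces of $M_2(K)$ in the remaining case $n=2$, $\Char K=2$ --- and $\dim\sl_n=n^2-1>n=\dim K[b]$ for $n\geq 2$. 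In fact you may skip the intersection $\bigcap_s sK[b]s^{-1}$ altogether and apply this directly to $c=b$. Second, both your argument and the paper's implicitly assume $n\geq 2$ (for $n=1$ no noncommuting pair exists) and $K$ infinite; these are the standing assumptions of this section, but they are worth making explicit since your density and orbit-span steps genuinely use them.
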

\begin{proof} We go back and forth to generic matrices and $\widetilde{\UD}$. First of all, for all generic matrices $Y_1, \dots, Y_m, Y_1', \dots, Y_m'$,
and each $i$, clearly $ p(Y_1, \dots, Y_i,Y'_{i+1},\dots, Y'_m)$
has degree $n$ over $K_1$, and thus has distinct eigenvalues, from
which it follows at once that $ p(Y_1, \dots, Y_m)$ and $
p(Y_1',\dots, Y'_m)$ do not commute (since one could diagonalize $
p(Y_1, \dots, Y_m)$ while $ p(Y_1',\dots, Y'_m)$ remains
non-diagonal).

But, for each $i$, $K_1( p(Y_1, \dots, ,Y_i,Y'_{i+1},\dots,
Y'_m))$ has dimension $n$ over $K_1$ and thus is a maximal
subfield of $\widetilde{\UD}$. It follows that $ p(Y_1, \dots,
Y_i,Y'_{i+1},\dots, Y'_m) $ and $ p(Y_1, \dots,
Y_{j-1},Y'_{j},\dots, Y'_m)$ commute iff \begin{gather*}K_1( p(Y_1, \dots,
Y_i,Y'_{i+1},\dots, Y'_m)) = K_1( p(Y_1, \dots,
Y_{j-1},Y'_{j},\dots, Y'_m)).\end{gather*} In particular, $K_1( p(Y_1,
\dots, Y_m)) \ne K_1( p(Y'_1, \dots, Y'_m) )$, implying \begin{gather*}K_1(
p(Y_1, \dots, Y_i,Y'_{i+1},\dots, Y'_m))\ne K_1( p(Y_1, \dots,
Y_{i-1},Y'_{i},\dots, Y'_m))\end{gather*} for some $i$, and thus $ p(Y_1,
\dots, Y_i,Y'_{i+1},\dots, Y'_m) $ and $ p(Y_1, \dots,
Y_{i-1},Y'_{i}\dots, Y'_m)$ do not commute. In other words,
\begin{gather*}[ p(Y_1, \dots, Y_i,Y'_{i+1},\dots, Y'_m) , p(Y_1, \dots,Y_{i-1},Y'_{i}\dots, Y'_m)] \ne 0,\end{gather*} implying there is a specialization
\begin{gather*}Y_1 \mapsto a_1, \dots, Y_i \mapsto a_i , Y_i' \mapsto a_i', Y'_{i+1} \mapsto a_{i+1}, \dots, Y'_m \mapsto a_m\end{gather*}
yielding $[ p(a_1, \dots, a_{i-1},a_i,a_{i+1},\dots, a_m),p(a_1,
\dots, a_{i-1},a_i',a_{i+1},\dots, a_m)] \ne 0$.
\end{proof}

\begin{Example}\label{genericten-pcp} Suppose $K$ has characteristic $\ne 2$, and $n = 2^{t-1} q$ where $q$ is odd, and
construct~$D$ to be a tensor product of ``generic'' symbols
$\big(\lambda_u^{n_u} , \mu_u^{n_u} \big)$ as in \cite[Example~7.1.28 and
Theorem~7.1.29]{Row1}, where $n_1 = n_2 = \dots = n_{t-1} = 2$
and $n_t = q$. In other words, $D$ is the algebra of central
fractions of the skew polynomial ring $R :=
K(\varepsilon)[\lambda_u\mu_u\colon 1 \le u \le t]$ where $\varepsilon$
is a primitive $q$ root of~1 and the indeterminates commute except
for $\lambda_u \mu_u = - \mu_u \lambda_u$ for $1 \le u \le t-1$
and $\lambda_t \mu_t =\varepsilon \mu_t \lambda_t$. We write a
typical element of $R$ as $\sum \a _{\bfi,\bfj} \lambda ^{\bfi}
\mu ^{\bfj}$, where $\lambda ^{\bfi}$ denotes $\prod\limits_{u=1}^t
\lambda_u^{i_u}$. There is a natural grade given by the
lexicographic order on the exponents of the monomials, and it is
easy to see that if $d^\nu \in K$ then the leading term $\hat
d^\nu \in K$.

In particular, for $\nu =2$, if $d^2 \in K$ then $\hat d$ must
have the form $\a \lambda ^{\bfi} \mu ^{\bfj}$ where $i_t = j_t =
0$. On the other hand, we claim that if $ d ^2 \in K$ and $\hat d
\in K$, then $d \in K$. Indeed, taking $d'$ to
 be the next leading term in $d$, we have
 \begin{gather*}d^2 = \big(\hat d + d'\big)^2 = \hat d^2 + 2\hat d d' + \cdots,\end{gather*}
 implying $d' \in K$, and continuing, we conclude $d\in K$, as desired.

 It follows that if $d$ is $2$-central then $\hat d$ is $2$-central.

 Now we claim that $D$ does not have $4$-central elements. Indeed,
 if $d$ is 4-central then $d^2$ is $2$-central, implying $\hat d^2$ is
 $2$-central,
 and thus $\hat d$ is $4$-central, implying $\hat d$ must have
the form $\a \lambda ^{\bfi} \mu ^{\bfj}$ where $i_t = j_t = 0;$
we conclude that $\hat
 d$ is $2$-central, implying $\hat d^2 \in K$, and thus $d^2 \in
 K$ by the claim.
\end{Example}

\begin{Theorem}[Rowen--Saltman]\label{generictena-pcp} $4$-Central
polynomials do not exist.
\end{Theorem}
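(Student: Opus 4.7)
The plan is to derive a contradiction via the specialization principle of Remark~\ref{genericten0-pcp}. Suppose a $4$-central polynomial $p$ exists, so that for some $n$, Amitsur's generic division algebra $\widetilde{\UD}$ of degree $n$ contains an element $d$ with $d^4\in\Cent(\widetilde{\UD})$ but $d^2\notin\Cent(\widetilde{\UD})$. Since $p^2$ is then a $2$-central (noncentral) polynomial on $M_n(K)$, Lemma~\ref{div-pcp} applied with $\nu=2$ forces $\gcd(2,n)>1$; i.e., $n$ is even. Write $n=2^{t-1}q$ with $q$ odd and $t\geq 2$.

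Next I would invoke the explicit construction of Example~\ref{genericten-pcp}: the tensor product $D$ of generic symbol algebras $\big(\lambda_u^{n_u},\mu_u^{n_u}\big)$ with $n_1=\cdots=n_{t-1}=2$ and $n_t=q$ is a central division algebra of degree $n$ over its center $\hat K\supseteq K$, and the example proves that $D$ contains no $4$-central elements. The argument runs on the lexicographic grading of the skew polynomial ring: the leading term $\hat d$ of any putative $4$-central $d\in D$ is a monomial $\alpha\lambda^{\bfi}\mu^{\bfj}$, and a leading-term analysis (together with the claim there that $d^2\in K$ and $\hat d\in K$ already imply $d\in K$) forces $\hat d^2\in K$ and then inductively $d^2\in K$, contradicting strict $4$-centrality.

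The third step is to use the universal property of $\widetilde{\UD}$ to specialize $d$ into $D$. Since $D$ is a central simple $K$-algebra of degree~$n$, it is a specialization of $\widetilde{\UD}$ through substitution into the generic entries; and since $d^4$ is central in $\widetilde{\UD}$ while $d^2$ is not, a generic specialization produces an element $\bar d\in D$ with $\bar d^4$ central and $\bar d^2$ noncentral, i.e., a $4$-central element of $D$. This contradicts Example~\ref{genericten-pcp} and proves the theorem.

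The principal obstacle will be ensuring that the specialization genuinely preserves strict $4$-centrality, rather than collapsing $d^2$ into the center of $D$. For this one must verify that for each central $\gamma$, the element $d^2-\gamma$ is a nonzero element of the domain $\widetilde{\UD}$ (Proposition~\ref{Am1-2}), and hence vanishes only on a proper Zariski-closed subset of specializations; choosing the substitution generically outside these loci delivers the required $\bar d$. A further caveat is the standing hypothesis $\Char K\neq 2$ in Example~\ref{genericten-pcp}; the characteristic-$2$ case requires a separate construction (for instance, replacing the quaternion symbols by appropriate $p$-symbol algebras with a graded or valuation-theoretic leading-term analysis) in the spirit of Saltman's original treatment.
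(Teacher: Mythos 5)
Your proposal is correct and follows essentially the same route as the paper, whose entire proof is to combine Remark~\ref{genericten0-pcp} (a $4$-central polynomial yields a $4$-central element of $\widetilde{\UD}$, which specializes to any division algebra of degree $n$ over a center containing $K$) with Example~\ref{genericten-pcp} (the generic tensor-product division algebra admits no $4$-central elements). Your extra steps --- the reduction to even $n$ via Lemma~\ref{div-pcp}, the genericity argument ensuring the specialization keeps $d^2$ noncentral, and the characteristic-$2$ caveat --- merely spell out details implicit in that Remark rather than changing the approach.
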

\begin{proof} Combine Remark~\ref{genericten0-pcp}
with Example~\ref{genericten-pcp}.\end{proof}

This leaves us to look for $2$-central polynomials.

\begin{Proposition}\label{generictena1-pcp} There exist homogeneous $2$-central
polynomials with respect to $M_n(K)$ if $n = 2q$ or $n = 4q$ for
$q$ odd.
\end{Proposition}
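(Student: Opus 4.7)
By Remark~\ref{genericten0-pcp}, producing a homogeneous $2$-central polynomial for $M_n(K)$ is equivalent to producing a non-central element $d \in \widetilde{UD}(n)$ with $d^2 \in Z := Z(\widetilde{UD}(n))$, together with a realization of $d$ as a homogeneous polynomial expression (up to central clearing) via Proposition~\ref{Am1-2}. The plan is first to construct $d$ using the primary decomposition of central simple algebras, and then to clear denominators to obtain a homogeneous polynomial.

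For $n = 2q$ with $q$ odd, since $\gcd(2,q) = 1$, the Brauer class of $\widetilde{UD}(n)$ in $\operatorname{Br}(Z)$ splits into its $2$-primary and $q$-primary components, and these can be represented by central division algebras $A_2$ and $A_q$ of degrees exactly $2$ and $q$: indeed, the indices $\operatorname{ind}[A_2]$ and $\operatorname{ind}[A_q]$ are coprime prime-power integers, and their product equals $\operatorname{ind}(\widetilde{UD}(n)) = n$, forcing them to be $2$ and $q$ respectively. The tensor product $A_2 \otimes_Z A_q$ is then itself a division algebra of degree $n$ Brauer-equivalent to $\widetilde{UD}(n)$, so $\widetilde{UD}(n) \cong A_2 \otimes_Z A_q$. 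Inside the quaternion algebra $A_2$ pick an element $i$ with $i^2 \in Z$ and $i \notin Z$. Then $d := i \otimes 1$ satisfies $d^2 = i^2 \otimes 1 \in Z$, while its centralizer $Z[i] \otimes A_q$ is a proper subalgebra, so $d$ is non-central.

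For $n = 4q$ with $q$ odd, the same argument produces $\widetilde{UD}(n) \cong A_4 \otimes_Z A_q$ with $A_4$ of degree $4$. By Remark~\ref{genericten00-pcp} a homogeneous $2$-central polynomial exists for $M_4(K)$, and via the converse of Remark~\ref{genericten0-pcp} this supplies a non-central $d_4 \in \widetilde{UD}(4) \cong A_4$ with $d_4^2 \in Z(A_4) \subseteq Z$. Setting $d := d_4 \otimes 1$ gives the desired non-central element of $\widetilde{UD}(n)$ with central square.

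Conversion of $d$ into a homogeneous polynomial proceeds as in Proposition~\ref{Am1-2}: write $d$ as a rational function in generic matrices and traces, express each trace through the identity~\eqref{trace2pol0-2} using a $t$-alternating polynomial $f$ chosen homogeneous, and clear remaining denominators by a homogeneous central polynomial $s$ (e.g.\ Formanek's). The output is a polynomial $P = d \cdot s \cdot f^k$, homogeneous in the generic matrix entries, satisfying $P^2 = d^2 s^2 f^{2k} \in Z$. The main obstacle is the primary decomposition step for the generic division algebra (ensuring that the $2$- and $q$-primary, or $4$- and $q$-primary, parts of the Brauer class are representable by division algebras of the intended degrees so their tensor product has index exactly $n$); once this is in hand, the homogenization argument is routine, since the central and alternating polynomials involved admit homogeneous choices.
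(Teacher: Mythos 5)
Your argument is essentially the paper's: decompose the generic division algebra $\widetilde{\UD}$ of degree $n$ into a tensor product of coprime primary components $D_1\otimes D_q$ with $\deg D_1\in\{2,4\}$ and $\deg D_q=q$, exhibit a non-central square-central element in $D_1$, tensor it with $1$, and convert back to a (homogeneous) $2$-central polynomial via the equivalence in Remark~\ref{genericten0-pcp}; the paper's proof is exactly this, stated in two lines, and your extra paragraph on clearing traces and denominators just makes explicit what Remark~\ref{genericten0-pcp} and Proposition~\ref{Am1-2} are taken to supply.

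One step of yours is not correctly justified: the claim that the $2$-primary component $A_4$ of the degree-$4q$ generic division algebra is isomorphic to the generic division algebra of degree $4$. That is false (or at least completely unproved): $A_4$ is merely \emph{some} division algebra of degree $4$ over the center of $\widetilde{\UD}$, and there is no reason it should be the generic one. Fortunately the isomorphism is not needed. What you need is only that $A_4$ contains a non-central element with central square, and this follows from the material you already cite: by the specialization statement in Remark~\ref{genericten0-pcp}, the square-central element of the generic division algebra of degree $4$ supplied by Remark~\ref{genericten00-pcp} specializes to an arbitrary division algebra of degree $4$ (equivalently, the homogeneous $2$-central polynomial for $M_4(K)$ takes square-central, and by Zariski density not always central, values on $A_4$, whose center is infinite); alternatively one can invoke Albert's theorem that every degree-$4$ division algebra contains a quadratic subfield. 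With that replacement your proof coincides with the paper's ("$D_1$ has a $2$-central element"), so the gap is one of justification rather than of strategy. A minor additional point: in your final homogenization you should note that the alternating polynomial $f$ and the polynomial $s$ used to clear denominators must be chosen central-valued (and homogeneous) for the identity $P^2=d^2s^2f^{2k}\in Z$ to give centrality of $P^2$; such choices exist and this is routine.
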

\begin{proof} $\widetilde{\UD}$ is a tensor product of a division algebra $D_1$ of degree 2 or 4
and a division algebra of degree $q$, and we observed earlier that
$D_1$ has a $2$-central element. \end{proof}

The situation for $8|n$ remains open, and is equivalent to another
important question in division algebras about the existence of
square-central elements. The following observation might be
relevant, although we do not use it further.

\subsubsection{2-central polynomials linear in the first indeterminate}

Having settled the issue for homogeneous polynomials except for $n = 8q$, we turn to 1-linear polynomials, where the story ends
differently. We use the division algebra approach.

\begin{Lemma} Any division algebra $D$ with a $2$-central subspace $V$ of dimension $2$ contains a~$K$-central quaternion subalgebra. In particular, $n: = \deg(D)$ cannot be odd. Also, $4$ does not divide $n$ if $D$ also has exponent~$n$.
\end{Lemma}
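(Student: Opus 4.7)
The plan is to extract an anticommuting pair of square-central elements from $V$ and then run the standard quaternion construction; the numerical consequences will then fall out of the double centralizer theorem together with simple Brauer-group arithmetic. First I would choose a $K$-basis $\{a,b\}$ of $V$ with $a,b\notin K$. The $2$-central hypothesis applied to $a$, $b$ and $a+b$ then yields the polarization identity
\[
\alpha \;:=\; ab+ba \;=\; (a+b)^{2}-a^{2}-b^{2} \;\in\; K.
\]
Replacing $b$ by $b':=b-\tfrac{\alpha}{2a^{2}}\,a\in V$ (legitimate because $\operatorname{Char}K\ne 2$ and $a^{2}\ne 0$ in the division algebra $D$) gives $ab'+b'a=0$ and $(b')^{2}\in K$. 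A standard conjugation-by-$a$ argument then shows that $\{1,a,b',ab'\}$ is $K$-linearly independent: any relation $\lambda_{0}+\lambda_{1}a+\lambda_{2}b'+\lambda_{3}ab'=0$, compared with its conjugate by $a$, yields $\lambda_{2}b'+\lambda_{3}ab'=0$; right-multiplication by $(b')^{-1}$ gives $\lambda_{2}+\lambda_{3}a=0$, forcing $\lambda_{2}=\lambda_{3}=0$ since $a\notin K$, and then $\lambda_{0}=\lambda_{1}=0$. Hence $Q:=K\langle a,b'\rangle$ is a $K$-central quaternion subalgebra of $D$, of the form $(a^{2},(b')^{2}/K)$.

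For the numerical assertions I would apply the double centralizer theorem: $Q$ is a central simple $K$-subalgebra of the central simple $K$-algebra $D$, so $D\cong Q\otimes_{K}C_{D}(Q)$, and taking degrees gives $n=\deg D=2\deg C_{D}(Q)$, which is even. For the exponent claim, $Q$ is a division algebra (being a finite-dimensional subring of the division algebra $D$), hence $\exp Q=2$ in $\operatorname{Br}(K)$. Using $\exp C_{D}(Q)\mid \deg C_{D}(Q)=n/2$ together with the fact that the exponent of a tensor product divides the lcm of the factors' exponents, one has $\exp D\mid \operatorname{lcm}(2,\exp C_{D}(Q))$. If $4\mid n$, then both $2$ and $\exp C_{D}(Q)$ already divide $n/2$, so $\exp D\mid n/2<n$, contradicting $\exp D=n$.

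The main obstacle is the non-degeneracy in the very first step: if $V$ happens to be a quadratic subfield $K[a]$ (equivalently $1\in V$, so $V=K+Ka$), then the polarization identity still holds trivially but $V$ produces no element noncommuting with $a$, and no quaternion subalgebra can be read off from $V$ alone. Excluding this degenerate configuration is implicit in the intended reading of "$2$-central subspace of dimension $2$" (for instance by requiring $V\cap K=0$, or equivalently the existence of noncommuting square-central elements in $V$); once that is in place, the remaining steps are a routine computation combined with standard structural results for central simple algebras.
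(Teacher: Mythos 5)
Your proof is correct and follows essentially the same route as the paper: polarize $(v+v')^2$ to get $vv'+v'v\in K$, build the four-dimensional central quaternion subalgebra on $1,v,v',vv'$, and then use centralizer/exponent arithmetic for the numerical claims (the paper phrases the exponent step more loosely, asserting the exponent of $D$ equals $\operatorname{lcm}\big(2,\tfrac n2\big)$, whereas your "divides" version is all that is needed and is the cleaner statement). The degeneracy you flag at the end is not a genuine obstacle and requires no added hypothesis such as $V\cap K=0$: if $a,b\in V$ are linearly independent and commute, the same polarization gives $2ab\in K$, hence $b\in Ka$ in characteristic $\neq 2$, a contradiction, and likewise $1\notin V$ since $1+a$ would fail to be square-central. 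So any basis of a two-dimensional $2$-central subspace automatically consists of noncommuting elements, and your construction goes through unconditionally.
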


\begin{proof} Take $v, v' \in V $. Then $v^2,
{v'}^2 \in K$. But also, by assumption, $v+v' $ is also
square-central, so
\begin{gather*}v^2 +vv' + v'v +{v'}^2 = (v+v')^2 \in K,\end{gather*} implying $v'v = -vv' +
\alpha$ for some $\alpha \in K$. But then $K + Kv + Kv' + Kvv'$ is
a central $K$-subalgebra of $D$ and has dimension at least $3$, but
has elements of degree~$2$, so has dimension~$4$.

The last assertion follows easily from the theory of finite
dimensional division algebras. If~$D$ has a quaternion division
algebra then~$2$ must divide $n$ and the exponent of~$D$ is the
least common multiple of~$2$ and~$\frac n2$.
\end{proof}

We are ready to improve Theorem~\ref{no-mpc-pcp}(i) in certain cases.

\begin{Proposition}\label{thmB-pcp} If $p(x_1, \dots, x_m)$ is a $2$-central polynomial for $n\times n$
matrices, linear in $x_1$, and there are non-commuting values
$p(a_1, \dots, a_m)$ and $p(a_1', \dots, a_m)$ for matrices $a_1,
a_1', a_2, \dots, a_m$, then the generic division algebra
$\widetilde{\UD}$ of degree $n$ has a $K_1$-central quaternion
subalgebra. In particular, $n$ cannot be odd, and $4$ does not
divide $n$.
\end{Proposition}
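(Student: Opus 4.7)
The plan is to lift the hypothesis to Amitsur's generic division algebra $\widetilde{\UD}$ and then invoke the preceding lemma. Let $Y_1, Y_1', Y_2, \dots, Y_m$ be generic matrices, and set
\[
v := p(Y_1, Y_2, \dots, Y_m), \qquad v' := p(Y_1', Y_2, \dots, Y_m).
\]
Both $v$ and $v'$ are $2$-central (since $p$ is) and both lie in $\widetilde{\UD}$ by Proposition~\ref{Am1-2}.

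Next I would check that $V := K_1 v + K_1 v'$ is a $2$-dimensional $2$-central subspace of $\widetilde{\UD}$, where $K_1$ denotes the center of $\widetilde{\UD}$. Linearity of $p$ in $x_1$ gives, for any $\alpha,\beta \in K_1$,
\[
\alpha v + \beta v' = p(\alpha Y_1 + \beta Y_1', Y_2, \dots, Y_m),
\]
which is again an evaluation of $p$ and is therefore $2$-central; hence every element of $V$ is $2$-central. For the dimension, the specialization hypothesis says that the commutator $[p(x_1, x_2, \dots, x_m), p(x_1', x_2, \dots, x_m)]$ is not a PI on $M_n(K)$, so it takes a nonzero value at generic matrices. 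Thus $v$ and $v'$ do not commute, hence are not proportional, and $\dim V = 2$.

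Applying the preceding lemma to $D = \widetilde{\UD}$ together with $V$ yields a $K_1$-central quaternion subalgebra of $\widetilde{\UD}$. The two arithmetic restrictions on $n$ then follow from the final assertion of that lemma, invoking the classical fact that Amitsur's $\widetilde{\UD}$ has exponent equal to its degree~$n$. The main subtlety in the proof is establishing the dimension-$2$ assertion: without the non-commutation hypothesis, $V$ could collapse to a line and the quaternion conclusion would be lost. Everything else is a direct translation between the specialization in $M_n(K)$ and the generic setting in $\widetilde{\UD}$, using only that $p$ is linear in $x_1$ and that the specialization witnesses a commutator that is not a PI.
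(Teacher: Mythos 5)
Your proposal is correct and follows essentially the same route as the paper: lift to generic matrices to get $w=p(Y_1,\dots,Y_m)$ and $w'=p(Y_1',Y_2,\dots,Y_m)$ in $\widetilde{\UD}$, use linearity in $x_1$ so that $K_1$-combinations of $w,w'$ are again values of $p$ and hence square-central, and conclude via the quaternion lemma together with the fact that $\widetilde{\UD}$ has exponent $n$. The only cosmetic difference is that the paper redoes the lemma's computation inline (obtaining $w'w=-ww'+\alpha$ and the $4$-dimensional algebra $K_1+K_1w+K_1w'+K_1ww'$) instead of citing the lemma, while you make explicit the use of the non-commuting specialization to guarantee the subspace is genuinely $2$-dimensional.
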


\begin{proof} Let \begin{gather*}w = p(Y_1, \dots, Y_m), \qquad w' = p(Y_1', Y_2 \dots,
Y_m),\end{gather*} where the $Y_i$ and $Y_1'$ are generic matrices. Then
$w^2, {w'}^2 \in K_1$. But also, by definition, $w+w' =
p(Y_1+Y_1', \dots, Y_m)$ is also $2$-central, so \begin{gather*}w^2 +ww' + w'w
+{w'}^2 = (w+w')^2 \in K_1,\end{gather*} implying $w'w = -ww' + \alpha$ for
some $\alpha \in K_1$. But then $K_1 + K_1w + K_1w' + K_1ww'$ is a
central $K_1$-subalgebra of $\widetilde{\UD}$ and has dimension~$4$.

The last assertion follows since the generic division algebra
$\widetilde{\UD}$ of degree $n$ has exponent~$n$, whereas if
$\widetilde{\UD}$ has a central quaternion division subalgebra,
then 2 must divide $n$ and the exponent of $\widetilde{\UD}$ is
the least common multiple of~$2$ and~$\frac n2$.
\end{proof}

This conclusion is the opposite of Proposition~\ref{generictena1-pcp},
when $n=4q$ for $q$ odd.
 The hypothesis clearly holds when $p$ is multilinear. Indeed,
in view of Lemma~\ref{noncom-pcp}, two consecutive terms of the chain
\begin{gather*}p(a_1, \dots, a_m), p(a'_1,a_2, \dots, a_m), \dots, p(a'_1,
\dots, a'_m)\end{gather*} do not commute. On the other hand, it also holds
when $p(x_1, x_2, \dots, x_2)$ is non-central, since we could take
$Y_1' = Y_1 Y_2 Y_1^{-1}$.

\subsection{Open problems}

The investigations of possible image sets may be difficult, this is the reason to try first to construct interesting examples of polynomials with non-standard image sets:
\begin{Question}
What interesting examples of evaluations of homogeneous polynomials $($with respect to Zariski topology$)$ on high rank matrix algebras can occur?
\end{Question}
\begin{Remark}These examples can be nontrivial (see Section~\ref{pcps}) and there
is a little hope to complete description at this moment. Thus, at
this moment it would be nice to provide interesting examples and
constructions.
\end{Remark}

Another famous problem is the Freiheitsatz for associative algebras in nonzero
characteristic:

\medskip

\noindent
{\bf Freiheitsatz.} {\it For $P \in A = k\langle{x_1,\dots,x_n,
t}\rangle$ which involves $t$ nontrivially, the algebra
$k\langle x_1,\dots,\allowbreak x_n \rangle$ can be naturally embedded into the
quotient $k\langle{x_1,\dots, x_n, t}\rangle/\operatorname{Id}(P)$.}

\medskip

For $\Char K=0$ the Freiheitsatz was solved by Makar-Limanov~\cite{ML} and generalized by Kolesnikov \cite{Ko1,Ko2}.

{\it Working hypothesis:} The Freiheitsatz is true in arbitrary characteristic.

 {\it Methods:} The Freiheitsatz for associative algebras was established in the
case char$(k)=0$ by Makar-Limanov, as a consequence of his
construction of an algebraically closed skew-field. Later on,
P.~Kolesnikov developed Makar-Limanov's ideas and, in particular,
improved his exposition. The proof is based on solvability of
equations in the algebra of Malcev--von Neumann series which are
related to differential operators. The construction of the algebra
in which the corresponding equations were solved is none other
than the $*$-operation related to Kontsevich's formal
quantization.

Makar-Limanov has proposed proving that the co-rank of an arbitrary
polynomial $p$ on $M_n(k)$ is bounded by some reasonable function of~$n$. This would yield the Freiheitsatz almost immediately. The
difficulty is that $p$ need not be homogeneous. We have some
information obtained by examining generic matrices, and the hope is
that they will be amenable to new geometric techniques. Topological
maps defined on Banach spaces were investigated by Bre\v{s}ar in
\cite{Bre} and his approach can be useful in this question.

\subsubsection{Images of polynomial maps and matrix equations}

Let $P$ be a noncommutative polynomial.

Suppose that for all sufficiently large $n$ we can solve the system
\begin{gather} \label{eq1}
\{P_i(x_1,\dots ,x_s,t)=0\}.
\end{gather}
Let $\xi_i$, $i=1,\dots, s$, $\nu$ be a solution of~(\ref{eq1}).
Then after substituting $x_i\mapsto a_i+\xi_i$, $t\mapsto
t+\nu$, we obtain an equation without a free term. Further,
assume that after such a substitution there appears a term $B(t)$
linear in~$t$. Let us regard $B$ as an element of an operator
algebra. If it turns out that the operator~$B$ is invertible, we
will be able to solve~(\ref{eq1}) using the method of consecutive
approximations in the product of the matrix algebra and the free
algebra, and thus see that this equation does not impose any
restrictions on the original $x_1,\dots, x_s$.

It is worth noting that it is enough to know how to solve
equations (and prove invertibility of operators) modulo matrices
of bounded rank and as $n\to\infty$.

Therefore the following question is relevant:
\begin{Problem}
Give a classification of possible image sets of arbitrary $($or homogeneous$)$ polynomial evaluated on matrices of rank much higher than the polynomial degree.
\end{Problem}

For the multilinear case, one can consider the union of all ranks matrices $M_\infty(K)$: the set of infinite matrices with finite number of nonzero entries.
 The working hypothesis of the L'vov--Kaplansky conjecture) is quite reasonable:
\begin{Problem}
Let $p$ be any multilinear polynomial evaluated on $M_\infty(K)$.
Then $\Image p$ is the set of trace vanishing matrices
$\ssl_\infty(K)$, if and only if $p$ can be written as a sum of
commutators; otherwise it is the entire set $M_\infty(K)$.
\end{Problem}

\section{Lie polynomials and non-associative generalizations}\label{lie}

Next we consider a Lie
polynomial $p$. We describe all the possible images of $p$ in $M_2(K)$
and provide an example of such $p$ whose image is the set of
non-nilpotent trace zero matrices, together with 0. We provide an
arithmetic criterion for this case. We also show that the standard
polynomial $s_k$ is not a Lie polynomial, for $k>2$.

This section, which consists of two parts, studies the
L'vov--Kaplansky conjecture for Lie algebras. Even the case of Lie
identities has room for further investigation, although it has
already been
 studied in two important books \cite{Bak,Ra4}. In the first part we
are interested in images of Lie polynomials on $M_n(K)$, viewed as
a Lie algebra, and thus denoted as $\gl_n(K)$ (or just $\gl_n$ if
$K$ is understood). Since $[f,g]$ can be interpreted as $fg-gf$ in the free associative algebra,
 we identify any Lie polynomial with an associative polynomial. In this way, any set that can arise as the image of a Lie
polynomial on the Lie algebra $\gl_n$ also fits into the framework
of the associative theory of~$M_n(K)$, and our challenge here is to
find examples of Lie polynomials that achieve the sets described in Sections~\ref{2q7} and~\ref{3a}.

As we shall see, this task is not so easy as it may seem at first
glance. At the outset, the situation for Lie polynomials is subtler than for
regular polynomials, for the simple reason that the most prominent
polynomials in the theory, the standard polynomial $s_n$ and the
Capelli polynomial~$c_n$, turn out not to be
 Lie polynomials. We first consider Lie identities, proving that the standard
polynomial $s_k $ is not a Lie polynomial for $k>2$. A key role is played by~$\sl_n$,
the Lie algebra of $n\times n$ matrices over $K$ having trace~0.

Then we
classify the possible images of Lie polynomials evaluated on
$2\times 2$ matrices, based on
Section~\ref{2q} where the field $K$ was required to be
quadratically closed, and Section~\ref{2r}, where results were
provided over real closed fields, some of them holding more
generally over arbitrary fields. We also consider the $3\times 3$ case (Section~\ref{3a}).

\subsection[Homogeneous Lie polynomials on $\gl_n$ and $\sl_n$]{Homogeneous Lie polynomials on $\boldsymbol{\gl_n}$ and $\boldsymbol{\sl_n}$}

We refine Conjecture~\ref{Polynomial image-int} to Lie polynomials, and ask:

\begin{Question} What is the possible image set $\Image f$ of a Lie polynomial $f$ on
$\gl_n$ and~$\sl_n$? For which Lie polynomials $f$ do we achieve
this image set? For example, what are the Lie identities of
smallest degree on $\gl_n$ and~$\sl_n$?
\end{Question}

In order to pass to the associative theory, we make use of the
\textit{adjoint algebra} $\ad L = \{ \ad_a : L \to L : a \in L\}$
given by $\ad_a (b) = [a,b]$. Note that \begin{gather*}\dim _K (\ad L) < \dim _K
\End _K (L) = (\dim _K L)^2.\end{gather*} The map $a \mapsto \ad_a$ defines a
well-known Lie homomorphism $L \to \ad L$.

We write $ [a_1, \dots, a_t ]$ for $[a_1, \dots,[a_{t-1}, a_t]]$,
and $ \big[a^{(k)}, a_t \big]$ for $ [a, \dots, a, a_t ]$ where $a$ occurs
$k$ times. By \textit{ad-monomial} we mean a term $\alpha \, \ad
x_{i_1} \cdots \ad x_{i_t}$ for some $\alpha\in K$. By
\textit{ad-polynomial} we mean a sum of ad-monomials.

\begin{Remark}\label{corres}
\begin{gather*}\ad _{a_1}\cdots \ad_{a_t}(a) = [a_1, \dots, a_t ,a ].\end{gather*} In this way, any ad-monomial corresponds to a Lie monomial,
and thus any ad-polynomial $f(\ad _{x_1}, \dots, \ad _{x_t})$ gives
rise to a Lie polynomial $f( x_1 , \dots, x_t, y)$ taking on the
same values, and in which $y$ appears of degree~1 in each Lie
monomial in the innermost set of Lie brackets.\end{Remark}

Recall that an associative polynomial $f (x_1, \dots, x_k)$ is
\textit{alternating in the last $m+1$ variables} if $f$ becomes 0
whenever two of the last $m+1$ variables are specialized to the same
quantity. This yields:

\begin{Proposition}\label{spec0} Suppose $L$ is a Lie algebra of dimension
$m$, and $f (x_1, \dots, x_k)$ is a multilinear polynomial
alternating in the last $m+1$ variables. Then
\begin{gather*}f(\ad _{x_1}, \dots , \ad _{x_k})(y)\end{gather*} corresponds to a Lie
identity of $L$ of degree $\deg f +1$.
\end{Proposition}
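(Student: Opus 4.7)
The plan is to exploit the inequality $\dim_K(\ad L)\le \dim_K L = m$, noted already in the paragraph preceding Remark~\ref{corres}. Since the map $a\mapsto \ad_a$ is $K$-linear, $\ad L$ is a $K$-subspace of $\End_K(L)$ of dimension at most $m$, so any $m+1$ elements of $\ad L$ are linearly dependent over $K$.

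First I would record the elementary multilinear-algebra fact: if $g$ is a $K$-multilinear function of $m+1$ variables with values in some $K$-module, and $g$ is alternating (vanishes whenever two arguments coincide), then $g(v_1,\dots,v_{m+1})=0$ whenever $v_1,\dots,v_{m+1}$ lie in a subspace of dimension $\le m$. This is proved by fixing a basis of that subspace, writing each $v_i$ as a linear combination, and expanding via multilinearity: every resulting term repeats a basis vector in two of the $m+1$ slots and is annihilated by the alternating property.

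Next I would apply this to $f$. Fix any specialization $x_i\mapsto a_i\in L$. Because $f(x_1,\dots,x_k)$ is a multilinear associative polynomial, substituting the endomorphisms $\ad_{a_1},\dots,\ad_{a_k}\in\End_K(L)$ yields an element $f(\ad_{a_1},\dots,\ad_{a_k})\in\End_K(L)$, and this substitution is $K$-multilinear and alternating in its last $m+1$ arguments. The last $m+1$ arguments $\ad_{a_{k-m}},\dots,\ad_{a_k}$ all lie in the at-most-$m$-dimensional subspace $\ad L$, so by the fact above, $f(\ad_{a_1},\dots,\ad_{a_k})=0$ as an endomorphism of~$L$. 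Applying it to any $b\in L$ gives $0$.

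Finally, by Remark~\ref{corres}, the expression $f(\ad_{x_1},\dots,\ad_{x_k})(y)$ is realized as a Lie polynomial of degree $\deg f+1$ in $x_1,\dots,x_k,y$, with $y$ appearing linearly inside the innermost bracket of each monomial. The calculation just performed shows this Lie polynomial vanishes on every specialization of $x_1,\dots,x_k,y$ into $L$, so it is a Lie identity of $L$ of the claimed degree. There is no real obstacle here; the only point that deserves care is the verification that the associative substitution $(\ad_{a_1},\dots,\ad_{a_k})\mapsto f(\ad_{a_1},\dots,\ad_{a_k})$ really is $K$-multilinear in the slots where $f$ is alternating, which is immediate from the multilinearity of $f$ and $K$-linearity of $\ad$.
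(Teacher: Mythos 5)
Your proof is correct and follows essentially the same route as the paper: the paper simply cites the standard fact (Rowen, Proposition~1.2.24) that a polynomial alternating in $m+1$ variables vanishes on the at-most-$m$-dimensional space $\ad L$, and then invokes Remark~\ref{corres}, exactly as you do. The only difference is that you prove the alternating-vanishing fact in-line by expanding in a basis, which is a detail the paper delegates to the citation.
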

\begin{proof}\sloppy The alternating property implies
$f (x_1, \dots, x_k)$ vanishes on $\ad L$,
 cf.~\cite[Proposi\-tion~1.2.24]{Row},
 so every substitution of $f(\ad _{x_1}, \dots , \ad _{x_k})(y)$
vanishes.
\end{proof}

Since the alternating polynomial of smallest degree is the standard
polynomial~$s_{m+1}$, we have a Lie identity of degree $m+2$ for any
Lie algebra of dimension~$m$. In particular, $\dim (\sl_n) = n^2-1$,
yielding:

\begin{Corollary}\label{min1} $\sl_n$ satisfies a Lie identity of degree
$n^2+1$.
\end{Corollary}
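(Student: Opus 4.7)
The plan is to apply Proposition \ref{spec0} directly, with $L = \sl_n(K)$ and $f$ equal to the standard polynomial of the appropriate size. Since $\dim_K \sl_n = n^2 - 1$, we set $m = n^2 - 1$, so that $m+1 = n^2$. Proposition \ref{spec0} then asks for a multilinear polynomial $f(x_1, \dots, x_k)$ that is alternating in the last $n^2$ variables. The natural and smallest-degree choice is $f = s_{n^2}$, the standard polynomial on $n^2$ letters, which is alternating in all of its variables and has degree $\deg f = n^2$.

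Given this choice, I would invoke Proposition \ref{spec0} to conclude that $s_{n^2}(\ad_{x_1}, \dots, \ad_{x_{n^2}})(y)$ is an identity of $\sl_n$, with Lie degree $\deg f + 1 = n^2 + 1$. The translation from an ad-polynomial back to a genuine Lie polynomial is exactly the content of Remark \ref{corres}: the ad-monomial $\ad_{x_{i_1}} \cdots \ad_{x_{i_{n^2}}}$ corresponds to the nested Lie monomial $[x_{i_1}, \dots, x_{i_{n^2}}, y]$, so after summing with signs we obtain the Lie polynomial
\begin{gather*}
\tilde{s}_{n^2}(x_1, \dots, x_{n^2}, y) = \sum_{\sigma \in S_{n^2}} \operatorname{sgn}(\sigma) [x_{\sigma(1)}, x_{\sigma(2)}, \dots, x_{\sigma(n^2)}, y],
\end{gather*}
which is multilinear of total degree $n^2 + 1$ in the free Lie algebra.

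The verification that this is indeed an identity on $\sl_n$ uses that $s_{n^2}$, being alternating in $n^2$ variables, vanishes on any substitution from a vector space of dimension $< n^2$; in particular it vanishes on the ad-representation $\ad \sl_n \subseteq \End_K(\sl_n)$, whose image has $K$-dimension at most $\dim_K \sl_n = n^2 - 1$ (in fact it equals $n^2 - 1$ since $\sl_n$ is centerless for $\operatorname{char}(K) \nmid n$, but we do not need the exact value). Consequently every evaluation of $\tilde{s}_{n^2}$ on $\sl_n$ yields $0$, completing the argument. There is no real obstacle here beyond bookkeeping: the corollary is an immediate specialization of Proposition \ref{spec0} together with the dimension count for $\sl_n$, and the only point to be careful about is the translation (via Remark \ref{corres}) between ad-polynomial identities of $L$ and Lie polynomial identities of $L$, which costs exactly one in the degree.
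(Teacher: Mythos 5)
Your argument is correct and is exactly the paper's route: apply Proposition \ref{spec0} with $L=\sl_n$, $m=\dim_K\sl_n=n^2-1$, and $f=s_{n^2}$ (the smallest-degree polynomial alternating in $m+1$ variables), obtaining a Lie identity of degree $n^2+1$ via the translation in Remark \ref{corres}. The only difference is that you spell out the bookkeeping (the explicit Lie polynomial and the dimension count for $\ad\sl_n$) that the paper leaves implicit.
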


Corollary~\ref{min1} gives rise to the following question:

\begin{Question} What is the minimal degree $m_n$ of a Lie identity of
$\sl_n$? \end{Question}

By Corollary~\ref{min1}, $m_n \le n^2+1$, and in particular $m_2
\le 5$. Even the answer $m_2 = 5$ given in \cite[Theorem~36.1]{Ra4},
is not easy, although a reasonably fast combinatoric approach is
given in \cite[p.~165]{Bak}, where it is observed that any since any
Lie algebra $L$ satisfying a Lie identity of degree $<5$ is
solvable, one must have $m_2 \ge 5$, yielding $m_2 = 5$.
\v{S}penko~\cite[Proposition 7.5]{S} looked at this from the other
direction and showed that if $p$ is a Lie polynomial of degree
 $\le 4$ then $\Image p=\ssl_2$.

Conversely, we have:

\begin{Proposition}\label{spec} Suppose $f(x_1, \dots, x_t, y)$ is a Lie
polynomial in which $y$ appears in degree~$1$ in each of its Lie
monomials. Then $f$ corresponds to an ad-polynomial taking on the
same values on $L$ as $f$.\end{Proposition}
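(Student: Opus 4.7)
The plan is to prove the statement by reducing to a single Lie monomial and then inducting on degree, using the Jacobi identity to move $y$ into the innermost bracket position and the fact that $\ad$ is a Lie homomorphism to convert everything else into a composition of $\ad_{x_i}$'s.

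First I would write $f = \sum_\alpha c_\alpha h_\alpha(x_1,\dots,x_t,y)$ as a sum of Lie monomials, each of which contains $y$ exactly once. By linearity, it suffices to show that each such $h_\alpha$ is of the form $g_\alpha(\ad_{x_1},\dots,\ad_{x_t})(y)$ for some associative polynomial $g_\alpha$ in commuting symbols (applied as operators).

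The main step is induction on the degree $d$ of the Lie monomial $h$ in which $y$ occurs exactly once. If $d=1$, then $h=y$ and the corresponding ad-polynomial is $1$. For $d>1$, write $h=[u,v]$ where exactly one of $u,v$ contains $y$. By Lie antisymmetry we may assume $y$ occurs in $v$, so that $v$ is itself a Lie monomial of smaller degree in $y$-linear form and $u$ is a Lie monomial in $x_1,\dots,x_t$ alone. The inductive hypothesis yields an associative expression $\tilde g$ in the $\ad_{x_i}$'s with $v=\tilde g(\ad_{x_1},\dots,\ad_{x_t})(y)$. Then
\begin{gather*}
h=[u,v]=\ad_u\bigl(\tilde g(\ad_{x_1},\dots,\ad_{x_t})(y)\bigr).
\end{gather*}
The remaining task is to rewrite $\ad_u$ as a polynomial in the $\ad_{x_i}$'s; since the adjoint representation $a\mapsto \ad_a$ is a Lie algebra homomorphism, a secondary induction on the degree of the Lie monomial $u$ expands $\ad_u$ in terms of commutators $[\ad_{x_i},\ad_{x_j}]=\ad_{x_i}\ad_{x_j}-\ad_{x_j}\ad_{x_i}$, yielding the required associative expression $g$ with $h=g(\ad_{x_1},\dots,\ad_{x_t})(y)$.

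The mild obstacle, which I expect to be purely notational, is keeping track of signs and of the linearity that allows summing the monomial-by-monomial conversions into a single ad-polynomial $f(\ad_{x_1},\dots,\ad_{x_t})$ whose values on $L$ agree with those of the original Lie polynomial. Once the monomial case is handled, this follows from Remark~\ref{corres} and the observation that each substitution $x_i\mapsto a_i$, $y\mapsto a$ in $L$ commutes with the passage between Lie brackets and adjoint operators.
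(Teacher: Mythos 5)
Your argument is correct, and it shares the paper's skeleton (reduce to a single Lie monomial via Remark~\ref{corres}, then induct on the degree of $h=[h_1,h_2]$), but the inductive step is carried out differently. The paper's ``slicker argument'' treats both factors as ad-monomials applied to $y$ and concatenates them, so that each Lie monomial is asserted to correspond to a single ad-monomial; read literally this is awkward, since the $y$-free factor contains no $y$ at all, and it is also stronger than what is true: $[[x_1,x_2],[x_3,y]]=(\ad_{x_1}\ad_{x_2}-\ad_{x_2}\ad_{x_1})\ad_{x_3}(y)$ is a sum of two ad-monomials, not one. You instead keep only the $y$-bearing factor $v$ in the induction and dispose of the $y$-free factor $u$ by using that $a\mapsto\ad_a$ is a Lie homomorphism, so $\ad_u$ expands as an iterated commutator of the $\ad_{x_i}$ and hence as an ad-polynomial; composing this with the inductive expression for $v$ exhibits $h$ as an ad-polynomial applied to $y$, which is exactly the assertion of the proposition, and the identity, holding already in the free Lie algebra, persists under every substitution in $L$. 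So your route makes rigorous precisely the step the paper's sketch glosses over, at the modest cost of a secondary induction on $u$, and the sign issue you flag amounts only to the factor $-1$ from antisymmetry when moving $y$ into the second slot. One wording slip: the $g_\alpha$ you produce is a polynomial in noncommuting (not commuting) operator symbols $\ad_{x_1},\dots,\ad_{x_t}$, as your own use of $[\ad_{x_i},\ad_{x_j}]=\ad_{x_i}\ad_{x_j}-\ad_{x_j}\ad_{x_i}$ shows.
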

\begin{proof} In view of Remark~\ref{corres}, it suffices to show
that any Lie monomial $h$ can be rewritten in the free Lie algebra
as a sum of Lie monomials in which $y$ appears (in degree 1) in the
innermost set of Lie brackets. This could be done directly by means
of the Jacobi identity, but here is a~slicker argument.

Write $h = [h_1,h_2]$, and we appeal to induction on the degree of
$h$. $y$ appears say in~$h_2$. If \mbox{$h_1 = y$} then we are done since
$h = -[h_2,y]$ corresponds to $-\ad _{h_2}$. Likewise if $h_2 = y$.
In general, by induction, $h_1$ corresponds to some ad-monomial $\ad
_{x_{i_1}}\cdots \ad _{x_{i_k}}(y)$ and $h_2$ corresponds to some
ad-monomial $\ad _{x_{i_{k+1}}}\cdots \ad _{x_{i_\ell}}(y)$,
 so $[h_1,h_2]$ corresponds
to $\ad _{x_{i_1}}\cdots \ad _{x_{i_k}}((\ad _{x_{i_{k+1}}}\cdots
\ad _{x_{i_\ell}})(y)) \allowbreak =\ad _{x_{i_{1}}}\cdots \ad _{x_{i_\ell}}(y)$
as desired.
\end{proof}

\begin{Corollary}\label{mustbe} Any homogeneous Lie polynomial of degree $\ge 3$ must be
an identity $($viewing the Lie commutator $[a,b]$ as $ab-ba)$ of the
Grassmann algebra $G$.
\end{Corollary}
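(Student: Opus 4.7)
The plan is to exploit the $\mathbb{Z}/2$-grading $G = G_0 \oplus G_1$ of the Grassmann algebra, where $G_0$ is generated by products of an even number of generators and $G_1$ by products of an odd number. The crucial structural fact is that $G_0$ lies in the center of $G$, while two elements of $G_1$ anticommute.

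First I would establish the key lemma: for any $a, b \in G$, the commutator $[a,b]$ lies in $G_0$. Writing $a = a_0 + a_1$ and $b = b_0 + b_1$ with $a_i, b_i \in G_i$, bilinearity of the bracket gives
\begin{gather*}
[a,b] = [a_0,b_0] + [a_0,b_1] + [a_1,b_0] + [a_1,b_1].
\end{gather*}
The first three summands vanish because $G_0$ is central, while $[a_1,b_1] = a_1 b_1 - b_1 a_1 = 2 a_1 b_1 \in G_1 \cdot G_1 \subseteq G_0$. Hence $[a,b] \in G_0$ as claimed.

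Next I would promote this to a statement about all Lie monomials of degree $\ge 2$ by induction on the degree $d$ of the Lie monomial $h$: every evaluation $h(a_1, \dots, a_d)$ lies in $G_0$. The base case $d = 2$ is the lemma just established. For the inductive step, write $h = [h_1, h_2]$ with $\deg h_1 + \deg h_2 = d \ge 3$. If both $h_i$ have degree $1$ the sum would only be $2$, so at least one $h_i$ has degree $\ge 2$; by induction that $h_i$ takes values in $G_0 \subseteq Z(G)$, so $[h_1(\cdots), h_2(\cdots)] = 0 \in G_0$.

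Finally, to deduce the corollary, let $f$ be a homogeneous Lie polynomial of degree $d \ge 3$, written as $f = \sum_j \alpha_j h_j$ with each $h_j$ a Lie monomial of the same degree $d$. Each $h_j$ is again of the form $[h_{j,1}, h_{j,2}]$ with at least one of $h_{j,1}, h_{j,2}$ of degree $\ge 2$, so by the induction the inner factor evaluates into the center $G_0$ and $h_j$ vanishes identically on $G$. Summing, $f$ is an identity of $G$. The main obstacle is essentially bookkeeping: formulating the induction on Lie monomials correctly so that the centrality of $G_0$ can be invoked at the outermost bracket; once the grading argument is in place the computation is immediate.
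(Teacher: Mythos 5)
Your proof is correct and follows essentially the same route as the paper: the paper's one-line argument rests on the well-known fact that the triple commutator is an identity of $G$ (equivalently, all commutators land in the central even part $G_0$), which is exactly your key lemma. You simply supply the grading computation and the induction over nested Lie monomials that the paper leaves implicit, and that bookkeeping is carried out correctly.
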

\begin{proof} Each term includes $[x_i,x_j,x_k]$, which is well known to be an identity of~$G$.
 \end{proof}

\begin{Example}\label{usefulex}\quad
\begin{enumerate}\itemsep=0pt
\eroman \item The standard polynomial $s_2$ itself is a Lie
polynomial.

\item $s_4$ vanishes on $\sl_2$ (viewed inside the associative algebra $M_2(K)$),
since $\sl_2$ has dimension~3. But surprisingly, this is not the
polynomial of lowest degree vanishing on~$\sl_2$, as we see next.

\item Bakhturin~\cite[Theorem~5.14]{Bak} points out that $f = [(x_1x_2 +
x_2 x_1),x_3]$ vanishes on $\sl_2$. In other words, $a_1a_2 + a_2
a_1$ is scalar for any $2 \times 2$ matrices $a_1,a_2$ of trace 0.
Indeed, $a_i^2 $ is scalar for $i=1,2$, implying $a_1a_2 + a_2 a_1$
is scalar unless $a_1$, $a_2$ are linearly independent, in which case
$a_1a_2 + a_2 a_1$ commutes with both $a_1 $ and $ a_2$, and thus
again is scalar. But $f$ is not a Lie polynomial, as seen via the
next lemma.
\end{enumerate}
\end{Example}

This discussion motivates us to ask when a polynomial is a Lie polynomial. Here is a very easy criterion which is of some use.

\begin{Lemma}\label{Liep1} Any Lie polynomial which vanishes on $\sl_n$ is an
identity of $\gl_n$.
\end{Lemma}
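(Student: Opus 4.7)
The plan is to exploit two facts about the Lie structure of $\gl_n$: that the Lie center is $Z(\gl_n)=K\cdot I$ (so $[I,\cdot]=0$), and that $\gl_n=\sl_n\oplus K\cdot I$ as a vector space when $\operatorname{char}K\nmid n$. Given a Lie polynomial $f(x_1,\dots,x_m)$ vanishing on $\sl_n$, the goal is to show that its value on any $m$-tuple of matrices from $\gl_n$ reduces to its value on the $\sl_n$-parts.

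First I would replace $f$ by its multihomogeneous components. Considering $f(\lambda x_1,x_2,\dots,x_m)=\sum_k\lambda^k f_k(x_1,\dots,x_m)$ for $x_i\in\sl_n$ and invoking Vandermonde (since $K$ is infinite), each $f_k$ vanishes on $\sl_n$; iterating variable by variable, one may assume $f$ is multihomogeneous of some multidegree $(d_1,\dots,d_m)$. Each component is still a Lie polynomial since multidegree is preserved by brackets. If $\sum d_i=1$, then $f=cx_i$, and vanishing on $\sl_n\ne\{0\}$ forces $c=0$. So assume total degree at least $2$: every Lie monomial in $f$ then has each variable nested inside at least one bracket, and since $I$ is Lie-central, substituting $I$ for any argument annihilates $f$.

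The main step is the expansion $f(s_1+\lambda_1 I,\dots,s_m+\lambda_m I)=f(s_1,\dots,s_m)$. Multihomogeneity in each $\lambda_i$ (again via Vandermonde) lets me isolate the coefficients in the $\lambda_i$'s; any term in which at least one $I$ has been substituted vanishes by the previous paragraph, so only the all-$s_i$ term survives. Given arbitrary $x_i\in\gl_n$, I would then set $\lambda_i=\tr(x_i)/n$ and $s_i=x_i-\lambda_i I\in\sl_n$, obtaining $f(x_1,\dots,x_m)=f(s_1,\dots,s_m)=0$ by hypothesis, as required.

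The chief obstacle is precisely the decomposition $\gl_n=\sl_n\oplus K\cdot I$, which collapses when $\operatorname{char}K\mid n$, since then $I\in\sl_n$ and no scalar shift of an element of trace $\ne 0$ lands in $\sl_n$. Indeed, the statement can fail in this regime (for instance $[[x,y],z]$ vanishes on $\sl_2$ in characteristic $2$, because $[\sl_2,\sl_2]\subseteq K\cdot I$, yet $[[e_{11},e_{12}],e_{21}]=I\ne 0$ in $\gl_2$), so the argument above implicitly assumes $\operatorname{char}K\nmid n$; handling the modular case would require either an extra hypothesis or a genuinely different approach.
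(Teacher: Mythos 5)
Your proof is correct and is essentially the fleshed-out version of what the paper disposes of in one line. The paper's proof reads ``Immediate, since $\gl_n'=\sl_n'$'', i.e., it appeals to the coincidence of the derived subalgebras, leaving the reader to supply exactly the mechanism you make explicit: a Lie monomial of total degree at least $2$ cannot see the scalar part of its arguments, so after writing $x_i=s_i+\lambda_i I$ with $s_i\in\sl_n$ the evaluation collapses onto $\sl_n$. Your write-up adds two things the paper omits: the reduction to multihomogeneous components (and the separate, easy treatment of the linear part), and a clean statement of where the argument needs $\operatorname{char}K\nmid n$, namely for the decomposition $\gl_n=\sl_n\oplus K\cdot I$ and for the shift $s_i=x_i-\tfrac{\tr x_i}{n}I$. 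Your characteristic-$2$, $n=2$ counterexample $[[x,y],z]$ is valid and is consistent with the paper's one-liner, since in that case $\gl_2'=\sl_2\neq K\cdot I=\sl_2'$, so the paper's cited equality fails in exactly the regime you exclude; note, however, that for $n\geq 3$ the equality $\gl_n'=\sl_n'=\sl_n$ holds in every characteristic, so the paper's formulation nominally covers cases ($\operatorname{char}K\mid n$, $n\geq 3$) that your central-decomposition argument does not reach. One stylistic remark: in the expansion step the terms are obtained by replacing individual \emph{occurrences} of $x_i$ by $\lambda_i I$, not whole arguments; the conclusion is unaffected, since any occurrence of $I$ sits directly inside some bracket $[I,w]$ or $[w,I]$, which vanishes by centrality, but it is worth phrasing it that way.
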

\begin{proof} Immediate, since $\gl_n ' = \sl_n'$.
\end{proof}

The standard polynomial $s_4$ is not a Lie polynomial. Here are
three ways of seeing this basic fact.

 \begin{enumerate}\itemsep=0pt \eroman \item Confront Example \ref{usefulex}
 with the fact that $m_2 = 5$, whereas $\deg s_2 = 4$.

\item A computational approach. We have 15 multilinear Lie
monomials of degree~4, namely $\frac{1}{2}\binom 42 = 3$ of the form
\begin{gather}\label{firsttype} [[x_{i_1}, x_{i_2}],[x_{i_3}, x_{i_4}]]\end{gather}
and $2 \binom 42 = 12$ of the form
\begin{gather}\label{secondtype}[[[x_{i_1}, x_{i_2}],x_{i_3}],
x_{i_4}].\end{gather} But
\begin{align*} [[x_{i_1}, x_{i_2}],[x_{i_3},
x_{i_4}]] & = \ad_{[x_{i_3}, x_{i_4}]} \ad _{x_{i_2}}
(x_{i_1})\\
& =\ad_{x_{i_3}} \ad_{ x_{i_4}}\ad _{x_{i_2}}(x_{i_1})- \ad_{ x_{i_4}}
\ad_{ x_{i_3}}\ad _{x_{i_2}}(x_{i_1}), \end{align*} so we can
rewrite the equations \eqref{firsttype} in terms of~\eqref{secondtype}. Furthermore, with the help of the Jacobi
identity, \eqref{secondtype} can be reduced to seven independent Lie
monomials, and one can show that these do not span~$s_4$. Even
though this might seem unduly complicated, it provides a general
program to verify that a given polynomial is not Lie.

\item The third approach is simpler and works for~$s_k$, for any $k>2$.

P.M.~Cohn was the first to tie the standard polynomial to the
infinite dimensional Grassmann algebra $G$ with base $e_1, e_2,
\dots$, by noting that $s_k(e_1, \dots, e_k) = k!e_1 \cdots e_k \ne
0$ when $k! \ne 0$.
\end{enumerate}

\begin{Theorem} The standard polynomial $s_k$ is not a Lie polynomial,
for any $k>2$.
\end{Theorem}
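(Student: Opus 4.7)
The plan is to leverage Corollary~\ref{mustbe}: any homogeneous Lie polynomial of degree $\geq 3$ vanishes identically on the Grassmann algebra $G$. Thus it suffices to exhibit a substitution in $G$ on which $s_k$ does not vanish. This is the natural extension of the approach sketched in item~(iii) preceding the theorem (Cohn's observation that $s_k(e_1, \ldots, e_k) = k!\,e_1\cdots e_k$).

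First I would argue by contradiction: suppose $s_k$ is a Lie polynomial. Since $\deg s_k = k \geq 3$, Corollary~\ref{mustbe} would force $s_k$ to be a polynomial identity of $G$. I would then evaluate $s_k$ on the generators $e_1, \ldots, e_k$. Using the anticommutativity relation $e_ie_j = -e_je_i$ inductively, one gets $e_{\pi(1)}e_{\pi(2)}\cdots e_{\pi(k)} = \operatorname{sgn}(\pi)\, e_1 e_2\cdots e_k$ for every $\pi \in S_k$, so
\begin{gather*}
s_k(e_1, \ldots, e_k) \;=\; \sum_{\pi \in S_k} \operatorname{sgn}(\pi)^2\, e_1 e_2 \cdots e_k \;=\; k!\,e_1 e_2\cdots e_k.
\end{gather*}
Since $e_1 e_2\cdots e_k$ is a nonzero basis element of $G$, this product is nonzero whenever $k! \neq 0$ in $K$, which is the case for $\chara K = 0$ and, more generally, for $\chara K > k$. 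This contradicts the hypothesis that $s_k$ is a $G$-identity, completing the argument in those characteristics.

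The main obstacle is the residual case $\chara K = p \leq k$, where the Grassmann evaluation degenerates to zero and Corollary~\ref{mustbe} no longer gives a contradiction directly. To handle this, I would lift the question to a universal setting: the multilinear degree-$k$ component of the free Lie algebra embeds as a pure $\mathbb{Z}$-submodule of rank $(k-1)!$ inside the multilinear degree-$k$ component of the free associative algebra (of rank $k!$), this being part of the PBW theorem over $\mathbb{Z}$. Consequently, writing $s_k = \ell + c$ in a $\mathbb{Z}$-splitting $A_k = L_k \oplus C_k$, the Grassmann argument over $\mathbb{Q}$ shows $c \neq 0$ in $\mathbb{Z}$; a sharpening of the calculation would be needed to check that the integer coefficients of $c$ are not all simultaneously divisible by any given prime $p \leq k$. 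Alternatively, for small $k$ (like $k = 3, 4$), one could simply enumerate a basis of multilinear Lie polynomials of degree $k$ and verify by a finite linear-algebra computation in each positive characteristic that $s_k$ lies outside the Lie subspace, as was done in sketch~(ii) for $s_4$.
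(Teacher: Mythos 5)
Your argument is correct and is essentially the paper's own proof: invoke Corollary~\ref{mustbe} to conclude that a Lie polynomial $s_k$ would be an identity of the Grassmann algebra $G$, then contradict this with Cohn's evaluation $s_k(e_1,\dots,e_k)=k!\,e_1\cdots e_k\neq 0$. The residual case $\chara K=p\le k$ that you flag is not treated in the paper either --- its proof explicitly takes $\Char K=0$ --- so your characteristic-zero argument already matches the paper's scope, and the extra discussion is optional rather than a gap.
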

\begin{proof} Otherwise, by Corollary~\ref{mustbe} it would be an
identity of $G$, contradicting Cohn's observation (taking $\Char K =
0$).
\end{proof}

\subsection[The case $n = 2$]{The case $\boldsymbol{n = 2}$}

Recall from Corollary~\ref{min1} that there is a Lie identity of degree~5.

\begin{Theorem}\label{main} If $p$ is a homogeneous Lie polynomial evaluated on the
matrix ring $M_2(K)$, where $K$ is an algebraically closed field,
then $\Image f$ is either $\{0\}$, or $K$ $($the set of scalar
matrices$)$, or the set of all non-nilpotent matrices having trace zero, or $\ssl_2(K)$, or $M_2(K)$.
\end{Theorem}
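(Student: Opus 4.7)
The plan is to reduce Theorem~\ref{main} to Theorem~\ref{imhom2-int} via the structural observation that every Lie monomial of degree at least $2$ takes trace-zero values. First I would dispose of the degenerate case $\deg p =1$: a homogeneous Lie polynomial of degree $1$ is just a $K$-linear combination $p=\sum_i c_i x_i$ of the variables, so $\Image p$ is either $\{0\}$ or all of $M_2(K)$, which already accounts for two of the five outcomes in the statement.

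Suppose now $\deg p\ge 2$. Every Lie monomial of $p$ is then an iterated commutator $[\cdots [a_{i_1},a_{i_2}]\cdots]$, which has trace zero for any substitution; consequently $\Image p\subseteq \ssl_2(K)$. Because $K$ is algebraically closed it is in particular quadratically closed and closed under $d$-th roots for every~$d$, so Theorem~\ref{imhom2-int} applies to $p$ (regarded as a semi-homogeneous polynomial) and forces $\Image p$ to be one of $\{0\}$, the scalars $K$, the set of non-nilpotent trace-zero matrices, $\ssl_2(K)$, or a Zariski-dense subset of $M_2(K)$. The dense case is incompatible with the inclusion $\Image p\subseteq \ssl_2(K)$, a proper closed subvariety, and is ruled out. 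The scalar case requires $K\cap\ssl_2(K)\ne\{0\}$, which happens only when $\Char K =2$ (every scalar is then trace-zero); in the remaining characteristics that case collapses to $\{0\}$. The surviving list matches the theorem's statement exactly.

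To certify that the classification is sharp I would then exhibit a Lie polynomial realising each possibility. Three cases are immediate: $p=x$ gives $M_2(K)$; any Lie identity of $\ssl_2$, such as the degree-$5$ identity recalled after Corollary~\ref{min1}, gives $\{0\}$; and $p=[x_1,x_2]$ gives $\ssl_2(K)$ by the Albert--Muckenhoupt theorem. The hard part, and the main obstacle, is realising the \emph{non-nilpotent trace-zero} set as the image of a homogeneous Lie polynomial. An associative template with exactly this image is provided in Example~\ref{coneex2-2}(iii) by $[x,y]^3$, which, however, is not a Lie polynomial. Following the adjoint-map strategy alluded to in~\cite{BMR4}, I would use Proposition~\ref{spec} to encode an appropriate associative product as an iterated bracket: arrange a factor of $[x,y]$ to survive so that any nilpotent substitution (for which $[x,y]^2=0$) is killed, while the remaining bracketed expression keeps the value inside $\ssl_2(K)$ and sweeps out every non-nilpotent trace-zero matrix via conjugation and the irreducible-cone argument of Lemma~\ref{cone_conj-2}. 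Verifying homogeneity and computing the image on generic matrices would be the delicate technical step.
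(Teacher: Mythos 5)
Your proof is correct and takes essentially the same route as the paper: apply the semi-homogeneous classification of Theorem~\ref{imhom2-int} and observe that every Lie monomial of degree $\ge 2$ is trace-vanishing (so the image lies in $\ssl_2(K)$, killing the dense case and reducing the scalar case to $\Char K=2$), while degree $1$ yields $M_2(K)$ -- exactly the paper's two-line argument, spelled out in more detail. The closing discussion about realising each image (in particular the non-nilpotent trace-zero set) is not required for the statement, which only asserts membership in the list, so its incompleteness is harmless; the paper treats those examples separately.
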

\begin{Remark}Nonzero scalar matrices can be obtained in Theorem~\ref{main} only
when $\Char K=2$, and the last case $M_2(K)$ is possible only if $\deg f=1$.
\end{Remark}
\begin{proof}[Proof of Theorem~\ref{main}]
According to Theorem \ref{imhom-2} the image of $p$ must be either
 $\{0\}$, or $K$, or the set of all non-nilpotent matrices having
trace zero, or $\ssl_2(K)$, or a dense subset of~$M_2(K)$ (with
respect to Zariski topology). Note that if at least one matrix
having nonzero trace belongs to the image of~$p$, then $\deg p=1$
and thus $\Image p=M_2(K)$.
\end{proof}
\begin{Theorem} For any algebraically closed field $K$ of characteristic $\ne 2$,
 the image of any Lie polynomial~$p$ $($not necessarily homogeneous$)$ evaluated on $\ssl_2(K)$
 is either $\ssl_2(K)$, or~$\{0\}$, or the set of trace zero non-nilpotent matrices.
\end{Theorem}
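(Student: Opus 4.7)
The plan is to classify $\Image p$ by studying a single scalar invariant, the determinant of $p$. Since $\ssl_2(K)$ is a Lie subalgebra of $M_2(K)$ one has $\Image p\subseteq\ssl_2(K)$, the set $\Image p$ is $\GL_2(K)$-conjugation invariant, and $0\in\Image p$ (every Lie monomial has degree at least $1$, so $p(0,\dots,0)=0$). Using $\Char K\ne 2$, Cayley--Hamilton for a trace-zero $2\times 2$ matrix reads $A^2=-\det(A)\,I$; hence $A\in\ssl_2(K)$ is nilpotent precisely when $\det A=0$, and for each $d\ne 0$ all trace-zero matrices of determinant $d$ form a single $\GL_2(K)$-orbit (they share distinct eigenvalues $\pm\sqrt{-d}$). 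Thus the $\GL_2(K)$-orbits on $\ssl_2(K)$ are $\{0\}$, the single orbit of nonzero nilpotents, and one orbit per $d\in K\setminus\{0\}$; a conjugation-invariant subset of $\ssl_2(K)$ is therefore determined by the set of determinants attained together with a single bit recording whether a nonzero nilpotent is present.

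I would then examine the polynomial function $D\colon \ssl_2(K)^m\to K$ defined by $D(A_1,\dots,A_m):=\det\bigl(p(A_1,\dots,A_m)\bigr)$, noting $D(0,\dots,0)=0$. If $D\equiv 0$, then $p^2=0$ as a function on $\ssl_2(K)^m$; viewing $p$ as an element of the algebra of generic traceless $2\times 2$ matrices --- a subring of the algebra of generic matrices with traces, which embeds in Amitsur's division algebra $\widetilde{\UD}$ by Proposition~\ref{Am1-2} and is therefore a domain --- the relation $p^2=0$ forces $p=0$ in that algebra, and by density of traceless specializations $p$ vanishes on all of $\ssl_2(K)^m$, so $\Image p=\{0\}$. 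If instead $D$ is non-constant, then, since $K$ is algebraically closed, for every $c\in K$ the polynomial $D-c$ is a non-unit in $K[\text{entries of }A_i]$ and hence has a zero by the Nullstellensatz; in particular $D$ is surjective. Consequently, for each nonzero $d\in K$ some evaluation of $p$ yields a matrix of determinant $d$, and conjugation-invariance then forces the entire orbit of trace-zero matrices of determinant $d$ to lie in $\Image p$. Therefore $\Image p$ contains every non-nilpotent trace-zero matrix.

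The remaining dichotomy is now immediate: either $\Image p$ contains some nonzero nilpotent, in which case conjugation sweeps out the whole nilpotent orbit and $\Image p=\ssl_2(K)$; or it does not, and $\Image p$ is precisely the set of non-nilpotent trace-zero matrices (together with the obligatory $0$). The main obstacle is the implication $D\equiv 0\Rightarrow p\equiv 0$: it rests on the fact that the generic traceless algebra inherits the domain property from $\widetilde{\UD}$ and on the density argument that vanishing on every $\ssl_2(K)$-specialization over the infinite field $K$ propagates to vanishing in that algebra. Once this is in place, the remainder is pure orbit bookkeeping on $\ssl_2(K)$, made possible by the classification of $\GL_2(K)$-orbits valid in characteristic $\ne 2$.
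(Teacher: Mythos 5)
Your proof is correct, and it reaches the theorem by a route that differs from the paper's at both key technical steps while sharing the same skeleton (surjectivity of $\det p$ onto $K$, followed by $\GL_2(K)$-orbit bookkeeping in $\ssl_2(K)$). The paper decomposes $p=f_j+\cdots+f_d$ into homogeneous Lie components with $f_d$ not a PI, rescales $x_i\mapsto cx_i$, fixes a specialization with $\det f_d\neq 0$, and uses that the resulting nonconstant one-variable polynomial in $c$ is surjective over the algebraically closed field $K$; you instead treat $D=\det p$ as a polynomial on $\ssl_2(K)^m\cong K^{3m}$ and obtain surjectivity from the weak Nullstellensatz, with the observation $D(0,\dots,0)=0$ excluding a nonzero constant. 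For the degenerate case the paper implicitly needs that a non-PI homogeneous Lie polynomial attains a value of nonzero determinant, which rests on the same Amitsur-type domain fact you make explicit: $D\equiv 0$ forces $p^2=0$ on generic traceless matrices, and since these generate a subring of the algebra of generic matrices with traces, a domain by Proposition~\ref{Am1-2}, $p$ must be a PI on $\ssl_2(K)$. Your variant is somewhat more self-contained, since it does not lean on the homogeneous classification proved just before, at the price of invoking the Nullstellensatz rather than the elementary one-variable surjectivity; both arguments use $\Char K\neq 2$ only for the orbit description (and, in your setup, for forming $X_i-\tfrac12\tr(X_i)$). One cosmetic point: as you note, $0$ is always a value, so in the third case the image is the set of non-nilpotent trace-zero matrices together with $0$ --- the same mild looseness that is present in the paper's own formulation.
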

\begin{proof} For $p$ not a PI,
 we can write $p=f_j+f_{j+1}+\dots+f_d$, where each $f_i$
 is a homogeneous Lie polynomial of degree $i$, and $f_d$ is not PI.
 Therefore for any $c\in K$ we have
 \begin{gather*}p(cx_1,cx_2,\dots,cx_m)=c^jf_j+\dots+c^df_d.\end{gather*}
 Since $f_d$ is not $PI$, we can take specializations of $x_1,\dots, x_m$ for which $\det(f_d)\neq 0$.
Fixing these specializations, we consider
$\det(c^jf_j+\dots+c^df_d)$
 as a polynomial in $c$ of degree $j + \cdots + d$.
Since the leading coefficient is not zero and $K$ is algebraically
closed, its image is $K$.
 Thus for any $k\in K$ there exist $x_1,\dots,x_m$ for which $\det(f)=k$.
 Hence (for $\Char K\neq 2$) any matrix with nonzero eigenvalues $\lambda$ and~$-\lambda$ belongs to $\Image f$.
 Therefore $\Image f$ is either $\ssl_2$ or the set of trace zero non-nilpotent matrices.
\end{proof}

Let us give examples of Lie polynomials having such images:
\begin{Example}[Alexei Kanel-Belov]\label{badex}
If $\Char K=2$, then $\Image p = K$ also is possible:
We take
\begin{gather*}p(x,y,z,t)=[[x,y],[z,t]].\end{gather*}
Any value of $p$ is the Lie product of two trace zero matrices
$s_1=[x,y]$ and $s_2=[z,t]$. Both can be written as
$s_i=h_i+u_i+v_i$, where the $h_i$ are diagonal trace zero
matrices (which are scalar since $\Char K=2$), the $u_i$ are
proportional to $e_{12}$, and the $v_i$ are proportional to
$e_{21}$. Thus $[s_1,s_2]=[u_1,v_2]+[u_2,v_1]$ is scalar.

 Over an arbitrary field, $\Image p$
can indeed be equal to $\{0\}$, or $K$, or the set of all
non-nilpotent matrices having trace zero, or $\ssl_2(K)$, or
$M_2(K)$.
\begin{enumerate}\itemsep=0pt \eroman \item $\Image x=M_2(K)$.

\item $\Image [x,y]=\ssl_2$.

\item Next, we construct a Lie polynomial whose image evaluated on
$\ssl_2(K)$ is the set of all non-nilpotent matrices having trace
zero. We take the multilinear polynomial $h(u_1,\dots,u_8)$
constructed in \cite{DK2} by Drensky and Kasparian which is central
on $3\times 3$ matrices. Given $2\times 2$ matrices $x_1,\dots,
x_9$ we consider the homogeneous Lie polynomial
\begin{gather*}p(x_1,\dots,x_9)=h(\ad_{ [x_9 ,x_9 ,\dots, x_9,x_1]},\ad_{x_2},\ad_{x_3},\dots,\ad_{x_8})(x_9).\end{gather*}
For any $2\times 2$ matrix $x$, $\ad_x$ is a $3\times 3$ matrix
since $\ssl_2$ is $3$-dimensional; hence, for any values of $x_i$,
the value of $p$ has to be proportional to $x_9$. However for $x_9$
nilpotent, this must be zero, since $\big[x^{(3)},y\big]=0$ for any $y\in\ssl_2(K)$ if $x$ is nilpotent. (When we open the brackets we have the sum of $8$ terms and each term equals $x^kyx^{3-k}$. But for any integer~$k$, either $k\geq 2$ or $3-k\geq 2$.) Thus the image of~$p$ is exactly the set of non-nilpotent trace zero matrices.

Another example of a homogeneous Lie polynomial with no nilpotent
values is $p(x,y)=[[[x,y],x],[[x,y],y]]$. (See \cite[Example~4.9]{BGKP} for details.)
\end{enumerate}
\end{Example}

\subsection[The case $n = 3$]{The case $\boldsymbol{n = 3}$}\label{3aa}

New questions arise concerning the possible evaluation of Lie
polynomials on $M_n(K)$.

According to Theorem \ref{semi_tr0_3-3}, if $p$ is a homogeneous polynomial with trace vanishing image, then $\Image p$ is one of the following:

\begin{itemize}\itemsep=0pt
\item $\{0\}$, \item the set of scalar matrices (which can occur
only if $\Char K=3$),
 \item a dense subset of $\sl_3(K)$, or \item
the set of $3$-scalar matrices, i.e., with
eigenvalues $\big(c,c\omega,c\omega^2\big)$,
where $\omega$ is our cube root of~$1$.
\end{itemize}

Drensky~and Rashkova~\cite{DR} have found several identities of
$\sl_3$ of degree 6, but they cannot be Lie polynomials, since
otherwise they would be identities of $\gl_3$ and thus a multiple of
$s_6$, which is not a Lie polynomial. Thus, one must go to higher
degree.

In the associative case, the fact that the generic division algebra
has a $3$-central element implies that there is a homogeneous
$3$-central polynomial for $M_3(K)$, i.e., all of whose values
take on eigenvalues $c$, $\omega c$, $c\omega ^2$, where $\omega$ is a
cube root of~$1$. But any matrix with these eigenvalues is either
scalar or has trace~$0$. This leads us to the basic questions needed
to complete the case $n=3$:

\begin{Question} Is there a Lie polynomial $p$ whose values on $\sl_3 (\mathbb C)$ are
dense, but does not take on all values?
\end{Question}

\begin{Question} Is there a Lie polynomial $p$ whose values on
$\sl_3$ all take on eigenvalues $c$, $\omega c$, $c\omega ^2$, where
$\omega$ is a primitive cube root of $1$?
\end{Question}

\subsubsection{A group theoretical question and its relation to the Lie theoretical problem}

There is a group conjecture (see \cite[Question~2]{KBKP} for the
more general case):

\begin{Conjecture}\label{word-PSL2} If the field $K$ is algebraically closed
of characteristic $0$, then the image of any nontrivial group word
$w(x_1,\dots,x_m)$ on the projective linear group ${\rm PSL}_2(K)$ is
${\rm PSL}_2(K)$.
\end{Conjecture}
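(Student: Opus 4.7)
The plan is to reduce the conjecture to a single residual obstruction and then to attack that obstruction by the Deligne trick. By Borel's theorem, the word map $w\colon \SL_2(K)^m \to \SL_2(K)$ is dominant, hence $\Image w$ is Zariski dense. By the Liebeck--Nikolov--Shalev lemma quoted above (and cf.\ \cite{BanZar,G}), every non-unipotent element of ${\rm PSL}_2(K)$ already lies in $\Image w$. Thus the entire problem reduces to showing that one of the two elements $\pm(I+e_{12})$ (which are equal in ${\rm PSL}_2(K)$) lies in $\Image w$ for every nontrivial word $w$.

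First I would study the trace map $T_w(g_1,\dots,g_m) := \tr(w(g_1,\dots,g_m))$ as a regular function on $\SL_2(K)^m$. By Borel's theorem $T_w$ is nonconstant, so $T_w\bigl(\SL_2(K)^m\bigr)=K$. In particular the fiber $T_w^{-1}(2)$ is a nonempty closed subvariety; since a trace-$2$ element of $\SL_2(K)$ is either $I$ or unipotent, it suffices to exhibit a point of $T_w^{-1}(2)$ at which $w$ does \emph{not} specialize to $I$. One could try to produce such a point by a deformation argument: start at generic $(g_1,\dots,g_m)$ with $\tr(w)=2+\varepsilon$ (where the Liebeck--Nikolov--Shalev argument already gives a diagonalizable preimage with eigenvalues $\lambda,\lambda^{-1}$ satisfying $\lambda+\lambda^{-1}=2+\varepsilon$) and take a limit along a one-parameter family in $T_w^{-1}(2+\varepsilon)$ as $\varepsilon\to 0$, hoping that the limiting conjugacy class is unipotent rather than trivial.

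The Deligne-trick ingredient would enter as follows. View $w$ as an element $\widetilde w$ of Amitsur's generic division algebra $\widetilde{\UD}$ of degree~$2$; by Proposition~\ref{Am1-2} the characteristic polynomial of $\widetilde w$ cannot split into proper factors over $\widetilde{\UD}$. If $\widetilde w - I$ were nilpotent yet nonzero, then $\widetilde w$ would have $1$ as an eigenvalue of multiplicity~$2$, which by Lemma~\ref{divAm-int} forces $\widetilde w = I$, contradicting the nontriviality of $w$ as a word. Consequently on a Zariski open subset $U\subseteq\SL_2(K)^m$ we have $w\neq I$, and the Zariski closure of $T_w^{-1}(2)\cap U$ should contribute unipotent values to $\Image w$---provided one can show this closure is not contained in the locus $\{w=I\}$.

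The main obstacle, and the reason this conjecture has remained open, is exactly controlling the limit: the fiber $T_w^{-1}(2)$ may well be dominated by solutions with $w=I$, and one needs an auxiliary invariant distinguishing such trivial solutions from genuine unipotent ones. A promising refinement would be to analyze, for each nontrivial $w$, the scheme-theoretic structure of the variety $V_w := \{(\mathbf g,u)\in\SL_2(K)^m\times \SL_2(K)\colon w(\mathbf g)=u\}$ near unipotent $u$, and to verify that the fiber over $u=I+e_{12}$ is nonempty by a dimension count combined with Bertini-type irreducibility. Given the delicacy, I would first test the strategy on short words and on specific families (powers $x^n$, Engel words $[x,y,\dots,y]$, basic commutators) where partial results from \cite{BGG,BanZar,GKP2,GKP3,GKP1,KKMP} already give footholds, and then search for a uniform mechanism---most likely organized by the abelianization and lower-central-series position of $w$---that handles the general case.
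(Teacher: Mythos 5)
There is a genuine gap here, and it is worth being precise about its nature: the statement you were asked about is a \emph{conjecture} (Conjecture~\ref{word-PSL2}), which the paper explicitly records as widely open; the paper offers no proof, only the standard reduction. Your write-up reproduces that reduction correctly (Borel's theorem gives dominance, and the Liebeck--Nikolov--Shalev argument puts every non-unipotent class of $\PSL_2(K)$ into $\Image w$, so everything hinges on whether $\pm(I+e_{12})$ is a value of $w$), but the part that would actually prove the conjecture is exactly the part you leave as a ``hope.'' The deformation step is where the argument fails: the image of a word map is only a constructible set, so knowing that $T_w^{-1}(2+\varepsilon)$ meets the image for all small $\varepsilon\neq 0$ tells you only that the unipotent class lies in the \emph{closure} of $\Image w$, which is already immediate from density; a limit of points of $\Image w$ along a one-parameter family need not lie in $\Image w$, and even when the limiting matrix has trace $2$ it may perfectly well be $I$ rather than a nontrivial unipotent. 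Likewise the Deligne-trick step proves only that the generic value $\widetilde w$ is not $I$ (indeed not unipotent), which is automatic from nontriviality of $w$ and says nothing about the special fiber $T_w^{-1}(2)$; the whole difficulty is that this fiber could be dominated by solutions of $w=I$, and no invariant you introduce separates those from genuinely unipotent solutions. A minor additional slip: dominance of $T_w$ gives that its image is cofinite in $K$, not automatically all of $K$ (the paper gets surjectivity of $\tr w$ via a homogeneity argument); this is harmless here only because trace $2$ is attained trivially at the identity.

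Two further remarks. First, the paper's own discussion contains a reduction you did not use: if some variable $x_i$ has nonzero exponent sum in $w$, substituting $x_i\mapsto I+e_{12}$ and $x_j\mapsto I$ for $j\neq i$ gives the unipotent value $I+ke_{12}$ outright (characteristic $0$), so the open case is exactly words with zero exponent sum in every variable, reformulated in the paper as Conjecture~\ref{word-PGL}; any serious attack should start from that normalization. Second, your appeal to Proposition~\ref{Am1-2} and Lemma~\ref{divAm-int} requires interpreting the group word inside the generic division algebra $\widetilde{\UD}$ of degree $2$ (which is legitimate since generic matrices are invertible there), but the conclusion you extract from it is not the characteristic-polynomial non-splitting statement that powers the Deligne trick elsewhere in the paper; as used, it adds nothing beyond Borel. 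In short, what you have is a correct summary of the known partial results plus a research programme, not a proof, and the conjecture remains open exactly at the point you identify.
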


\begin{Remark} Note that if one takes the group $\SL_2$ instead of ${\rm PSL}_2$, Conjecture~\ref{word-PSL2}
fails, since the matrix $-I+e_{12}$ does not belong to the image of
the word map $w=x^2$.
\end{Remark}

\begin{Example}
When $\Char K=p>0$, the image of the word map $w(x)=x^p$ evaluated
on ${\rm PSL}_2(K)$ is not ${\rm PSL}_2(K)$. Indeed, otherwise the matrix
$I+e_{12}$ could be written as $x^p$ for $x\in{\rm PSL}_2(K)$.
 If the eigenvalues of $x$ are equal, then $x=I+n$ where $n$ is nilpotent. Therefore $x^p=(I+n)^p=I+pn=I$.
 If the eigenvalues of $x$ are not equal, then $x$ is diagonalizable and therefore $x^p$ is also diagonalizable, a contradiction.
\end{Example}

\begin{Lemma}[Liebeck, Nikolov, Shalev, cf.~also \cite{BanZar,G}] $\Image w$ contains all matrices from ${\rm PSL}_2(K)$ which
are not unipotent.
\end{Lemma}

\begin{proof}According to \cite{B} the image of the word map $w$ must be Zariski
dense in $\SL_2(K)$. Therefore the image of $\tr w$ must be Zariski
dense in $K$. Note that $\tr w$ is a homogeneous rational function
and $K$ is algebraically closed. Hence, $\Image (\tr w)=K$. For any
$\lambda\neq\pm 1$ any matrix with eigenvalues $\lambda$ and
$\lambda^{-1}$ belongs to the image of $w$ since there is a matrix
with trace $\lambda+\lambda^{-1}$ in $\Image w$ and any two matrices
from $\SL_2$ with equal trace (except trace $\pm 2$) are similar.
Note that the identity matrix $I$ belongs to the image of any word map.
\end{proof}

However the question whether one of the matrices $(I+e_{12})$ or
$(-I-e_{12})$ (which are equal in ${\rm PSL}_2$) belongs to the image of
$w$ remains open. We conjecture that $I+e_{12}$ must belong to
$\Image w$. Note that if there exists $i$ such that the degree of
$x_i$ in $w$ is $k\neq 0$ then we can consider all $x_j=I$ for
$j\neq i$ and $x_i=I+e_{12}$. Then the value of $w$ is
$(I+e_{12})^k=I+ke_{12}$ and this is a unipotent matrix since $\Char
K=0$, and thus $\Image w={\rm PSL}_2(K)$. Therefore it is interesting to
consider word maps $w(x_1,\dots,x_m)$ such that the degree of each
$x_i$ is zero.

This is why Conjecture~\ref{word-PSL2} can be reformulated as follows:

\begin{Conjecture}\label{word-PGL}
Let $w(x_1,\dots,x_m)$ be a group word whose degree at each $x_i$ is
$0$. Then the image of $w$ on $G=\GL_2(K)/\{\pm 1\}$ must be
${\rm PSL}_2(K)$.
\end{Conjecture}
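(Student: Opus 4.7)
The plan is to reduce to the essential case and then focus on reaching the unique conjugacy class which is not yet accounted for by previously cited results.

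First, the degree-zero hypothesis is exactly the condition that $w$ lie in $[F_m,F_m]$, via the abelianization $F_m\twoheadrightarrow\Z^m$. This has two consequences: $\det w\equiv 1$ on $\GL_2(K)^m$ (since $\det\colon\GL_2\to K^\times$ is abelian), and the evaluation is invariant under scaling each input by a central scalar. Hence $w$ genuinely defines a map $\PGL_2(K)^m\to\SL_2(K)\subseteq\PGL_2(K)$, so the conjecture is well-posed. Moreover, the discussion preceding Conjecture \ref{word-PGL} shows that any word failing the degree-zero hypothesis already has $I+ke_{12}$ (with $k\neq 0$) in its image, so combined with the Liebeck--Nikolov--Shalev lemma it is already known surjective; only the present case remains.

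Next, assemble the known partial information. Borel's theorem gives that $\Image w$ is Zariski dense in $\PGL_2(K)$. The Liebeck--Nikolov--Shalev lemma quoted just above the conjecture shows that every non-unipotent conjugacy class of $\PGL_2(K)$ lies in $\Image w$. Combined with the trivial observation $I\in\Image w$, the conjecture reduces to the single statement that the non-identity unipotent class, represented by $[I+e_{12}]\in\PGL_2(K)$, meets $\Image w$.

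To produce a non-trivial unipotent, the natural attack is restriction to the Borel subgroup $B\leq\GL_2(K)$ of upper triangular matrices. Since $w\in[F_m,F_m]$, one has $w(B^m)\subseteq[B,B]=U$, the upper unitriangular subgroup. Writing $x_i=t_iu_i$ with $t_i$ in the torus $T$ and $u_i\in U$, fix the $t_i$ generically and expand $w$ to first order in the $u_i$; the adjoint action of $T$ on $\mathrm{Lie}(U)\cong K$ is through a nontrivial character, so non-vanishing of this linearization is equivalent to non-vanishing of the Lie polynomial attached to $w$ via the Magnus embedding $x_i\mapsto 1+\xi_i$ into a power series algebra. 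The hope is to combine this with the Lie-algebra Borel theorem (Theorem \ref{borel-lie}): the associated graded Lie polynomial, if not an identity of $\sl_2$, has Zariski-dense image on $\mathfrak{b}$, and density together with the $T$-equivariance ought to force $w|_{B^m}\not\subseteq\{I\}$, yielding a non-trivial unipotent in $\Image w$.

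The main obstacle is precisely the transfer from the Lie algebra to the group. Converting density of the Magnus-linearized image into a non-trivial value of $w$ on $B$ requires ruling out systematic cancellations among the higher-order Magnus terms, and this is where the problem has resisted every attack. No Deligne-style argument inside $\widetilde{\UD}$ is directly available in the group-theoretic setting, since the analogue of Amitsur's theorem for group algebras is much weaker; and although one can certify non-triviality of the word in any single degree of the Magnus filtration, the degrees can in principle conspire against each other on the image side. It is exactly this gap --- between Lie-theoretic density and group-theoretic surjectivity onto the unipotent class --- that leaves the conjecture open and that a successful proof would have to bridge with a genuinely new idea.
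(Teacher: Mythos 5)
The statement you were asked about is a conjecture, not a theorem: the paper offers no proof of it, and indeed states explicitly (just before the reformulation) that the question of whether $I+e_{12}$ or $-I-e_{12}$ lies in the image of such a word map remains open. Your write-up is honest about this, and your reduction coincides with the paper's own surrounding discussion: the degree-zero hypothesis puts $w$ in $[F_m,F_m]$, forces $\det w=1$ so that the map lands in ${\rm PSL}_2(K)$ (the paper's normalization $z_i=x_i/\sqrt{\det x_i}$ makes the same point), Borel's theorem gives Zariski density, the Liebeck--Nikolov--Shalev lemma covers every non-unipotent class, and everything reduces to whether the nontrivial unipotent class meets $\Image w$. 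So there is no gap in the sense of a missed proof --- nobody has one --- and your assessment of where the difficulty sits agrees with the paper's.

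One concrete caution about the attack you sketch, though: restriction to the Borel subgroup $B$ of upper triangular matrices cannot, by itself, reach the unipotent class for all words in question. Since $B$ is metabelian, every word lying in the second derived subgroup $[[F_m,F_m],[F_m,F_m]]$ (for instance $w=[[x,y],[z,t]]$) is an identity of $B$, so $w(B^m)=\{I\}$ identically and no amount of genericity in the torus part or control of higher Magnus terms can produce a nontrivial unipotent value from $B$-substitutions. Thus the obstruction is not merely ``cancellations among higher-order Magnus terms'' but the fact that the solvable subgroup sees only the metabelian quotient of $w$; any successful argument must use substitutions that do not all lie in a common Borel, which is exactly why the Lie-theoretic density results (Theorem \ref{borel-lie}) do not transfer and why the conjecture is still open.
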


One can consider matrices $z_i=\frac{x_i}{\sqrt{\det x_i}}$ and note
that $w(z_1,\dots,z_m)=w(x_1,\dots,x_m)$.

For Conjecture \ref{word-PGL} we take $y_i=x_i-I$. Then we can open
the brackets in
\begin{gather*}w(1+y_1,1+y_2,\dots,1+y_m)=1+f(y_1,\dots,y_m)+g(y_1,\dots,y_m),\end{gather*}
where $f$ is a homogeneous Lie polynomial of degree $d$, and $g$ is
the sum of terms of degree greater than $d$. Therefore it is
interesting to investigate the possible images of Lie polynomials,
whether it is possible that the image of $l$ does not contain
nilpotent matrices. Unfortunately we saw such an example
(Example~\ref{badex}, although its degree must be at least~5 by \v{S}penko
\cite[Lemma~7.4]{S1}). More general questions about surjectivity of
word maps in groups and polynomials in algebras are considered in
\cite{KBKP}.

\begin{Remark}
Our next theorem describes the situation in which the trace
vanishing polynomial does not take on nonzero nilpotent values. It
implies that any nontrivial word map $w$ evaluated on ${\rm PSL}_2$ is
not surjective iff its projection to $\ssl_2$ given by $\ssl_2\colon
x\mapsto x-\frac{1}{2}\tr x$ is a~multiple of any prime divisor of
$\det(\pi(w))$. This might help in answering Conjecture
\ref{word-PSL2}.
\end{Remark}
\begin{Theorem}\label{nonilp}Let $p(x_1,\dots,x_m)$ be a trace vanishing polynomial, evaluated on
$M_n(K[\xi ]). $ Let $\bar p = p(y_1, \dots, y_m)$. Then $p$ takes
on no nonzero nilpotent values on any integral domain containing
$K$, iff each prime divisor $d$ of $\det(\bar p)$ also divides each
entry of~$\bar f$.
\end{Theorem}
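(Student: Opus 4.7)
My plan is to prove both implications of the iff statement by exploiting that $\bar p$ is literally a matrix of polynomials in $K[\xi]$, so that any specialization $y_i \mapsto a_i \in M_n(R)$ corresponds to a $K$-algebra homomorphism $K[\xi] \to R$ and commutes with forming $\det$ and each entry. The content of the theorem is then a bridge between the factorization structure of $\det(\bar p)$ in $K[\xi]$ and the specializations at which $p$ becomes nilpotent but nonzero. I would state and use this specialization principle once at the beginning, then treat the two directions separately.

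For the $(\Leftarrow)$ direction, assume every prime factor of $\det(\bar p)$ divides every entry of $\bar p$. Suppose $a=(a_1,\dots,a_m)$ is a tuple in $M_n(R)$ with $p(a)$ nilpotent; then $\det(p(a))=0$, and this equals the image of $\det(\bar p)\in K[\xi]$ under the specialization map $\phi\colon K[\xi]\to R$. Since $R$ is an integral domain, some prime divisor $d$ of $\det(\bar p)$ must satisfy $\phi(d)=0$. By hypothesis, every entry of $\bar p$ is a multiple of $d$ in $K[\xi]$, so after applying $\phi$ every entry of $p(a)$ is a multiple of $\phi(d)=0$ in $R$, giving $p(a)=0$. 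This direction is purely formal and valid for any $n$.

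For the $(\Rightarrow)$ direction, I would argue by contrapositive: assume some prime $d\in K[\xi]$ divides $\det(\bar p)$ but not every entry of $\bar p$. Form the integral domain $R=K[\xi]/(d)$ (which contains $K$), and let $\bar y_i$ denote the image of $y_i$ in $M_n(R)$. Then $p(\bar y_1,\dots,\bar y_m)$ has trace $0$ (since $p$ is trace vanishing) and its determinant is $\det(\bar p) \bmod d = 0$. In the case $n=2$ this forces the characteristic polynomial to be $\lambda^2$ and hence $p(\bar y)$ to be nilpotent; moreover $p(\bar y)\neq 0$ because some entry of $\bar p$ is not in the ideal $(d)$. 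Hence $p$ takes a nonzero nilpotent value on $R$, contradicting the hypothesis.

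The main obstacle is the $(\Rightarrow)$ direction when $n>2$: having $\mathrm{tr}(p(\bar y))=0=\det(p(\bar y))$ is no longer enough to conclude nilpotence, since one also needs $\omega_k(p(\bar y))=0$ for $1<k<n$. In that regime the cleanest reading of the statement is that one should strengthen the right hand side to require each prime divisor of every $\omega_k(\bar p)$ (not just $\omega_n(\bar p)=\det(\bar p)$) to divide every entry of $\bar p$; alternatively, one restricts to $n=2$, which is the natural setting in view of the $\mathrm{PSL}_2$ discussion immediately preceding the theorem. Once one has the right statement, the above two paragraphs give a complete proof: the $(\Leftarrow)$ argument is characteristic-free and works for all $n$, while the $(\Rightarrow)$ argument reduces, via a quotient by a prime ideal, to the elementary observation that a trace-zero $2\times 2$ matrix with vanishing determinant is nilpotent.
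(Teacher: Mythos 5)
Your proof follows essentially the same route as the paper: both directions work by specializing the generic evaluation $\bar p$ along a homomorphism into an integral domain and using that $K[\xi]$ is a UFD, with the forward direction realized by reducing modulo a prime $d$ that divides $\det(\bar p)$ but not every entry of $\bar p$. Your caveat about $n>2$ is well taken: the paper's own argument concludes nilpotence from trace zero together with determinant zero, a step valid only for $n=2$ (the intended setting, given the preceding ${\rm PSL}_2$ discussion), so for general $n$ one would indeed need to control the intermediate coefficients $\omega_k(\bar p)$ exactly as you indicate.
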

\begin{proof} $(\Rightarrow)$ If some prime divisor $d$ of $\det(\bar
p)$ does not divides~$\bar p$, then $\bar p$ does not specialize to~$0$ modulo~$d$. Therefore we have a nonzero matrix in the image of~$p$ which has determinant zero and also trace zero, and thus is
nilpotent, a contradiction.

 $(\Leftarrow)$ Assume that $p$ takes on a
nonzero nilpotent value over some integral domain extension of
$K$. Thus $\det \bar p$ goes to 0 under the corresponding
specialization of the $\xi_{i,j}^k$, so some prime divisor $d$ of
$\det(\bar p)$ goes to $0$, and $\bar p$ is not divisible by~$d$.
\end{proof}

\subsection{Open problems for nonassociative algebras}

We described the evaluations of Lie polynomials on $\sl_2$ and
$\gl_2$. One can consider the same problem for other important Lie
algebras:
\begin{Problem}
Give a classification of possible images of multilinear/homogeneous/arbitrary Lie polynomials on the Lie algebra
\begin{itemize}\itemsep=0pt
\item $\so_4$ $($an algebra of rank $2)$,
\item $\so_6$ $($an algebra of rank $3)$,
\item $\ssp_4$ $($an algebra of rank $2)$,
\item $\ssp_6$ $($an algebra of rank $3)$,
\item $\G_2$ $($an algebra of rank $2)$,
\item $\ff_4$ $($an algebra of rank $4)$.
\end{itemize}
\end{Problem}
Analogous problems occur for the non-Lie case:
\begin{Problem}
Investigate possible evaluations of multilinear/homogeneous/arbitrary polynomials on simple Jordan algebras.
\end{Problem}
\begin{Problem}
Investigate possible evaluations of
multilinear/homogeneous/arbitrary polynomials on the Malcev algebra $($for example, see {\rm \cite{ZSSS})}.
\end{Problem}
\begin{Problem}
There are nonassociative operator algebras $($in particular, the
algebra of left multiplications$)$. It is important to investigate
possible evaluations of polynomials on these algebras.
\end{Problem}

\subsection*{Acknowledgements}

We would like to thank M.~Bre\v{s}ar, B.~Kunyavskii and express our special gratitude to E.~Plotkin for interesting and fruitful discussions. We would also like to thank the anonymous referees whose suggestions contributed significantly to the level
of the paper. The second and third named authors were supported by the ISF (Israel
Science Foundation) grant 1994/20. The first named author was supported by the Russian Science Foundation grant No.~17-11-01377.
The second and fourth named authors were supported by Israel Innovation Authority, grant no.~63412: Development of A.I.~based platform for e commerce.


\addcontentsline{toc}{section}{References}
\LastPageEnding


\begin{thebibliography}{100}
\footnotesize\itemsep=0pt

\bibitem{AM}
Albert A.A., Muckenhoupt B., On matrices of trace zeros, \href{https://doi.org/10.1307/mmj/1028990168}{\textit{Michigan
 Math.~J.}} \textbf{4} (1957), 1--3.

\bibitem{A}
Almutairi N.B., On multilinear polynomials evaluated on quaternion algebra,
 {M}aster Thesis, Kent State University, 2016.

\bibitem{AL}
Amitsur A.S., Levitzki J., Minimal identities for algebras, \href{https://doi.org/10.2307/2032312}{\textit{Proc. Amer.
 Math. Soc.}} \textbf{1} (1950), 449--463.

\bibitem{AEV}
Anzis B.E., Emrich Z.M., Valiveti K.G., On the images of {L}ie polynomials
 evaluated on {L}ie algebras, \href{https://doi.org/10.1016/j.laa.2014.11.015}{\textit{Linear Algebra Appl.}} \textbf{469}
 (2015), 51--75.

\bibitem{Bak}
Bahturin Yu.A., Identical relations in {L}ie algebras, VNU Science
 Press, Utrecht, 1987.

\bibitem{BGG}
Bandman T., Garion S., Grunewald F., On the surjectivity of {E}ngel words on
 {${\rm PSL}(2,q)$}, \href{https://doi.org/10.4171/GGD/162}{\textit{Groups Geom. Dyn.}} \textbf{6} (2012), 409--439,
 \href{https://arxiv.org/abs/1008.1397}{arXiv:1008.1397}.

\bibitem{BGK}
Bandman T., Garion S., Kunyavskii B., Equations in simple matrix groups:
 algebra, geometry, arithmetic, dynamics, \href{https://doi.org/10.2478/s11533-013-0335-4}{\textit{Cent. Eur.~J. Math.}}
 \textbf{12} (2014), 175--211, \href{https://arxiv.org/abs/1302.4667}{arXiv:1302.4667}.

\bibitem{BGKP}
Bandman T., Gordeev N., Kunyavskii B., Plotkin E., Equations in simple
 {L}ie algebras, \href{https://doi.org/10.1016/j.jalgebra.2012.01.012}{\textit{J.~Algebra}} \textbf{355} (2012), 67--79,
 \href{https://arxiv.org/abs/1012.4106}{arXiv:1012.4106}.

\bibitem{BaKu}
Bandman T., Kunyavskii B., Criteria for equidistribution of solutions of
 word equations on {${\rm SL}(2)$}, \href{https://doi.org/10.1016/j.jalgebra.2013.02.031}{\textit{J.~Algebra}} \textbf{382} (2013),
 282--302, \href{https://arxiv.org/abs/1201.5260}{arXiv:1201.5260}.

\bibitem{BanZar}
Bandman T., Zarhin Y.G., Surjectivity of certain word maps on {${\rm
 PSL}(2,\mathbb{C})$} and {${\rm SL}(2,\mathbb{C})$}, \href{https://doi.org/10.1007/s40879-016-0101-9}{\textit{Eur.~J. Math.}}
 \textbf{2} (2016), 614--643, \href{https://arxiv.org/abs/1407.3447}{arXiv:1407.3447}.

\bibitem{Belov01}
Belov A.Ya., Counterexamples to the {S}pecht problem, \href{https://doi.org/10.1070/SM2000v191n03ABEH000460}{\textit{Sb. Math.}}
 \textbf{191} (2000), 329--340.

\bibitem{BBL}
Belov A.Ya., Borisenko V.V., Latyshev V.N., Monomial algebras, \href{https://doi.org/10.1007/BF02355446}{\textit{J.~Math. Sci.}} \textbf{87} (1997), 3463--3575.

\bibitem{Bokut}
Bokut L.A., Unsolvability of the equality problem and subalgebras of finitely
 presented {L}ie algebras, \textit{Math. USSR-Izv.} \textbf{6} (1972),
 1153--1199.

\bibitem{BokutChen}
Bokut L.A., Chen Y., Gr\"{o}bner--{S}hirshov bases and their calculation,
 \href{https://doi.org/10.1007/s13373-014-0054-6}{\textit{Bull. Math. Sci.}} \textbf{4} (2014), 325--395, \href{https://arxiv.org/abs/1303.5366}{arXiv:1303.5366}.

\bibitem{BK2}
Bokut L.A., Kukin G.P., Undecidable algorithmic problems for semigroups,
 groups and rings, \href{https://doi.org/10.1007/BF01094865}{\textit{J.~Soviet Math.}} \textbf{45} (1989), 871--911.

\bibitem{B}
Borel A., On free subgroups of semisimple groups, \textit{Enseign. Math.}
 \textbf{29} (1983), 151--164.

\bibitem{Bre}
Bre\v{s}ar M., Commutators and images of noncommutative polynomials,
 \href{https://arxiv.org/abs/2001.10392}{arXiv:2001.10392}.

\bibitem{BK}
Bre\v{s}ar M., Klep I., Values of noncommutative polynomials, {L}ie skew-ideals
 and tracial {N}ullstellens\"atze, \href{https://doi.org/10.4310/MRL.2009.v16.n4.a5}{\textit{Math. Res. Lett.}} \textbf{16}
 (2009), 605--626, \href{https://arxiv.org/abs/0810.1774}{arXiv:0810.1774}.

\bibitem{BrPS}
Bre\v{s}ar M., Procesi C., \v{S}penko \v{S}., Quasi-identities on matrices and the
 {C}ayley--{H}amilton polynomial, \href{https://doi.org/10.1016/j.aim.2015.03.021}{\textit{Adv. Math.}} \textbf{280} (2015),
 439--471, \href{https://arxiv.org/abs/1212.4597}{arXiv:1212.4597}.

\bibitem{BLS}
Burness T.C., Liebeck M.W., Shalev A., The length and depth of algebraic
 groups, \href{https://doi.org/10.1007/s00209-018-2101-6}{\textit{Math.~Z.}} \textbf{291} (2019), 741--760, \href{https://arxiv.org/abs/1712.08214}{arXiv:1712.08214}.

\bibitem{BLS1}
Burness T.C., Liebeck M.W., Shalev A., The length and depth of compact {L}ie
 groups, \href{https://doi.org/10.1007/s00209-019-02324-7}{\textit{Math.~Z.}} \textbf{294} (2020), 1457--1476,
 \href{https://arxiv.org/abs/1805.09893}{arXiv:1805.09893}.

\bibitem{BW}
Buzinski D., Winstanley R., On multilinear polynomials in four variables
 evaluated on matrices, \href{https://doi.org/10.1016/j.laa.2013.08.005}{\textit{Linear Algebra Appl.}} \textbf{439} (2013),
 2712--2719, \href{https://arxiv.org/abs/1511.06331}{arXiv:1511.06331}.

\bibitem{Ch}
Chuang C.-L., On ranges of polynomials in finite matrix rings, \href{https://doi.org/10.2307/2048069}{\textit{Proc.
 Amer. Math. Soc.}} \textbf{110} (1990), 293--302.

\bibitem{C}
Cohn P.M., The range of derivations on a skew field and the equation
 {$ax-xb=c$}, \textit{J.~Indian Math. Soc. (N.S.)} \textbf{37} (1973), 61--69.

\bibitem{CLO}
Cox D., Little J., O'Shea D., Ideals, varieties, and algorithms. An
 introduction to computational algebraic geometry and commutative algebra, 3rd~ed., \textit{Undergraduate Texts in Mathematics}, \href{https://doi.org/10.1007/978-0-387-35651-8}{Springer}, New York, 2007.

\bibitem{DPP}
De~Concini C., Papi P., Procesi C., The adjoint representation inside the
 exterior algebra of a simple {L}ie algebra, \href{https://doi.org/10.1016/j.aim.2015.04.011}{\textit{Adv. Math.}} \textbf{280}
 (2015), 21--46, \href{https://arxiv.org/abs/1311.4338}{arXiv:1311.4338}.

\bibitem{P3}
De~Concini C., Procesi C., The invariant theory of matrices, \textit{University
 Lecture Series}, Vol.~69, Amer. Math. Soc., Providence, RI, 2017.

\bibitem{DS}
Deligne P., Sullivan D., Division algebras and the
 {H}ausdorff--{B}anach--{T}arski paradox, \textit{Enseign. Math.} \textbf{29}
 (1983), 145--150.

\bibitem{Dn}
Dniester Notebook: Unsolved problems in the theory of rings and modules, 4th~ed.,
 {M}athematics Institute, Russian Academy of Sciences Siberian Branch,
 Novosibirsk, 1993.

\bibitem{D}
Donkin S., Invariants of several matrices, \href{https://doi.org/10.1007/BF01231338}{\textit{Invent. Math.}} \textbf{110}
 (1992), 389--401.

\bibitem{Dr2}
Drensky V., Free algebras and {PI}-algebras. Graduate course in algebra,
 Springer-Verlag Singapore, Singapore, 2000.

\bibitem{DF}
Drensky V., Formanek E., Polynomial identity rings, \textit{Advanced Courses in
 Mathematics. CRM Barcelona}, \href{https://doi.org/10.1007/978-3-0348-7934-7}{Birkh\"auser Verlag}, Basel, 2004.

\bibitem{DK2}
Drensky V., Kasparian A., A new central polynomial for {$3\times 3$} matrices,
 \href{https://doi.org/10.1080/00927878508823188}{\textit{Comm. Algebra}} \textbf{13} (1985), 745--752.

\bibitem{DP}
Drensky V., Piacentini~Cattaneo G.M., A central polynomial of low degree for
 {$4\times 4$} matrices, \href{https://doi.org/10.1006/jabr.1994.1240}{\textit{J.~Algebra}} \textbf{168} (1994), 469--478.

\bibitem{DR}
Drensky V., Rashkova T.G., Weak polynomial identities for the matrix algebras,
 \href{https://doi.org/10.1080/00927879308824765}{\textit{Comm. Algebra}} \textbf{21} (1993), 3779--3795.

\bibitem{DyKl}
Dykema K.J., Klep I., Instances of the {K}aplansky--{L}vov multilinear
 conjecture for polynomials of degree three, \href{https://doi.org/10.1016/j.laa.2016.08.005}{\textit{Linear Algebra Appl.}}
 \textbf{508} (2016), 272--288, \href{https://arxiv.org/abs/1508.01238}{arXiv:1508.01238}.

\bibitem{EG1}
Egorchenkova E., Gordeev N., Products of three word maps on simple algebraic
 groups, \href{https://doi.org/10.1007/s00013-018-1241-6}{\textit{Arch. Math. (Basel)}} \textbf{112} (2019), 113--122.

\bibitem{EG}
Ellers E.W., Gordeev N., On the conjectures of {J}.~{T}hompson and {O}.~{O}re,
 \href{https://doi.org/10.1090/S0002-9947-98-01953-9}{\textit{Trans. Amer. Math. Soc.}} \textbf{350} (1998), 3657--3671.

\bibitem{Fag}
Fagundes P.S., The images of multilinear polynomials on strictly upper
 triangular matrices, \href{https://doi.org/10.1016/j.laa.2018.11.014}{\textit{Linear Algebra Appl.}} \textbf{563} (2019),
 287--301, \href{https://arxiv.org/abs/1807.09136}{arXiv:1807.09136}.

\bibitem{FM}
Fagundes P.S., de~Mello T.C., Images of multilinear polynomials of degree up to
 four on upper triangular matrices, \href{https://doi.org/10.7153/oam-2019-13-18}{\textit{Oper. Matrices}} \textbf{13}
 (2019), 283--292, \href{https://arxiv.org/abs/1807.09421}{arXiv:1807.09421}.

\bibitem{F1}
Formanek E., Central polynomials for matrix rings, \href{https://doi.org/10.1016/0021-8693(72)90050-6}{\textit{J.~Algebra}}
 \textbf{23} (1972), 129--132.

\bibitem{F2}
Formanek E., The polynomial identities and invariants of {$n\times n$}
 matrices, \textit{CBMS Regional Conference Series in Mathematics}, Vol.~78,
 \href{https://doi.org/10.1090/cbms/078}{Amer. Math. Soc.}, Providence, RI, 1991.

\bibitem{GZ}
Giambruno A., Zaicev M., Polynomial identities and asymptotic methods,
 \textit{Mathematical Surveys and Monographs}, Vol.~122, \href{https://doi.org/10.1090/surv/122}{Amer. Math. Soc.},
 Providence, RI, 2005.

\bibitem{G}
Gordeev N., On {E}ngel words on simple algebraic groups, \href{https://doi.org/10.1016/j.jalgebra.2014.10.045}{\textit{J.~Algebra}}
 \textbf{425} (2015), 215--244.

\bibitem{GKP2}
Gordeev N., Kunyavskii B., Plotkin E., Geometry of word equations in simple
 algebraic groups over special fields, \href{https://doi.org/10.1070/RM9838}{\textit{Russian Math. Surveys}}
 \textbf{73} (2018), 753--796, \href{https://arxiv.org/abs/1808.02303}{arXiv:1808.02303}.

\bibitem{GKP3}
Gordeev N., Kunyavskii B., Plotkin E., Word maps on perfect algebraic groups,
 \href{https://doi.org/10.1142/S0218196718400052}{\textit{Internat.~J. Algebra Comput.}} \textbf{28} (2018), 1487--1515,
 \href{https://arxiv.org/abs/1801.00381}{arXiv:1801.00381}.

\bibitem{GKP1}
Gordeev N., Kunyavskii B., Plotkin E., Word maps, word maps with constants and
 representation varieties of one-relator groups, \href{https://doi.org/10.1016/j.jalgebra.2017.03.016}{\textit{J.~Algebra}}
 \textbf{500} (2018), 390--424, \href{https://arxiv.org/abs/1801.00379}{arXiv:1801.00379}.

\bibitem{Go}
Gordienko A.S., Lecture notes on polynomial identities, {P}reprint, 2014.

\bibitem{GK}
Guterman A.E., Kuzma B., Maps preserving zeros of matrix polynomials,
 \href{https://doi.org/10.1134/S1064562409040152}{\textit{Dokl. Math.}} \textbf{80} (2009), 508--510.

\bibitem{Ha}
Halpin P., Central and weak identities for matrices, \href{https://doi.org/10.1080/00927878308822961}{\textit{Comm. Algebra}}
 \textbf{11} (1983), 2237--2248.

\bibitem{Hel}
Helling H., Eine {K}ennzeichnung von {C}harakteren auf {G}ruppen und
 assoziativen {A}lgebren, \href{https://doi.org/10.1080/00927877408548718}{\textit{Comm. Algebra}} \textbf{1} (1974), 491--501.

\bibitem{Her}
Herstein I.N., On the {L}ie structure of an associative ring,
 \href{https://doi.org/10.1016/0021-8693(70)90103-1}{\textit{J.~Algebra}} \textbf{14} (1970), 561--571.

\bibitem{HLS}
Hui C.Y., Larsen M., Shalev A., The {W}aring problem for {L}ie groups and
 {C}hevalley groups, \href{https://doi.org/10.1007/s11856-015-1246-9}{\textit{Israel~J. Math.}} \textbf{210} (2015), 81--100,
 \href{https://arxiv.org/abs/1404.4786}{arXiv:1404.4786}.

\bibitem{I}
Itoh M., Invariant theory in exterior algebras and {A}mitsur--{L}evitzki type
 theorems, \href{https://doi.org/10.1016/j.aim.2015.09.033}{\textit{Adv. Math.}} \textbf{288} (2016), 679--701,
 \href{https://arxiv.org/abs/1404.1980}{arXiv:1404.1980}.

\bibitem{BGEY}
Kanel-Belov A., Grigoriev S., Elishev A., Yu J.-T., Zhang W., Lifting of
 polynomial symplectomorphisms and deformation quantization, \href{https://doi.org/10.1080/00927872.2018.1427255}{\textit{Comm.
 Algebra}} \textbf{46} (2018), 3926--3938, \href{https://arxiv.org/abs/1707.06450}{arXiv:1707.06450}.

\bibitem{BKR}
Kanel-Belov A., Karasik Y., Rowen L.H., Computational aspects of polynomial
 identities, {V}ol.~1, {K}emer's theorems, 2nd~ed., \textit{Monographs and Research
 Notes in Mathematics}, CRC Press, Boca Raton, FL, 2016.

\bibitem{KBKP}
Kanel-Belov A., Kunyavskii B., Plotkin E., Word equations in simple groups and
 polynomial equations in simple algebras, \href{https://doi.org/10.3103/S1063454113010044}{\textit{Vestnik St. Petersburg Univ.
 Math.}} \textbf{46} (2013), 3--13, \href{https://arxiv.org/abs/1304.5052}{arXiv:1304.5052}.


\bibitem{BMR1}
Kanel-Belov A., Malev S., Rowen L., The images of non-commutative polynomials
 evaluated on {$2\times2$} matrices, \href{https://doi.org/10.1090/S0002-9939-2011-10963-8}{\textit{Proc. Amer. Math. Soc.}}
 \textbf{140} (2012), 465--478, \href{https://arxiv.org/abs/1005.0191}{arXiv:1005.0191}.

\bibitem{BMR2}
Kanel-Belov A., Malev S., Rowen L., The images of multilinear polynomials
 evaluated on {$3\times 3$} matrices, \href{https://doi.org/10.1090/proc/12478}{\textit{Proc. Amer. Math. Soc.}}
 \textbf{144} (2016), 7--19, \href{https://arxiv.org/abs/1306.4389}{arXiv:1306.4389}.

\bibitem{BMR3}
Kanel-Belov A., Malev S., Rowen L., Power-central polynomials on matrices,
 \href{https://doi.org/10.1016/j.jpaa.2015.11.001}{\textit{J.~Pure Appl. Algebra}} \textbf{220} (2016), 2164--2176,
 \href{https://arxiv.org/abs/1310.1598}{arXiv:1310.1598}.

\bibitem{BMR4}
Kanel-Belov A., Malev S., Rowen L., The images of {L}ie polynomials evaluated
 on matrices, \href{https://doi.org/10.1080/00927872.2017.1282959}{\textit{Comm. Algebra}} \textbf{45} (2017), 4801--4808,
 \href{https://arxiv.org/abs/1506.06792}{arXiv:1506.06792}.

\bibitem{BR}
Kanel-Belov A., Rowen L.H., Computational aspects of polynomial identities,
 \textit{Research Notes in Mathematics}, Vol.~9, A~K~Peters, Ltd., Wellesley,
 MA, 2005.

\bibitem{Ke1}
Kemer A.R., Capelli identities and nilpotence of the radical of a finitely
 generated {PI}-algebra, \textit{Sov. Math. Dokl.} \textbf{255} (1980),
 750--753.

\bibitem{Ke2}
Kemer A.R., Varieties and {$Z_2$}-graded algebras, \href{https://doi.org/10.1070/IM1985v025n02ABEH001285}{\textit{Math. USSR Izv.}}
 \textbf{25} (1985), 359--374.

\bibitem{KKMP}
Klimenko E., Kunyavskii B., Morita J., Plotkin E., Word maps in {K}ac--{M}oody
 setting, \href{https://doi.org/10.15099/00015096}{\textit{Toyama Math.~J.}} \textbf{37} (2015), 25--54,
 \href{https://arxiv.org/abs/1506.01422}{arXiv:1506.01422}.

\bibitem{Ko1}
Kolesnikov P.S., The {M}akar-{L}imanov algebraically closed skew field,
 \href{https://doi.org/10.1023/A:1010270602485}{\textit{Algebra Logic}} \textbf{39} (2000), 378--395.

\bibitem{Ko2}
Kolesnikov P.S., On various definitions of algebraically closed skew fields,
 \href{https://doi.org/10.1023/A:1012338502261}{\textit{Algebra Logic}} \textbf{40} (2001), 219--230.

\bibitem{Ku1}
Kulyamin V.V., Images of graded polynomials in matrix rings over finite group
 algebras, \href{https://doi.org/10.1070/rm2000v055n02ABEH000278}{\textit{Russian Math. Surveys}} \textbf{55} (2000), 345--346.

\bibitem{Ku2}
Kulyamin V.V., On images of polynomials in finite matrix rings, Ph.D.~Thesis,
 {M}oscow Lomonosov State University, Moscow, 2000.

\bibitem{La}
Larsen M., Word maps have large image, \href{https://doi.org/10.1007/BF02787545}{\textit{Israel~J. Math.}} \textbf{139}
 (2004), 149--156, \href{https://arxiv.org/abs/math.GR/0211302}{arXiv:math.GR/0211302}.

\bibitem{LaS}
Larsen M., Shalev A., Word maps and {W}aring type problems, \href{https://doi.org/10.1090/S0894-0347-08-00615-2}{\textit{J.~Amer.
 Math. Soc.}} \textbf{22} (2009), 437--466, \href{https://arxiv.org/abs/math.GR/0701334}{arXiv:math.GR/0701334}.

\bibitem{LaST1}
Larsen M., Shalev A., Tiep P.H., The {W}aring problem for finite simple groups,
 \href{https://doi.org/10.4007/annals.2011.174.3.10}{\textit{Ann. of Math.}} \textbf{174} (2011), 1885--1950.

\bibitem{LaST2}
Larsen M., Shalev A., Tiep P.H., Waring problem for finite quasisimple groups,
 \href{https://doi.org/10.1093/imrn/rns109}{\textit{Int. Math. Res. Not.}} \textbf{2013} (2013), 2323--2348,
 \href{https://arxiv.org/abs/1107.3341}{arXiv:1107.3341}.

\bibitem{LaT}
Larsen M., Tiep P.H., A refined {W}aring problem for finite simple groups,
 \href{https://doi.org/10.1017/fms.2015.4}{\textit{Forum Math. Sigma}} \textbf{3} (2015), e6, 22~pages,
 \href{https://arxiv.org/abs/1312.4998}{arXiv:1312.4998}.

\bibitem{LeZh}
Lee T.-K., Zhou Y., Right ideals generated by an idempotent of finite rank,
 \href{https://doi.org/10.1016/j.laa.2009.07.005}{\textit{Linear Algebra Appl.}} \textbf{431} (2009), 2118--2126.

\bibitem{LT}
Li C., Tsui M.C., On the images of multilinear maps of matrices over
 finite-dimensional division algebras, \href{https://doi.org/10.1016/j.laa.2015.12.007}{\textit{Linear Algebra Appl.}}
 \textbf{493} (2016), 399--410.

\bibitem{LOST}
Liebeck M.W., O'Brien E.A., Shalev A., Tiep P.H., The {O}re conjecture,
 \href{https://doi.org/10.4171/JEMS/220}{\textit{J.~Eur. Math. Soc. (JEMS)}} \textbf{12} (2010), 939--1008.

\bibitem{MO}
Ma A., Oliva J., On the images of {J}ordan polynomials evaluated over symmetric
 matrices, \href{https://doi.org/10.1016/j.laa.2015.11.015}{\textit{Linear Algebra Appl.}} \textbf{492} (2016), 13--25.

\bibitem{ML}
Makar-Limanov L., Algebraically closed skew fields, \href{https://doi.org/10.1016/0021-8693(85)90177-2}{\textit{J.~Algebra}}
 \textbf{93} (1985), 117--135.

\bibitem{ML2}
Makar-Limanov L., An example of a skew field without a trace, \href{https://doi.org/10.1080/00927878908823848}{\textit{Comm.
 Algebra}} \textbf{17} (1989), 2303--2307.

\bibitem{M}
Malev S., The images of non-commutative polynomials evaluated on {$2\times 2$}
 matrices over an arbitrary field, \href{https://doi.org/10.1142/S0219498814500042}{\textit{J.~Algebra Appl.}} \textbf{13}
 (2014), 1450004, 12~pages, \href{https://arxiv.org/abs/1310.8563}{arXiv:1310.8563}.

\bibitem{M2}
Malev S., The images of noncommutative polynomials evaluated on the quaternion
 algebra, \href{https://doi.org/10.1142/S0219498821500742}{\textit{J.~Algebra Appl.}}, {t}o appear, \href{https://arxiv.org/abs/1906.04973}{arXiv:1906.04973}.

\bibitem{MP}
Malev S., Pines C., The images of multilinear non-associative polynomials
 evaluated on a rock-paper-scissors algebra with unit over an arbitrary field,
 \href{https://arxiv.org/abs/2006.04517}{arXiv:2006.04517}.

\bibitem{Mall}
Malle G., The proof of {O}re's conjecture (after {E}llers--{G}ordeev and
 {L}iebeck--{O}'{B}rien--{S}halev--{T}iep), \textit{Ast\'erisque} \textbf{361}
 (2014), Exp.\ No.~1069, 325--348.

\bibitem{Mes}
Mesyan Z., Polynomials of small degree evaluated on matrices, \href{https://doi.org/10.1080/03081087.2012.758262}{\textit{Linear
 Multilinear Algebra}} \textbf{61} (2013), 1487--1495, \href{https://arxiv.org/abs/1212.1925}{arXiv:1212.1925}.

\bibitem{P}
Procesi C., The invariant theory of {$n\times n$} matrices, \href{https://doi.org/10.1016/0001-8708(76)90027-X}{\textit{Adv. Math.}}
 \textbf{19} (1976), 306--381.

\bibitem{P2}
Procesi C., On the theorem of {A}mitsur--{L}evitzki, \href{https://doi.org/10.1007/s11856-014-1118-8}{\textit{Israel~J. Math.}}
 \textbf{207} (2015), 151--154.

\bibitem{Ra2}
Razmyslov Yu.P., Finite basing of the identities of a matrix algebra of second
 order over a field of characteristic zero, \href{https://doi.org/10.1007/BF02218641}{\textit{Algebra Logic}} \textbf{12}
 (1973), 47--63.

\bibitem{Ra1}
Razmyslov Yu.P., On a problem of {K}aplansky, \href{https://doi.org/10.1070/IM1973v007n03ABEH001952}{\textit{Math USSR. Izv.}}
 \textbf{7} (1973), 479--496.

\bibitem{Ra3}
Razmyslov Yu.P., Trace identities of full matrix algebras over a field of
 characteristic zero, \href{https://doi.org/10.1070/IM1974v008n04ABEH002126}{\textit{Math USSR. Izv.}} \textbf{38} (1974), 727--760.

\bibitem{Ra4}
Razmyslov Yu.P., Identities of algebras and their representations,
 \textit{Translations of Mathematical Monographs}, Vol.~138, Amer. Math. Soc.,
 Providence, RI, 1994.

\bibitem{Row}
Rowen L.H., Polynomial identities in ring theory, \textit{Pure and Applied
 Mathematics}, Vol.~84, Academic Press, Inc., New York~-- London, 1980.

\bibitem{Row1}
Rowen L.H., Graduate algebra: commutative view, \textit{Graduate Studies in
 Mathematics}, Vol.~73, \href{https://doi.org/10.1090/gsm/073}{Amer. Math. Soc.}, Providence, RI, 2006.

\bibitem{Row2}
Rowen L.H., Graduate algebra: noncommutative view, \textit{Graduate Studies in
 Mathematics}, Vol.~91, \href{https://doi.org/10.1090/gsm/091}{Amer. Math. Soc.}, Providence, RI, 2008.

\bibitem{Sa}
Saltman D.J., On {$p$}-power central polynomials, \href{https://doi.org/10.2307/2043028}{\textit{Proc. Amer. Math.
 Soc.}} \textbf{78} (1980), 11--13.

\bibitem{San}
Santulo E.A., Yasurmura F.Y., On the image of polynomials evaluated on
 incidence algebras: a counter-example and a solution, \href{https://arxiv.org/abs/1902.08116}{arXiv:1902.08116}.

\bibitem{Sha1}
Shalev A., Commutators, words, conjugacy classes and character methods,
 \textit{Turkish~J. Math.} \textbf{31} (2007), suppl., 131--148.

\bibitem{S}
Shalev A., Word maps, conjugacy classes, and a noncommutative {W}aring-type
 theorem, \href{https://doi.org/10.4007/annals.2009.170.1383}{\textit{Ann. of Math.}} \textbf{170} (2009), 1383--1416.

\bibitem{Sha3}
Shalev A., Some results and problems in the theory of word maps, in Erd\"{o}s
 Centennial, \textit{Bolyai Soc. Math. Stud.}, Vol.~25, \href{https://doi.org/10.1007/978-3-642-39286-3_22}{J\'{a}nos Bolyai Math.
 Soc.}, Budapest, 2013, 611--649.

\bibitem{Sh}
Shirshov A.I., Some questions in the theory of rings close to associative,
 \textit{Russian Math. Surveys} \textbf{13} (1958), no.~6, 3--20.

\bibitem{Th}
Thom A., Convergent sequences in discrete groups, \href{https://doi.org/10.4153/CMB-2011-155-3}{\textit{Canad. Math. Bull.}}
 \textbf{56} (2013), 424--433, \href{https://arxiv.org/abs/1003.4093}{arXiv:1003.4093}.

\bibitem{V}
Vitas D., Multilinear polynomials are surjective on algebras with surjective
 inner derivations, {P}reprint, 2020.

\bibitem{S1}
\v{S}penko \v{S}., On the image of a noncommutative polynomial, \href{https://doi.org/10.1016/j.jalgebra.2012.12.006}{\textit{J.~Algebra}}
 \textbf{377} (2013), 298--311, \href{https://arxiv.org/abs/1212.4600}{arXiv:1212.4600}.

\bibitem{W1}
Wang Y., The images of multilinear polynomials on {$2\times 2$} upper
 triangular matrix algebras, \href{https://doi.org/10.1080/03081087.2019.1614519}{\textit{Linear Multilinear Algebra}} \textbf{67}
 (2019), 2366--2372.

\bibitem{W2}
Wang Y., The image of arbitrary polynomials on $2\times 2$ upper triangular
 matrix algebras, {P}reprint, 2019.

\bibitem{W3}
Wang Y., Liu P.P., Bai J., Correction: ``The images of multilinear polynomials
 on {$2\times 2$} upper triangular matrix algebras'', \href{https://doi.org/10.1080/03081087.2019.1656706}{\textit{Linear
 Multilinear Algebra}} \textbf{67} (2019), no.~11, i--vi.

\bibitem{Wat}
Watkins W., Linear maps that preserve commuting pairs of matrices,
 \href{https://doi.org/10.1016/0024-3795(76)90060-4}{\textit{Linear Algebra Appl.}} \textbf{14} (1976), 29--35.

\bibitem{Ze}
Zelmanov E., Infinite algebras and pro-{$p$} groups, in Infinite groups:
 geometric, combinatorial and dynamical aspects, \textit{Progr. Math.}, Vol.~248, \href{https://doi.org/10.1007/3-7643-7447-0_11}{Birkh\"auser}, Basel, 2005, 403--413.

\bibitem{ZSSS}
Zhevlakov K.A., Slinko A.M., Shestakov I.P., Shirshov A.I., Rings close to
 associative, Moscow, Nauka, 1978.

\bibitem{Zu}
Zubkov A.N., Non-abelian free pro-$p$-groups cannot be represented by 2-by-2
 matrices, \href{https://doi.org/10.1007/BF00969315}{\textit{Sib. Math.~J.}} \textbf{28} (1987), 742--747.

\end{thebibliography}
\end{document}